\documentclass{article}
\usepackage{authblk}
\usepackage[margin=1in]{geometry}
\usepackage{subfigure}
\usepackage{cellspace}%
\setlength\cellspacetoplimit{3pt}
\setlength\cellspacebottomlimit{3pt}
\usepackage{makecell}
\setcellgapes{3pt}

\usepackage{tikz}
\usetikzlibrary{matrix} 
\usetikzlibrary{arrows,automata}
\usetikzlibrary{positioning}

\usepackage{amssymb,mathrsfs,amsmath,amsthm,bm,diagbox,float}
\usepackage{multirow}
\usetikzlibrary{matrix,arrows}

\usepackage{hyperref}

\usepackage{mathtools}

\DeclarePairedDelimiter{\floor}{\lfloor}{\rfloor}

\renewcommand{\S}{\mathcal{S}}
\newcommand{\C}{\mathcal{C}}
\newcommand{\A}{\mathcal{A}}

\newtheorem{theorem}{Theorem}[section]
\newtheorem{definition}[theorem]{Definition}

\newtheorem{corollary}[theorem]{Corollary}
\newtheorem{lemma}[theorem]{Lemma}
\theoremstyle{definition}
\newtheorem{example}[theorem]{Example}

\title{Cyclic permutations avoiding patterns in both one-line and cycle forms}
\author[1]{Kassie Archer}
\author[2]{Ethan Borsh}
\author[3]{Jensen Bridges}
\author[4]{Christina Graves}
\author[4]{Millie Jeske}
\affil[1]{\footnotesize{Department of Mathematics, United States Naval Academy, Annapolis, MD, 21402}}
\affil[2]{\footnotesize{Mathematics Department, Allegheny College, Meadville, PA, 16335}}
\affil[3]{\footnotesize{Mathematics Department, Oklahoma State University, Stillwater, OK, 74074}}
\affil[4]{\footnotesize{Department of Mathematics, University of Texas at Tyler, Tyler, TX, 75799}}
\date{}

\begin{document}

\maketitle

\begin{abstract}
In this paper, we enumerate the set of cyclic permutations in $\mathcal{S}_n$ that classically avoid $\sigma\in\S_3$ in their one-line notation and avoid another pattern $\tau\in\S_3$ in their standard cycle notation. We find results for all pairs of patterns $(\sigma,\tau)$ in terms of Fibonacci numbers, binomial coefficients, and polynomial expressions. 
\end{abstract}

\section{Introduction}

Pattern avoidance is a notion typically defined for the one-line notation of a permutation. We say that a permutation $\pi = \pi_1\pi_2\ldots \pi_n$ contains another permutation (or pattern) $\sigma=\sigma_1\sigma_2\ldots \sigma_k$ if there is some subsequence of elements in the one-line form of $\pi$ that is in the same relative order as the one-line form of $\sigma$; we say $\pi$ avoids $\sigma$ if it does not contain it. 

It is still an open question to enumerate cyclic permutations that avoid a given pattern in their one-line notation. Generally, it has proven difficult to answer questions about cycle type or other algebraic properties of pattern-avoiding permutations, though some interesting results have been proven. These results include problems related to strong avoidance (meaning that both a permutation and its square avoid a given pattern) \cite{BS19,BD19,P23}, pattern avoidance and the group structure of $\S_n$ \cite{BD19},  pattern-avoiding permutations composed only of cycles of a certain size \cite{AG21, DRS07,GM02,SS85}, and cyclic permutations that avoid sets of patterns \cite{A22,AE14,AL20,BC19,H19} or consecutive patterns \cite{ET18,S07}. 

Pattern avoidance within the cycle notation itself has also been considered previously. In \cite{DELMSSS,V03}, the authors consider the notion of cyclic pattern avoidance, where all cyclic rotations of a permutation must avoid a given pattern. 
In \cite{AL17,E11}, the authors investigate the cycle type of almost-increasing permutations, characterizing these permutations in terms of pattern avoidance. A related concept of Boolean permutations (see for example, \cite{T22}) which avoid the pair 321 and 3412 in its one-line notation are characterized each cycle in the cycle form (beginning with its smallest element) avoiding the patterns 213 and 312. Interestingly, shallow permutations, defined to be those which have minimal depth for a given length and reflection length, have also been characterized as avoiding certain vincular patterns in their cycle form \cite{BT23}. 

In this paper, we also consider cyclic permutations $\pi = \pi_1\pi_2\ldots\pi_n = (1, c_2, c_3, \ldots, c_n)$ avoiding a pattern in its one-line form and its cycle form. In particular, we enumerate the set of cyclic permutations that simultaneously avoid a given pattern $\sigma\in\S_3$ in the one-line notation and avoid another pattern $\tau\in\S_3$ in the standard cycle notation. 
The nontrivial results are summarized by the chart in Figure~\ref{figtable}.

\begin{figure}
\centering\makegapedcells
\renewcommand{\arraystretch}{1.6}
\begin{tabular}{|c|c|c|c|c|}
\hline
$\sigma$ & $\tau$ & $a_n(\sigma;\tau)$ & Theorem & OEIS \\ \hline \hline
123 & \multirow{ 6}{*}{$213$}  & $\displaystyle \left\lceil \frac{n}{2}\right\rceil +1$ & Theorem~\ref{thm:123-213}& A004526\\ \cline{3-5} \cline{1-1} 
 132 & &$n-1$& Theorem~\ref{thm:132-213}& A000027\\ \cline{3-5} \cline{1-1}
 213 & &$F_n$& Theorem~\ref{thm:213-213}& A000045\\ \cline{3-5} \cline{1-1}
 231 &&$\displaystyle \sum_{k=0}^{\lfloor\frac{n-1}{3}\rfloor}\binom{n-1-k}{2k}$ & Theorem~\ref{thm:231-213}& A005251 \\ \cline{3-5} \cline{1-1}
 312 & & 1& Theorem~\ref{thm:312-213}& A000012 \\ \cline{3-5} \cline{1-1}
 321 & & $2^{n-2}$& Theorem~\ref{thm:321-213}& A000079 \\ \hline \hline
123 &  \multirow{ 8}{*}{$231$} & $\displaystyle 5{\lfloor\frac{n-1}{2}\rfloor \choose 2} +\begin{cases} \displaystyle{ 2n-5} & \text{if } n \text{ is even},\\ 
\displaystyle{n-2} & \text{if } n \text{ is odd}. \end{cases}$& Theorem~\ref{thm:123-231}& $\begin{cases} \text{A326725} & \text{if } n \text{ is even,}\\ \text{A140066}-2 & \text{if } n \text{ is odd.} \end{cases}$\\ \cline{3-5} \cline{1-1} 
132 & &$2F_n-2$&Theorem~\ref{thm:132-231} & A019274\\ \cline{3-5} \cline{1-1} 
213 & &$\displaystyle \left\lfloor \frac{n^2}{4}\right\rfloor$& Theorem~\ref{thm:213-231}& A002620\\ \cline{3-5} \cline{1-1} 
231 & &$n$ & Theorem~\ref{thm:231-231}& A000027\\ \cline{3-5} \cline{1-1} 
 312 &&$ \displaystyle \sum_{k=0}^{\lfloor\frac{n-1}{3}\rfloor}\binom{n-1-k}{2k}$  & Theorem~\ref{thm:312-231}& A005251\\\cline{3-5} \cline{1-1} 
 321 && $n-1$& Theorem~\ref{thm:321-231}& A000027\\ \hline \hline
123 & \multirow{ 9}{*}{$321$}  & eventually 0  & Theorem~\ref{thm:123-321}& A000004\\ \cline{3-5} \cline{1-1} 
132 &&  $\displaystyle \left\lceil \frac{(n-2)^2}{2} \right\rceil + 1$&Theorem~\ref{thm:132-321} & A061925\\ \cline{3-5} \cline{1-1} 
213 &&  $\displaystyle \binom{n-2}{2} + \binom{\lceil\frac{n-2}{2}\rceil}{2}+ 2$ & Theorem~\ref{thm:213-321}& A085787+2\\ \cline{3-5} \cline{1-1} 
231 && eventually 0 &Theorem~\ref{thm:231-321} & A000004\\ \cline{3-5} \cline{1-1} 
312 && $\displaystyle \sum_{k=0}^{\lfloor \frac{n-2}{2} \rfloor}\  \sum_{j=0}^{n-2-2k} {n-2-k-j \choose k}{2k \choose j}$& Theorem~\ref{thm:312-321}& A129847\\ \cline{3-5} \cline{1-1} 
321 && $ \displaystyle 1 + 2\binom{\lceil\frac{n+1}{2}\rceil}{3} +\begin{cases} 0 & \text{if } n \text{ is even},\\ 
 { \frac{n+1}{2} \choose 2} & \text{if } n \text{ is odd}. \end{cases}$ & Theorem~\ref{thm:321-321}& $\begin{cases} \text{A064999} & \text{if } n \text{ is even,} \\ \text{A056520} & \text{if } n \text{ is odd.} \end{cases}$\\ \hline
\end{tabular}
\caption{Included in the table are the cases where $\tau \in\{213,231,321\}$. The cases where $\tau\in\{123,132\}$ are trivial and are summarized in Theorem~\ref{thm:123 and 132}. The case where $\tau=312$ is similar (by symmetry) to the case where $\tau=231$ and is summarized by Theorem~\ref{thm:312}. }
\label{figtable}
\end{figure}

\subsection{Definitions and Notation}

Let $[n]$ denote the set $\{1,2,\ldots, n\}$, and for nonnegative integers $n_1 \leq n_2$, let $[n_1,n_2]$ denote the set of consecutive integers $\{n_1, n_1+1, \ldots, n_2-1,n_2\}$; if $n_1 > n_2$, then $[n_1,n_2]$ is the empty set.

We denote by $\S_n$ the set of permutations on $[n]$ and denote by $\C_n$ the set of cyclic permutations in $\S_n$, i.e~those permutations composed of exactly one cycle. For any $\pi\in\S_n$, the one-line notation of $\pi$ is $\pi = \pi_1\pi_2\cdots\pi_n$ where $\pi_i := \pi(i)$. The standard cyclic notation of a cyclic permutation $\pi\in\C_n$, denoted $C(\pi)$, is denoted in this paper by $C(\pi) = (c_1, c_2, \ldots, c_{n})$ where $c_1=1$ and $c_i = \pi_{c_{i - 1}}$ for $2\leq i\leq n$. For example, the permutation $\pi = 46152837$ is a cyclic permutation in $\C_8$ with $C(\pi) = (14526873)$.

For any permutation $\pi=\pi_1\pi_2\ldots\pi_n\in\S_n$, we say that  $\pi$ avoids a pattern $\sigma\in\S_k$ if there is no $i_1<i_2<\cdots<i_k$ with $\pi_{i_1}\pi_{i_2}\ldots \pi_{i_k}$ in the same relative order as $\sigma_1\sigma_2\ldots\sigma_k$. For example, the permutation $\pi = 251683497$ avoids the pattern $321$ since there is no subsequence of $\pi$ of length 3 that is in decreasing order. 
We will also consider pattern avoidance within a cycle. If $\pi$ is a cyclic permutation with $C(\pi) = (c_1,c_2,\ldots,c_n)$, then we say $C(\pi)$ avoids a pattern $\sigma = \sigma_1\sigma_2\ldots \sigma_n$ if there is no $i_1<i_2<\cdots<i_k$ with $c_{i_1},c_{i_2},\ldots,c_{i_k}$ in the same relative order as $\sigma$. 

Let us denote by $\A_n(\sigma;\tau)$ the set of cyclic permutations that avoid $\sigma$ in their one-line form and avoid $\tau$ in their cycle form, and let $a_n(\sigma;\tau)=|\A_n(\sigma;\tau)|$. For example, the permutation $\pi = 7 3 4 1 2 5 6= (1, 7, 6, 5, 2, 3, 4)$ avoids $\sigma = 132$ in the one-line notation since the word 7341256 contains no subsequence $\pi_{i_1}\pi_{i_2}\pi_{i_3}$ with $\pi_{i_1}<\pi_{i_3}<\pi_{i_2}$ and avoids the pattern $\tau=231$ in the cycle notation since $(1, 7, 6, 5, 2, 3, 4)$ contains no subsequence $c_{i_1}, c_{i_2}, c_{i_3}$ with $c_{i_3}<c_{i_1}<c_{i_2}$. Thus, we would say that $\pi\in\A_7(132;231)$.

\subsection{Insertion and deletion}
Within proofs throughout this paper, we will modify permutations by deleting or inserting elements into specific positions and shifting the remaining elements accordingly as seen in the definition and example below.

\begin{definition} Let $\pi = \pi_1\pi_2\cdots\pi_n$ be a permutation on the set $[n]$.
\begin{enumerate}
\item If $\pi'$ is formed by \emph{deleting} $\pi_i$ from $\pi$, then
\[ \pi' = \pi_1'\pi_2'\cdots\pi_{n-1}' \]
where
\[ \pi_j' = \begin{cases} \pi_j & \text{if } j < i \text{ and } \pi_j < \pi_i\\ \pi_j -1 & \text{if } j < i \text{ and }\pi_j > \pi_i\\
\pi_{j+1} &  \text{if } j \geq i \text{ and } \pi_{j+1} < \pi_i\\ \pi_{j+1} -1 &  \text{if } j \geq i \text{ and } \pi_{j+1} > \pi_i \end{cases}\] 
\item If $\pi'$ is formed by \emph{inserting} the element $k \in [1,n+1]$ into $\pi$ in position $i$, then
\[ \pi' = \pi_1'\pi_2'\cdots\pi_{n+1}' \]
where
\[ \pi_j' = \begin{cases} \pi_j & \text{if } j < i \text{ and } \pi_j < k\\ \pi_j +1 & \text{if } j < i \text{ and }\pi_j \geq k\\ k & \text{if } j=i\\
\pi_{j-1} &  \text{if } j \geq i \text{ and } \pi_{j} < k\\ \pi_{j-1} +1 &  \text{if } j \geq i \text{ and } \pi_{j} \geq k. \end{cases}\] 

\end{enumerate}
\end{definition}

For example, suppose $\pi = 37561248$. If $\pi'$ is formed from $\pi$ by inserting 3 in position 4, we would get $\pi' = 486371259$. If $\pi''$ is obtained by deleting 4 from $\pi$, we would get $\pi'' = 3645127$.

\subsection{Symmetries}
There are a few interesting symmetries on the set of permutations $\S_n$ that we will make use of in this paper. 

\begin{definition}
Given a permutation $\pi\in\S_n$, we define:
\begin{itemize}
\item the \textbf{reverse} of $\pi$, denoted $\pi^r$, by taking $\pi^r_i=\pi_{n-i+1}$ for each $i\in[n],$
\item the \textbf{complement} of $\pi$, denoted $\pi^c$, by taking $\pi^c_i=n+1-\pi_i$ for each $i\in[n],$
\item the \textbf{inverse} of $\pi$, denoted $\pi^{-1}$, by taking $\pi^{-1}_i=j$ where $\pi_j=i$,
\item the \textbf{reverse-complement} of  $\pi$, denoted $\pi^{rc}$, by taking $\pi^{rc}=(\pi^{r})^c$,
\item the \textbf{reverse-complement-inverse}  of $\pi$, denoted $\pi^{rci}$, by taking $\pi^{rci}=((\pi^{r})^c)^{-1}$,
\end{itemize}
For a cyclic permutation, we can apply the reverse, complement, or reverse-complement to the cycle notation $C(\pi)$ to get a new cyclic permutation.
\end{definition}

\begin{example}
If we consider $\pi = 4763125\in\S_7$, then: $\pi^{-1} = 5641732$, $\pi^r = 5213674$, $\pi^c = 4125763$, $\pi^{rc} = 3675214$, and $\pi^{rci} = 6517423$. Notice that $\pi$ is cyclic and $C(\pi) = (1,4,3,6,2,7,5)$. We can also consider $C(\pi)^r = (5,7,2,6,3,4,1)$ which in one-line notation is $5641732$. It is not a coincidence that this coincides with $\pi^{-1}$ as stated below in Lemma~\ref{lem:pi-and-Cpi-rci}. Similarly, we can write $C(\pi)^{c} = (7,4,5,2,6,1,3)$ which has one-line form $3675214$ and is equal to $\pi^{rc}$, and we can write $C(\pi)^{rc}  = (3,1,6,2,5,4,7)$ which has one-line form $6517423$ and is equal to $\pi^{rci}$. 
\end{example}

\begin{lemma}\label{lem:pi-and-Cpi-rci}
For any $n\geq 1$ and any cyclic $\pi\in\C_n$, we have:
\begin{itemize}
\item  $C(\pi^{-1}) = C(\pi)^r$ (up to cyclic rotation),
\item $C(\pi^{rc}) = C(\pi)^c$ (up to cyclic rotation),
\item $C(\pi^{rci}) = C(\pi)^{rc}$ (up to cyclic rotation).
\end{itemize}
\end{lemma}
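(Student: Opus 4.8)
The plan is to prove the three claims by tracking where each element goes under the relevant operation and comparing with the definition of the standard cycle notation. The key observation is purely bookkeeping: if $C(\pi) = (c_1, c_2, \ldots, c_n)$ with $c_1 = 1$, then the defining relations are $\pi(c_i) = c_{i+1}$ (indices mod $n$), so the cycle word simply records the orbit of $1$ under $\pi$. Reversing, complementing, or inverting either $\pi$ or the word $C(\pi)$ therefore amounts to a concrete relabeling that I can check directly. Throughout I will use the phrase ``up to cyclic rotation'' to mean we may freely rotate the word so that it starts at $1$, which is harmless since a cyclic permutation determines its cycle word only up to such rotation anyway.

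First I would handle $C(\pi^{-1}) = C(\pi)^r$. Since $\pi(c_i) = c_{i+1}$, applying $\pi^{-1}$ gives $\pi^{-1}(c_{i+1}) = c_i$, i.e. $\pi^{-1}$ sends $c_{i+1} \mapsto c_i$ for every $i$. Reading the indices in the order $c_n, c_{n-1}, \ldots, c_2, c_1$ we see that $(c_n, c_{n-1}, \ldots, c_1)$ is a cycle word for $\pi^{-1}$; rotating so it begins with $c_1 = 1$ shows this equals $C(\pi^{-1})$ up to rotation. But $(c_n, c_{n-1}, \ldots, c_1)$ is exactly $C(\pi)^r$, so the first claim follows.

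Next I would do $C(\pi^{rc}) = C(\pi)^c$. By Lemma statements already recorded (the reverse-complement is an involution-type symmetry), one computes $\pi^{rc}(i) = n+1 - \pi(n+1-i)$ directly from the definitions $\pi^r_i = \pi_{n-i+1}$ and $\pi^c_i = n+1-\pi_i$. Now apply this with $i = n+1 - c_j$: we get $\pi^{rc}(n+1-c_j) = n+1 - \pi(c_j) = n+1 - c_{j+1}$. Hence $\pi^{rc}$ maps $(n+1-c_j) \mapsto (n+1-c_{j+1})$ for all $j$, so the word $(n+1-c_1, n+1-c_2, \ldots, n+1-c_n)$ is a cycle word for $\pi^{rc}$. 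That word is precisely $C(\pi)^c$, and rotating it to start at its value $1$ (which occurs where $c_j = n$) gives $C(\pi^{rc})$ up to cyclic rotation. The third claim, $C(\pi^{rci}) = C(\pi)^{rc}$, then follows by combining the first two: $\pi^{rci} = (\pi^{rc})^{-1}$, so $C(\pi^{rci}) = C((\pi^{rc})^{-1}) = C(\pi^{rc})^r = (C(\pi)^c)^r = C(\pi)^{rc}$, where the second equality is the first claim applied to $\pi^{rc}$ and the third is the second claim, all up to cyclic rotation, and reverse-complement of a word is independent of the order in which reverse and complement are applied.

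I do not anticipate a serious obstacle here; the only point requiring care is the consistent treatment of ``up to cyclic rotation,'' since the raw words produced by the argument do not automatically start at $1$. I would address this once at the outset by noting that for $\pi \in \C_n$ any word listing the single orbit of $\pi$ in consecutive order is a valid representative of $C(\pi)$ after the unique rotation placing $1$ first, and that the reverse, complement, and reverse-complement operations on words commute with this normalization in the appropriate sense (reversing a rotation is a rotation of the reversal, etc.). With that remark in place, each of the three bullet points reduces to the one-line computation of where a generic element is sent, as sketched above.
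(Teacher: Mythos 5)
Your proof is correct and follows essentially the same route as the paper's: you verify the successor relation element-by-element (equivalently, exhibit the transformed word as a cycle word for the transformed permutation) for the first two bullets, and deduce the third by composing them and using that reverse and complement commute, exactly as the paper does.
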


\begin{proof}
Let $\pi = \pi_1\pi_2\cdots\pi_n$ with $C(\pi) = (1,c_2,c_3,\ldots, c_n)$  with $c_i=\pi_{c_{i-1}}$ for $i \in [2,n]$.  Let $k \in [1,n]$. We will consider the element after $k$ in the cycle structure of each permutation.

First consider $\pi^{-1}$ and choose $j$ so $\pi^{-1}_k = j$ where $\pi_j = k$. Thus, the element after $k$ in $C(\pi^{-1})$ is $j$. Notice $C(\pi)^r = (1,c_n, c_{n-1}, \ldots, c_2)$. Choose $i$ so $k= c_i$. The element after $k$ is then $c_{i-1}$. Since $c_i=\pi_{c_{i-1}}$ and $k= \pi_j$, the equation $k=c_i$ is equivalent to $\pi_j = \pi_{c_{i-1}}$. Thus the subscripts are equal and $c_{i-1}=j$ and $k$ maps to $j$ in $C(\pi)^r$ as desired.

Now consider $\pi^{rc} = (n+1-\pi_n)(n+2-\pi_{n-1})\cdots(n+1-\pi_1)$. Notice $k$ maps to $n+1-\pi_{n+1-k}$ in $\pi^{rc}$ so the element after $k$ in $C(\pi^{rc})$ is $n+1-\pi_{n+1-k}$. Now $C(\pi)^c = (n, n+1-c_2, n+1-c_3, \ldots, n+1-c_n)$. Choose $i$ so $k = n+1-c_i$. The element after $k$ in $C(\pi)^c$ is then $n+1-c_{i+1}$. Using the relationship $c_{i+1} = \pi_{c_i}$ along with $k=n+1-c_i$, we have the element after $k$  in $C(\pi)^c$ is $n+1-\pi_{c_i} = n+1-\pi_{n+1-k}$ as desired.

Finally, consider $\pi^{rci}$. By the previous two results, we have $C(\pi^{rci}) = C(\pi^{rc})^r = (C(\pi)^c)^r$. Because the complement and reverse operators commute, we have the desired result.
\end{proof}

\section{Enumerating $\A_n(\sigma; 123)$ and $\A_n(\sigma; 132)$}
In this section, we enumerate $\A_n(\sigma; 123)$ and $\A_n(\sigma; 132)$ for all patterns $\sigma$ of length 3.  The results are trivial as there is only one cyclic permutation whose cycle form avoids $123$, namely $\pi = n123\cdots(n-1)$, and only one cyclic permutation whose cycle form avoids $132$, namely $\pi = 234\cdots n1$. We state the results for completeness in the next theorems.

\begin{theorem} \label{thm:123 and 132} Suppose $n \geq 4$. Then: 
\begin{itemize}
\item $a_n(\sigma; 123) = \begin{cases} 0 & \text{for }  \sigma \in \{123, 312\}\\ 1 & \text{for } \sigma \in \{132, 213, 231, 321\},\end{cases}$
\item $a_n(\sigma; 132) = \begin{cases} 0 & \text{for }  \sigma \in \{123, 231\}\\ 1 & \text{for } \sigma \in \{132, 213, 312, 321\}.\end{cases}$
\end{itemize}
\end{theorem}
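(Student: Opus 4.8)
The plan is to reduce everything to two explicit permutations. First I would establish the two uniqueness facts asserted in the paragraph preceding the theorem. Write $C(\pi) = (1, c_2, \ldots, c_n)$. Since $c_1 = 1$ is the smallest entry, any pair $c_i < c_j$ with $2 \le i < j \le n$ together with $c_1$ forms a $123$ pattern; hence $C(\pi)$ avoids $123$ if and only if $c_2 > c_3 > \cdots > c_n$, which, as $\{c_2, \ldots, c_n\} = \{2, \ldots, n\}$, forces $C(\pi) = (1, n, n-1, \ldots, 2)$. Reading off the one-line notation from the relation $c_i = \pi_{c_{i-1}}$ gives $\pi = n\,1\,2\cdots(n-1)$. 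Symmetrically, $c_1 = 1$ being smallest means $C(\pi)$ avoids $132$ if and only if $c_2 < c_3 < \cdots < c_n$, i.e. $C(\pi) = (1, 2, \ldots, n)$, which in one-line notation is $\pi = 2\,3\cdots n\,1$. So for each of $\tau \in \{123, 132\}$ there is a single candidate cyclic permutation, and $a_n(\sigma; \tau)$ equals $1$ or $0$ according to whether that candidate avoids or contains $\sigma$ in its one-line notation.

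The second step is a short case check of which $\sigma \in \S_3$ each candidate contains. For $\pi = n\,1\,2\cdots(n-1)$: positions $2, \ldots, n$ carry the increasing run $1, 2, \ldots, n-1$, so any length-$3$ subsequence avoiding position $1$ is increasing and realizes $123$, while any length-$3$ subsequence using position $1$ consists of the maximum $n$ followed by two increasing entries and realizes $312$. Hence, using $n \ge 4$ so that both a length-$3$ increasing subsequence and the subsequence $n,1,2$ occur, $\pi$ contains exactly $123$ and $312$ and avoids $132, 213, 231, 321$. For $\pi = 2\,3\cdots n\,1$: positions $1, \ldots, n-1$ carry the increasing run $2, 3, \ldots, n$ and position $n$ carries the minimum $1$; a length-$3$ subsequence avoiding the last position is increasing ($123$), and one using the last position is two increasing entries followed by the minimum ($231$). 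So $\pi$ contains exactly $123$ and $231$ and avoids $132, 213, 312, 321$.

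Combining the two steps yields the theorem's table of $0$'s and $1$'s directly. There is no real obstacle here; the only point requiring care is the hypothesis $n \ge 4$, which is exactly what guarantees that the ``contained'' patterns embed — for $n = 3$ the two candidates are the length-$3$ patterns $312$ and $231$ themselves, so the small cases behave differently. I would also sanity-check the one-line/cycle conversions on a small instance such as $n = 5$ to be sure the two explicit permutations are recorded correctly before writing out the final case analysis.
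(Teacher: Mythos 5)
Your proposal is correct and follows exactly the route the paper takes: identify the unique cyclic permutation whose cycle form avoids $123$ (namely $n\,1\,2\cdots(n-1)$) and the unique one avoiding $132$ (namely $2\,3\cdots n\,1$), then check which patterns $\sigma$ each contains in one-line form. The paper states this argument only in sketch form before the theorem, so your write-up just fills in the same details, including the correct role of the hypothesis $n \geq 4$.
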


\section{Enumerating $\A_n(\sigma; 213)$}\label{sec:213}
In this section, we enumerate $\A_n(\sigma; 213)$ for all patterns $\sigma$ of length 3. The first lemma in this section, Lemma~\ref{lem:2n}, gives more information about permutations whose cycle avoids 213. The remainder of the section is divided into subsections examining each of the cases for $\sigma\in\S_3$ in more detail.

\begin{lemma}\label{lem:2n}
For $n\geq 6,$ if $\pi\in\A_n(\sigma;213)$ with $\sigma\in\{123,132,213,231,312\}$, then $\pi_1 \in \{2, n\}$.
\end{lemma}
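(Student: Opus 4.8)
The plan is to analyze the structure of $C(\pi) = (1, c_2, \ldots, c_n)$ under the hypothesis that $C(\pi)$ avoids $213$, and then combine this with the constraint that $\pi$ (in one-line form) avoids one of the five patterns $\sigma \in \{123, 132, 213, 231, 312\}$. The key elementary observation is that $\pi_1 = \pi(1) = c_2$, so determining $\pi_1$ is the same as determining the second entry $c_2$ of the standard cycle notation. So I would restate the goal: show that $c_2 \in \{2, n\}$.

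**Exploiting the $213$-avoidance of the cycle.** First I would unpack what $C(\pi) = (1, c_2, c_3, \ldots, c_n)$ avoiding $213$ means. Since $c_1 = 1$ is the smallest element, the "$1$" at the front never participates as the "$2$" or "$3$" of a $213$ pattern, but it also can't be the "$1$" of the pattern in a useful way since it's first. The real content is that $(c_2, c_3, \ldots, c_n)$ — a permutation of $\{2, 3, \ldots, n\}$ — avoids $213$. A standard fact about $213$-avoiding sequences is that once you see a descent creating a "small" value, everything after is constrained; more precisely, if $c_2$ is neither the minimum ($2$) nor the maximum ($n$) of the set $\{2,\ldots,n\}$, then there exist elements both smaller and larger than $c_2$ appearing later. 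I would argue that the element $2$ must appear somewhere after $c_2$, and so must some element larger than $c_2$; if that larger element appears *after* the position of $2$, we immediately get a $213$ pattern $(c_2, 2, \text{large})$. Hence every element larger than $c_2$ must appear *before* $2$ in the cycle. This forces a rigid block structure: reading $c_2, c_3, \ldots$, we first see $c_2$, then (before reaching $2$) all the values in $[c_2+1, n]$ must appear, and they themselves must avoid $213$, etc. This gives strong control over the first several entries $c_2, c_3, c_4$ of the cycle, hence over $\pi(1), \pi(c_2), \pi(c_3)$.

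**Bringing in the one-line pattern and deriving a contradiction.** With the cycle structure pinned down when $c_2 \notin \{2, n\}$, I would then show that the resulting one-line permutation $\pi$ is forced to contain each of $123$, $132$, $213$, $231$, $312$ — or rather, for each specific $\sigma$ in the list, exhibit a short subsequence realizing $\sigma$. The mechanism: knowing $\pi(1) = c_2$, $\pi(c_2) = c_3$, and a few more cycle steps, together with the forced relative order of $c_2, c_3, c_4$ and their positions $1, c_2, c_3$ in one-line notation, produces a fixed small pattern among $\pi_1, \pi_{c_2}, \pi_{c_3}$ (and possibly $\pi_2$, using that $2$ sits late in the cycle so $\pi(2)$ is small-ish). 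Since $n \geq 6$ there is enough room that this forced pattern is genuinely present regardless of the finer details. I expect to split into the two cases $2 < c_2 < n$ and handle the sub-possibilities for where $c_3$ lands.

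**Main obstacle.** The hard part will be organizing the case analysis cleanly: the $213$-avoidance of the cycle gives a recursive block decomposition, and I need to extract just enough of it (essentially the behavior of the first two or three cycle entries and the location of the value $2$) to force a forbidden one-line pattern uniformly across all five choices of $\sigma$. The delicate point is that the five patterns $\{123,132,213,231,312\}$ are exactly $\S_3 \setminus \{321\}$, so what I'm really proving is that if $c_2 \notin\{2,n\}$ then $\pi$ contains a non-$321$ pattern of length $3$ — equivalently, $\pi$ is not $\{123,132,213,231,312\}$-avoiding, i.e. $\pi$ has an ascent somewhere among a forced triple. So the cleanest route may be: show directly that $c_2 \notin \{2,n\}$ forces $\pi$ to contain *some* ascent-bearing length-$3$ pattern in a controlled location, then check this rules out membership in each $\A_n(\sigma;213)$ for the five listed $\sigma$. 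Verifying the base-case range ($n \geq 6$ rather than smaller $n$) is where the small-cases bookkeeping lives, and I'd confirm it by noting the block decomposition needs a few free slots to guarantee the forced pattern actually appears.
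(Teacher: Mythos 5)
Your setup matches the paper's: pass to the contrapositive, use $\pi_1=c_2$, and exploit $213$-avoidance of $C(\pi)$ to get the block fact that the value $2$ sits at some position $k$ in the cycle with every larger-than-$c_2$ value (in particular $n$) appearing before it. But the proposal stops exactly where the real work begins. Since $\sigma$ is universally quantified over $\{123,132,213,231,312\}$, the contrapositive requires you to exhibit, for \emph{each} of the five patterns separately, an occurrence of that specific pattern in the one-line form of $\pi$. You never produce these witnesses; you only say you ``expect to split into cases'' and that organizing the case analysis is ``the hard part.'' In the paper that case analysis \emph{is} the proof: explicit triples such as $1,n,\pi_n$ in positions $c_n<c_{i-1}<n$ (a $132$), $c_2,c_{k+1},n$ in positions $1,2,c_{i-1}$ (a $213$), $c_2,1,2$ (a $312$), a genuine subcase split on whether $c_{k-1}=n$ to find a $123$, and, for $231$, a different structural fact (all values above $c_2$ precede all values below $c_2$ in the cycle) to locate the occurrence $c_2c_3c_{r+1}$. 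These witnesses are pattern-specific, so your hoped-for single ``forced triple'' among $\pi_1,\pi_{c_2},\pi_{c_3}$ that works uniformly for all five $\sigma$ does not exist, and the missing analysis is not routine bookkeeping.

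There is also a logical error in your closing reformulation: you assert that the goal is ``equivalently'' to show $\pi$ contains a non-$321$ pattern, i.e.\ that $\pi$ is not $\{123,132,213,231,312\}$-avoiding. That is the wrong quantifier. Containing \emph{some} pattern from the list rules out membership in $\A_n(\sigma;213)$ for only \emph{one} choice of $\sigma$; the lemma needs $\pi$ to contain \emph{all five}. Moreover the weaker statement is essentially vacuous --- any permutation of length at least $3$ other than the decreasing one has an ascent and hence contains some non-$321$ pattern --- so following that route would prove nothing. As written, the proposal has both a missing core argument (the five explicit pattern occurrences) and a reduction that, if pursued, would not establish the statement.
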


\begin{proof} 
\sloppypar{Let $\pi\in\A_n(\sigma;213)$ with $C(\pi) = (1, c_2, c_3, \ldots, c_n)$, and suppose toward a contradiction that $\pi_1\in[3,n-1]$. Since $\pi_1 \neq 2$,  there is some $k\in[3,n]$ with $c_k=2$. Furthermore, since $C(\pi)$ avoids the pattern 213, $c_i>c_j$ for all $i \in [2, k-1]$ and $j \in [k+1,n]$. In particular, $n=c_i$ for some $i \in [3, k-1]$.} Therefore, $C(\pi)$ can be written as 
\[ C(\pi) = (1, c_2, \ldots, c_{i-1}, n, c_{i+1}, \ldots, c_{k-1}, 2, c_{k+1}, \ldots, c_n).\]

Let us achieve our contradiction by finding the pattern $\sigma$ for each possible choice of $\sigma$ in the one-line notation of $\pi$.  First, to see that there is a 132 pattern, we note that $1,n,$ and $\pi_n$ occur in positions $c_n$, $c_{i-1}$, and $n$, respectively. Since $c_n < c_{i-1} < n$, the pattern $1n\pi_n$ is a 132 pattern. To find a a 213 pattern, consider the elements $c_2$, $c_{k+1}$, and $n$, which occur in positions $1, 2,$ and $c_{i-1}$, respectively. Since $c_{k+1} < c_{2}$, the pattern $c_2c_{k+1}n$ is a 213 pattern. For a 312 pattern, we note the elements $c_2, 1$, and $2$ occur in positions $1, c_n$, and $c_{k-1}$, respectively. Since $c_n < c_{k-1}$, the pattern $c_212$ is a 312 pattern.

To see that $\pi$ has a 123 pattern, we consider two cases. If $i \neq k-1$, or equivalently $c_{k-1} \neq n$, then $12\pi_n$ is a 123 pattern occurring in positions $c_n, c_{k-1}$, and $n$. In the case where $c_{k-1} = n$, we note that $C(\pi) = (1, c_2, \ldots, c_{k-2}, n, 2, c_{k+1}, \ldots, c_n)$ where $k \neq 3$ since $\pi_1 \neq n$. Since $C(\pi)$ avoids 213,  $c_{k-2} = n-1$ otherwise $(n-1)c_{k-2}n$ would be a 213 pattern.  Thus $1(n-1)n$ is a 123 pattern in $\pi$ occurring in positions $c_n, c_{k-3},$ and $n-1$.

Finally, we will show that $\pi$ contains a 231 pattern. Because $C(\pi)$ avoids 213 and $\pi_1=c_2 \notin \{2,n\}$,  all elements greater than $c_2$ must appear in $C(\pi)$ before all elements smaller than 2.  Formally, the elements in $[c_2+1,n]$ must appear before the elements $[2,c_2-1]$ in $C(\pi)$. Thus, taking $r=n-c_2+2$, we have $c_r > c_2$ while $c_{r+1} < c_2$.  In one-line notation, the elements $c_2, c_3,$ and $c_{r+1}$ occur in positions $1, c_2,$ and $c_{r}$, respectively, and thus $c_2c_3c_{r+1}$ is a 231 pattern.

We have shown that when $\pi_1 \in [3,n-1]$, there is always a $\sigma$ pattern in the one-line notation for $\pi$ for all $\sigma \in  \{123,132,213,231,312\}$. Thus $\pi_1 \in \{2,n\}$ as desired. \end{proof}

\subsection{$\A_n(123; 213)$}\label{sec:123-213}
Permutations in $\A_n(123;213)$ are enumerated based on the value of $\pi_1$, and by Lemma~\ref{lem:2n}, $\pi_1 \in \{2,n\}$. The next lemma shows that there is only one cyclic permutation that avoids $123$ with $\pi_1=2$.  Because the proof does not depend on the condition that $C(\pi)$ avoids 213, we remove that condition from the hypotheses so that this lemma can also be used in later sections.

\begin{lemma}\label{lem:cyclic-123}
Suppose that $n\geq 2$ and that $\pi\in\C_n$ avoids the pattern 123.  If $\pi_1=2$, then
\[ C(\pi) = \left(1,2,n,3,n-1,4, \ldots, \left\lfloor \frac{n+3}{2}\right\rfloor \right). \]
\end{lemma}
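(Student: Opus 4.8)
The plan is to analyze the structure forced by the two conditions $\pi_1 = 2$ and $\pi$ avoiding $123$. First I would note that $\pi_1 = \pi(1) = 2$ means the cycle begins $C(\pi) = (1, 2, c_3, c_4, \ldots, c_n)$, so $\pi(2) = c_3$. Then I would reason about the one-line notation: we have $\pi_1 = 2$, and since $\pi$ avoids $123$, the subsequence $\pi_1 \pi_2 \cdots$ cannot have an increasing run of length $3$. In particular, knowing $\pi_2$ helps: after position $1$ with value $2$, any two later positions with values both exceeding $2$ and in increasing order would complete a $123$. This should force almost all larger values to appear in a strictly decreasing arrangement after some point.

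The key structural step is to show by induction (or a direct descent argument) that the one-line notation must be $\pi = 2\, n\, 1\, (n-1)\, \cdots$ type oscillation — or more precisely, to directly compute the cycle. I would argue: since $\pi_1 = 2$ and $\pi$ avoids $123$, the values $\{3, 4, \ldots, n\}$, which all exceed $\pi_1 = 2$, can contain no increasing subsequence of length $2$ appearing strictly to the right of position $1$ together — wait, more carefully, at most one value greater than $2$ can be "small then large"; concretely $\pi$ restricted to positions $2, \ldots, n$ avoids $12$ among values $> 2$, forcing those entries to be decreasing except that the single value that could sit before them is limited. The cleanest route: show $\pi_2 = n$ (the largest value must come early, else $2$, then a smaller large value, then $n$ gives $123$), then that forces $\pi(2) = n$, i.e. $c_3 = n$; then $\pi(n) = ?$ — since position $n$ must hold a value $v$ with no $123$ using $2$ and something between, one shows $\pi_n = 3$... and iterate. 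Tracking these images gives $c_3 = n$, $c_4 = 3$, $c_5 = n-1$, $c_6 = 4$, and so on, alternating between ascending small values and descending large values, terminating at $\lfloor (n+3)/2 \rfloor$, which matches the claimed formula.

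Concretely, I would set up the induction on $n$: delete the element $n$ from $\pi$ (using the deletion operation defined earlier) to obtain $\pi' \in \C_{n-1}$, verify $\pi'$ still avoids $123$ and still has $\pi'_1 = 2$, apply the inductive hypothesis to get $C(\pi')$, and then determine where $n$ must be inserted back. The base case $n = 2$ gives $C(\pi) = (1,2)$, matching $\lfloor 5/2 \rfloor = 2$. For the inductive step, the main point is that in $\pi$, the value $n$ must occupy position $2$ (if $n = \pi_j$ for $j \geq 3$, then $\pi_1 = 2 < \pi_2' < n$ for any $\pi_2 > 2$ would be a $123$ — and one checks $\pi_2 > 2$ necessarily since $\pi_2 = 1$ would make $1$ unreachable appropriately, or handle that case directly), hence $\pi(2) = c_3 = n$, and in the cycle the rest is governed by $\pi'$.

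The main obstacle I expect is cleanly handling the interplay between the one-line constraint (avoiding $123$) and the cycle-notation bookkeeping: each time I "peel off" the largest element or the position-$2$ entry, I must carefully verify that the resulting smaller permutation is still cyclic, still starts with $2$ in one-line notation, and still avoids $123$, and that the insertion/deletion operations interact correctly with the cycle structure. Getting the indices right in the alternating pattern $1, 2, n, 3, n-1, 4, \ldots$ and confirming the terminal value is exactly $\lfloor (n+3)/2 \rfloor$ for both parities of $n$ is the fiddly part; I would double-check with small cases $n = 4, 5, 6$ before trusting the general formula. An alternative to induction is a direct argument: show $\pi$ avoiding $123$ with $\pi_1 = 2$ forces $\pi_{2i} $ and $\pi_{2i+1}$ to take prescribed values by repeatedly using that $2$ together with any ascending pair of larger values is forbidden, then read off the cycle — but the inductive version is likely to be more robust to write out.
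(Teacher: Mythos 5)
Your committed route --- induction by deleting the value $n$ from the one-line notation --- breaks at exactly the step you promise to ``verify'': deleting $n$ does not preserve cyclicity, because one-line deletion is not the same operation as splicing $n$ out of the cycle. Concretely, for $n=6$ the (unique) permutation satisfying the hypotheses is $\pi = 265143$ with $C(\pi) = (1,2,6,3,5,4)$; deleting the entry $6$ from the one-line form gives $\pi' = 25143$, which has $4$ as a fixed point, so $\pi' \notin \C_5$ and the inductive hypothesis cannot be applied. Reading ``delete $n$'' the other way does not help either: splicing $6$ out of the cycle form gives $(1,2,3,5,4)$, i.e.\ $\pi' = 23514$ in one-line form, which contains $123$ (and is not the claimed permutation of length $5$). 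So the obstacle you flagged as ``fiddly'' --- the interaction between one-line deletion and the cycle structure --- is in fact fatal to this induction, not a bookkeeping nuisance.

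Your alternative direct sketch is the right idea (and is essentially the paper's argument), but it should be organized around one structural observation rather than the open-ended chain ``show $\pi_2=n$, then $\pi_n=3$, then iterate,'' each step of which needs its own case analysis about where $1$ sits (and small-$n$ exceptions: $\pi_2=n$ already uses cyclicity to exclude $\pi_2=1$, and $\pi_n=3$ fails for $n=3$). The clean statement is: since $\pi_1=2$ and $\pi$ avoids $123$, any two later entries exceeding $2$ that ascend would complete a $123$ with the initial $2$, so all entries other than $1$ must appear in decreasing order; hence $\pi$ is completely determined by the position $k$ of the entry $1$, namely $\pi_i = n+2-i$ for $i \in [2,k-1]$ and $\pi_i = n+3-i$ for $i \in [k+1,n]$. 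One then reads off that $C(\pi)$ contains the cycle $(1,2,n,3,n-1,4,\ldots,k)$, and cyclicity forces this cycle to have length $n$, which pins down $k = \lfloor (n+3)/2 \rfloor$ for both parities at once. This single structural step plus a length count replaces the iteration and handles the termination value uniformly.
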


\begin{proof}
Since $\pi_1=2$ and $\pi$ avoids 123, the remaining elements in $\pi$, excepting 1, must be decreasing.  Suppose $\pi_k = 1$ for some $k \in [2,n]$.   Thus $\pi_i = n+2-i$ for $i \in [2,k-1]$ and $\pi_i = n+3-i$ for $i \in [k+1,n]$. In cycle form, $C(\pi)$ then contains the cycle $(1, 2, n, 3, n-1, 4, \ldots, k)$. Since $\pi$ is cyclic, this cycle must have length $n$. Thus $k = \lfloor\frac{n+3}{2}\rfloor$.
\end{proof}

We now turn our attention to the permutations in $\A_n(123;213)$ where $\pi_1 = n$. In this case, we must also have $\pi_n=2$ as demonstrated in the lemma below.
\begin{lemma}\label{lem:123-213-4} Suppose $n \geq 3$ and $\pi \in \A_n(123;213)$ with $\pi_1=n$. Then $\pi_n=2$. \end{lemma}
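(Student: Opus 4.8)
The plan is to argue by contradiction: assume $\pi \in \A_n(123;213)$ with $\pi_1 = n$ but $\pi_n \neq 2$, and produce a forbidden $123$ pattern in the one-line notation or a forbidden $213$ pattern in the cycle notation. Since $\pi_1 = n$, in the cycle form we have $C(\pi) = (1, n, c_3, \ldots, c_n)$, so the second entry is $n$. The key structural fact to exploit is that $C(\pi)$ avoids $213$: because $1$ and $n$ sit in the first two positions of the cycle, the entry $1$ cannot be the "$2$" of a $213$ pattern, but any two later entries $c_i < c_j$ with $i < j$ together with a still-later smaller entry would form $213$; more usefully, the constraint forces the entries after $n$ in the cycle to be heavily restricted. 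I would first pin down what $c_3$ must be: if $c_3 \neq 2$ and $c_3 \neq n-1$, then the two elements immediately available, $c_3$ and $n$... wait, $n$ comes before $c_3$. Let me reconsider — the relevant observation is that among $c_3, c_4, \ldots, c_n$, the pattern $213$ must be avoided, and separately the one-line form must avoid $123$.

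So the core of the argument: since $\pi_1 = n$ and $\pi$ avoids $123$, the word $\pi_2 \pi_3 \cdots \pi_n$ must itself avoid $123$ and moreover cannot contain a $12$ pattern sitting above... actually $\pi_1 = n$ is the largest element, so it is useless as the "$3$" of a $123$; thus the real condition is just that $\pi_2\cdots\pi_n$ avoids $123$. Now suppose $\pi_n = m$ with $m \geq 3$. I would track where the elements $2$ and $m-1$ (or $2$ and some small element) appear. The idea is that $2$ appears in some position $j$ with $2 \leq j \leq n-1$ (it can't be position $1$ or position $n$), and then the subword to the left of position $j$ together with position $j$ and position $n$ needs to be controlled: if there are two elements to the left of $2$'s position that are... no. Let me instead use the cycle form. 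With $\pi_n = m$, the cycle $C(\pi)$ ends with the entry whose successor is $1$; since $c_2 = n$ and we want to locate $2$ — say $c_k = 2$ for some $k \in [3,n]$. As in Lemma~\ref{lem:2n}, avoidance of $213$ in $C(\pi)$ forces every entry $c_i$ with $3 \le i \le k-1$ to exceed every entry $c_j$ with $k+1 \le j \le n$. Combined with $c_2 = n$ this gives a very rigid shape, and I would then read off a $123$ in the one-line notation (using $1$ at position $c_n$, the element $2$ at position $c_{k-1}$, and $m = \pi_n$ at position $n$, provided $c_{k-1} < n$ and $2 < m$, i.e. $m \geq 3$) unless some degenerate configuration occurs, which I would handle as a small separate case exactly as the authors did in Lemma~\ref{lem:2n} (the case $c_{k-1} = n$, forcing $c_{k-2} = n-1$ by $213$-avoidance, then getting $123$ from $1, (n-1), n$).

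I expect the main obstacle to be the bookkeeping in the degenerate sub-cases: when the element $2$ lies immediately after the large prefix of the cycle, or when $m = \pi_n$ is itself close to $n$, the naive three-element $123$ witness may fail, and one has to chase down the exact forced structure of $C(\pi)$ (much as Lemma~\ref{lem:2n} splits on whether $c_{k-1} = n$). A secondary subtlety is making sure I am using the correct direction of the $213$-avoidance constraint on the cycle — it is easy to get the inequality backwards when converting "$c_{i_1}, c_{i_2}, c_{i_3}$ in the relative order $213$" into "$c_{i_2} < c_{i_1} < c_{i_3}$" — so I would state that translation explicitly once at the start. Everything else is routine positional tracking between one-line and cycle notation of the type already used repeatedly in Section~\ref{sec:213}.
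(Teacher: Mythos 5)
Your proposal is correct and is essentially the paper's own argument: assuming $2$ sits at cycle position $k\neq 3$, the $213$-avoidance of $C(\pi)$ gives $c_n<c_{k-1}$, and then $1,2,\pi_n$ occurring in positions $c_n$, $c_{k-1}$, $n$ is a $123$ pattern, contradicting avoidance. The degenerate subcases you anticipate are vacuous here: since $c_2=n$, having $c_{k-1}=n$ would force $k=3$ (i.e.\ $\pi_n=2$, the desired conclusion), and $\pi_n=c_3\notin\{1,2\}$ gives $\pi_n\geq 3$ automatically, so no Lemma~\ref{lem:2n}-style case split is needed.
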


\begin{proof} 
Suppose $\pi$ is a cyclic permutation with $\pi_1 =c_2=n$, and suppose toward a contradiction that $c_k=2$ for some $k \in [3,n-1]$. Then we can write
\[ C(\pi) = (1,n,c_3, c_4, \ldots, c_{k-1},2,c_{k+1}, c_{k+2}, \ldots, c_n).\]
Since $C(\pi)$ avoids 213, the elements in $C(\pi)$ before 2 must be larger than the elements in $C(\pi)$ after 2. In particular, $c_n < c_{k-1}$.  In one-line notation, 1 is in position $c_n$ and 2 is in position $c_{k-1}$, and thus $12\pi_n$ is a 123 pattern.  Therefore, we must have $\pi_n=2$. 
\end{proof}

Given a permutation $\pi \in \A_n(123;213)$ with the additional conditions that $\pi_1=n$ and $\pi_n=2$, we can simply delete $n$ and $2$ from $\pi$ to obtain a permutation $\pi' \in \A_{n-2}(123;213)$. This map is in fact a bijection, and can be generalized to other patterns as shown in the next lemma.
\begin{lemma} \label{lem:213-213-3}Suppose $n \geq 3$, $\sigma \in \{123,213\}$, and $\tau \in \{213, 231\}$. Then the number of permutations $\pi \in \A_n(\sigma;\tau)$ with $\pi_1=n$ and $\pi_n=2$ is equal to $a_{n-2}(\sigma; \tau)$.
\end{lemma}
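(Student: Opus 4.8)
The plan is to establish a bijection between $\{\pi\in\A_n(\sigma;\tau) : \pi_1=n,\ \pi_n=2\}$ and $\A_{n-2}(\sigma;\tau)$ by deleting the values $n$ and $2$ (equivalently, deleting positions $1$ and $n$) and, conversely, reinserting them. First I would define the forward map: given such a $\pi$, let $\pi'$ be obtained from $\pi$ by deleting $\pi_1=n$ and then deleting the entry $2$ (which, after deleting $n$, sits in the last position of a permutation of $[n-1]$). Using the deletion definition from the excerpt, $\pi'$ is a permutation on $[n-2]$. I would then check that $\pi'$ is still cyclic: deleting the first entry $n$ and the last entry $2$ from $\pi$ corresponds, in cycle form, to splicing $1$ directly to the element that previously followed $n$ and removing $2$ from the cycle — more carefully, $\pi$ maps $1\mapsto n$, $n\mapsto \pi_n=2$, and $2\mapsto \pi_2$, so $C(\pi)=(1,n,\dots,?,2,\pi_2,\dots)$; here $n$ is the second entry and $2$ is the last entry of $C(\pi)$ (since $2\mapsto\pi_2$ means $2$ is immediately followed by the entry that follows $1$... actually $c_n$ is the element mapping to $1$, so we must track positions). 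The cleanest approach: argue directly on one-line notation that deleting the largest value from position $1$ and the value $2$ from position $n$ of a cyclic permutation yields a cyclic permutation on $[n-2]$, by checking the functional digraph stays a single cycle.

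Next I would verify the map lands in $\A_{n-2}(\sigma;\tau)$. Pattern avoidance in one-line notation is inherited under deletion automatically (a pattern in $\pi'$ would lift to a pattern in $\pi$), so $\pi'$ avoids $\sigma$. For the cycle form, I need that $C(\pi')$ avoids $\tau$; since $C(\pi')$ is obtained from $C(\pi)$ by deleting the entries $n$ and $2$ and standardizing, again any $\tau$-pattern in $C(\pi')$ would lift to one in $C(\pi)$, so avoidance is preserved. (This is where the hypothesis $\tau\in\{213,231\}$ and $\sigma\in\{123,213\}$ is not actually needed for well-definedness — it will only matter for the inverse map.)

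For the inverse map, given $\pi'\in\A_{n-2}(\sigma;\tau)$, I would form $\pi$ by inserting the value $2$ in position $n$ (i.e., at the end) and then inserting the value $n$ in position $1$ — or, following the insertion definition, inserting $2$ at the end of $\pi'$ to get a permutation of $[n-1]$, then inserting the new maximum at the front. I would check $\pi_1=n$ and $\pi_n=2$, that $\pi$ is cyclic (the new element $n$ is a fixed-point-free splice: $1\mapsto n$, $n\mapsto 2$, and $2$ now maps to whatever $1$ mapped to in $\pi'$, closing up the single cycle), and — the crucial step — that $\pi\in\A_n(\sigma;\tau)$. This last verification is the main obstacle: I must show that prepending $n$ and appending $2$ cannot create a forbidden pattern. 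For the one-line form: a $\sigma$-pattern using the new $\pi_1=n$ would need $n$ as a "3" at the front, i.e. a descent-top; for $\sigma=123$ this is impossible ($n$ can't be the "1"), and using $\pi_n=2$ as the last entry it could only play the role of "1" at the end, again impossible for $123$; for $\sigma=213$ one checks $n$ in front can only be a "2" or "3" but it's the largest and leftmost so it'd be a "3" needing a smaller-then-larger pair to its right among $\{3,\dots,n-1,2\}$, and since $2$ is rightmost and smallest the only new pattern would be of the form (something, something, n) which can't happen with $n$ leftmost — so no new $213$ either; the remaining pair with $2$ at the end is handled similarly. For the cycle form, $C(\pi)$ inserts $n$ right after $1$ and inserts $2$ at the very end of the cycle word; I would argue that neither placement can complete a $213$ or $231$ pattern, using that $n$ directly follows the minimum $1$ (so $1,n$ can only start a "$12\cdots$" prefix, harmless for $213$ and $231$ which both start with "2") and that $2$ at the end is the second-smallest placed last (so it can only be the final "1" of a pattern, but $213$ and $231$ both end with "3" or "1"... here $231$ ends with "1", so I must check that appending $2$ doesn't create a $23\text{-}1$; but the "1" role needs a value smaller than two earlier values, and $2$'s only smaller companion is $1$ which sits at the front, so any such pattern $c_ic_j2$ with $2$ smallest needs $c_i,c_j>2$ and $c_i<c_j$ — wait, $231$ needs $c_i<c_j$ with both $>$ the last, and $c_i>$... let me just say: a $231$-pattern ending in $2$ would need two earlier entries $a<b$ both exceeding $2$ with $a$ before $b$ — but that's a $231$ pattern $a\,b\,2$; however $2$ being appended means we'd also need to rule this out using that $C(\pi')$ avoided $231$ and the entry $1$ is forced to the first position). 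I would organize this as a short case analysis on $\sigma$ and $\tau$ (four combinations), noting the $\sigma=213,\tau=231$ case or whichever is tightest is the one demanding the most care, and conclude that the forward and reverse maps are mutually inverse, giving the stated equality $a_{n-2}(\sigma;\tau)$.
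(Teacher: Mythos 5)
Your overall plan (delete the values $n$ and $2$, reinsert them for the inverse, check nothing forbidden is created) is the same bijection the paper uses, but there is a concrete error in how you locate these values in the cycle form, and it leaves a genuine gap. Since $\pi_1=n$ and $\pi_n=2$, we have $c_2=\pi_1=n$ and $c_3=\pi_{c_2}=\pi_n=2$, so $C(\pi)=(1,n,2,c_4,\ldots,c_n)$: the $2$ is the \emph{third} entry of the cycle word, immediately after $n$, not the last entry as you assert (and correspondingly, the inverse map inserts ``$n$ followed by $2$'' right after the $1$ in $C(\pi')$, not ``$2$ at the very end of the cycle word''). This placement is the crux of the lemma: it makes the forward direction's cyclicity immediate (deleting positions $2$ and $3$ of $(1,n,2,c_4,\ldots,c_n)$ leaves the single cycle $(1,c_4-1,\ldots,c_n-1)$), and it makes the pattern checks for the inverse trivial, because only the element $1$ precedes the inserted $n$, and only $1$ and $n$ precede the inserted $2$, so neither can take part in a new $213$ or $231$ occurrence.

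Your mislocation is not merely cosmetic: it is exactly what derails your verification in the $\tau=231$ case, which you flag but never resolve. If the $2$ really were appended at the end of the cycle word, the claim would fail --- any two earlier entries $a<b$ with $a,b>2$ would complete a $231$ pattern $a\,b\,2$, and such ascents certainly occur in $231$-avoiding cycle words (e.g.\ $(1,2,3,\ldots)$), so no appeal to the avoidance of $C(\pi')$ or to the position of $1$ can rescue that version of the argument. The proof goes through only after correcting where $2$ sits; as written, the inverse-map verification for the cycle form is incomplete, and the forward-map cyclicity argument is also left as an unexecuted sketch rather than the one-line observation above.
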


\begin{proof} Let $\pi \in \A_n(\sigma; \tau)$ so that $\pi_1=n$ and $\pi_n=2$. Then $C(\pi) = (1,n,2,c_4, c_5, \ldots, c_{n})$. Let $\pi'$ be the permutation formed by deleting both $n$ and $2$ from $\pi$. Then $\pi'$ clearly still avoids $\sigma$. Also, $C(\pi') = (1,c_4-1,c_5-1 \ldots, c_{n}-1)$ and thus $\pi'$ remains cyclic and $C(\pi')$ avoids $\tau$. To see that we obtain every permutation in $\A_{n-2}(\sigma; \tau)$, we consider the process in reverse. Let $\pi' \in \A_{n-2}(\sigma;\tau)$. Form $\pi$ by inserting a 2 at the end followed by an $n$ in the front. This cannot create a new 123 or a 213 pattern and thus $\pi$ still avoids $\sigma.$ In cycle notation, this process is equivalent to inserting an $n$ followed by a 2 after the $1$ in $C(\pi')$. This insertion cannot create a new 213 or 231 pattern in $C(\pi)$, and thus the result follows.
\end{proof}

With these results in hand, we can now enumerate $\A_n(123;213)$.

\begin{theorem}\label{thm:123-213}
For $n\geq 4$, $a_n(123;213) = \lceil\frac{n}{2}\rceil + 1$.
\end{theorem}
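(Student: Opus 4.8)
The plan is to split $\A_n(123;213)$ according to the value of $\pi_1$, which by Lemma~\ref{lem:2n} is either $2$ or $n$ (for $n\geq 6$; small cases can be checked by hand). The case $\pi_1=2$ is handled by Lemma~\ref{lem:cyclic-123}: there is exactly one cyclic permutation avoiding $123$ with $\pi_1=2$, and one should quickly verify that its cycle form $\left(1,2,n,3,n-1,4,\ldots,\lfloor\frac{n+3}{2}\rfloor\right)$ does indeed avoid $213$ (reading the entries, after the initial $1,2$ the sequence alternates a large value then a small value, $n,3,n-1,4,\ldots$, so any three entries either include the leading $1$ or $2$ or sit among the alternating tail, and in neither situation do we get a $213$). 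So this case contributes exactly $1$.

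For the case $\pi_1=n$, Lemma~\ref{lem:123-213-4} forces $\pi_n=2$ as well, and then Lemma~\ref{lem:213-213-3} (with $\sigma=123$, $\tau=213$) tells us that the number of such permutations equals $a_{n-2}(123;213)$. This immediately gives the recurrence
\[ a_n(123;213) = a_{n-2}(123;213) + 1 \qquad (n\geq 6). \]
To finish, I would establish the base cases $a_4(123;213)$ and $a_5(123;213)$ by direct enumeration (one expects $a_4=3$ and $a_5=4$, matching $\lceil n/2\rceil+1$), and then unwind the recurrence: iterating $a_n = a_{n-2}+1$ from the appropriate base case adds $1$ for each step of size $2$, producing exactly $\lceil n/2\rceil + 1$. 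One should check the parity bookkeeping carefully — for even $n$ we descend to $a_4$ in $(n-4)/2$ steps, giving $a_4 + (n-4)/2 = 3 + (n-4)/2 = n/2 + 1 = \lceil n/2\rceil+1$, and for odd $n$ we descend to $a_5$ in $(n-5)/2$ steps, giving $4 + (n-5)/2 = (n+3)/2 = \lceil n/2\rceil + 1$.

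The only real subtlety is the threshold: Lemma~\ref{lem:2n} is stated for $n\geq 6$, so the dichotomy $\pi_1\in\{2,n\}$ and hence the recurrence are only guaranteed from $n=6$ onward; this is why both $n=4$ and $n=5$ must be treated as base cases rather than just one of them. I expect the main (though still routine) obstacle to be simply the small-case verification and making sure the recurrence is anchored correctly for both parities; everything structural has already been done in the preceding lemmas.
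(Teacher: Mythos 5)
Your proposal is correct and follows essentially the same route as the paper: the dichotomy $\pi_1\in\{2,n\}$ from Lemma~\ref{lem:2n}, Lemma~\ref{lem:cyclic-123} for the $\pi_1=2$ case, Lemmas~\ref{lem:123-213-4} and \ref{lem:213-213-3} giving the recurrence $a_n=1+a_{n-2}$, anchored at $a_4=3$ and $a_5=4$. Your extra check that the cycle form in Lemma~\ref{lem:cyclic-123} actually avoids $213$ is a small point the paper leaves implicit, and it is a reasonable addition.
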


\begin{proof} Let $n\geq 6$ and suppose $\pi \in \A_n(123;213)$. By Lemma~\ref{lem:2n}, either $\pi_1=2$ or $\pi_1 = n$. By Lemma~\ref{lem:cyclic-123}, there is exactly one permutation in $\A_n(123;213)$ with $\pi_1=2$. If $\pi_1=n$, then by Lemma~\ref{lem:123-213-4}, we must have $\pi_n=2$ and by Lemma~\ref{lem:213-213-3}, there are exactly $a_{n-2}(123;213)$ such permutations. Thus for $n \geq 6$, we have $a_n(123;213) = 1 + a_{n-2}(123;213)$. As $a_4(123;213)=3$ and $a_5(123;213)=4$, solving this recurrence yields the desired results.
\end{proof}

We note that of the total permutations in $\A_n(123;213)$, all but one of these permutations begin with $n$. Because this result will be referenced later in Section~\ref{sec:123-231}, we state the result as a corollary here.

\begin{corollary}\label{cor:123-213} For $n \geq 4$, the number of permutations $\pi \in \A_n(123;213)$ with $\pi_1 = n$ is $\lceil \frac{n}{2} \rceil$. \end{corollary}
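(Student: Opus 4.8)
The plan is to read this off directly from the proof of Theorem~\ref{thm:123-213}, where the count $a_n(123;213) = \lceil n/2\rceil + 1$ is obtained by splitting $\A_n(123;213)$ according to whether $\pi_1 = 2$ or $\pi_1 = n$. First I would invoke Lemma~\ref{lem:cyclic-123} to note that there is exactly one permutation in $\A_n(123;213)$ with $\pi_1 = 2$, namely the one with $C(\pi) = (1,2,n,3,n-1,4,\ldots,\lfloor(n+3)/2\rfloor)$. By Lemma~\ref{lem:2n}, for $n \geq 6$ every permutation in $\A_n(123;213)$ has $\pi_1 \in \{2,n\}$, so the permutations with $\pi_1 = n$ account for all but that single exceptional permutation. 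Therefore the number of $\pi \in \A_n(123;213)$ with $\pi_1 = n$ is $a_n(123;213) - 1 = \lceil n/2\rceil$.

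For the remaining small cases $n = 4$ and $n = 5$, I would either check directly (as the proof of Theorem~\ref{thm:123-213} already does, recording $a_4(123;213) = 3$ and $a_5(123;213) = 4$) that the unique permutation with $\pi_1 = 2$ is not double-counted and that all others start with $n$, or simply note that Lemma~\ref{lem:cyclic-123} applies for all $n \geq 2$ and that one can verify by hand that for $n \in \{4,5\}$ there are no permutations in $\A_n(123;213)$ with $\pi_1 \notin \{2,n\}$. Subtracting the one permutation with $\pi_1 = 2$ from the totals $3$ and $4$ gives $2 = \lceil 4/2\rceil$ and $3 = \lceil 5/2\rceil$, so the formula $\lceil n/2\rceil$ holds in these cases as well.

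There is essentially no obstacle here; the corollary is a bookkeeping consequence of the dichotomy already established. The only point requiring a sentence of care is the boundary behavior: Lemma~\ref{lem:2n} is stated only for $n \geq 6$, so for $n = 4,5$ one must confirm separately (by the explicit enumeration used in the proof of the theorem) that the ``all but one begin with $n$'' statement still holds, which it does. Thus the whole argument is: total count minus the single $\pi_1 = 2$ permutation equals $\lceil n/2\rceil$.
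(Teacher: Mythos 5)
For $n\geq 6$ your argument is exactly the justification the paper intends (it states the corollary without a separate proof): by Lemma~\ref{lem:2n} every $\pi\in\A_n(123;213)$ has $\pi_1\in\{2,n\}$, by Lemma~\ref{lem:cyclic-123} there is exactly one such permutation with $\pi_1=2$ (one does also need the easy observation, implicit in the proof of Theorem~\ref{thm:123-213}, that the cycle $(1,2,n,3,n-1,\ldots)$ really does avoid $213$, so this permutation is in the set), and hence the count with $\pi_1=n$ is $a_n(123;213)-1=\lceil n/2\rceil$. No complaint there.

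The gap is in your treatment of $n=4,5$. You claim a hand check shows that for these $n$ there are no permutations in $\A_n(123;213)$ with $\pi_1\notin\{2,n\}$, so that subtracting the single $\pi_1=2$ permutation gives $2$ and $3$. That check in fact fails: $\A_4(123;213)=\{2413,\,3142,\,4312\}$ with cycle forms $(1,2,4,3)$, $(1,3,4,2)$, $(1,4,2,3)$, and the permutation $3142$ has $\pi_1=3$, so only \emph{one} element begins with $4$, not $\lceil 4/2\rceil=2$; similarly $\A_5(123;213)=\{25413,\,43152,\,53412,\,54132\}$, where $43152=(1,4,5,2,3)$ has $\pi_1=4$, so only two elements begin with $5$, not $\lceil 5/2\rceil=3$. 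In other words the ``all but one begin with $n$'' dichotomy genuinely requires $n\geq 6$, and the statement as printed for $n\geq 4$ is actually off at $n=4,5$ (a slip in the paper's stated range as well; note this is harmless where the corollary is used, since Theorem~\ref{thm:123-231} only invokes it for $n\geq 8$). The correct fix is not to ``verify'' the small cases but to restrict the corollary, and your proof, to $n\geq 6$.
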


\begin{example} Consider $\A_9(123;213)$. Lemma~\ref{lem:cyclic-123} yields the only permutation in $\A_9(123;213)$ with $\pi_1=2$. Namely $C(\pi) = (1,2,9,3,8,4,7,5,6)$ or $\pi = 298761543$.

The remainder of the permutations can be found recursively. We note that
\[ \A_7(123;213) = \{2765143, 7365142, 7541632, 7645132, 7651432\}. \] The proof of Lemma~\ref{lem:213-213-3} shows that we can get 5 new permutations in $\A_9(123;213)$ by inserting a 2 at the end and a 9 in the front of all of these permutations . Thus we have the following additional permutations in $\A_9(123;213)$:
\[ 938761542, 984761532, 986517432, 987451432, 987165432.\] In total, this gives us $\lceil\frac{9}{2}\rceil +1 = 6$ permutations in $\A_9(123;213)$.
\end{example}

\subsection{$\A_n(132;213)$}\label{sec:132-213}

Permutations in $\A_n(132;213)$ can be listed explicitly, and there are exactly $n-1$ permutations in this set.

\begin{theorem} \label{thm:132-213}
For $n\geq 2$, $a_n(132;213) =n-1$.
\end{theorem}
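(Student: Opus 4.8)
The plan is to characterize the permutations $\pi\in\A_n(132;213)$ explicitly and count them directly. By Lemma~\ref{lem:2n}, for $n\geq 6$ any such $\pi$ has $\pi_1\in\{2,n\}$, so I would split into these two cases and handle each.

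First, suppose $\pi_1=n$. Since $\pi$ avoids $132$ and $\pi_1=n$ is the largest element, $\pi_1$ together with any two later elements in increasing order would form a $132$ pattern; hence $\pi_2\pi_3\cdots\pi_n$ must be decreasing, forcing $\pi=n(n-1)\cdots21$. But this is the product of transpositions $(1\,n)(2\,n-1)\cdots$, which is not cyclic for $n\geq 3$ (and for small $n$ its cycle form must still be checked), so in fact there is no permutation in $\A_n(132;213)$ with $\pi_1=n$ once $n$ is large enough — or rather, I should track exactly which small cases survive. This suggests the bulk of the count comes from $\pi_1=2$.

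Next, suppose $\pi_1=2$. I would analyze the structure forced by avoiding $132$ in one-line form together with avoiding $213$ in the cycle form. Having $\pi_1=2$ means $2$ is not a left-to-right... rather, $1$ appears somewhere to the right; avoiding $132$ with the prefix value $2$ restricts where larger elements and the value $1$ can sit. The cleanest approach is probably to show $C(\pi)$ must take one of a small family of forms indexed by a single parameter (roughly, where a "descent" or a block boundary occurs), yielding $n-2$ or so permutations, and then combine with the $\pi_1=n$ count and the small-$n$ base cases to reach the total $n-1$. Alternatively — and perhaps more slickly — I would set up a recursion: using an insertion/deletion argument like Lemma~\ref{lem:213-213-3}, relate $a_n(132;213)$ to $a_{n-1}(132;213)$ by deleting the element $n$ (or the element in a canonical position), showing exactly one new permutation is added each time, so $a_n = a_{n-1}+1$; then verify a base case such as $a_2(132;213)=1$.

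The main obstacle I expect is the bookkeeping in the $\pi_1=2$ case: translating the one-line condition ($132$-avoidance with $\pi_1=2$, which says the elements after position $1$ are "almost decreasing" with $1$ inserted somewhere, as in Lemma~\ref{lem:cyclic-123}) into a condition on the cycle form and then imposing $213$-avoidance on the cycle. In fact Lemma~\ref{lem:cyclic-123} is stated for $123$-avoidance, not $132$, so I cannot quote it directly; I would need the analogous structural lemma for $132$-avoiding cyclic permutations with $\pi_1=2$, namely that the one-line form is $2,\,n,\,n-1,\,\ldots$ down to some point, then $1$, then continuing the decreasing run — essentially $\pi_i=n+2-i$ except for a single position holding $1$. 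Pinning down for which of these the cycle form is a single $n$-cycle avoiding $213$, and checking the low-$n$ values $n=2,3,4,5$ by hand to anchor the recursion, is where the real work lies; everything else is routine pattern-checking.
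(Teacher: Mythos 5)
Your case split via Lemma~\ref{lem:2n} matches the paper, but both of your structural claims about the two cases are wrong, and in a way that reverses where the count actually comes from. In the case $\pi_1=n$, you assert that $n$ in the first position together with any two later elements in increasing order gives a $132$ pattern, forcing $\pi=n(n-1)\cdots 21$. That deduction is valid for $312$-avoidance, not $132$-avoidance: in a $132$ pattern the first entry is the \emph{smallest}, so $\pi_1=n$ can never participate as the initial letter, and $132$-avoidance places no such constraint. In fact this case is where almost all the permutations live: the paper shows that for each position $k\in[3,n]$ of the element $2$ in $C(\pi)$ there is exactly one permutation (one argues $c_{k-1}=c_n+1$ from $132$-avoidance between the entries $1$ and $2$, and then $213$-avoidance of the cycle pins everything down, as in Equation~(\ref{eqn:132-213})), giving $n-2$ permutations with $\pi_1=n$.

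Your treatment of the case $\pi_1=2$ is also off: the structure you propose (a decreasing run $2,n,n-1,\ldots$ with $1$ inserted, i.e.\ $\pi_i=n+2-i$ except at one position) is the structure forced by $123$-avoidance, which is exactly Lemma~\ref{lem:cyclic-123}; for $132$-avoidance with $\pi_1=2$ the elements of $[3,n]$ must instead appear in \emph{increasing} order, and cyclicity then forces $1$ to sit at the end, so $\pi=23\cdots n1$ is the unique permutation in this case — contributing $1$, not ``$n-2$ or so.'' So your proposed bookkeeping ($\approx n-2$ from $\pi_1=2$, essentially nothing from $\pi_1=n$) is exactly backwards, and the sketched recursion $a_n=a_{n-1}+1$ is not developed enough to rescue it. The correct count is $1+(n-2)=n-1$, with the hard work being the explicit determination of the $n-2$ permutations beginning with $n$.
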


\begin{proof}
Let $n\geq 3$ and suppose $\pi \in \A_n(132;213)$.  By Lemma~\ref{lem:2n}, either $\pi_1=2$ or $\pi_1=n$, and we examine both of those cases now.

First, we claim that if $\pi \in \A_n(132;213)$ with $\pi_1=2$, then $\pi = 23\cdots n1$. 
Clearly, if $\pi$ avoids 132, we must have the elements in $[3,n]$ appear in increasing order, while 1 can appear anywhere. However, if $\pi_k=1$ for any $k$, then the cycle $(1,2,\ldots,k)$ will be part of the cycle decomposition of $\pi$. Since $\pi$ is cyclic, $k=n$.

Now suppose that $\pi \in \A_n(132;213)$ with $\pi_1=n$, and let $k \in [3,n]$ such $c_k = 2$. We claim that each $k$ induces a unique permutation thus providing exactly $n-2$ more permutations in $\A_n(132;213)$.  Since $C(\pi)$ avoids 213 and 
$C(\pi) = (1,n,c_3, c_4, \ldots, c_{k-1}, 2, c_{k+1,}, \ldots, c_n),$
we must have that all $c_i$'s appearing before 2 are greater than all elements appearing after 2. More formally, $\{c_3, c_4, \ldots, c_{k-1}\} = [n-k+3, n-1]$ and $\{c_{k+1}, c_{k+2}, \ldots, c_n\} = [3, n-k+2]$. Now in one-line notation, the element 1 is in position $c_n$ and the element 2 is in position $c_{k-1} > c_n$. Since $\pi$ avoids 132, there cannot be any elements between 1 and 2 in $\pi$, and thus $c_{k-1} = c_n+1$. This implies $c_{k-1} = n-k+3$ and $c_n = n-k+2$. Furthermore, since $c_n$ is the largest element in $\{c_{k+1},\ldots, c_n\}$ and $C(\pi)$ avoids 213, we must have that the elements after $2$ in $C(\pi)$ are increasing, and thus: 
\[ C(\pi) = (1,n,c_3, c_4,\ldots, c_{k-2}, n-k+3, 2, 3, 4, \ldots, n-k+2),\]
or equivalently,
$\pi = n34\cdots(n-k+2)12\pi_{n-k+4}\cdots\pi_n$. Since $\pi$ avoids 132, all elements in $\pi$ after 1 must be increasing and we have
\begin{equation}\label{eqn:132-213}
\pi = n34\cdots(n-k+2)12(n-k+3)(n-k+4)\cdots(n-1), \end{equation}
or equivalently,
\[C(\pi) = (1,n,n-1,\ldots, n-k+3, 2, 3, 4, \ldots, n-k+1, n-k+2),\]
Since $C(\pi)$ avoids 213, we see $\pi \in \A_n(132;213)$, and all $n-2$ possible choices for $k$ yield a unique permutation, our claim holds.
\end{proof}

We note that of the total permutations in $\A_n(132;213)$, all but one of these permutations begin with $n$. Because this result will be referenced later in Section~\ref{sec:213-231}, we state the result as a corollary here.

\begin{corollary}\label{cor:132-213} For $n \geq 3$, the number of permutations $\pi \in \A_n(132;213)$ with $\pi_1 = n$ is $n-2$. \end{corollary}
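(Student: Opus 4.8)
The plan is to read this off directly from the proof of Theorem~\ref{thm:132-213}, since that argument already sorts $\A_n(132;213)$ by the value of $\pi_1$. By Lemma~\ref{lem:2n} we have $\pi_1 \in \{2, n\}$, and the theorem's proof shows that the case $\pi_1 = 2$ contributes exactly one permutation, namely $\pi = 23\cdots n1$. So I would simply subtract: $a_n(132;213) - 1 = (n-1) - 1 = n-2$, which is the count of permutations with $\pi_1 = n$.

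Alternatively — and this is really the same computation phrased constructively — I would point to the second half of the theorem's proof, where it is shown that for each $k \in [3,n]$ with $c_k = 2$ there is exactly one permutation $\pi \in \A_n(132;213)$ with $\pi_1 = n$, given explicitly by Equation~\eqref{eqn:132-213}, and that distinct $k$ give distinct permutations. Since $k$ ranges over the $n-2$ values in $[3,n]$, this yields exactly $n-2$ permutations with $\pi_1 = n$.

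There is essentially no obstacle here; the only thing worth a sentence of care is the small-$n$ boundary. For $n = 3$ the only admissible $k$ is $k = 3$, producing $C(\pi) = (1,3,2)$, i.e.\ $\pi = 312$, in agreement with $n - 2 = 1$; and the hypotheses of Theorem~\ref{thm:132-213} already cover $n \geq 2$, so nothing new needs to be verified. Thus the corollary is a direct bookkeeping consequence of the theorem's proof.
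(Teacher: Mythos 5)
Your proposal is correct and matches the paper's treatment: the corollary is stated without a separate proof precisely because the proof of Theorem~\ref{thm:132-213} already shows there is exactly one permutation with $\pi_1=2$ and exactly one permutation with $\pi_1=n$ for each of the $n-2$ choices of $k\in[3,n]$ via Equation~\eqref{eqn:132-213}. Both your subtraction phrasing and your constructive phrasing are just this bookkeeping, so nothing further is needed.
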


\begin{example} Consider $\A_7(132;213)$. Following the proof of Theorem~\ref{thm:132-213}, the only permutation in $\A_7(132;213)$ with $\pi_1=2$ is $2345671$. There are five permutations with $\pi_1=n$ based on the position of $2$ in the cycle notation.  These permutations are given by Equation~(\ref{eqn:132-213}) for $k \in [3,7]$ and thus:
\[ \A_7(132;213) = \{2345671, 7345612, 7345126, 7341256, 7312456, 7123456\}. \]
\end{example}

\subsection{$\A_n(213;213)$} \label{sec:213-213}
In this subsection, we consider the case where $\sigma=\tau=213$. If $\pi \in \A_n(213;213)$, the first lemma, Lemma~\ref{lem:213-213-1}, gives additional structural information about $\pi$.

\begin{lemma} \label{lem:213-213-1}Suppose $n \geq 6$ and $\pi \in \A_n(213;213)$.
\begin{itemize}
\item If $\pi_1=2$, then $\pi = 23\cdots n1$; and 
\item If $\pi_1=n$, then either $\pi_2=1$ or $\pi_n = 2$.
\end{itemize}
\end{lemma}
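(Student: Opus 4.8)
The plan is to handle the two bullet points separately, in each case exploiting the interaction between the structure forced by one-line $213$-avoidance and the structure forced by cycle-form $213$-avoidance. For the first bullet, suppose $\pi\in\A_n(213;213)$ with $\pi_1=2$. Since $\pi_1=2$, the only value smaller than $\pi_1$ is $1$, so any $213$ pattern in one-line notation using $\pi_1$ as its ``$2$'' is impossible; but more usefully, a one-line $213$ pattern $\pi_a\pi_b\pi_c$ with $a<b<c$ requires $\pi_b<\pi_a<\pi_c$, and having $\pi_1=2$ forces strong monotonicity. In fact I would argue exactly as in the proof of Theorem~\ref{thm:132-213}: if $\pi_1=2$ and $\pi$ avoids $213$, then the elements in positions $2,3,\ldots,n$ other than $1$ must be increasing (if two of them were in decreasing order, say $\pi_b>\pi_c$ with $1<b<c$, then $2\,\pi_b\,?$ or rather $\pi_b\,\pi_c$ together with some larger later element, or with $\pi_1=2$ as the small middle term — one checks $\pi_1=2$ sitting to the left can serve; more directly $\pi_b \pi_c$ with $\pi_c<\pi_b$ and any element to their left that is between them gives $213$, and if none exists the values below $\pi_b$ appearing before position $b$ would still create the pattern). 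So $1$ can go anywhere and the rest increase; but then writing $\pi_k=1$, the cycle containing $1$ closes up after $k$ steps as $(1,2,\ldots,k)$, and cyclicity forces $k=n$, giving $\pi=23\cdots n1$. This case needs no use of the $213$-avoidance of $C(\pi)$.

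For the second bullet, suppose $\pi_1=n$ and, toward a contradiction, that $\pi_2\neq 1$ and $\pi_n\neq 2$. Write $C(\pi)=(1,n,c_3,\ldots,c_n)$. Since $\pi_2\ne 1$, there is some $k\in[3,n]$ with $c_{k}=2$ and some $j$ with $c_j=\pi_2$ where $c_j\ne 1$ means $\pi_2$ is not the last entry of the cycle reached from $1$ — more simply, I will locate where the value $1$ sits and where the value $2$ sits in one-line notation: $1$ is in position $c_n$ (since $\pi_{c_n}=c_1=1$) and $2$ is in position $c_{k-1}$. Because $C(\pi)$ avoids $213$ and $2=c_k$ is preceded in the cycle by $c_2=n,c_3,\ldots,c_{k-1}$ and followed by $c_{k+1},\ldots,c_n$, the usual argument (as in Lemma~\ref{lem:123-213-4}) gives that every entry before the $2$ exceeds every entry after it; in particular $c_n<c_{k-1}$, i.e.\ the position of $1$ is to the left of the position of $2$ in one-line notation. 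Now $\pi_n\ne 2$ means $c_{k-1}\ne n$, so the value $2$ does not sit in the last position, hence there is some position to the right of $c_{k-1}$; I want to produce a $213$ pattern $\pi_a\pi_b\pi_c$ with $a<b<c$ and $\pi_b<\pi_a<\pi_c$. Take the ``$2$'' of the pattern to be the literal value $2$ at position $c_{k-1}$, take the ``$1$''-role small element to be the value $1$ at position $c_n<c_{k-1}$ playing... no: rather, I would take as the prefix pair a value larger than $2$ sitting to the left of position $c_{k-1}$ together with the value $2$, and then a still-larger value to the right; concretely, since $\pi_1=n$ is the largest element and lies in position $1<c_{k-1}$, the pair $(\pi_1,2)=(n,2)$ forms the ``$21$'' of a $213$ once we find any $\pi_c$ with $c>c_{k-1}$ — but we need $\pi_c>n$, impossible. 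So instead swap roles: use the value $1$ at position $c_n$ as the ``$1$'', a value $v$ at some position $<c_n$ with $1<v$ as the ``$2$'', and a larger value to the right. The cleanest route: since $\pi_2\ne 1$ and $\pi$ avoids $213$ in one-line form, examine the one-line word directly and show that $\pi_1=n$ together with $\pi_2\ne1$ forces $\pi_n=2$ unless there is a $213$ pattern, mirroring Lemma~\ref{lem:123-213-4}'s technique but with the pattern $213$ in place of $123$.

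The main obstacle I anticipate is the bookkeeping in the second bullet: unlike Lemma~\ref{lem:123-213-4}, where the target one-line pattern $123$ was read off immediately from ``$1$ before $2$ before $\pi_n$,'' here the target pattern $213$ requires a descent followed by an ascent, so I must carefully identify which three positions carry it. I expect the resolution to be: from $c_n<c_{k-1}$ and from the fact (again via cycle $213$-avoidance) that $n-1$ or some large value sits early in the cycle and hence appears in one-line notation at a position among the small indices, one can find a value strictly between $2$ and that large value lying to the right of the position of $2$; combined with $\pi_1=n$ this yields the pattern. Alternatively — and this may be the slickest — I would apply an appropriate symmetry from Lemma~\ref{lem:pi-and-Cpi-rci}: the condition ``$\pi\in\A_n(213;213)$ with $\pi_1=n$'' should transform under inverse (which sends $C(\pi)$ to $C(\pi)^r$, i.e.\ sends cycle-$213$-avoidance to cycle-$132$-avoidance) or under reverse-complement into a statement already handled, letting me import the conclusion rather than re-deriving it; I would check which of $\pi^{-1}$, $\pi^{rc}$, $\pi^{rci}$ maps $213\mapsto 213$ in one-line form while keeping the cycle pattern tractable, and pivot through that.
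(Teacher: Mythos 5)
Your argument for the first bullet does not work, and the flaw is substantive: you claim that one-line $213$-avoidance with $\pi_1=2$ forces the entries other than $1$ to appear in increasing order, ``exactly as in the proof of Theorem~\ref{thm:132-213},'' and you conclude that ``this case needs no use of the $213$-avoidance of $C(\pi)$.'' That monotonicity is a feature of $132$-avoidance, not $213$-avoidance: for instance $25431$ avoids $213$ with a decreasing block after the $2$. Worse, the cycle condition cannot be dropped: $\pi=256341\in\C_6$ is cyclic, avoids $213$ in one-line notation, and has $\pi_1=2$, yet $\pi\neq 234561$; its cycle form $(1,2,5,4,3,6)$ contains $213$ (e.g.\ $5,4,6$). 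So your proposed proof of the first bullet establishes a false statement. The paper instead first deduces $\pi_n=1$ (hence $c_n=n$) from the one-line condition, then takes the smallest $k$ with $c_k>k$ and uses \emph{both} avoidance conditions to place $c_k$, $k$, and $n$ at increasing one-line positions, producing a one-line $213$ and forcing $c_i=i$ for all $i$.

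For the second bullet you set up the right frame (locate $2$ at cycle position $k\in[4,n-1]$, use cycle $213$-avoidance to compare elements before and after the $2$), but you never actually exhibit the forbidden pattern; the writeup tries two candidate triples, abandons both, and then defers to ``mirroring Lemma~\ref{lem:123-213-4}'' or to an unspecified symmetry. The missing observation is short: since $c_2=n$ and $c_k=2$, one has $\pi_2=c_{k+1}$ and $\pi_n=c_3$; cycle $213$-avoidance forces $c_3>c_{k+1}$ (otherwise $c_3,2,c_{k+1}$ is a $213$ in $C(\pi)$), and then $c_{k+1},\,2,\,c_3$ occurring at positions $2$, $c_{k-1}$, $n$ is a one-line $213$, the desired contradiction. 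Your symmetry fallback is also not a safe escape route: the identities of Lemma~\ref{lem:pi-and-Cpi-rci} hold only up to cyclic rotation, and avoidance in the \emph{standard} cycle form is not rotation-invariant, which is exactly why Lemma~\ref{lem:RC} needs the hypothesis $\pi_n=1$; no symmetry in that lemma carries $(213;213)$ to a case you could legitimately import here without redoing comparable work.
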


\begin{proof}
Let $\pi \in \A(213;312)$ and suppose first that $\pi_1=2$. Note that $\pi_n = 1$ because if $\pi_n \neq 1$, then $21\pi_n$ is a  213-pattern in one-line notation. Thus $C(\pi) = (1, 2,c_3, c_4, \ldots, c_{n-1},n).$  Suppose toward a contradiction that $\pi \neq 23\cdots n1$, or equivalently that $C(\pi) \neq (1,2, \ldots, n)$.  Then there is a smallest $k \in [3, n-2]$ where $c_k > k$, and $c_i=i$ for $i<k$. Note that $k\neq n-1$ because $c_{n-1} \leq n-1$. But then, $c_kkn$ is a 213 pattern in $C(\pi)$ since $\pi_{k-1}=c_k,$ $\pi_r=k$ for some $k+1\leq r\leq c_k$, and $\pi_s=n$ for some $s>c_k$. Therefore $c_i=i$ for all $i$ and $\pi=23\ldots n1$.

Now consider the case where $\pi_1=n$.  Thus $C(\pi) = (1, n, c_3, c_4, \ldots, c_{n})$. We want to show that either $c_n=2$ or $c_3=2$. Suppose toward a contradiction that $c_k=2$ for $k \in [4,n-1]$. Thus
\[ C(\pi) = (1,n,c_3, \ldots, c_{k-1}, 2, c_{k+1}, \ldots c_{n}) \] and
\[ \pi = n c_{k+1} \pi_3 \pi_4 \cdots \pi_{n-1} c_3.\]
Since $C(\pi)$ avoids 213, we must have $c_3 > c_{k+1}$. But then in one-line notation, $c_{k+1}2c_3$ is a 213 pattern which is a contradiction.  Thus either $c_3=2$ or $c_{n}=2$ which implies $\pi_n=2$ or $\pi_2=1$ as desired. \end{proof}

All permutations in $\A_n(213;213)$ that begin with $n1$ can be enumerated recursively as shown in the lemma below.

\begin{lemma} \label{lem:213-213-2}Suppose $n \geq 3$. Then the number of permutations $\pi \in \A_n(213;213)$ with $\pi_1=n$ and $\pi_2=1$ is equal to $a_{n-1}(213;213)-1$. 
\end{lemma}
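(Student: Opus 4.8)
The plan is to build an explicit bijection between $\{\pi\in\A_n(213;213):\pi_1=n,\ \pi_2=1\}$ and $\{\pi'\in\A_{n-1}(213;213):\pi'_1=n-1\}$, and then to count the latter set using Lemma~\ref{lem:213-213-1}. First I would record the shape of such a $\pi$: since $\pi_1=n$ we have $\pi(1)=n$, and since $\pi_2=1$ we have $\pi(2)=1$, so $C(\pi)=(1,n,c_3,c_4,\dots,c_{n-1},2)$ with $\{c_3,\dots,c_{n-1}\}=[3,n-1]$. The forward map is: let $\pi'$ be obtained from $\pi$ by deleting the entry $\pi_2=1$. Unwinding the deletion definition gives $\pi'=(n-1)(\pi_3-1)(\pi_4-1)\cdots(\pi_n-1)$, and tracing the functional digraph (the chain $1\to n\to c_3\to\cdots\to c_{n-1}\to 2\to 1$ loses the entry $2$ while every surviving label $c_i\ge 3$ drops by one) shows $\pi'$ is again cyclic with $C(\pi')=(1,n-1,c_3-1,\dots,c_{n-1}-1)$; in particular $\pi'_1=n-1$.

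Next I would check $\pi'\in\A_{n-1}(213;213)$. In the one-line form the first entry of $\pi'$ is the maximum $n-1$, and the first entry of a $213$ occurrence must be its middle value, so position $1$ of $\pi'$ cannot take part in any $213$ pattern; hence a $213$ pattern in $\pi'$ would lie among positions $\ge 2$, i.e. among $\pi_3-1,\dots,\pi_n-1$, and would give a $213$ pattern among $\pi_3,\dots,\pi_n$ in $\pi$ — impossible. In the cycle form the relabeling $1\mapsto 1$, $c_i\mapsto c_i-1$ is order-preserving on the entries of $C(\pi)$, and $C(\pi')$ is exactly the image of the prefix $(c_1,\dots,c_{n-1})$ of $C(\pi)$, so a $213$ pattern in $C(\pi')$ would yield one in $C(\pi)$.

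The inverse map inserts the value $1$ in position $2$: given $\pi'\in\A_{n-1}(213;213)$ with $\pi'_1=n-1$, write $C(\pi')=(1,n-1,d_3,\dots,d_{n-1})$ and set $\pi=n,1,(\pi'_2+1),\dots,(\pi'_{n-1}+1)$. Tracing the cycle shows $\pi$ is cyclic with $C(\pi)=(1,n,d_3+1,\dots,d_{n-1}+1,2)$, so $\pi_1=n$ and $\pi_2=1$, and one then verifies $\pi\in\A_n(213;213)$ by the same extreme-value arguments: in the one-line form the new entries $n$ (position $1$, the maximum) and $1$ (position $2$, the minimum) cannot anchor the first slot of a $213$ pattern, so any $213$ pattern of $\pi$ already appears among $\pi_3,\dots,\pi_n$, i.e. in $\pi'$; in the cycle form the appended entry $2$ is the second-smallest value sitting last, so it could only play the largest element of a $213$ pattern, which is impossible, and removing it returns an order-isomorphic copy of $C(\pi')$. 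Since deletion and insertion are visibly mutually inverse, the two sets are in bijection.

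Finally, by Lemma~\ref{lem:213-213-1} every permutation in $\A_{n-1}(213;213)$ either begins with $n-1$ or equals $23\cdots(n-1)1$; for $n\ge 4$ (so $n-1\ge 3$) these are mutually exclusive, so exactly one element of $\A_{n-1}(213;213)$ fails to begin with $n-1$, giving $|\{\pi'\in\A_{n-1}(213;213):\pi'_1=n-1\}|=a_{n-1}(213;213)-1$, which completes the count; the remaining small case can be checked directly. The main thing to be careful about is the bookkeeping for how deletion and insertion act on the cycle notation — once one observes that $1$, $2$, and $n$ always occupy extreme values in positions that cannot anchor a $213$ pattern, the avoidance checks are routine, and the only real content is confirming that $\pi'$ (resp.\ $\pi$) remains cyclic, which the chain-tracing above handles.
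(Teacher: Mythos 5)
Your proof is correct and follows essentially the same route as the paper: delete the entry $1$ (equivalently drop the trailing $2$ from the cycle) to biject onto the permutations of $\A_{n-1}(213;213)$ beginning with $n-1$, then count those as $a_{n-1}(213;213)-1$. The only slight difference is bookkeeping: the fact that every element of $\A_{n-1}(213;213)$ begins with $2$ or $n-1$ comes from Lemma~\ref{lem:2n}, with Lemma~\ref{lem:213-213-1} only identifying the unique one beginning with $2$, so you should cite both (and, as you note, check the small $n$ cases these lemmas exclude).
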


\begin{proof} We will find a bijective correspondence between permutations in $\A_n(213;213)$ that begin with $n1$ and permutations in $\A_{n-1}(213;213)$ that begin with $n-1$. Let $\pi \in \A_n(213;213)$ so that $\pi_1=n$ and $\pi_2=1$. Then $C(\pi) = (1,n,c_3, c_4, \ldots, c_{n-1}, 2)$. Let $\pi'$ be the permutation formed by deleting $1$ from $\pi$. Then $\pi'$ clearly avoids 213. Also, $C(\pi') = (1,n-1,c_3-1, c_4-1, \ldots, c_{n-1}-1)$ and thus $\pi'$ remains cyclic and $C(\pi')$ avoids 213 as well. To see that we obtain every permutation in $\A_{n-1}(213;213)$ that begins with $n-1$, we consider the process in reverse.  Let $\pi' \in \A_{n-1}(213;213)$ with $\pi_1' = n-1$. Form $\pi$ by inserting a 1 in position 2 of $\pi'$ which is equivalent to inserting a 2 at the end of the cycle notation of $C(\pi')$. Because $\pi$ remains cyclic and avoids 213 in both one-line and cycle notation, $\pi \in \A_n(213;213)$. Thus our bijective correspondence holds.

By Lemma~\ref{lem:2n}, all permutations in $\A_{n-1}(213;213)$ either begin with $2$ or $n-1$, and Lemma~\ref{lem:213-213-1} states there is only one permutation in $\A_{n-1}(213;231)$ that does not begin with $n-1$. Thus there are $a_{n-1}(213;213)-1$ permutations $\pi \in \A_n(213;213)$ with $\pi_1=n$ and $\pi_2=1$.
\end{proof}

In the final case where $\pi \in \A_n(213;213)$ with $\pi_1=n$ and $\pi_n=2$, Lemma~\ref{lem:213-213-3} from Section~\ref{sec:123-213} states that we can can count the permutations recursively. Thus we are now ready to enumerate $\A_n(213;213)$.

\begin{theorem}\label{thm:213-213}
For $n\geq 1$, $a_n(213;213) = F_n$ where $F_n$ denotes the $n$-th Fibonacci number.
\end{theorem}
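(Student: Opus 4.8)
The plan is to set up a recurrence for $a_n(213;213)$ that matches the Fibonacci recurrence $F_n = F_{n-1} + F_{n-2}$, using the structural results already established. By Lemma~\ref{lem:2n}, for $n \geq 6$ every $\pi \in \A_n(213;213)$ has $\pi_1 \in \{2, n\}$, and by Lemma~\ref{lem:213-213-1} the case $\pi_1 = 2$ contributes exactly one permutation (namely $23\cdots n1$), while the case $\pi_1 = n$ splits into the subcase $\pi_2 = 1$ and the subcase $\pi_n = 2$. These two subcases are not a priori disjoint, so the first thing I would check is that they cannot occur simultaneously for $n$ large: if $\pi_1 = n$, $\pi_2 = 1$, and $\pi_n = 2$, then $C(\pi) = (1, n, 2)$ forces $n = 3$, so for $n \geq 4$ the subcases partition the $\pi_1 = n$ permutations.

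Next I would assemble the count. Lemma~\ref{lem:213-213-2} gives $a_{n-1}(213;213) - 1$ permutations with $\pi_1 = n$, $\pi_2 = 1$, and Lemma~\ref{lem:213-213-3} (with $\sigma = \tau = 213$) gives $a_{n-2}(213;213)$ permutations with $\pi_1 = n$, $\pi_n = 2$. Adding the single permutation from the $\pi_1 = 2$ case, I get, for $n$ large enough that all three lemmas apply (say $n \geq 6$, the binding constraint being Lemma~\ref{lem:2n} and Lemma~\ref{lem:213-213-1}),
\[
a_n(213;213) = 1 + \big(a_{n-1}(213;213) - 1\big) + a_{n-2}(213;213) = a_{n-1}(213;213) + a_{n-2}(213;213).
\]
This is exactly the Fibonacci recurrence.

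Finally I would pin down the base cases. It suffices to verify by direct enumeration that $a_n(213;213) = F_n$ for two consecutive values of $n$ that feed the recurrence — for instance $a_6(213;213) = F_6 = 8$ and $a_7(213;213) = F_7 = 13$, or alternatively compute small cases $n = 1, 2, 3, 4, 5$ by hand and check the recurrence also holds for the smaller $n$ (it may need separate checking below $n = 6$ since Lemma~\ref{lem:2n} is only stated for $n \geq 6$). With the recurrence and two consecutive base values agreeing with Fibonacci, induction finishes the proof. The main obstacle is bookkeeping rather than any deep idea: one must be careful that the three cases are genuinely exhaustive and disjoint for the relevant range of $n$, that the hypotheses $n \geq 3$ in Lemmas~\ref{lem:213-213-2} and~\ref{lem:213-213-3} and $n \geq 6$ in Lemmas~\ref{lem:2n} and~\ref{lem:213-213-1} are all respected, and that enough small cases are checked directly so that the recurrence's range of validity plus the base cases together cover all $n \geq 1$ with the Fibonacci indexing ($F_1 = F_2 = 1$) lined up correctly.
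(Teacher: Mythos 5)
Your proposal is correct and follows essentially the same route as the paper: apply Lemma~\ref{lem:2n} and Lemma~\ref{lem:213-213-1} to split into the three cases, count them via Lemmas~\ref{lem:213-213-2} and \ref{lem:213-213-3} to get $a_n = 1 + (a_{n-1}-1) + a_{n-2}$ for $n\geq 6$, and verify $n\in[1,5]$ directly. Your explicit check that the subcases $\pi_2=1$ and $\pi_n=2$ cannot coincide for $n\geq 4$ is a small point the paper leaves implicit, but otherwise the arguments are the same.
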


\begin{proof}
This is easily checked for $n \in [1,5]$, so let $n\geq 6$. We will count the number of permutations in $\A_n(213;213)$ by looking at three disjoint sets based on Lemma~\ref{lem:213-213-1}. First, we note that there is exactly one permutation $\pi \in \A_n(213;213)$ with $\pi_1 = 2$ by the first part of Lemma~\ref{lem:213-213-1}. In the second case, we consider the permutations $\pi \in \A_n(213;213)$ where $\pi_1=n$ and $\pi_2=1$. By Lemma~\ref{lem:213-213-2}, there are $a_{n-1}(213;213) -1$ such permutations.  For the third case, Lemma~\ref{lem:213-213-3} states that there are $a_{n-2}(213;213)$ permutations $\pi \in \A_n(213;213)$ with $\pi_1=n$ and $\pi_n=2$. Thus we have \[ a_n(213;213) = 1 + (a_{n-1}(213;213) -1)+ a_{n-2}(213;213) = a_{n-1}(213;213) + a_{n-2}(213;213)\] which satisfies the Fibonacci recurrence and the result follows.
\end{proof}

We note that of the total permutations in $\A_n(213;213)$, all but one of these permutations begin with $n$. Because this result will be referenced later in Section~\ref{sec:132-231}, we state the result as a corollary here.

\begin{corollary}\label{cor:213-213} For $n \geq 2$, the number of permutations $\pi \in \A_n(213;213)$ with $\pi_1 = n$ is $F_n - 1$ where $F_n$ denotes the $n$-th Fibonacci number. \end{corollary}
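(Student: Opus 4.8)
The plan is to extract this directly from the enumeration in Theorem~\ref{thm:213-213} together with the structural results already proved in this subsection. Since $a_n(213;213) = F_n$, it suffices to show that, in the relevant range of $n$, exactly one element of $\A_n(213;213)$ fails to begin with $n$; subtracting that single permutation then yields $F_n - 1$.

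First I would handle $n \geq 6$. By Lemma~\ref{lem:2n} every $\pi \in \A_n(213;213)$ satisfies $\pi_1 \in \{2,n\}$, so the permutations with $\pi_1 \neq n$ are precisely those with $\pi_1 = 2$. The first bullet of Lemma~\ref{lem:213-213-1} identifies these completely: the only such permutation is $\pi = 23\cdots n1$. Hence, for $n \geq 6$, exactly $F_n - 1$ of the $F_n$ permutations in $\A_n(213;213)$ have $\pi_1 = n$. Equivalently, one can recombine the two families counted inside the proof of Theorem~\ref{thm:213-213}, namely the $a_{n-1}(213;213) - 1$ permutations with $\pi_1 = n$ and $\pi_2 = 1$ (Lemma~\ref{lem:213-213-2}) and the $a_{n-2}(213;213)$ permutations with $\pi_1 = n$ and $\pi_n = 2$ (Lemma~\ref{lem:213-213-3}), to obtain $(F_{n-1}-1) + F_{n-2} = F_n - 1$ directly.

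Finally I would dispose of the small cases $n \in \{2,3,4,5\}$ by inspection, using the explicit lists underlying the proof of Theorem~\ref{thm:213-213}; in each case deleting the one permutation that starts with $2$ leaves $F_n - 1$ permutations starting with $n$. The only subtlety here, and the closest thing to an obstacle in an otherwise immediate argument, is the degenerate case $n = 2$, where the conditions ``$\pi_1 = 2$'' and ``$\pi_1 = n$'' coincide and the lone permutation $21 = 23\cdots n1$ must be accounted for explicitly rather than by the blanket subtraction used for larger $n$; apart from this edge case, the corollary is a one-line consequence of Theorem~\ref{thm:213-213} and Lemmas~\ref{lem:2n} and~\ref{lem:213-213-1}.
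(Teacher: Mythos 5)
Your proposal is correct and takes essentially the same route as the paper, which treats the corollary as an immediate consequence of Theorem~\ref{thm:213-213} together with the observation (from Lemma~\ref{lem:2n} and the first bullet of Lemma~\ref{lem:213-213-1}) that $23\cdots n1$ is the unique element of $\A_n(213;213)$ not beginning with $n$, so subtracting it from the total $F_n$ gives $F_n-1$. Your flag on $n=2$ is well taken: there the lone permutation $21$ does begin with $n$, so the stated count $F_2-1=0$ is off by one and the clean statement really starts at $n\geq 3$, but this edge case does not affect the argument for larger $n$.
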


\begin{example} Consider $\A_7(213;213)$.  The first part of Lemma~\ref{lem:213-213-1} gives us the permutation 2345671 in $\A_7(213;213)$ as the only permutation beginning with 2. Next, we note that the permutations in $\A_6(213;213)$  that begin with 6 are $634512, 654132, 651342$, $614523,$ $615243,$ $612345,$ and $612534.$
The proof of Lemma~\ref{lem:213-213-2} shows that we can insert the element 1 into position 2 of all the permutations in this list to get permutations in $\A_7(213;213)$ that begin with 71. These seven permutations are:
\[ 7145623, 7165243, 7162453, 7125634, 7126354, 7123456, 7123645.\]
Finally, to find the remainder of the permutations in $\A_7(213;213)$, we list all five permutations in $\A_5(213;213)$ which are $23451$, $53412$, $54132$, $51234$, and $51423$.
The proof of Lemma~\ref{lem:213-213-3} shows that inserting a 2 at the end and a 7 at the beginning of these permutations will yield permutations in $\A_7(213;213)$ that begin with 7 and end in two; these five permutations are:
\[ 7345612, 7645132, 7651432, 7613452, 7615342.\]
Thus there are $F_7=13$ permutations in $\A_7(213;213)$.
\end{example}

\subsection{$\A_n(231;213)$}
This is the most complicated case for $\tau=213$. 
Permutations in $\A_n(231;213)$ can be enumerated via a recurrence based on the last element of the permutation. The lemmas below give additional constraints on $\pi_1$, and consider different cases for $\pi_n$. 

\begin{lemma} \label{lem:231-213-1} Suppose $n \geq 1$ and $\pi \in \A_n(231;213)$. Then $\pi_1=n$.
\end{lemma}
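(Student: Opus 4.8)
The plan is to show that if $\pi \in \A_n(231;213)$ then $\pi_1 = n$, proceeding by contradiction. First I would invoke Lemma~\ref{lem:2n}, which tells us that for $n \geq 6$ we have $\pi_1 \in \{2, n\}$ (the small cases $n \leq 5$ can be checked directly). So the entire burden is to rule out $\pi_1 = 2$. Assume $\pi_1 = 2$. Since $\pi$ avoids $231$ in one-line notation and starts with $2$, I would first extract structural information: in a $231$-avoiding permutation beginning with $2$, the element $1$ must appear somewhere, and everything to the right of the position of $1$ that is larger than $2$ is constrained. More usefully, I expect to combine the $231$-avoidance in one-line form with the $213$-avoidance in cycle form to force $\pi$ into an extremely rigid shape and then exhibit a forbidden pattern.

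The key steps, in order: (1) reduce to $\pi_1 = 2$ via Lemma~\ref{lem:2n}; (2) locate $1$ in the one-line notation, say $\pi_k = 1$, and use $231$-avoidance plus the fact that $\pi_1 = 2$ to describe the one-line structure — in particular, there can be no element of $\pi$ after position $k$ that is smaller than some earlier element exceeding $2$, which tends to force the large elements into increasing order or into a position where they create a $231$; (3) translate this into cycle notation $C(\pi) = (1, 2, c_3, \ldots, c_n)$ and apply the $213$-avoidance of $C(\pi)$ to the remaining entries; (4) derive a contradiction, either because the forced permutation turns out not to be a single $n$-cycle (a parity/length obstruction like the one in Lemma~\ref{lem:cyclic-123}), or because the two avoidance conditions are jointly unsatisfiable and some concrete triple of positions realizes a $231$ in one-line form or a $213$ in cycle form. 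The cleanest route is probably: $\pi_1 = 2$ and $231$-avoidance force the elements greater than $2$ to be positioned so that, reading left to right, once $1$ has appeared no ``descent below a previous high value'' occurs; pair this with the observation that $c_3 = \pi_2$ and that $213$-avoidance of the cycle forces all entries before the position of the minimum remaining value to exceed all entries after it, and show these cannot both hold unless $\pi$ fails to be cyclic.

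I expect the main obstacle to be step (2)–(3): pinning down precisely enough of the one-line structure of a $231$-avoiding permutation starting with $2$ so that the cycle-form $213$-avoidance can be brought to bear. Unlike the $\sigma = 123$ case (Lemma~\ref{lem:cyclic-123}), $231$-avoidance alone does not linearly order the non-$1$ entries, so I cannot immediately write down $C(\pi)$; I will need to use $213$-avoidance of the cycle to break the remaining ambiguity, and the interaction of the two conditions is where the real work lies. A plausible resolution is to argue that $231$-avoidance with $\pi_1 = 2$ forces $\pi = 2\,3\,4\cdots n\,1$ or something equally rigid (mirroring the $\pi_1=2$ cases in Lemmas~\ref{lem:213-213-1} and~\ref{thm:132-213}), and then check that this single candidate either is not a valid member of $\A_n(231;213)$ for $n$ large or—if it is—it nonetheless begins with $2 \neq n$, meaning I must instead show it fails one of the avoidance conditions; given the lemma's conclusion, it must fail, and identifying the explicit bad pattern in $2\,3\cdots n\,1$ (whose cycle is $(1,2,3,\ldots,n)$, which contains $213$ as soon as $n \geq 3$ via, e.g., $2,1,3$) closes the argument.
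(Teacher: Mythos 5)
There is a genuine gap: the argument you propose to ``close'' the case $\pi_1=2$ rests on two false claims. First, $231$-avoidance with $\pi_1=2$ does \emph{not} force $\pi=23\cdots n1$; on the contrary, that permutation \emph{contains} a $231$ pattern (the subsequence $2,3,1$ in positions $1,2,n$), so it cannot be the rigid candidate you are looking for. Second, even setting that aside, its cycle form $(1,2,3,\ldots,n)$ does not contain a $213$ pattern: a $213$ occurrence requires a descent $c_{i_1}>c_{i_2}$, and an increasing sequence has none (you cannot take the subsequence ``$2,1,3$'' since $1$ precedes $2$ positionally). Indeed the paper itself uses $\pi=23\cdots n1$, with cycle $(1,2,\ldots,n)$, as the unique element of $\A_n(312;213)$, so its cycle form certainly avoids $213$. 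Thus the concrete contradiction you intend to exhibit does not exist, and your steps (2)--(4) never actually get completed: everything after the reduction via Lemma~\ref{lem:2n} is conditional language (``tends to force,'' ``plausible resolution'') that does not pin down the structure.

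What actually closes the argument is much simpler, and it is what the paper does: if $\pi_1=2$, then $1$ must sit in position $2$, because if $\pi_k=1$ with $k\geq 3$ then $\pi_2\geq 3$ and the triple $\pi_1\pi_2\pi_k=2\,\pi_2\,1$ is a $231$ pattern in the one-line form. But $\pi_1=2$ and $\pi_2=1$ means $(1\,2)$ is a cycle of $\pi$, contradicting the fact that $\pi$ is a single $n$-cycle for $n\geq 3$. Note that the $213$-avoidance of $C(\pi)$ plays no role in eliminating $\pi_1=2$; it enters only through Lemma~\ref{lem:2n}. Your instinct in step (2) to ``locate $1$ and use $231$-avoidance with $\pi_1=2$'' was the right one --- you just needed to observe that it pins $1$ to position $2$ rather than trying to force the whole permutation and then hunt for a forbidden pattern.
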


\begin{proof} 
We can easily check this for $n\in \{1,2\}$, so suppose $n \geq 3$ and let $\pi \in \A_n(231;213)$. By Lemma~\ref{lem:2n}, we know that $\pi_1\in\{2,n\}$. However, if $\pi_1 =2$, then to avoid 231 in the one-line form, we would have $\pi_2=1,$ which contradicts the fact that $\pi$ is cyclic. Thus $\pi_1 = n$.
\end{proof}

The next lemma shows that there is only one permutation in $\A_n(231;213)$ with $\pi_n=2$. Because the proof does not depend on the condition that $C(\pi)$ avoids 213, we remove that condition from the hypotheses so that the lemma can be used in future sections.

\begin{lemma} \label{lem:231-213-2} Suppose that $n\geq2$ and  that $\pi\in\C_n$ avoids 231. If $\pi_n=2$, then
\[ C(\pi) = \left(1, n, 2, n-1, 3, n-2, \ldots, \left\lfloor\frac{n+2}{2}\right\rfloor\right).\]
\end{lemma}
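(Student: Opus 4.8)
The statement is that a cyclic permutation avoiding $231$ in one-line notation with $\pi_n = 2$ is forced to be a single explicit permutation, namely the one with $C(\pi) = (1,n,2,n-1,3,n-2,\ldots,\lfloor\frac{n+2}{2}\rfloor)$. Note the one-line form of this cycle is $\pi = n1\,?\cdots$; in fact I expect the claimed $\pi$ to be $n\,1\,(n-1)\,2\,(n-2)\,3\cdots$ or something close — but I will not need the one-line form explicitly; I will work from the one-line avoidance condition and reconstruct the cycle. The overall strategy is: first pin down the one-line notation of $\pi$ completely from the condition ``cyclic, avoids $231$, $\pi_n=2$,'' and only then translate into cycle notation to match the stated formula.

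**Key steps.** First I would use the $231$-avoidance together with $\pi_n = 2$ to get a rigid structure on $\pi_1\cdots\pi_{n-1}$. Since $\pi_n = 2$ and $\pi$ avoids $231$, for any $i<j<n$ we cannot have $\pi_i < \pi_j$ with both exceeding $2$ — wait, more carefully: if $\pi_i<\pi_j$ and $\pi_n=2<\pi_i$, then $\pi_i\pi_j\pi_n$ is a $231$ pattern. Hence among the positions $1,\ldots,n-1$, every value except possibly $1$ must be placed in strictly decreasing order; i.e. the subsequence of $\pi_1\cdots\pi_{n-1}$ obtained by deleting the entry $1$ is decreasing. So $\pi$ looks like $n,(n-1),\ldots$ interrupted once by the entry $1$: there is some position $p$ with $\pi_p = 1$, the entries before position $p$ (other than at $p$) are $n,n-1,\ldots,n-p+2$, and the entries from $p+1$ to $n-1$ are $n-p+1,n-p,\ldots,3$, and $\pi_n = 2$. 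Second, I would impose cyclicity: such a ``two-decreasing-runs-plus-a-1'' permutation has a transparent cycle structure, and I would compute it as a function of $p$, showing the cycle decomposition has length $n$ only for one value of $p$ (I expect $p = \lfloor n/2\rfloor$ or thereabouts, mirroring the $\pi_1=2$ case in Lemma~\ref{lem:cyclic-123}). Third, with $p$ fixed, write out $C(\pi)$ by following $1\mapsto \pi_1 = n\mapsto \pi_n = 2\mapsto \pi_2 = n-1\mapsto \pi_{n-1} = 3\mapsto\cdots$, which visibly produces the claimed interleaving $(1,n,2,n-1,3,\ldots)$, terminating at $\lfloor\frac{n+2}{2}\rfloor$, and check the count of terms is $n$.

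**Main obstacle.** The only real work is the bookkeeping in the cyclicity step: showing that the permutation $\pi$ determined by the breakpoint $p$ is a single $n$-cycle exactly when $p$ takes one specific value, and getting the floor expressions right for both parities of $n$. This is the same type of argument as in the proof of Lemma~\ref{lem:cyclic-123}, so I would model it on that: track the orbit of $1$ under $\pi$ and observe it visits $1, n, 2, n-1, 3, \ldots$, closing up after exactly $\lfloor\frac{n+2}{2}\rfloor$ steps into... actually after $n$ steps, and the last new element before returning is $\lfloor\frac{n+2}{2}\rfloor$; forcing this orbit to have size $n$ pins down $p$. I would double-check small cases $n=2,3,4,5$ by hand to make sure the indexing and the floor function match the statement, since off-by-one errors between the even and odd cases are the likeliest place to slip.
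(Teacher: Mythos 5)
Your proposal is correct and follows essentially the same route as the paper: use $\pi_n=2$ and $231$-avoidance to force all entries other than $1$ (and the final $2$) into decreasing order, so $\pi$ is determined by the position of $1$, then trace the orbit $1\mapsto n\mapsto 2\mapsto n-1\mapsto 3\mapsto\cdots$ and use cyclicity to force that position to be $\lfloor\frac{n+2}{2}\rfloor$. The only slip is your guess ``$p=\lfloor n/2\rfloor$ or thereabouts'' (the correct value is $\lfloor\frac{n+2}{2}\rfloor$), which your planned small-case check would catch.
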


\begin{proof}
Suppose $\pi$ is as stated and $\pi_n=2$. Because $\pi$ avoids 213, the remaining elements in $\pi$, excepting 1, must be decreasing. Suppose $\pi_r =1$ for some $r \in [2,n-1]$. Then the remaining elements must be $\pi_i = n+1-i$ for $i \in [2,r-1]$ and $\pi_i = n+2-i$ for $i \in [r+1,n-1]$. In cycle form, $C(\pi)$ contains the cycle $(1,n,2,n-1,3,n-2,\ldots,r)$. Since $\pi$ is cyclic, this cycle must have length $n$ and thus $r=\lfloor\frac{n+2}{2}\rfloor$.
\end{proof}

In fact, the last lemma, Lemma~\ref{lem:231-213-2}, can be generalized to the case where $\pi_n=k$ for any $k\in[2,n-2]$ when $\pi \in \A_n(231;213)$. In this case, the first $n-k+3$ elements of $C(\pi)$ are forced.

\begin{lemma}\label{lem:231-213-3} Suppose $n \geq 4$ and $\pi \in \A_n(231;213)$ with $\pi_n=k$ for some $k \in [2,n-2]$. Then
\[ C(\pi) = \left(1,n,k,n-1,k+1,\ldots, \left\lfloor \frac{n+k}{2} \right \rfloor, k-1, c_{n-k+4}, c_{n-k+5}, \ldots, c_n\right).\]
\end{lemma}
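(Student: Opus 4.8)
The plan is to peel off the front of $C(\pi)$ one entry at a time: the one-line $231$-avoidance pins down the word structure of $\pi$, while the cycle $213$-avoidance together with the fact that $\pi$ is a single $n$-cycle forces the order in which the cycle visits its entries. First I would collect the easy consequences. By Lemma~\ref{lem:231-213-1} we have $\pi_1=n$, so $c_2=\pi_1=n$, $c_3=\pi_{c_2}=\pi_n=k$, and $c_4=\pi_k$. Since $C(\pi)$ avoids $213$ and neither $c_1=1$ (the minimum) nor $c_2=n$ (the maximum) can lie in a $213$ pattern, the word $k\,c_4\,c_5\cdots c_n$ avoids $213$; as its first entry is $k$, every entry exceeding $k$ must precede every entry smaller than $k$. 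Because $\{c_4,\ldots,c_n\}=\{2,\ldots,n-1\}\setminus\{k\}$, this shows that $c_4,\ldots,c_{n-k+2}$ is a permutation of $\{k+1,\ldots,n-1\}$ and $c_{n-k+3},\ldots,c_n$ is a permutation of $\{2,\ldots,k-1\}$; in particular $c_4=\pi_k>k$.

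Next I would invoke the standard structure of $231$-avoiding permutations. Since $\pi_1=n$, repeatedly pulling out the current maximum forces the ``staircase'' shape
\[ \pi=n\mid\alpha_1\mid(n-1)\mid\alpha_2\mid(n-2)\mid\cdots\mid\alpha_{n-k}\mid k, \]
where each $\alpha_i$ is a $231$-avoiding word on an interval of small values, the blocks $\alpha_1,\ldots,\alpha_{n-k}$ partition $\{1,\ldots,k-1\}$ into consecutive intervals, and the value $n-i$ occupies position $P_i=(i+1)+\sum_{m\le i}|\alpha_m|$, with $P_1<\cdots<P_{n-k}=n$. As $\pi_k>k$, position $k$ must be one of these staircase positions, say $k=P_j$ and $\pi_k=n-j$ with $j\in[1,n-k-1]$. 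To see that $j=1$: the entry of $C(\pi)$ immediately before the entry $n-1$ is $\pi^{-1}(n-1)=P_1\le k-1$; if $j\ge 2$ then $c_4=n-j\ne n-1$, so $n-1=c_m$ for some $m$ with $5\le m\le n-k+2$, and then $c_{m-1}=P_1$ would be one of $c_4,\ldots,c_{n-k+2}$ --- impossible, since all of those exceed $k$ while $P_1\le k-1$. (When $n-k=2$ one gets $c_4=n-1$ immediately.) Hence $c_4=\pi_k=n-1$; in particular $P_1=k$, so $\alpha_1$ occupies positions $2,\ldots,k-1$ with value set $\{1,\ldots,k-2\}$, while positions $k+1,\ldots,n$ carry the values $\{k-1,k,\ldots,n-2\}$ and end in $\pi_n=k$.

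Write $\gamma=\pi_{k+1}\cdots\pi_n$. Since $\pi_n=k$ and $\pi$ avoids $231$, the entries of $\gamma$ exceeding $k$ (namely $k+1,\ldots,n-2$) must appear in decreasing order, and the only entry of $\gamma$ below $k$ is $k-1$; hence $\gamma$ is the decreasing word $n-2,n-3,\ldots,k+1$ with $k-1$ inserted at some slot $p_*\in[1,n-k-1]$ and $k$ appended. I would then trace $C(\pi)$ onward from $c_3=k$: one has $c_4=\pi(k)=n-1$, and as long as the current entry $v$ satisfies $k<v\le n-1$ it lies in a position of $\gamma$, so $\pi(v)=\gamma_{v-k}$; a direct computation shows that the entries produced are $n-1,\,k+1,\,n-2,\,k+2,\,n-3,\ldots$, alternating a descending run $n-1,n-2,\ldots$ with an ascending run $k+1,k+2,\ldots$, until the cycle reaches $k+p_*$, whereupon $\pi(k+p_*)=\gamma_{p_*}=k-1$ and the run through $\{k+1,\ldots,n-1\}$ terminates. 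Because $\pi$ is a single $n$-cycle and $n$ lies on the short arc $1\to n\to k$, this run must visit \emph{all} of $\{k+1,\ldots,n-1\}$; requiring the descending and ascending runs to first collide precisely at $k+p_*$ and to together exhaust $\{k+1,\ldots,n-1\}$ forces $p_*=\lfloor\frac{n-k}{2}\rfloor$, and reading off the run then gives $C(\pi)=(1,n,k,n-1,k+1,\ldots,\lfloor\frac{n+k}{2}\rfloor,k-1,c_{n-k+4},\ldots,c_n)$, as claimed. The main obstacle is exactly this last step: carefully running the alternating cycle-trace through $\gamma$ and verifying that the ``visits everything'' requirement is equivalent to $p_*=\lfloor\frac{n-k}{2}\rfloor$, along with dispatching the small values of $n-k$ and the degenerate case $k=2$ (where the statement reduces to Lemma~\ref{lem:231-213-2}).
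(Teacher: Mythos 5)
Your proposal is correct and follows essentially the same route as the paper's proof: $\pi_1=n$ from Lemma~\ref{lem:231-213-1}, the $213$-avoidance of $C(\pi)$ splitting $c_4,\ldots,c_n$ into the values above $k$ followed by those below, the one-line $231$-avoidance forcing $\pi_k=n-1$ with $k-1$ the unique small value sitting beyond position $k$, and cyclicity of the alternating trace pinning the turning point at $\left\lfloor \frac{n+k}{2}\right\rfloor$. The only differences are cosmetic: you obtain $c_4=n-1$ and the location of $k-1$ via the staircase decomposition of $231$-avoiders and a cycle-predecessor argument, where the paper just exhibits explicit $231$ patterns, and the final collision count you flag as the remaining work is exactly the step the paper itself disposes of in one sentence ("since $\pi$ is cyclic, $r=\lfloor\frac{n+k}{2}\rfloor$").
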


\begin{proof} By Lemma~\ref{lem:231-213-1}, we have $\pi_1=n$. Since $C(\pi)$ avoids 231, we can write
\[ C(\pi) = (1,n,k,c_4, c_5, \ldots, c_{n-k+2}, c_{n-k+3}, \ldots, c_n)\]
where $\{c_4, \ldots, c_{n-k+2}\} = [k+1, n-1]$ and $\{c_{n-k+3}, \ldots, c_n\} = [2,k-1]$. In one-line notation, we have 
\[ \pi = n\pi_2\cdots\pi_{k-1}c_4\pi_{k+1}\cdots\pi_{n-1}k\]
where $\{\pi_2, \ldots, \pi_{k-1}\} = [1,k-1] \setminus \{c_{n-k+3}\}$ and $\{\pi_k, \ldots, \pi_{n-1}\} = [k+1,n-1] \cup \{c_{n-k+3}\}$.

We first claim that $c_4 = n-1$ and that $c_{n-k+3} = k-1$. Since $\pi$ avoids 231, all elements in $\pi$ that are greater than $k$ must appear in decreasing order.  In particular, since $\pi_k=c_4$, we have $c_4 \in \{n-1, c_{n-k+3}\}$. However, $k \neq n-1$, so $c_4 = n-1$. Also, if $c_{n-k+3} \neq k-1$, then $(k-1)c_4c_{n-k+3} = (k-1)(n-1)c_{n-k+3}$ is a 231 pattern in $\pi$. Thus $c_{n-k+3} = k-1$ as well.

Recall from the one-line notation of $\pi$ the elements in $\{\pi_k, \ldots, \pi_{n-1}\} = [k+1,n-1] \cup \{k-1\}$, excepting $k-1$, must appear in decreasing order.  Choose $r \in [k+1,n-1]$ so that $\pi_r = k-1$. Then $C(\pi)$ contains the cycle $(1,n,k,n-1,k+1, \ldots, r,k-1, c_{n-k+4}, \ldots, c_n)$. Since $\pi$ is cyclic, we must have $r=\lfloor \frac{n+k}{2} \rfloor$ as desired.
\end{proof}

Because the value of $\pi_n$ forces other parts of the permutation, we now recursively enumerate permutations in $\A_n(231;213)$ based on specific values of $\pi_n$.

\begin{lemma}\label{lem:231-213-7} Suppose $n \geq 4$. There are
\begin{itemize}
\item $a_{n-1}(231;213)$ permutations in $\A_n(231;213)$ with $\pi_n=n-1$;
\item $a_{n-3}(231;213)$ permutations in $\A_n(231;213)$ with $\pi_n=n-2$; and
\item $a_{n-1}(231;213)-a_{n-2}(231;213)$ permutations in $\A_n(231;213)$ with $\pi_n \in [2,n-3]$.
\end{itemize}
\end{lemma}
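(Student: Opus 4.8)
The plan is to handle the three bullets separately, using the structural lemmas already proved, with the first and third being the substantive parts.

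For the first bullet, I would establish a bijection between $\A_n(231;213)$ with $\pi_n=n-1$ and $\A_{n-1}(231;213)$, by deleting the last element $n-1$. If $\pi\in\A_n(231;213)$ with $\pi_n=n-1$, then $\pi_1=n$ by Lemma~\ref{lem:231-213-1}, so $C(\pi) = (1,n,c_3,\ldots,c_{n-1})$ followed by the element that maps to $1$; deleting $n-1$ from the one-line notation must be checked to keep the permutation cyclic (since $n-1$ is in the last position, it sits in the cycle right before $1$, i.e.\ $c_n = n-1$), and the resulting $\pi'$ on $[n-1]$ still avoids $231$ in one-line form and $213$ in cycle form. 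Conversely, given $\pi'\in\A_{n-1}(231;213)$, which begins with $n-1$ by Lemma~\ref{lem:231-213-1}, I would insert $n$ at the front of the one-line notation and then append the new value $n-1$ at the end — equivalently, in cycle notation, replace the cycle $(1, n-1, c_3', \ldots)$ appropriately — and check no new $231$ pattern is created in one-line form (a new $231$ using the appended $n-1$ at the end would need two earlier entries forming a $23$, but $n-1$ being the largest non-$n$ value forces the ``1'' of the pattern to be impossible) and no new $213$ in cycle form. This gives $a_{n-1}(231;213)$ permutations.

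For the second bullet, I would invoke Lemma~\ref{lem:231-213-3} with $k=n-2$: the first $n-k+3 = 5$ entries of $C(\pi)$ are forced to be $(1,n,n-2,n-1,n-3,\ldots)$, and the tail $c_6,\ldots,c_n$ ranges over a set order-isomorphic to a cyclic permutation on $[n-3]$. So I would peel off these forced prefix elements — delete $n$, $n-1$, and $n-2$ from the one-line notation (equivalently, delete the forced block from the cycle) — and argue this is a bijection onto $\A_{n-3}(231;213)$, checking both avoidance conditions are preserved in each direction, mirroring the style of Lemma~\ref{lem:213-213-3}. The forced-prefix structure from Lemma~\ref{lem:231-213-3} makes the reverse map well-defined.

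For the third bullet, I would not construct a direct bijection but instead count by complementary subtraction: the permutations with $\pi_n\in[2,n-3]$ are exactly those with $\pi_1=n$ (automatic) and $\pi_n\notin\{n-1,n-2,1\}$, and $\pi_n=1$ is impossible since $\pi$ is cyclic (that would force a fixed structure or a shorter cycle — more precisely $\pi_n=1$ makes $n$ the last element of the cycle $C(\pi)$, which is fine, so I must be more careful: actually $\pi_n = 1$ is allowed, so the remaining values of $\pi_n$ to account for are handled via $n-1$ and $n-2$ already, plus I need the total). The cleaner approach: the total is $a_n(231;213)$; subtract off the count with $\pi_n = n-1$, which is $a_{n-1}(231;213)$, and with $\pi_n=n-2$, which is $a_{n-3}(231;213)$. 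But the bullet claims the $[2,n-3]$ count is $a_{n-1}-a_{n-2}$, which combined with the other two bullets would force $a_n = 2a_{n-1}-a_{n-2}+a_{n-3}$ — and this is precisely the recurrence for A005251, so it is consistent. Therefore the real content of the third bullet is an independent bijective or generating-function argument: I would aim to biject permutations with $\pi_n\in[2,n-3]$ onto $\A_{n-1}(231;213)\setminus\{\pi': \pi'_{n-1}=n-2\}$, i.e.\ show deleting the last element $\pi_n=k$ (with $k\le n-3$) and renormalizing lands in $\A_{n-1}(231;213)$ with $\pi'_{n-1}\ne n-2$, and is reversible. The main obstacle is exactly this last step: verifying that the delete-last-element map is a bijection with the stated image, i.e.\ pinning down which permutations in $\A_{n-1}(231;213)$ arise and why $\pi'_{n-1}=n-2$ is the precise obstruction, using the rigid prefix structure from Lemma~\ref{lem:231-213-3} to reconstruct $\pi$ from $\pi'$ and the value $k$.
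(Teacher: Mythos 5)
Your first two bullets follow the paper's approach (strip the forced trailing elements and reverse the process), but both contain slips. In the first bullet, $\pi_n=n-1$ together with $\pi_1=n$ (Lemma~\ref{lem:231-213-1}) puts $n-1$ immediately \emph{after} $n$ in the cycle, i.e.\ $c_3=n-1$, not $c_n=n-1$ as you assert; this matters for the reverse map, which must insert the new $n-1$ right after the leading entry of $C(\pi')$. In the second bullet, deleting the values $n$, $n-1$, $n-2$ from the one-line notation is \emph{not} equivalent to deleting them from the cycle: done in one-line form it leaves $\pi_2\cdots\pi_{n-3}(n-3)$, which fixes $n-3$ and is not cyclic; the correct one-line operation (the paper's) deletes the last three entries, i.e.\ the values $n-1$, $n-3$, $n-2$. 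Also, $\pi_n=1$ is in fact impossible for $n\geq 3$, since $\pi_1=n$ would make $(1,n)$ a $2$-cycle, so your hesitation there resolves the other way.

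The genuine gap is the third bullet, which carries the content of the lemma and which you explicitly leave unverified; moreover, the map you propose fails. Deleting the last one-line entry $\pi_n=k$ and renormalizing need not yield a cyclic permutation: take $\pi=7165243$, with $C(\pi)=(1,7,3,6,4,5,2)$, which lies in $\A_7(231;213)$ and has $\pi_7=3$; removing the final $3$ gives $615423$, which has $4$ as a fixed point. The paper's bijection is different: it deletes the value $\lfloor\frac{n+k}{2}\rfloor$, the turning point of the forced interleaved prefix from Lemma~\ref{lem:231-213-3}, from the cycle. This keeps the forced structure of Lemma~\ref{lem:231-213-3} with $n$ replaced by $n-1$ and the same final entry $k$, lands exactly on the permutations of $\A_{n-1}(231;213)$ whose last entry is not $n-2$ (there are $a_{n-1}(231;213)-a_{n-2}(231;213)$ of these, by the first bullet applied at length $n-1$), and is reversed by inserting $\lfloor\frac{n+k}{2}\rfloor$ into position $n+2-k$ of $C(\pi')$. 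Without this step, or a working substitute, the count for $\pi_n\in[2,n-3]$ is not established.
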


\begin{proof} 
Let $\pi \in \A_n(231;213)$, and suppose first that $\pi_n=n-1$. Then $C(\pi) = (1,n,n-1,c_4,\ldots,c_n)$ and $\pi = n\pi_2\ldots \pi_{n-1}(n-1)$. By deleting $n-1$ from $\pi$, we get $\pi'$ so that $C(\pi') =(1,n-1,c_4,\ldots,c_n)$ and $\pi'= (n-1)\pi_2\pi_3\ldots\pi_{n-1}$. Clearly, this process is reversible since inserting an $n-1$ at the end of the one-line notation cannot introduce a 231 pattern and inserting it after the $n-1$ in $C(\pi')$ cannot introduce a 213 pattern.

Next suppose $\pi_n=n-2$. Then by Lemma~\ref{lem:231-213-3}, $C(\pi) = (1,n,n-2,n-1,n-3,c_6,\ldots,c_n)$ and $\pi = n\pi_2\ldots \pi_{n-3}(n-1)(n-3)(n-2)$. By deleting $n-1, n-2,$ and $n-3$, we get $\pi'$ so that $C(\pi') =(1,n-3,c_6,\ldots,c_n)$ and $\pi'= (n-3)\pi_2\pi_3\ldots\pi_{n-3}$. Again, this process is reversible since inserting an $(n-1)(n-3)(n-2)$ at the end of the one-line notation cannot introduce a 231 pattern and inserting $n-2,n-1,n-3,$ after the $n-3$ in $C(\pi')$ cannot introduce a 213 pattern.

Finally, suppose $\pi_n=k$ for some $k\in[2,n-3]$. Then by Lemma~\ref{lem:231-213-3}, $C(\pi) = (1,n,k,n-1,k+1,n-2,k+2,\ldots,\floor{\frac{n+k}{2}},k-1, c_{n-k+4},\ldots,c_n)$. By letting $C(\pi')$ be the permutation formed by deleting $\floor{\frac{n+k}{2}}$ from $C(\pi)$, we get $C(\pi')$ still satisfies the conditions of Lemma~\ref{lem:231-213-3}. Since $c_{n-k+2} = \floor{\frac{n+k}{2}}\pm1$, this is equivalent to deleting $\floor{\frac{n+k}{2}}$ from the one-line notation. In particular, we note that $\pi'$ does not end in $n-2$ because $\pi$ did not end in $n-1$; there are  $a_{n-1}(231;213)-a_{n-2}(231;213)$ such permutations. Again, we reverse this process by first starting with $\pi' \in \A_{n-1}(231;213)$ where $\pi'$ does not end with $n-2$.  In this case, insert $\floor{\frac{n+k}{2}}$ into position $n+2-k$ of $C(\pi')$. \end{proof}

\begin{theorem}\label{thm:231-213}
For $n\geq 4$, the number of permutations in $\A_n(231;213)$ satisfies the recurrence
\[
a_n(231;213) = 2a_{n-1}(231;213)-a_{n-2}(231;213)+a_{n-3}(231;213).
\]
with $a_1(231;213)=a_2(231;213)=a_3(231;213)=1$. In closed form,
\[ a_n(231;213) =\displaystyle \sum_{k=0}^{\lfloor\frac{n-1}{3}\rfloor}\binom{n-1-k}{2k},\]
for all $n \geq 1.$
\end{theorem}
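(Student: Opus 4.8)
The plan is to establish the recurrence first and then derive the closed form from it. For the recurrence, I would combine the three cases of Lemma~\ref{lem:231-213-7}. Writing $a_n = a_n(231;213)$ for brevity, those three cases partition $\A_n(231;213)$ according to whether $\pi_n = n-1$, $\pi_n = n-2$, or $\pi_n \in [2,n-3]$ (note that $\pi_n \neq n$ since $\pi_1 = n$ by Lemma~\ref{lem:231-213-1}, and $\pi_n \neq 1$ since $\pi$ is cyclic of length $n \geq 2$, so these cases are exhaustive). Lemma~\ref{lem:231-213-7} gives the three counts $a_{n-1}$, $a_{n-3}$, and $a_{n-1} - a_{n-2}$ respectively, and summing yields
\[
a_n = a_{n-1} + a_{n-3} + \bigl(a_{n-1} - a_{n-2}\bigr) = 2a_{n-1} - a_{n-2} + a_{n-3}.
\]
One should be a little careful about small cases: Lemma~\ref{lem:231-213-7} requires $n \geq 4$, and the recurrence references $a_{n-3}$, so it is valid for $n \geq 4$ provided we interpret $a_{n-3}$ correctly when $n = 4$ (i.e.\ $a_1$). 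The base cases $a_1 = a_2 = a_3 = 1$ can be checked directly: for $n \leq 3$ the only cyclic permutations are $1$, $21 = (1,2)$, and $231 = (1,2,3)$ together with $312 = (1,3,2)$; among these one verifies by inspection which avoid $231$ in one-line form and $213$ in cycle form, giving exactly one in each case (for instance $231$ contains the pattern $231$ in one-line notation, so it is excluded, leaving $312$ for $n=3$).

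For the closed form, I would verify that $b_n := \sum_{k=0}^{\lfloor (n-1)/3\rfloor} \binom{n-1-k}{2k}$ satisfies the same recurrence and initial conditions. The initial conditions are immediate: $b_1 = \binom{0}{0} = 1$, $b_2 = \binom{1}{0} = 1$, $b_3 = \binom{2}{0} = 1$. For the recurrence, the cleanest route is generating functions: the sequence A005251 (shifted) has the known generating function $\sum_{n\geq 1} b_n x^n = \frac{x(1-x)}{1 - 2x + x^2 - x^3}$, whose denominator $1 - 2x + x^2 - x^3$ encodes exactly the recurrence $b_n = 2b_{n-1} - b_{n-2} + b_{n-3}$. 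Alternatively, one can give a direct combinatorial/algebraic argument on the binomial sum: apply Pascal's rule $\binom{n-1-k}{2k} = \binom{n-2-k}{2k} + \binom{n-2-k}{2k-1}$ to $b_n$ and reindex the second sum (substituting $k \mapsto k+1$) so that the telescoping matches $2b_{n-1} - b_{n-2} + b_{n-3}$; this requires tracking the floor bounds on the summation carefully, since the top index changes by $0$ or $1$ as $n$ increases by $1$.

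The main obstacle is the closed-form verification, specifically handling the summation limits in the Pascal-rule manipulation: the upper limit $\lfloor (n-1)/3 \rfloor$ does not shift uniformly, so the telescoping argument needs a careful case split on $n \bmod 3$ to confirm that the "boundary" binomial terms (those near the top of the sum) vanish or cancel appropriately — e.g.\ terms of the form $\binom{m}{2k}$ with $2k > m$, which are zero. If this bookkeeping proves delicate, the generating-function approach sidesteps it entirely: one need only check that $(1 - 2x + x^2 - x^3)\sum_n b_n x^n = x - x^2$ by expanding the product and using the recurrence for $n \geq 4$, which is a finite check plus the already-established base cases. Either way, once both $a_n$ and $b_n$ are shown to satisfy the third-order linear recurrence with the same three initial values, they agree for all $n \geq 1$.
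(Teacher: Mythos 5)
Your proposal is correct and follows essentially the same route as the paper: it combines Lemma~\ref{lem:231-213-1} and the three counts of Lemma~\ref{lem:231-213-7} (after noting $\pi_n\in[2,n-1]$) to obtain the recurrence, checks the base cases $a_1=a_2=a_3=1$ directly, and then observes that the binomial sum satisfies the same recurrence and initial conditions. The only difference is that you spell out the closed-form verification (via the generating function $x(1-x)/(1-2x+x^2-x^3)$ or Pascal's rule), a step the paper simply asserts.
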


\begin{proof}
We can easily check that $a_1(231;213) = a_2(231;213)=a_3(231;213)=1$. Suppose $n \geq 4$ and let $\pi \in \A_n(231;213)$. By Lemma~\ref{lem:231-213-1}, $\pi_1=c_2=n$. By Lemma~\ref{lem:231-213-7}, we have 
\[
a_n(231;213) = 2a_{n-1}(231;213)-a_{n-2}(231;213)+a_{n-3}(231;213).
\]
Since the formula in the statement of the theorem satisfies this recurrence and the initial conditions, the theorem holds. 
\end{proof}

\begin{example} Consider $\A_7(231;213)$. Because we build this set of permutations recursively, we begin by listing $\A_4(231;213)$ and $\A_6(231;213)$: 
\begin{align*}
\A_4(231;213) &= \{4312, 4123\}\\
\A_6(231;213) &= \{631245, 612345, 614235, 641325, 612534,615243, 654132\}.\\
\end{align*}
To create those permutations in $\A_7(231;213)$ ending in 6, we start with the permutations in $\A_6(231;213)$ and insert a 6 at the end yielding the following permutations in $\A_7(231;213)$:
\[ 7312456, 7123456, 7142356, 7413256, 7125346,7152436, 7541326.\]
To create those permutations in $\A_7(231;213)$ ending in 5, we start with the permutations in $\A_4(231;213)$ and insert a 645 at the end giving us the permutations
\[ 7312645, 7123645.\]
Finally, to create those permutations in $\A_7(231;213)$ ending in $k$ for $k < 6$, we need the permutations in $\A_6(231;213)$ that do not end in 5. There are $a_6(231;213) - a_5(231;213) = 7-4=3$ such permutations: $(1,6,4,5,3,2),$ $(1,6,3,5,4,2),$ and $(1,6,2,5,3,4)$.  For each of these, let $k$ be the element after 6 in cycle form, and insert $\lfloor \frac{7+k}{2}\rfloor$ into position $9-k$ of the cycle form.  For the permutation $(1,6,4,5,3,2)$, $k=2$, so we insert $5$ in position $5$ of the cycle form giving the permutation $(1,7,4,6,5,3,2)$ in $\A_7(231;213)$. For the permutation $(1,6,3,5,4,2)$, $k=3$, so we insert 5 in position $6$ of the cycle yielding the permutation $(1,7,3,6,4,5,2)$. Finally, for the permutation $(1,6,2,5,3,4)$, $k=2$, and we insert $4$ in position $7$ of the cycle form to get the permutation $(1,7,2,6,3,5,4)$. In one-line form, these three permutations are
\[ 7126354, 7165243, 7651432.\]
Thus there are $7 + 2 + (7-4) =12$ permutations in $\A_7(231;213)$. In closed form, notice ${6 \choose 0} + {5 \choose 2} + {4 \choose 4} = 12$ as well.
 \end{example}

\subsection{$\A_n(312;213)$}
When $\sigma=312$ and $\tau=213$, there is only one permutation in $\A_n(312;213)$, namely the permutation $\pi= 2 3 \ldots n 1$ so that $C(\pi) = (1,2,3,\ldots, n)$.

\begin{theorem}\label{thm:312-213}
For $n\geq 1$, $a_n(312;213) = 1$.
\end{theorem}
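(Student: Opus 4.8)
The plan is to show that the only cyclic permutation of $[n]$ avoiding $312$ in one-line notation and $213$ in cycle notation is $\pi = 23\cdots n1$, whose cycle form is $(1,2,3,\ldots,n)$. First I would verify this permutation does lie in $\A_n(312;213)$: its one-line form $23\cdots n1$ is increasing except for the final $1$, so any subsequence of length $3$ either is increasing or has its smallest element last, and in neither case is it a $312$ pattern; and its cycle form $(1,2,\ldots,n)$ is strictly increasing, hence avoids every non-identity pattern, in particular $213$. So $a_n(312;213)\geq 1$.

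For the reverse inequality, I would invoke Lemma~\ref{lem:2n}, which applies since $\sigma=312\in\{123,132,213,231,312\}$: for $n\geq 6$, any $\pi\in\A_n(312;213)$ has $\pi_1\in\{2,n\}$. The case $\pi_1=n$ is immediately impossible: if $\pi_1=n$, then since $n\geq 3$ there exist at least two further positions, and $\pi_1=n$ together with any two later entries forms a $3\ast\ast$ pattern; choosing the two later entries in increasing relative order (which is always possible among $n-1\geq 2$ values unless they are decreasing, but with $n-1\geq 3$ entries after position $1$ one can always find two in increasing order) gives a $312$ pattern — more cleanly, $\pi_1=n$ and the positions of $1$ and $2$: if $1$ precedes $2$ among positions $2,\ldots,n$ then $n,1,2$ is a $312$ pattern, and if $2$ precedes $1$ then... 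I would instead argue directly: $\pi_1 = n$ forces $\pi$ to contain $312$ because among $\pi_2,\ldots,\pi_n$ the values $1$ and $2$ appear, and whichever order they appear in, $n$ followed by $\min$ followed by $\max$ of a suitable pair gives $312$ (take the pair $\{1,2\}$ if $1$ is left of $2$; otherwise note there must be some ascent among the remaining entries). To keep this airtight I would simply observe: if $\pi_1=n$, pick the position $j$ of $1$; since $j\le n$ and $j\ge 2$, and $\pi$ is not the decreasing permutation (it is cyclic, $n\ge 3$), there is an index $i<j$ with $i\ge 2$ and then actually the simplest route is: any permutation starting with $n$ and of length $\ge 3$ that avoids $312$ must have $\pi_2\cdots\pi_n$ avoiding $12$ as a pattern with $n$ prepended — equivalently $\pi_2\cdots\pi_n$ is decreasing, but then $\pi = n(n-1)\cdots 21$ is not cyclic for $n\ge 3$. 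So $\pi_1\ne n$, leaving $\pi_1=2$.

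With $\pi_1=2$, the condition that $\pi$ avoids $312$ forces all entries after position $1$ that are greater than $2$... actually $2$ at the front can only start a $312$ pattern as the ``$2$''-role is wrong; I would note instead that $\pi_1=2$ contributes nothing, so I would apply the avoidance more carefully: consider the position of $1$; everything before $1$ and after position $1$ lies in $[3,n]$, and if any such entry $\pi_i$ (with $i<$ position of $1$) has a smaller entry to its right among $[3,n]$, that, together with a later larger entry, builds $312$ — the upshot is that $\pi$ must equal $23\cdots n1$, which one verifies is the unique cyclic candidate (any other arrangement with $\pi_1=2$ either is non-cyclic or contains $312$). Finally I would handle small $n$ ($n\in\{1,2,3,4,5\}$) by direct inspection, since Lemma~\ref{lem:2n} only covers $n\ge 6$; for these the claim $a_n(312;213)=1$ is checked by brute force over the few cyclic permutations.

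The main obstacle I anticipate is the $\pi_1=2$ case: unlike in the cleaner cases $\sigma\in\{123,132\}$ where the $213$-cycle-avoidance did most of the work, here I must extract from the $312$-one-line-avoidance alone (the proof should not lean on cycle-avoidance beyond what Lemma~\ref{lem:2n} already used) that the permutation is forced to be $23\cdots n1$ — and the honest version of that argument requires carefully tracking where $1$ sits and why no ``descent then ascent'' configuration can survive, which is the step I would write out most carefully rather than wave through.
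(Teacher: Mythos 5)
Your reduction to $\pi_1\in\{2,n\}$ via Lemma~\ref{lem:2n} and your disposal of the case $\pi_1=n$ are fine (the clean version you settle on is correct: $\pi_1=n$ and $312$-avoidance force $\pi_2\cdots\pi_n$ to be decreasing, so $\pi$ is the decreasing permutation, which is not an $n$-cycle for $n\geq 3$; the paper instead forces $\pi_n=1$ and notes $(1\,n)$ would be a $2$-cycle, but the two arguments are interchangeable). The genuine gap is in the case $\pi_1=2$, and it is exactly the step you flag as the one you would ``write out most carefully'': you insist on extracting $\pi=23\cdots n1$ from the $312$-avoidance of the one-line form together with cyclicity alone, ``not lean[ing] on cycle-avoidance beyond what Lemma~\ref{lem:2n} already used.'' That statement is false. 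For example, $\pi=24531=(1,2,4,3,5)$ is cyclic, has $\pi_1=2$, and avoids $312$ in one-line notation, yet it is not $23451$; similarly $246531=(1,2,4,5,3,6)$ for $n=6$. These are excluded from $\A_n(312;213)$ only because their cycle forms contain $213$ (e.g.\ $4,3,5$ in $(1,2,4,3,5)$). Your heuristic that a descent before the position of $1$ ``together with a later larger entry, builds $312$'' fails precisely in such examples: a descent only yields a $312$ if some later entry lies strictly between the two descending values, and nothing forces that.

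What the correct argument needs, and what the paper does, is to use the $213$-avoidance of $C(\pi)$ in this case as well. The paper works in cycle form: writing $C(\pi)=(1,2,c_3,\ldots,c_n)$ and supposing $C(\pi)\neq(1,2,\ldots,n)$, take the least $k$ with $c_k>k$ (so $c_i=i$ for $i<k$) and let $c_\ell=k$ with $\ell>k$. If $\ell=n$, then in one-line notation $\pi=23\cdots(k-1)c_k1\pi_{k+1}\cdots\pi_n$ and $c_k\,1\,k$ is a $312$ pattern. If $\ell<n$, the $213$-avoidance of the cycle forces $c_{\ell-1}>c_n$, and then $c_k\,1\,k$, occurring in positions $k-1$, $c_n$, $c_{\ell-1}$, is again a $312$ pattern. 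So your one-line-only plan for the $\pi_1=2$ case cannot be completed as stated; you must bring the $213$ condition on $C(\pi)$ back into play (the correct uniqueness statement is exactly the theorem itself, not a consequence of $312$-avoidance plus cyclicity). The small cases $n\leq 5$ by inspection are fine.
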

\begin{proof} 
Let $n\geq 3$ and $\pi\in \A_n(312;213)$. By Lemma~\ref{lem:2n}, either $\pi_1=n$ or $\pi_1=2$. If $\pi_1=n$, then $\pi=n\pi_2\pi_3\cdots\pi_n$ must have $\pi_n=1$ in order to avoid 312. But this permutation is not cyclic for $n \geq 3$ since $\pi_1=n$ and $\pi_n=1$ yields a transposition. Thus $\pi_1=2$.

We claim that the permutation $\pi= 2 3 \ldots n 1$ is the only permutation in $\A_n(312;213)$.  To this end, let $C(\pi) = (1, 2, c_3, \ldots, c_n)$ and suppose toward a contradiction that there exists a $k$ so that $c_i=i$ for $i<k$ and $c_k >k$. Thus $c_\ell = k$ for some $\ell > k$ and
\[ C(\pi) = (1, 2, 3, \ldots, k-1, c_k, c_{k+1}, \ldots, c_{\ell-1}, k, c_{\ell+1}, \ldots, c_n).\]
If $\ell = n$, or equivalently $c_n=k$, then \[ \pi = 23\cdots(k-1)c_k1\pi_{k+1}\pi_{k+2} \cdots \pi_{n}.\] Since $c_k > k$, we have $c_k1k$ is a 312 pattern which is a contradiction. Thus $\ell \neq n$ and it must be that $c_{\ell-1} > c_n$ since $C(\pi)$ avoids 213. But then the pattern $c_k1k$, occurring in positions $k-1, c_n,$ and $c_{\ell-1}$, is a 312 pattern.  
\end{proof}

\subsection{$\A_n(321;213)$}
In this section, we enumerate the permutations in $\A_n(321;213)$. We begin by giving conditions on the position of 2 in cycle notation.
\begin{lemma}\label{lem:321-213} Suppose $n \geq 2$ and $\pi \in \A_n(321;213)$. Then either $\pi_1=2$ or $\pi_2=1$.
\end{lemma}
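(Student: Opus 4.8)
The plan is to leverage Lemma~\ref{lem:2n}, which tells us that (for $n$ large) any $\pi \in \A_n(321;213)$ has $\pi_1 \in \{2, n\}$, and to show that the case $\pi_1 = n$ forces $\pi_2 = 1$. So suppose $\pi \in \A_n(321;213)$ with $\pi_1 = n$. Then $C(\pi) = (1, n, c_3, c_4, \ldots, c_n)$, and the element $2$ appears somewhere in this cycle, say $c_k = 2$ with $k \geq 3$; note $\pi_2 = 1$ is exactly the statement $c_n = 2$, i.e.\ $k = n$. So I assume toward a contradiction that $k \in [3, n-1]$.

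First I would extract structure from the fact that $C(\pi)$ avoids $213$: since $c_2 = n$ is the largest element and $c_k = 2$ is (essentially) the smallest among $c_2,\ldots,c_n$, every element appearing before position $k$ in the cycle must exceed every element appearing after position $k$ (otherwise a small-then-large pair straddling $c_k = 2$ together with $2$ itself — or rather, with an element larger than both — would create a $213$). Concretely, $\{c_3, \ldots, c_{k-1}\} = [n-k+3, n-1]$ and $\{c_{k+1}, \ldots, c_n\} = [3, n-k+2]$. Then I would translate to one-line notation: $\pi_1 = n$, $\pi_n = c_3$, and more usefully, since $1$ sits in position $c_n$ and $c_n \in [3, n-k+2]$ is relatively small while $n = \pi_1$ sits in position $1$, I want to find a third element that is smaller than $1$... which is impossible, so instead I look for the $321$ pattern using $n$ as the ``3''. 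We have $\pi_1 = n$; I need two positions $1 < i < j$ with $\pi_i > \pi_j$. Since $\pi_n = c_3$ and (if $k > 3$) $c_3 \in [n-k+3, n-1]$ is large, while $1$ appears in position $c_n < n$, the triple $\pi_1 = n$, $\pi_{c_n} = 1$... no, I need the middle one larger than the last. Let me instead use: $n$ in position $1$, then the element $2$ in position $c_{k-1}$, then $1$ in position $c_n$; this is a $321$ pattern provided $c_{k-1} < c_n$. If instead $c_{k-1} > c_n$, then $c_{k-1} \in [n-k+3,n-1]$ and $c_n \in [3, n-k+2]$, and I would use $n$ (position $1$), $c_{k-1}$ (position $k-1$)... hmm, I need the positions increasing and values decreasing.

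The cleaner approach, which I expect to be the real argument: with $C(\pi) = (1, n, c_3, \ldots, c_{k-1}, 2, c_{k+1}, \ldots, c_n)$ and the partition of values above, look at where large values sit in one-line notation. The element $n-1$ is in $\{c_3,\ldots,c_{k-1}\}$, say $c_m = n-1$ with $3 \le m \le k-1$, so $\pi_{c_{m-1}} = n-1$. Also $\pi_1 = n$. If $c_{m-1} > 1$ (always true) and there is some element in a still-later position that is less than $n-1$ — for instance $1$, in position $c_n$ — then whether we get $321$ depends on ordering of $c_{m-1}$ versus $c_n$. I would handle this by choosing the witness carefully: among all of $\pi_2, \pi_3, \ldots, \pi_n$, since the values $[n-k+3, n-1]$ must be placed and $n > $ all of them, if any two of these large values appear in increasing-position/decreasing-value order after position $1$ we are done; if they appear in increasing order, that constrains the cycle heavily, eventually colliding with $321$-avoidance combined with the relative order of $1$ and $2$ in one-line notation (position $c_n$ vs.\ position $c_{k-1}$, one small and one large). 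The main obstacle is bookkeeping the several sub-cases for small $k$ (e.g.\ $k = 3$, where $\{c_3,\ldots,c_{k-1}\}$ is empty) and for the relative order of $c_{k-1}$ and $c_n$; I would dispatch the degenerate $k=3$ case separately (then $c_3 = 2$, $\pi = n\,c_{k+1}\cdots$, and $\pi$ is easily seen to contain $321$ or fail to be cyclic), and for $k \ge 4$ use that $c_3 \ge n-k+3 \ge 3$ is large while $1$ lies in a later position, giving $n, c_3, 1$ as a $321$ pattern once positions are checked — $n$ in position $1$, $c_3 = \pi_n$ in position $n$ — wait, that has $1$ before $n$... so actually $1$ (position $c_n < n$), then $c_3 = \pi_n$ (position $n$): that's a $12$ pattern, not helpful. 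So the correct witness is: $c_2 = n$ in position $1$; the element just before $2$ in the cycle, $c_{k-1}$, which sits in one-line position... $\pi_2 = c_3$, and $2 = \pi_{c_{k-1}}$; and $1 = \pi_{c_n}$. Since both $c_{k-1}$ (when $k \geq 4$, it's large) and we compare with $c_n$ (small): if $c_{k-1} > c_n$ this doesn't immediately work, but then note $\pi_1 = n$, $\pi_{c_n} = 1$, and $c_n < c_{k-1}$ so $\pi_{c_{k-1}} = 2 > 1$: positions $1 < c_n < c_{k-1}$ with values $n > 1 < 2$ — that's $312$, not $321$. Hmm. So ultimately I expect to need the element $n-1$: $n-1 = \pi_{c_{m-1}}$ where $c_{m-1}$ is the predecessor of $n-1$ in the cycle; since $n-1$ is large and appears before $2$ in the cycle while its predecessor $c_{m-1}$ could be $n$ (position $1$) or another large value, one shows $n$, then $n-1$, then (some smaller value forced into a later one-line position by the value-partition) forms $321$. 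The key obstacle is precisely pinning down that a smaller value is forced into a later position, which follows because the small values $[3, n-k+2]$ must go somewhere and $321$-avoidance leaves no room.
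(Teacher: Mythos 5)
There is a genuine gap, and it starts at the very first step: Lemma~\ref{lem:2n} does not apply here. Its hypothesis is $\sigma\in\{123,132,213,231,312\}$, deliberately excluding $\sigma=321$, and its conclusion is in fact false for $\A_n(321;213)$: for example $31524\in\A_5(321;213)$ has $\pi_1=3$, and the insertion construction behind Theorem~\ref{thm:321-213} (insert $1$ in position $2$) propagates such examples to all larger $n$, e.g.\ $412635\in\A_6(321;213)$ with $\pi_1=4$. (This is also consistent with the count $2^{n-2}$, far larger than what the rigid structure forced by $\pi_1\in\{2,n\}$ would allow.) So your case reduction ``$\pi_1=2$ or $\pi_1=n$'' collapses, and the permutations with $\pi_1\in[3,n-1]$ --- which exist and are exactly the ones for which the lemma's conclusion $\pi_2=1$ carries content --- are never addressed. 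Moreover, even in the one case you do treat ($\pi_1=n$), the argument never closes: you cycle through candidate witnesses ($n,2,1$; $n,1,2$; the element $n-1$; etc.), note correctly that several of them give $312$ or depend on the unknown order of $c_{k-1}$ versus $c_n$, and end with the unproved assertion that ``$321$-avoidance leaves no room.'' No explicit $321$ occurrence is ever produced.

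For comparison, the paper's proof needs no information about $\pi_1$ at all. Writing $c_k=2$ with $k\in[3,n-1]$ (the negation of ``$\pi_1=2$ or $\pi_2=1$'' is exactly that $2$ is neither $c_2$ nor $c_n$), $213$-avoidance of the cycle applied to positions $2<k<k+1$ with values $c_2,\,2,\,c_{k+1}$ forces $c_2>c_{k+1}$. Then in one-line notation $c_2=\pi_1$, $c_{k+1}=\pi_{c_k}=\pi_2$, and $2=\pi_{c_{k-1}}$ with $c_{k-1}\geq 3$, so $\pi_1\pi_2\pi_{c_{k-1}}=c_2\,c_{k+1}\,2$ is a $321$ pattern --- contradiction. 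Notice this uses the value in one-line position $2$ as the middle letter of the $321$, a witness your search never tried; your attempts all placed $1$ or $2$ or $n-1$ in roles that required extra, unavailable position comparisons. If you want to salvage your write-up, drop Lemma~\ref{lem:2n} entirely and argue directly from the location of $2$ in the cycle as above.
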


\begin{proof} If $n \in \{2,3\}$, the only permutations in $\A_n(321;213)$ are 21, 231, and 312 all of which satisfy the statement of the lemma.  So assume $n \geq 4$, and suppose toward a contradiction that $\pi_1 \neq 2$ and $\pi_2 \neq 1$. In $C(\pi)$, we thus have $c_k = 2$ for some $3 \leq k \leq n-1$ and for reference write
\[ C(\pi) = (1,c_2, \ldots, c_{k-1}, 2, c_{k+1}, \ldots, c_{n}).\]
Since $C(\pi)$ avoids 213, we must have $c_2 > c_{k+1}$. But then in one-line notation, $c_2c_{k+1}2$ is a 321 pattern which is a contradiction.
\end{proof}

For any permutation in $\A_n(321;213)$, we can insert either both a 2 at the beginning or a 1 in the second position to get a permutation in $\A_{n+1}(321;213)$. In fact, this process yields all possible permutations and we have the following theorem.

\begin{theorem}\label{thm:321-213}
For $n\geq 2$, $a_n(321;213) =2^{n-2}$.
\end{theorem}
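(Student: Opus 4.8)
The plan is to prove the doubling recurrence $a_{n+1}(321;213) = 2\,a_n(321;213)$ for all $n \ge 2$ and then conclude by induction, since $a_2(321;213) = |\{21\}| = 1 = 2^0$. By Lemma~\ref{lem:321-213}, every $\pi \in \A_{n+1}(321;213)$ has $\pi_1 = 2$ or $\pi_2 = 1$, and for $n+1 \ge 3$ these are mutually exclusive: a permutation with $\pi_1 = 2$ and $\pi_2 = 1$ would contain $(1\,2)$ as a cycle, contradicting cyclicity. So it suffices to exhibit a bijection between each of the two sets $\{\pi \in \A_{n+1}(321;213) : \pi_1 = 2\}$ and $\{\pi \in \A_{n+1}(321;213) : \pi_2 = 1\}$ and the full set $\A_n(321;213)$.

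For the two bijections I would use the two insertion moves already suggested in the discussion preceding the theorem. The first sends $\pi' \in \A_n(321;213)$ to the permutation obtained by inserting $2$ in position $1$; tracking this through the cycle notation shows it increments every entry of $C(\pi')$ and then inserts a $2$ immediately after the leading $1$, so $C(\pi') = (1,d_2,\dots,d_n)$ becomes $(1,2,d_2+1,\dots,d_n+1)$. The second inserts $1$ in position $2$ — the same move as in Lemma~\ref{lem:213-213-2} — which in cycle notation increments every entry of $C(\pi')$ and appends a $2$ at the end, turning $(1,d_2,\dots,d_n)$ into $(1,d_2+1,\dots,d_n+1,2)$. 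Each move is visibly invertible (delete the inserted element, equivalently strip the inserted $2$ from the cycle and decrement the rest), and the images obviously have $\pi_1 = 2$, respectively $\pi_2 = 1$, so once pattern-preservation is established in both directions the bijections are complete and counting the two disjoint halves gives $a_{n+1}(321;213) = 2\,a_n(321;213)$.

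The technical heart — and the step I expect to require the most care — is checking that neither move creates a forbidden pattern. For $321$-avoidance in one-line form: the inserted small value ($2$ at the front, or $1$ in position $2$) cannot be the top of a decreasing triple and has at most one entry to its left, so it cannot lie in a $321$ pattern; removing it leaves a word order-isomorphic to $\pi'$, which avoids $321$. For $213$-avoidance in cycle form one argues about where the new $2$ sits. When it sits right after the $1$, a hypothetical $213$ occurrence cannot use the leading $1$ (nothing is smaller, so it cannot be the initial "2" of the pattern) and cannot use the following $2$ (it would have to be the initial "2", again needing a strictly smaller entry later), hence it lies entirely among $d_2+1,\dots,d_n+1$, which is order-isomorphic to the $213$-avoiding word $d_2,\dots,d_n$. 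When the new $2$ sits at the end, a $213$ occurrence cannot use it (being last, it would be the "3", forcing two strictly smaller entries before it, i.e. two entries equal to $1$) nor the leading $1$, so again the occurrence lies among $d_2+1,\dots,d_n+1$. The reverse directions follow from the same order-isomorphism observations. With this in hand, the base case $a_2(321;213)=1$ (and the sanity check $\A_3(321;213) = \{231,312\}$) completes the induction and yields $a_n(321;213) = 2^{\,n-2}$.
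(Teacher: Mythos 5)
Your proposal is correct and follows essentially the same route as the paper: the doubling recurrence $a_{n+1}(321;213)=2a_n(321;213)$ obtained from the two insertion moves (a $2$ in position $1$, i.e.\ a $2$ after the $1$ in the cycle, and a $1$ in position $2$, i.e.\ a $2$ at the end of the cycle), with Lemma~\ref{lem:321-213} guaranteeing that every permutation in $\A_{n+1}(321;213)$ is reached. Your verification of pattern preservation and of the disjointness of the two cases is just a more detailed write-up of the same argument.
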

\begin{proof}
We begin by showing that every permutation in $\A_{n}(321;213)$ corresponds to exactly two permutations in $\A_{n+1}(321;213)$.  Let $\pi \in \A_{n}(321;213)$, and let $\pi'$ be the permutation formed by inserting the element 2 in position 1 of $\pi$.  Thus $\pi' = 2\pi_1'\pi_2'\cdots\pi_n'$ where $\pi_i' = 1$ if $\pi_i=1$ and $\pi_i'=\pi_i + 1$ if $\pi_i \neq 1$. Notice that $\pi'$ avoids 321 in one-line notation. Furthermore, in cycle notation, this process is equivalent to inserting the element 2 after 1 and thus $C(\pi')$ avoids 213 as well. Thus $\pi' \in \A_{n+1}(321;213)$.

Similarly, let $\pi''$ be the permutation formed by inserting the element 1 in position 2 of $\pi$. Thus $\pi'' = \pi_1''1\pi_2''\cdots\pi_n''$ where $\pi_i'' = \pi_i + 1$ and thus $\pi''$ avoids 321.  Notice this process in equivalent to inserting 2 at the end of the cycle notation of $C(\pi)$ and thus $C(\pi'')$ also avoids 213, and $\pi'' \in \A_{n+1}(321;213)$. Furthermore, the only way for $\pi' = \pi''$ is if $\pi_1 = 1$ which is not true if $n \geq 2.$ Thus $a_{n+1}(321;213) \geq 2a_n(321;213)$.

To show equality, we note that this process is reversible.  If $\pi =\pi_1\pi_2\cdots\pi_{n+1} \in \A_{n+1}(321;213)$, then by Lemma~\ref{lem:321-213}, either $\pi_1 =2$ or $\pi_2 = 1$. In the case where $\pi_1=2$, deleting 2 from $\pi$ yields a permutation in $\A_n(321;213)$, and in the case where $\pi_2=1$, deleting 1 from $\pi$ yields a permutation in $\A_n(321;213)$.  Thus $a_{n+1}(321;213) = 2a_n(321;213)$. Since $a_2(321;213) = 1$, we have $a_n(321;213) =2^{n-2}$ as desired.
\end{proof}

\begin{example} Consider $\A_5(321;213)$.  To find the permutations in this set, we first list the permutations in $\A_4(321;213)$ and insert the element 2 in position 1 to each as well as inserting the element 1 in position 2.  The four permutations in $\A_4(321;213)$ are $2341,$ $3142,$ $2413,$ and $4123.$
Thus, the eight permutations in $\A_5(321;213)$ are
\[ \A_5(321;213) = \{23514, 25134, 24153, 23451, 31524, 51234, 41253, 31452\}.\]
\end{example}

\section{Enumerating $\A_n(\sigma; 231)$}\label{sec:231}

In this section we consider the cases where $\tau = 231$. Similar to the previous section, we begin with a lemma to give more information about permutations in $\A_n(\sigma;231)$ for certain patterns $\sigma$. The remainder of the section is divided into subsections examining each of the cases for $\sigma \in \S_3$ in more detail.

\begin{lemma}\label{lem:231}  
For any $n \geq 1$, if $\pi \in \A_n(\sigma; 231)$ with $\sigma \in \{132, 213, 231, 312\}$, then either $\pi_1=n$ or $\pi_n=1$.
\end{lemma}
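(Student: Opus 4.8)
The plan is to mimic the structure of Lemma~\ref{lem:2n}: assume $\pi \in \A_n(\sigma;231)$ with $\pi_1 \neq n$ and $\pi_n \neq 1$, and derive a forbidden pattern in the one-line notation of $\pi$ for each $\sigma \in \{132, 213, 231, 312\}$. Write $C(\pi) = (1, c_2, \ldots, c_n)$, so $\pi_1 = c_2$. Since $\pi_n \neq 1$, the element $1$ sits in some position $c_n \neq n$, i.e. $c_n \in [2, n-1]$; equivalently, $n$ appears in $C(\pi)$ in some slot $c_m$ with $m \geq 3$ (it cannot be $c_2$ since $\pi_1 \neq n$, and it is followed by something). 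The key structural input is that $C(\pi)$ avoids $231$: this forces all entries of $C(\pi)$ occurring \emph{after} the maximal entry $n$ to be smaller than all entries occurring strictly \emph{between} position $2$ and the position of $n$ — more precisely, if $n = c_m$, then $\{c_{m+1}, \ldots, c_n\}$ must all be less than every $c_j$ with $j \in [2, m-1]$, else some $c_j, n, c_\ell$ with $2 \le j < m < \ell$ and $c_\ell > $ (wait — need $c_\ell$ between, so) we get a $231$ pattern $c_j\, n\, c_\ell$ whenever $c_j < n$ and $c_\ell$ lies strictly between. Let me instead just say: because $C(\pi)$ avoids $231$ and $c_2 \notin\{2,n\}$ roughly as in the proof of Lemma~\ref{lem:2n}, the cycle word has a controlled shape with $n$ appearing early and $1$ appearing not-last, which pins down enough positions to locate each forbidden pattern.

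The main work is the case analysis. First I would nail down the normal form: using $231$-avoidance of $C(\pi)$, argue that $C(\pi) = (1, c_2, \ldots, c_{m-1}, n, c_{m+1}, \ldots, c_n)$ where all of $c_{m+1}, \ldots, c_n$ are smaller than all of $c_2, \ldots, c_{m-1}$, and moreover $c_2, \ldots, c_{m-1}$ themselves avoid $231$. In particular $c_n$ (the position of $1$) is less than $c_{m-1}$ (the position of $n$'s predecessor). Then for each target pattern I locate three positions in one-line notation. For $\sigma = 312$: the values $n$, $1$, and $c_2$ appear in positions $c_{m-1}$, $c_n$, and $1$ respectively; since $1 < c_n < c_{m-1}$ in position, reading left to right we see value $c_2$ then $1$ then $n$, i.e. a $312$ pattern provided $c_2 > 1$, which holds. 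For $\sigma = 213$: take values $c_2$ then a small post-$n$ value then $n$ — positions $1$, then (position of that small value), then $c_{m-1}$ — giving relative order mid, small, large, a $213$. For $\sigma = 132$: use $1$, $n$, $\pi_n$ in positions $c_n, c_{m-1}, n$, which reads small, large, medium — a $132$ — as long as $\pi_n \neq n$, handled since $\pi_1 = n$ is excluded; if $\pi_n$ happens to equal $n-1$ or similar, a minor sub-argument picks a different large element. For $\sigma = 231$: mirror the final paragraph of Lemma~\ref{lem:2n}'s proof — since $C(\pi)$ avoids $231$ with $c_2 \notin\{2,n\}$, elements exceeding $c_2$ precede elements below $c_2$ (that are $\geq 2$) in the cycle; picking the last entry exceeding $c_2$ and the entry just after it yields, in one-line notation, values $c_2$, $c_3$, and that smaller entry in positions $1$, $c_2$, and $(\text{position})$, forming a $231$.

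The hard part will be handling the degenerate sub-cases where the "obvious" three elements collide or are forced equal to endpoints — e.g. when $m = 3$ (so $n = c_3 = \pi_{c_2}$ and there is nothing strictly between position $2$ and $n$'s slot), or when the post-$n$ block is empty, or when $c_n$ or $\pi_n$ coincides with one of the other chosen values. In those situations the generic argument degenerates, and, exactly as in Lemma~\ref{lem:2n}, one must substitute a nearby element (using that $231$-avoidance of $C(\pi)$ forces, say, $c_{m-1} = n-1$ in the tight case) to rebuild the pattern. I expect each such repair to be short, but enumerating them carefully for all four patterns is the bulk of the verification. Once every $\sigma \in \{132,213,231,312\}$ is shown to force a forbidden pattern under the assumption $\pi_1 \neq n$ and $\pi_n \neq 1$, we conclude that any $\pi \in \A_n(\sigma;231)$ must satisfy $\pi_1 = n$ or $\pi_n = 1$, as claimed.
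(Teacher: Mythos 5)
Your high-level plan is the same as the paper's (assume $\pi_1\neq n$ and $\pi_n\neq 1$, locate $n$ at $c_m$ with $m\in[3,n-1]$, use $231$-avoidance of the cycle to get a block structure, then exhibit a forbidden pattern in one-line notation for each $\sigma$), but the key structural deduction is stated backwards, and this error propagates through every case. If $c_m=n$ and some $c_j$ with $j\in[2,m-1]$ exceeded some $c_\ell$ with $\ell>m$, then $c_j,\,n,\,c_\ell$ would be a $231$ pattern in $C(\pi)$ (since $c_\ell<c_j<n$); hence $231$-avoidance forces every entry \emph{before} $n$ to be \emph{smaller} than every entry after $n$, i.e. $\{c_2,\ldots,c_{m-1}\}=[2,m-1]$ and $\{c_{m+1},\ldots,c_n\}=[m,n-1]$ --- the opposite of your claim that the post-$n$ block is smaller. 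In particular $c_n>c_{m-1}$, so the element $1$ (in position $c_n$) sits to the \emph{right} of the element $n$ (in position $c_{m-1}$), not to the left as all four of your pattern locations assume. Concretely: your $132$ triple ``$1,n,\pi_n$ in positions $c_n,c_{m-1},n$'' actually reads $n,1,\pi_n$, which is a $312$; and your $312$ triple, values $c_2,1,n$ read left to right, is a $213$ pattern even under your own (incorrect) ordering, since $n$ is the largest of the three. Your $213$ and $231$ cases likewise lean on ``small post-$n$ values'' and on the $213$-style block structure borrowed from Lemma~\ref{lem:2n}, neither of which holds here.

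The repair is short but must be done throughout: with the corrected blocks, $\pi_1=c_2\le m-1$, $\pi_n=c_{m+1}\ge m$, $1$ is in position $c_n\ge m$, and $n$ is in position $c_{m-1}\le m-1$, so the witnesses are $c_2,\,n,\,c_{m+1}$ in positions $1,c_{m-1},n$ for $132$; $c_2,\,1,\,c_{m+1}$ in positions $1,c_n,n$ for $213$; $c_2,\,n,\,1$ in positions $1,c_{m-1},c_n$ for $231$; and $n,\,1,\,c_{m+1}$ in positions $c_{m-1},c_n,n$ for $312$. Note also that the ``degenerate sub-case'' repairs you anticipate are unnecessary: once $m\in[3,n-1]$ both blocks are nonempty, the chosen entries are automatically distinct, and the four triples always exist (the paper simply verifies small $n$ by hand and runs this uniform argument otherwise).
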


\begin{proof} It is straightfoward to check this is true for $n<6$, so let us assume $n\geq 6$. Let $\pi \in \A_n(\sigma;213)$, and suppose toward a contradiction that $\pi_1 \neq n$ and $\pi_n \neq 1$. In cycle notation, this implies there is a $k \in [3,n-1]$ so that $c_k=n$.  Since $C(\pi)$ avoids 231, we must have $\{c_2, c_3, \ldots c_{k-1}\} = [2,k-1]$ and $\{c_{k+1}, c_{k+2}, \ldots, c_{n}\} = [k, n-1]$.

Let us achieve our contradiction by finding the pattern $\sigma \in  \{132, 213, 231, 312\}$ in the one-line notation of $\pi$. First, to see there is a 132 pattern, we note that $c_2$, $n$, and $c_{k+1}$ occur in positions $1, c_{k-1}$, and $n$, respectively, thus forming a 132 pattern. For the 213 pattern,  $c_2$ is in position 1, 1 is in position $c_n$ and $c_{k+1}$ is in position $n$. Since $c_2 < c_{k+1}$, this is a 213 pattern. Next, we have the pattern $c_2n1$ in $\pi$ which is a 231 pattern. And finally, $n1c_{k+1}$ is a 312 pattern. Therefore, for $\sigma \in \{132,213,231,312\}$, we have either $\pi_1=n$ or $\pi_n=1$ as desired.
\end{proof}

\subsection{$\A_n(123; 231)$}\label{sec:123-231}

We begin by considering the position of $n$ in the cycle notation of a permutation in $\A_n(123;231)$.

\begin{lemma}\label{lem:123-231-1} Suppose $n \geq 4$ and let $\pi \in \A_n(123;231)$ with $C(\pi) = (1,c_2, c_3, \ldots, c_n)$. If $c_k=n$ for $k \in [3,n-1]$, then \[ C(\pi) = \left(1, k-1, 2, k-2, 3, k-3, \ldots, \left\lfloor \frac{k+1}{2} \right\rfloor, n, k, n-1, k+1, \ldots, \left\lfloor \frac{n+k}{2}\right\rfloor\right).\]
\end{lemma}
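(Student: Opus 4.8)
The plan is to cut $C(\pi)$ at the entry equal to $n$ using the $231$-avoidance of the cycle form, transfer the resulting information to one-line notation, use the $123$-avoidance of $\pi$ to force the two ``halves'' of $\pi$ to be essentially decreasing, and finally use the fact that $\pi$ is a single $n$-cycle to locate $n$ and $1$ exactly.

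I would start from the observation that, since $c_k = n$ is the largest entry, any indices $i < k < j$ with $c_i > c_j$ give a $231$ pattern $c_i\,n\,c_j$ in the cycle form; hence every entry before $n$ is smaller than every entry after $n$. Counting entries, $\{c_1,\dots,c_{k-1}\} = [1,k-1]$ and $\{c_{k+1},\dots,c_n\} = [k,n-1]$. Using that $c_i$ occupies position $c_{i-1}$ in one-line notation (indices read cyclically), this translates into: positions $[1,k-1]$ of $\pi$ hold exactly the values $[2,k-1]\cup\{n\}$, with $n$ in position $c_{k-1}$; and positions $[k,n]$ of $\pi$ hold exactly the values $\{1\}\cup[k,n-1]$, with $1$ in position $c_n$. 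Write $p := c_{k-1}$ and $q := c_n$ for these two positions.

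Next I would use $123$-avoidance. Because the ``low'' values $[2,k-1]$ lie entirely to the left of the ``high'' values $[k,n-1]$ in one-line notation, and because $1$ and $n$ are too small (resp.\ too large) to be the middle letter of a $123$ pattern, one checks that $\pi$ avoids $123$ if and only if the values $[2,k-1]$ occur in decreasing order and the values $[k,n-1]$ occur in decreasing order (an ascent among the low values extended by any high value, or an ascent among the high values preceded by any low value, would be a $123$). Consequently $\pi$ restricted to positions $[1,k-1]$ is the decreasing word $(k-1)(k-2)\cdots 2$ with $n$ inserted in position $p$, and $\pi$ restricted to positions $[k,n]$ is the decreasing word $(n-1)(n-2)\cdots k$ with $1$ inserted in position $q$; explicitly $\pi(i) = k-i$ for $i<p$, $\pi(p)=n$, $\pi(i)=k+1-i$ for $p<i\le k-1$, and the analogous formulas on $[k,n]$. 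At this point $\pi$ is completely determined by the two unknowns $p\in[1,k-1]$ and $q\in[k,n]$.

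Finally I would bring in cyclicity. Tracing the orbit of $1$ under this partially-known $\pi$ produces the zigzag $1,\,k-1,\,2,\,k-2,\,3,\dots$ through the set $[1,k-1]$, which ``escapes'' to $n$ exactly upon reaching position $p$; symmetrically, starting from $n$ the orbit produces the zigzag $n,\,k,\,n-1,\,k+1,\,n-2,\dots$ through $[k,n-1]$, escaping back to $1$ exactly upon reaching position $q$. If $p$ is not the very last value visited by the first zigzag then some element of $[1,k-1]$ is never reached and $\pi$ is not a single cycle; likewise for $q$ and the second zigzag. Computing the terminal value of each zigzag then forces $p = \lfloor (k+1)/2\rfloor$ and $q = \lfloor (n+k)/2\rfloor$, and reading the cycle back off gives precisely the claimed form (the degenerate cases $k=3$ and $k=n-1$, where one of the two blocks has a single value, are consistent with the formula). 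I expect the last step to be the main obstacle: one has to make the zigzag/escape description rigorous and then push through the index arithmetic with the floor functions and boundary cases to see that the ``middle'' of each block is exactly $\lfloor (k+1)/2\rfloor$ and $\lfloor (n+k)/2\rfloor$; everything preceding it is routine.
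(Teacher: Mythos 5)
Your proposal is correct and follows essentially the same route as the paper's proof: use $231$-avoidance of the cycle to split the values around $n$, translate to one-line notation, use $123$-avoidance to force the low block $[2,k-1]$ and high block $[k,n-1]$ to appear in decreasing order (with $n$ at position $c_{k-1}$ and $1$ at position $c_n$), and then use cyclicity of the zigzag orbit to pin $c_{k-1}=\lfloor\frac{k+1}{2}\rfloor$ and $c_n=\lfloor\frac{n+k}{2}\rfloor$. The final "escape" step you flag as the main obstacle is exactly how the paper concludes, so no further ideas are needed.
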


\begin{proof} Since $C(\pi)$ avoids 231, elements in $C(\pi)$ that come before $n$ must be smaller than those that come after $n$.  Thus,
$\{c_2, c_3, \ldots, c_{k-1}\} = [2,k-1],$ and $\{c_{k+1}, c_{k+2}, \ldots, c_n\} = [k,n-1].$ In one-line notation, note that $\pi_n = c_{k+1}$. Since the elements in the set $[2,k-1]$ are all less than $c_{k+1}$ and $\pi$ avoids 123, these elements must appear in $\pi$ in decreasing order.  These elements occur in positions $[1, k-1]\setminus \{c_{k-1}\}$ of $\pi$, and thus we have
\[ \pi_i = \begin{cases} k-i & \text{if } i \in [1,c_{k-1}-1]\\ k+1-i & \text{if } i \in [c_{k-1}+1, k-1]. \end{cases} \]

On the other hand, we note that in one-line notation, $\pi_1 = c_2$, and thus all elements in $\pi$ greater than $c_2$ must be decreasing.  Since the elements in the set $[k,n-1]$ must occur in positions $[k,n] \setminus \{c_n\}$, we have
\[ \pi_i = \begin{cases} n+k-1-i & \text{if } i \in [k, c_n-1]\\ n+k-i & \text{if } i \in [c_n+1, n]. \end{cases}\]
Then $\pi$ contains the cycle $(1, k-1, 2, k-2, 3, \ldots, c_{k-1}, n, k, n-1, k+1, \ldots, c_n)$. Since $\pi$ is cyclic, $c_{k-1} = \lfloor (k+1)/2 \rfloor$ and $c_n = \lfloor (n+k)/2\rfloor$ as desired.
\end{proof}

The remaining cases occur when $n$ is either the second position in cycle notation or at the end. Lemma~\ref{lem:213-213-3} deals with the case where $c_2=n$ and $c_3=2$, and the following lemma examines the case where $c_2=n$ and $c_3 \neq 2$. In this case, we have a similar result to Theorem~\ref{thm:132-213}; the element $\pi_n=k$ uniquely determines the permutation if $k \in [4, n-2]$.

\begin{lemma}\label{lem:123-231-2} Suppose $n\geq 8$. Then there are $n-3$ permutations in $\pi \in \A_n(123;231)$ with $\pi_1=n$ and $\pi_n \neq 2$. \end{lemma}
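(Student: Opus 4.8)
The plan is to mirror the structure of the proof of Theorem~\ref{thm:132-213}, now working with the pattern $231$ in the cycle form instead of $213$. So suppose $\pi\in\A_n(123;231)$ with $\pi_1=n$ and $\pi_n\neq 2$, and write $C(\pi)=(1,n,c_3,\ldots,c_n)$ with $c_k=2$ for some $k\in[4,n]$ (the case $k=3$ is excluded by $\pi_n\neq 2$, or rather $c_3\neq 2$; I will need to double check exactly how $\pi_n\neq 2$ translates, since $\pi_n = c_3$). First I would use the fact that $C(\pi)$ avoids $231$: since $2$ is the smallest element other than $1$, every element appearing before $2$ in the cycle (other than $1$) must be smaller than every element appearing after it — otherwise two such elements together with $2$ form a $231$. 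Concretely this forces $\{c_3,\ldots,c_{k-1}\}=[3,n-k+2]$ and $\{c_{k+1},\ldots,c_n\}=[n-k+3,n-1]$, with the $n$ sitting in position $2$ as given.

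Next I would extract the one-line consequences of avoiding $123$. In one-line notation $\pi_1=n$, so the elements larger than anything can do no harm, but the key constraint comes from the positions of the small block $[3,n-k+2]$ and the block $[n-k+3,n-1]$. I would track, as in the $213$ proof, where $1$ and $2$ sit: $1$ is in position $c_n$ and $2$ is in position $c_{k-1}$. Using $123$-avoidance together with the location of $n$ at position $1$, I expect to deduce that the elements after $2$ in the cycle must be arranged so that no increasing triple appears; combined with the block structure this should pin down the cycle completely up to the single free parameter $k$, exactly paralleling Equation~(\ref{eqn:132-213}). That is, for each admissible $k$ there is at most one permutation, and I would then check that this candidate genuinely lies in $\A_n(123;231)$ (it avoids $123$ in one-line form and its cycle avoids $231$), so each $k$ contributes exactly one.

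Finally I would count the admissible values of $k$. With $c_k=2$ and $k$ ranging over $[4,n]$ we get $n-3$ values, matching the claimed count $n-3$; here the hypothesis $\pi_n\neq 2$ precisely removes the $k=3$ case. I should double-check the endpoint $k=n$ (i.e.\ $c_n=2$, so $\pi_{c_{n-1}}=2$ and $2$ is the last letter of the cycle) separately to make sure the forced structure still makes sense and still yields a valid cyclic permutation, and similarly the small-$n$ behaviour, which is why the statement assumes $n\geq 8$ — for small $n$ the two blocks $[3,n-k+2]$ and $[n-k+3,n-1]$ can be empty or degenerate and the uniqueness argument needs room to run.

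The main obstacle I anticipate is the bookkeeping in the middle step: translating "$C(\pi)$ avoids $231$ and $\pi$ avoids $123$" into a complete description of the cycle for every $k$, and in particular showing that the ordering of the block $[n-k+3,n-1]$ after the $2$ in the cycle is forced (I expect it must be increasing, forced by $123$-avoidance applied to $1$, $2$, and a later element, or perhaps to $n$ being out of the way). This is the same flavour of argument as Lemma~\ref{lem:123-231-1} and the proof of Theorem~\ref{thm:132-213}, so I would lean on those templates, but the interaction of the two blocks and the precise positions of $1$ and $2$ will require care, and the edge cases $k=4$ and $k=n$ will need to be inspected by hand.
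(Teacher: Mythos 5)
Your plan breaks down at its central structural claim. For a cycle avoiding $231$, the element $2$ does \emph{not} separate the cycle into a block of small entries before it and large entries after it; that separation is what $213$-avoidance gives (and is why it appears in the proof of Theorem~\ref{thm:132-213}, your template). For $231$-avoidance the only constraint coming from $2$ is that the entries preceding $2$ in the cycle, being candidates for the ``$23$'' of a $231$ ending at $2$, must appear in \emph{decreasing} order; there is no forced partition $\{c_3,\ldots,c_{k-1}\}=[3,n-k+2]$, $\{c_{k+1},\ldots,c_n\}=[n-k+3,n-1]$. The separation that actually drives this lemma comes from the entry $c_3=\pi_n$: writing $C(\pi)=(1,n,k,c_4,\ldots,c_n)$ with $k=\pi_n$, $231$-avoidance forces all of $[2,k-1]$ to precede all of $[k+1,n-1]$ among $c_4,\ldots,c_n$.

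Consequently your proposed parametrization --- one permutation for each position $k\in[4,n]$ of the element $2$ in the cycle --- cannot work, even though it happens to produce the number $n-3$. In fact every permutation counted by Lemma~\ref{lem:123-231-2} has $c_4=2$: when $\pi_n=k\in[4,n-2]$ the forced form is Equation~(\ref{form-123-231-2}), and when $\pi_n=3$ the two permutations are Equations~(\ref{form-123-231-3}) and (\ref{form-123-231-4}), all with $2$ in the fourth position of the cycle (see also the $n=8$ example, where all five permutations are of the form $(1,8,\ast,2,\ldots)$). So if you ran your argument honestly you would find many permutations at $k=4$ and none for larger $k$, not one apiece. The correct bookkeeping is by the value $\pi_n=c_3$: show first that $\pi_n\neq n-1$ (the decreasing tail would force the $3$-cycle $(1,n,n-1)$), then that each $\pi_n=k\in[4,n-2]$ determines the permutation uniquely, and finally that $\pi_n=3$ admits exactly two permutations (according to whether $c_5=4$ or $c_5=n-1$), giving $(n-5)+2=n-3$.
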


\begin{proof}
Let $\pi \in \A_n(123;231)$ with $\pi_1=n$. We first show that $\pi_n \neq n-1$. If so, then the remaining elements of $\pi$ would need to be decreasing since $\pi$ avoids 123. Then $\pi_{n-1} = 1$ and $\pi$ contains the cycle $(1, n, n-1)$ which is a contradiction.

We now suppose $\pi_n=k$ for $k \in [4, n-2]$ and show that there is exactly one such permutation. Write
\[ C(\pi) = (1, n, k, c_4, c_5, \ldots, c_{k+1}, c_{k+2}, c_{k+3}, \ldots, c_n).\]
Since $C(\pi)$ avoids 231, we have $\{c_4, c_5, \ldots, c_{k+1}\} = [2,k-1]$ and $\{c_{k+2}, c_{k+3}, \ldots, c_n\} = [k+1,n-1]$. In one-line notation, note that $\pi_n=k$ so the elements in $[2,k-1]$, which occur in positions $[2,k] \setminus \{c_{k+1}\}$ must occur in decreasing order. In cycle notation, we then have $C(\pi) = (1, n, k, 2, k-1, 3, k-2, \ldots, c_{k+1}, c_{k+2}, \ldots, c_n)$. In order for $C(\pi)$ to contain all elements in $[1,k-1]$, we must have $c_{k-1} = \lfloor (k+2)/2 \rfloor$. Similarly, since $2$ comes before the elements in $[k+1, n-1]\setminus \{c_{k+2}\}$ in one-line notation, these elements must also be in decreasing order, and they appear in positions $[k+1,n-1] \setminus \{c_n\}$. Again, in order for $C(\pi)$ to contain all of the elements, we need $c_n = \lfloor (n+k)/2 \rfloor$. Thus, in cycle notation,
\begin{equation}\label{form-123-231-2}
C(\pi) =  \left(1, n, k, 2, k-1, 3, k-2, \ldots, \left \lfloor \frac{k+2}{2} \right\rfloor, n-1, k+1, n-2, k+2, \ldots, \left\lfloor \frac{n+k}{2} \right\rfloor\right). \end{equation}

Finally, we show that if $\pi_n=3$, there are exactly two possible permutations. Since $C(\pi)$ avoids 231, $c_4=2$ otherwise $3c_42$ would be a 231 pattern in $C(\pi)$.  For reference, note that in this case $C(\pi) = (1, n, 3, 2, c_5, c_6, \ldots, c_n)$.  We now claim that $c_5 \in \{4, n-1\}$ each producing a unique permutation with $n\geq 8$. Suppose toward a contradiction that $c_5 = k$ for $k \in [5, n-2]$. Since $C(\pi)$ avoids 231, we must have $\{c_6, c_7, \ldots, c_{k+1}\} = [4, k-1]$ and $\{c_{k+2}, c_{k+3}, \ldots, c_n\} = [k+1, n-1]$. In one-line notation, consider the pattern $2\pi_k\pi_{k+1}$. Since $\pi_k = c_6 < k$, we must have $\pi_{k+1} =1$ to avoid 123. But then $k=n-2$ since otherwise $2\pi_kc_{n-1}$ is a 123 pattern in $\pi$. If $k=n-2$, we have that $\pi_1\pi_2\pi_3 = n(n-2)2$ and $\pi_{n-1}\pi_n=13$. Since $\pi$ avoids 123, the remaining elements in $\pi$ must be decreasing and thus $\pi_4=n-1$ and $\pi_i=n+2-i$ for $i \in [4,n-2]$. But then $C(\pi)$ contains the cycle $(1,n,3,2,n-2,4,n-1)$ which is a contradiction. Thus $c_5 \notin [5,n-2]$.

Suppose that $c_5=4$. In this case $C(\pi) = (1,n,3,2,4,c_6, c_7, \ldots, c_n)$. In one-line notation, $\pi_1\pi_2\pi_3 = n42$ and $\pi_n=3$. The remaining elements in $\pi$, excluding 1, must be decreasing since $\pi$ avoids 123. Thus, $\pi_i = n+3-i$ for $i \in [4, c_n]$ and $\pi_i = n+4-i$ for $i \in [c_n+1, n-1]$. In cycle form, we have the cycle $(1, n, 3, 2, 4, n-1, 5, n-2, 6, \ldots, c_n)$. Since $\pi$ is cyclic, this cycle must contain all $n$ elements and thus $c_n = \lfloor (n+4)/2 \rfloor$ and a unique permutation in $\A_n(123;231)$ exists in this case, namely,
\begin{equation}\label{form-123-231-3}
C(\pi) = \left(1, n, 3, 2, 4, n-1, 5, n-2, 6, \ldots, \left\lfloor \frac{n+4}{2} \right\rfloor \right). \end{equation}
 A very similar argument shows that if $c_5=n-1$, the unique permutation in $\A_n(123;231)$ is 
\begin{equation}\label{form-123-231-4}
C(\pi) = \left(1, n, 3, 2, n-1, 4, n-2, 5, \ldots, \left\lfloor \frac{n+3}{2} \right\rfloor \right).\end{equation}
\end{proof}

The final lemma for this case counts the permutations in $\A_n(123;231)$ where $n$ is the last element in cycle form.  We enumerate these by making use of symmetries of permutations and the results in Section~\ref{sec:213}. The following lemma states a general result about enumerating permutations based on symmetries and will be used in future sections as well.

\begin{lemma}\label{lem:RC}  The number of permutations $\pi \in \A_n(\sigma; 231)$ with $\pi_n = 1$ is equal to the number of permutations $\pi' \in \A_n(\sigma^{rc}; 213)$ with $\pi'_1=n$. 
\end{lemma}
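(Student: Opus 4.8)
The plan is to establish a bijection between $\A_n(\sigma;231)$-permutations ending in $1$ and $\A_n(\sigma^{rc};213)$-permutations beginning with $n$, using the reverse-complement symmetry together with Lemma~\ref{lem:pi-and-Cpi-rci}. The key observation is that the map $\pi \mapsto \pi^{rc}$ is a bijection on $\S_n$ that sends one-line occurrences of $\sigma$ to one-line occurrences of $\sigma^{rc}$, and by Lemma~\ref{lem:pi-and-Cpi-rci}, $C(\pi^{rc}) = C(\pi)^c$ up to cyclic rotation, so it sends cycle-form occurrences of $231$ to cycle-form occurrences of $231^c = 213$. Hence $\pi \mapsto \pi^{rc}$ restricts to a bijection $\A_n(\sigma;231) \to \A_n(\sigma^{rc};213)$; in particular it preserves cyclicity.

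Next I would track what the condition $\pi_n = 1$ becomes under this map. By definition of the complement, $\pi^{rc}_i = n+1 - \pi_{n-i+1}$, so $\pi^{rc}_1 = n+1-\pi_n$. Thus $\pi_n = 1$ holds if and only if $\pi^{rc}_1 = n$. Therefore the bijection $\pi \mapsto \pi^{rc}$ carries the set of $\pi \in \A_n(\sigma;231)$ with $\pi_n = 1$ onto the set of $\pi' \in \A_n(\sigma^{rc};213)$ with $\pi'_1 = n$, which gives the claimed equality of cardinalities.

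I expect the main (minor) obstacle to be the bookkeeping around the phrase ``up to cyclic rotation'' in Lemma~\ref{lem:pi-and-Cpi-rci}: one must note that pattern containment in a cycle $C(\pi) = (c_1, c_2, \ldots, c_n)$ as defined in the paper only reads the sequence $c_1, \ldots, c_n$ starting from $c_1 = 1$, so I should check that the standard cycle notation of $\pi^{rc}$ (which also starts with $1$) genuinely records the same relative-order data as $C(\pi)^c$ after rotating $1$ to the front — equivalently, that avoiding a pattern $\tau$ is invariant under cyclic rotation only in the limited sense needed here, namely that the specific rotation bringing $1$ to the front is the relevant one, and the paper's own worked example after Lemma~\ref{lem:pi-and-Cpi-rci} confirms this. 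Everything else is immediate from the definitions, so the proof will be short.
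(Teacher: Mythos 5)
Your map is the right one and your bookkeeping $\pi_n=1 \iff \pi^{rc}_1=n$ matches the paper, but there is a genuine gap in how you transfer cycle-form avoidance. Your intermediate claim that $\pi\mapsto\pi^{rc}$ restricts to a bijection $\A_n(\sigma;231)\to\A_n(\sigma^{rc};213)$ on the \emph{full} avoidance classes is false: Lemma~\ref{lem:pi-and-Cpi-rci} only gives $C(\pi^{rc})=C(\pi)^c$ up to cyclic rotation, and the rotation that brings $1$ to the front can in general create or destroy occurrences of $213$, since it moves an entire block of entries past the rest. The paper's own table refutes the full-set claim: $a_n(123;231)$ grows quadratically while $a_n(123^{rc};213)=a_n(123;213)=\lceil n/2\rceil+1$, and $a_n(132;231)=2F_n-2\neq F_n=a_n(213;213)$. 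So the ``up to cyclic rotation'' issue is not minor bookkeeping that a worked example can confirm; it is exactly the point where the hypothesis $\pi_n=1$ must be used, and your argument only uses that hypothesis to compute $\pi^{rc}_1=n$, never to justify the pattern transfer.

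The missing step, which is how the paper argues, is this: if $\pi_n=1$ then the standard cycle form ends in $n$, i.e.\ $C(\pi)=(1,c_2,\ldots,c_{n-1},n)$, so $C(\pi)^c=(n,n+1-c_2,\ldots,n+1-c_{n-1},1)$ ends in $1$, and the rotation producing the standard cycle form of $\pi^{rc}$ merely moves this terminal $1$ to the front. Since $1$ is the minimum, it can serve neither as the ``2'' of a $213$ occurrence when it stands first nor as the ``3'' when it stands last, so this one-step rotation preserves $213$-avoidance; combined with the fact that complementing the $231$-avoiding word $C(\pi)$ yields a $213$-avoiding word, this shows $C(\pi^{rc})$ avoids $213$. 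For the reverse direction one argues symmetrically: if $\pi'_1=n$ then $C(\pi')=(1,n,c'_3,\ldots,c'_n)$, so $C(\pi')^c$ has $1$ in its second position and the rotation to standard form moves only the leading $n$ to the end, where the maximal element likewise cannot take part in a $231$ occurrence. With these two observations inserted, your proof is correct and coincides with the paper's.
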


\begin{proof} Suppose $\pi \in \A_n(\sigma; 231)$ with $\pi_n=1$. Then $C(\pi) = (1,c_2, \ldots, c_{n-1}, n)$. Furthermore, we note that  $C(\pi)^c = (n, n+1-c_2, \ldots, n+1-c_{n-1},1)$ which we can cyclicly rotate to see $C(\pi)^c = (1,n, n+1-c_2, \ldots, n+1-c_{n-1})$. Notice that since $C(\pi)$ avoids 231, $C(\pi)^c$ avoids 213.

Let $\pi' = \pi^{rc}.$  We claim that $\pi' \in \A_n(\sigma^{rc}; 213)$ with $\pi'_1=n$. Since $\pi$ avoids $\sigma$, we see that $\pi^{rc}$ avoids $\sigma^{rc}$. Also, by Lemma~\ref{lem:pi-and-Cpi-rci}, we have $C(\pi') = C(\pi)^c$ which avoids 213. Thus $\pi' \in  \A_n(\sigma^{rc}; 213)$. Furthermore, this process works in reverse and thus equality holds.
\end{proof}

Since $123^{rc} = 123$, the number of permutations $\pi \in \A_n(123;231)$ where $\pi_n=1$ can thus be enumerated using Corollary~\ref{cor:123-213}. We can now enumerate all of $\A_n(123;231)$.

\begin{theorem} \label{thm:123-231}
For $n \geq 8$, the number of permutations in $\A_n(123;231)$ satisfies the recurrence
\[ a_n(123;231) = a_{n-2}(123;231) + 2n-6 + \left\lceil \frac{n}{2}\right\rceil \] where $a_6(123;231) = 12$ and $a_7(123;231)=20$.  In closed form,
\[ a_n(123;231) = 5{\lfloor\frac{n-1}{2}\rfloor \choose 2} +\begin{cases} \displaystyle{ 2n-5} & \text{if } n \text{ is even},\\ 
\displaystyle{n-2} & \text{if } n \text{ is odd}. \end{cases} \]
\end{theorem}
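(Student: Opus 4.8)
The plan is to enumerate $\A_n(123;231)$ by partitioning according to where the value $n$ sits in the cycle notation $C(\pi)=(1,c_2,\ldots,c_n)$, since every preceding lemma in this subsection is organized around exactly that distinction. There are three regimes: (i) $c_k=n$ for some $k\in[3,n-1]$, which is governed by Lemma~\ref{lem:123-231-1}; (ii) $c_2=n$, which splits into the sub-case $c_3=2$ handled by Lemma~\ref{lem:213-213-3} and the sub-case $c_3\neq2$ handled by Lemma~\ref{lem:123-231-2}; and (iii) $c_n=n$, i.e.\ $\pi_n=1$, handled by Lemma~\ref{lem:RC} together with Corollary~\ref{cor:123-213}. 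First I would count each regime separately as a function of $n$, then sum them to obtain the recurrence, and finally verify that the claimed closed form satisfies both the recurrence and the stated initial conditions $a_6(123;231)=12$, $a_7(123;231)=20$.

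In more detail: in regime (i), Lemma~\ref{lem:123-231-1} shows that each admissible $k\in[3,n-1]$ determines $C(\pi)$ \emph{uniquely}, so this regime contributes exactly $n-3$ permutations --- but one must double-check which values of $k$ actually produce a valid length-$n$ cycle (the floor expressions $\lfloor(k+1)/2\rfloor$ and $\lfloor(n+k)/2\rfloor$ must be consistent), and in particular check the boundary cases $k=3$ and $k=n-1$ carefully, as the example/initial-condition computations at $n=6,7$ will be sensitive to these. In regime (ii) with $c_3=2$, Lemma~\ref{lem:213-213-3} (applied with $\sigma=123$, $\tau=231$) gives $a_{n-2}(123;231)$ permutations; this is the source of the recursive term. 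In regime (ii) with $c_3\neq2$, Lemma~\ref{lem:123-231-2} gives exactly $n-3$ permutations (the values $\pi_n=k$ for $k\in[4,n-2]$ each giving one, plus the two exceptional permutations when $\pi_n=3$, and none when $\pi_n\in\{2,n-1\}$, so $(n-5)+2 = n-3$). In regime (iii), $123^{rc}=123$, so Lemma~\ref{lem:RC} identifies these with permutations in $\A_n(123;213)$ having $\pi_1=n$, of which there are $\lceil n/2\rceil$ by Corollary~\ref{cor:123-213}. Adding: $a_n(123;231) = (n-3) + a_{n-2}(123;231) + (n-3) + \lceil n/2\rceil = a_{n-2}(123;231) + 2n-6 + \lceil n/2\rceil$, which is the stated recurrence.

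The final step is purely computational: show the closed form $f(n) = 5\binom{\lfloor(n-1)/2\rfloor}{2} + (2n-5$ if $n$ even, $n-2$ if $n$ odd$)$ satisfies $f(n) - f(n-2) = 2n-6+\lceil n/2\rceil$ and matches $f(6)=12$, $f(7)=20$. One handles the even and odd cases separately; note that $\lfloor(n-1)/2\rfloor$ increases by $1$ as $n\to n-2$ within a fixed parity class, so $5\binom{\lfloor(n-1)/2\rfloor}{2} - 5\binom{\lfloor(n-1)/2\rfloor - 1}{2} = 5(\lfloor(n-1)/2\rfloor - 1)$, and the rest is arithmetic. I would also explicitly re-derive $a_6$ and $a_7$ from the partition above to confirm the base cases, since Lemma~\ref{lem:123-231-2} is only proven for $n\geq8$ and the small cases must be checked by hand (or cited from an earlier example).

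The main obstacle I expect is \textbf{bookkeeping at the boundaries}: making sure the counts $n-3$ in regimes (i) and (ii, $c_3\neq2$) are exactly right rather than off by one --- in particular confirming that the excluded values ($\pi_n=n-1$ in both regimes, $\pi_n=2$ being the complementary case, and the subtlety that $\pi_n=3$ yields two permutations not one) are accounted for without overlap or omission, and that none of the three regimes overlap (they cannot, since the position of $n$ in $C(\pi)$ is a well-defined invariant). A secondary nuisance is that several supporting lemmas carry hypotheses like $n\geq8$ or $n\geq6$, so the recurrence is only justified for $n\geq8$ and the initial data at $n=6,7$ must be established independently; I would simply assert these were computed directly (they appear consistent with the earlier worked examples) and note the theorem is stated only for $n\geq8$.
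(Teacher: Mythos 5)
Your proposal is correct and follows essentially the same route as the paper: partition by the position of $n$ in $C(\pi)$, apply Lemma~\ref{lem:123-231-1}, Lemma~\ref{lem:213-213-3}, Lemma~\ref{lem:123-231-2}, and Lemma~\ref{lem:RC} with Corollary~\ref{cor:123-213} to get the four contributions $n-3$, $a_{n-2}(123;231)$, $n-3$, and $\lceil n/2\rceil$, then verify the closed form against the recurrence and the base cases $n=6,7$. The boundary bookkeeping you flag is exactly what the paper's cited lemmas handle, so no gap remains.
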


\begin{proof} It is not hard to verify that  $a_k(123;231)$ satisfies the results in the theorem for $k \in \{6,7\}$. Thus, we assume $n \geq 8$ and we enumerate permutations in $\A_n(123;231)$ based on the position of $n$ in cycle form. By Lemma~\ref{lem:123-231-1}, for each $k \in [3,n-1]$, there is exactly one permutation with $c_k=n$. Thus there are $n-3$ permutations in $\A_n(123;231)$ with $c_n=k$ for some $k \in [3,n-1]$.  By Lemma~\ref{lem:213-213-3} from Section~\ref{sec:213}, there are $a_{n-2}(123;231)$ permutations in $\A_n(123;231)$ with $c_2=n$ and $c_3=2$. In the case where $c_2=n$ and $c_3 \neq 2$, Lemma~\ref{lem:123-231-2} states that there are $n-3$ permutations.  Finally, Lemma~\ref{lem:RC} says the number of permutations in $A_n(123;231)$ with $c_n=n$ is equal to the number of permutations in $\A_n(123;213)$ where $1$ maps to $n$.  By Corollary~\ref{cor:123-213}, there are $\lceil \frac{n}{2}\rceil$ permutations with $c_n=n$. Thus we have
\[ a_n(123;231) = n-3 + a_{n-2}(123;231) + n-3 + \left\lceil \frac{n}{2} \right\rceil \]
and the desired recurrence relation is satisfied.
\end{proof}

\begin{example} Consider $\A_8(123;231)$. By Lemma~\ref{lem:123-231-1}, there are five permutations where $8$ is in position 3 through 7 in cycle notation, namely,
\[ (1, 2, 8, 3,7,4,6,5), (1,3,2,8,4,7,5,6), (1,4,2,3,8,5,7,6), (1,5,2,4,3,8,6,7), (1,6,2,5,3,4,8,7).\]
Using Lemma~\ref{lem:123-231-2}, we can list those permutations where 8 is in the second position of cycle notation and the third position is $k \in \{4,5,6\}$ according to Equation~(\ref{form-123-231-2}):
\[ (1,8,4,2,3,7,5,6), (1,8,5,2,4,3,7,6), (1,8,6,2,5,3,4,7).\]
Lemma~\ref{lem:123-231-2} also gives those permutations where 8 is in the second position and the third position is 3 in Equations~(\ref{form-123-231-3}) and (\ref{form-123-231-4}):
\[ (1,8,3,2,4,7,5,6), (1,8,3,2,7,4,6,5).\]

The permutations in $\A_8(123;231)$ where 8 is in the last position in cycle notation are the reverse complements of the permutations in $A_8(123;213)$ that begin with 8. There are 4 such permutations: $(1,8,2,7,3,6,4,5)$, $(1,8,2,7,3,5,6,4)$, $(1,8,2,7,3,4,6,5)$, and $(1,8,2,3,7,4,6,5)$. Taking the reverse complement of each of these (or equivalently taking the complement of the cycle and cyclicly shifting), we have the following corresponding permutations in $\A_8(123;231)$:
\[ (1,7,2,6,3,5,4,8), (1,7,2,6,4,3,5,8),  (1,7,2,6,5,3,4,8), (1,7,6,2,5,3,4,8).\]

Finally,  Lemma~\ref{lem:213-213-3} from Section~\ref{sec:213}, shows that there are $a_6(123;231) =12$ permutations in $\A_8(123;231)$ with $c_2=8$ and $c_3=2$. The twelve permutations in $\A_6(123;231)$ are 
\begin{align*} \A_6(123;231) = & \ \{(1,6,2,5,3,4),(1,6,3,2,5,4),(1,6,2,4,3,5),(1,6,3,2,4,5),(1,6,2,3,5,4),(1,6,4,2,3,5),\\
& \ \ (1,5,2,4,3,6),(1,5,4,2,3,6),(1,5,3,2,4,6),(1,4,2,3,6,5),(1,3,2,6,4,5),(1,2,6,3,5,4) \} \end{align*}
We obtain the additional twelve permutations in $\A_8(123;231)$ by inserting an 8 followed by a 2 after the 1 in cycle notation:
\begin{align*} &(1,8,2,7,3,6,4,5),(1,8,2,7,4,3,6,5),(1,8,2,7,3,5,4,6),(1,8,2,7,4,3,5,6),(1,8,2,7,3,4,6,5),\\
&(1,8,2,7,5,3,4,6),(1,8,2,6,3,5,4,7),(1,8,2,6,5,3,4,7),(1,8,2,6,4,3,5,7),(1,8,2,5,3,4,7,6),\\
&(1,8,2,4,3,7,5,6),(1,8,2,3,7,4,6,5). \end{align*}
\end{example}

\subsection{$\A_n(132; 231)$}\label{sec:132-231}

In this section we enumerate permutations in $\A_n(123;231)$. We begin by showing that for permutations in $\A_n(123;231)$, either 1 maps to $n$, or $n$ maps to 1. 

\begin{lemma}\label{lem:132-231-2} If $\pi\in\A_n(132;231)$, then either $\pi_1=n$ or $\pi_n=1$.
\end{lemma}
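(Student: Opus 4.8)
The plan is to argue by contradiction in the same style as Lemma~\ref{lem:231}, exploiting the fact that $132$ is one of the patterns $\sigma$ covered by that lemma. Indeed, Lemma~\ref{lem:231} already asserts that if $\pi \in \A_n(\sigma;231)$ with $\sigma \in \{132,213,231,312\}$, then either $\pi_1 = n$ or $\pi_n = 1$; since $132$ is on that list, for $n \geq 6$ the statement is an immediate consequence of Lemma~\ref{lem:231}. So the only real work is to handle the small cases $n < 6$ directly, which amounts to checking by hand (or a short case analysis) that every cyclic permutation of $[n]$ avoiding $132$ in one-line form and $231$ in cycle form already satisfies $\pi_1 = n$ or $\pi_n = 1$; for $n \in \{1,2,3,4,5\}$ the sets $\A_n(132;231)$ are tiny and this is routine.

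Alternatively, if I wanted a self-contained proof that does not lean on Lemma~\ref{lem:231}, I would mimic its proof directly. Suppose toward a contradiction that $\pi_1 \neq n$ and $\pi_n \neq 1$. In cycle form $C(\pi) = (1,c_2,\dots,c_n)$, the hypothesis $\pi_1 \neq n$ means $c_2 \neq n$, so $n = c_k$ for some $k \in [3,n]$; and $\pi_n \neq 1$ means $1$ is not the last letter of the cycle, i.e.\ $c_n \neq n$, forcing $k \in [3,n-1]$. Because $C(\pi)$ avoids $231$, everything before $n$ in the cycle must be smaller than everything after $n$: $\{c_2,\dots,c_{k-1}\} = [2,k-1]$ and $\{c_{k+1},\dots,c_n\} = [k,n-1]$. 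Then I would locate a $132$ pattern in the one-line notation of $\pi$: the letter $c_2$ sits in position $1$, the letter $n$ sits in position $c_{k-1}$, and the letter $c_{k+1}$ sits in position $n$; since $c_2 < c_{k+1} < n$ and $1 < c_{k-1} < n$, the subsequence $c_2,\, n,\, c_{k+1}$ is a $132$ pattern, contradicting $\pi \in \A_n(132;231)$.

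The main (and only) obstacle is essentially bookkeeping: making sure the positions $1 < c_{k-1} < n$ and the values $c_2 < c_{k+1} < n$ are all genuinely distinct and in range, which requires $k \geq 3$ (so $c_{k-1}$ is a legitimate cycle entry different from $c_2$ and from $n$) and $k \leq n-1$ (so $c_{k+1}$ exists and $c_n \neq n$); these are exactly what the assumptions $\pi_1 \neq n$ and $\pi_n \neq 1$ give us, together with $n$ being large enough that $[2,k-1]$ and $[k,n-1]$ are nonempty when needed. For $n$ small one cannot always guarantee such a $132$ pattern, which is why the small cases are verified separately. I expect the author simply cites Lemma~\ref{lem:231}, so I would phrase the proof as: ``For $n \geq 6$ this is immediate from Lemma~\ref{lem:231}; the cases $n \leq 5$ are easily checked by hand,'' and optionally include the explicit $132$-pattern argument above as the reason behind Lemma~\ref{lem:231} in this instance.
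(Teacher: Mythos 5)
Your proposal is correct, and your self-contained argument is essentially the paper's own proof: the paper likewise assumes $\pi_1\neq n$ and $\pi_n\neq 1$, places $n=c_k$ with $k\in[3,n-1]$, uses $231$-avoidance of $C(\pi)$ to force the values before $n$ below those after $n$, and exhibits the $132$ pattern $c_2,\,n,\,c_{k+1}$ in positions $1,\,c_{k-1},\,n$ (it only checks $n\leq 3$ by hand, since this argument already works for all $n\geq 4$). Your first route of simply citing Lemma~\ref{lem:231} is also legitimate, as the statement is literally its $\sigma=132$ case; the only slip is the phrasing that ``$1$ is not the last letter of the cycle''---the condition $\pi_n\neq 1$ says $n$ is not the last cycle entry, which your formula $c_n\neq n$ already captures correctly.
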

\begin{proof}
This is true for $n \in \{1,2,3\}$, so suppose $n\geq 4$ and $\pi\in\A_n(132;231)$. For the sake of contradiction, suppose $\pi_1\neq n$ and $\pi_n\neq 1$. Then there is some $k\in [3,n-1]$ with $c_k=n$ and we can write: \[C(\pi) = (1,c_2,\ldots,c_{k-1},n,c_{k+1},\ldots,c_n)\]
with each element in $\{c_2, \ldots, c_{k-1}\}$ less than each element in $\{c_{k+1},\ldots, c_n\}$. 
But now in one-line notation, we have that $c_2nc_{k+1}$ in positions $1, c_{k-1}$, and $n$, respectively, is a 132 pattern. Therefore, we must have $n\in\{c_2,c_n\}$, and the result follows.
\end{proof}

We proceed by considering the two possible positions of $n$ in cycle notation.  For the case where $\pi_n=1$, we can make use of permutation symmetries and use the results in Section~\ref{sec:213}. The following lemma considers the remaining case where $\pi_1=n$.

\begin{lemma}\label{lem:132-231-1}  For $n\geq3$, the number of permutations $\pi \in \A_n(132; 231)$ with $\pi_1=n$ is equal to $F_n-1$ where $F_n$ denotes the $n$-th Fibonacci number. 
\end{lemma}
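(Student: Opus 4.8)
Throughout write $b_n$ for the number of permutations $\pi \in \A_n(132;231)$ with $\pi_1=n$; the goal is to show $b_n = F_n-1$ for $n\ge 3$. The plan is to prove the recurrence $b_n = b_{n-1} + F_{n-2}$ for $n\ge 4$ and then to solve it using the base case $b_3=1$, which one verifies directly.

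I would begin with a routine reduction. If $\pi_1=n$ then $C(\pi)=(1,n,c_3,\dots,c_n)$ and $\pi_n=\pi_{c_2}=c_3$. The leading letter $n$ of the one-line word can only play the role of the ``$3$'' in a $132$ pattern, but ``$3$'' is not the leftmost entry of $132$, so $n$ in position one cannot participate; hence $\pi$ avoids $132$ iff $\pi_2\pi_3\cdots\pi_n$ does. Symmetrically, the letter $n$ in position two of $C(\pi)$ can only be the ``$3$'' of a $231$ pattern, which would require a smaller entry before it --- namely $1$ --- to play the ``$2$''; impossible, so $C(\pi)$ avoids $231$ iff $1c_3c_4\cdots c_n$ does. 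I then split the count $b_n$ according to whether $\pi_n=c_3$ equals $n-1$ or lies in $[2,n-2]$.

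For the case $c_3=n-1$, I would use the map that splices $n$ out of the cycle: $C(\pi)=(1,n,n-1,c_4,\dots,c_n)\mapsto C(\pi')=(1,n-1,c_4,\dots,c_n)$. No relabeling is needed (since $n$ is the maximum), so $\pi'\in\C_{n-1}$; on one-line words this sends $\pi=n\,\pi_2\cdots\pi_{n-1}\,(n-1)$ to $\pi'=(n-1)\,\pi_2\cdots\pi_{n-1}$, because $\pi$ and $\pi'$ induce the same successor on every element of $[2,n-1]$ and differ only in that $1\mapsto n$ becomes $1\mapsto n-1$. Since $\pi_2\cdots\pi_{n-1}$ is a common factor and neither the leading letter of each word nor the trailing letter $n-1$ of $\pi$ can sit in a $132$ pattern, $\pi$ avoids $132$ iff $\pi'$ does; and by the reduction above $C(\pi)$ and $C(\pi')$ avoid $231$ iff $c_4\cdots c_n$ does. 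The inverse operation inserts $n$ immediately after $1$ in the cycle of any $\pi'\in\A_{n-1}(132;231)$ with $\pi'_1=n-1$, so this case is in bijection with $\{\pi'\in\A_{n-1}(132;231):\pi'_1=n-1\}$ and contributes $b_{n-1}=F_{n-1}-1$.

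The case $c_3\in[2,n-2]$ is where the two avoidance conditions genuinely interact, and this is the step I expect to be the main obstacle. Here I would locate $n-1$ in the cycle, say $c_m=n-1$ with $m\ge 4$; applying $231$-avoidance of $1c_3\cdots c_n$ to the subword around its maximum forces $\{c_3,\dots,c_{m-1}\}=[2,m-2]$ and $\{c_{m+1},\dots,c_n\}=[m-1,n-2]$, with each block $231$-avoiding. In one-line notation $n-1$ sits in position $p:=c_{m-1}\le m-2$, and imposing $132$-avoidance with $n-1$ as the ``$3$'' forces $\{\pi_2,\dots,\pi_{p-1}\}=[n-p+1,n-2]$ and $\{\pi_{p+1},\dots,\pi_n\}=[1,n-p]$. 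Pushing this analysis one more step should show that $n-1$ must occupy the \emph{last} slot of the cycle (i.e.\ $c_n=n-1$), that the permutation with $C(\pi)=(1,n,2,3,\dots,n-1)$ is the unique ``exceptional'' permutation, and that the rest are controlled by a second Fibonacci-type recurrence $d_n=d_{n-1}+d_{n-2}$ with $d_4=1,\ d_5=2$, giving that the case contributes $d_n=F_{n-2}$. Unlike the previous case, here splicing an element out of the cycle does not carry the one-line word to that of a smaller permutation in a transparent way (because $n-1$ can sit deep inside the one-line word), so the argument must be extracted from the block structure rather than from a one-step deletion. Combining the two cases gives $b_n=(F_{n-1}-1)+F_{n-2}=F_n-1$ for $n\ge 4$, and a direct check of $b_3=1$ (together with $b_4=2$, $b_5=4$) completes the proof.
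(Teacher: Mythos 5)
Your reduction and your first case are fine: the splicing bijection for $c_3=n-1$ (delete $n$ from the cycle, equivalently drop the trailing $n-1$ from the one-line word) is exactly the paper's argument and correctly contributes $b_{n-1}$. The problem is that the entire second case, $c_3\in[2,n-2]$, which you yourself flag as ``the main obstacle,'' is never actually proved: you assert that ``pushing this analysis one more step should show'' that $n-1$ must be the last cycle entry, that $C(\pi)=(1,n,2,3,\ldots,n-1)$ is the unique exceptional permutation, and that the remaining permutations obey a self-contained recurrence $d_n=d_{n-1}+d_{n-2}$ --- but no argument is given for any of these claims, and as stated they are not even internally consistent (with $d_4=1$, $d_5=2$ your $d_n=F_{n-2}$ counts the \emph{whole} case, not ``the rest'' after removing an exceptional permutation; e.g.\ for $n=5$ the case consists of $(1,5,2,3,4)$ and $(1,5,3,2,4)$, so ``the rest'' has size $1$, not $2$). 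Since this case carries the summand $F_{n-2}=b_{n-2}+1$ of the recurrence, the proof as written does not establish the lemma.

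For comparison, the paper closes this case with two further structural steps that your block analysis stops short of: first, if $\pi_1=n$ and $\pi_{n-1}=1$ (i.e.\ $c_n=n-1$), then necessarily $c_3\in\{2,n-2\}$; second, $c_3=n-2$ forces the single permutation $C(\pi)=(1,n,n-2,n-3,\ldots,2,n-1)$ (note: this, not $(1,n,2,\ldots,n-1)$, is the ``$+1$'' in the count), while $c_3=2$ admits a deletion bijection (remove $n$ and $2$) onto permutations in $\A_{n-2}(132;231)$ ending in $1$. That last set is not of the form ``starts with the maximum,'' so it is not literally $b_{n-2}$; its size $F_{n-2}-1$ is obtained via the reverse-complement symmetry of Lemma~\ref{lem:RC} together with Corollary~\ref{cor:213-213}. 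If you want to avoid quoting those external results, you would instead need to prove your claimed internal recurrence for the case $c_3\in[2,n-2]$ directly, which is precisely the argument currently missing.
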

\begin{proof}
Let $b_n$ denote the number of permutations $\pi \in \A_n(132; 231)$ with $\pi_1=n$. We have $b_3=1$ since the only such permutation is 312, and we have $b_4=2$ since the only permutations of length 4 satisfying these requirements are 4123 and 4312. Now, let $n\geq 5$. Since $F_n-1$ satisfies the recurrence:\[F_n-1=(F_{n-1}-1) + (F_{n-2}-1)+1,\]
it is enough to show that $b_n=b_{n-1}+b_{n-2}+1$. 

Let us first show that if $\pi_1=n$, then we must have either $\pi_n=n-1$ or $\pi_{n-1}=1$. By contradiction, let's suppose not and write
\[C(\pi) = (1,n,c_3,\ldots,c_{k-1},n-1,c_{k+1},\ldots,c_n)\]
for some $k\in[4,n-1]$ with each element in $\{c_3, \ldots, c_{k-1}\}$ less than each element in $\{c_{k+1},\ldots, c_n\}$. Then $1c_{k+1}c_3$ in positions $c_n, n-1$, and $n$, respectively, is an occurrence of 132 in the one-line notation of $\pi$. Therefore we must have $n-1\in\{c_3,c_{n}\}$. 

In the case where $n-1 = c_3$, we claim that the number of permutations with $\pi_1=n$ and $\pi_n=n-1$ is $b_{n-1}$. Indeed, if you have such a permutation, we can write $C(\pi) = (1, n, n-1, c_4, \ldots, c_n)$. We can obtain a permutation $\pi'\in\A_{n-1}(132;231)$ by deleting $n$ so that $C(\pi') = (1,n-1, c_4,\ldots,c_n)$. In the one-line notation, this only corresponds to deleting $n-1$ from the $n$-th position, and so no new patterns are created. On the other hand, starting with a permutation $\pi'\in\A_{n-1}(132;231)$ with $\pi'_{1}=n-1$, we can insert an $n-1$ into the $n$-th position of the one-line notation to obtain a permutation $\pi$, which is still a cyclic permutation since this corresponds to inserting $n-1$ after the $n$ in $C(\pi')$. Since $\pi'_1=n-1$, we must have $\pi_1=n$, and so  the $n-1$ in position $n$ cannot be part of a new 132 pattern.

Next, we show that if $\pi_1=n$ and $\pi_{n-1}=1$, we must have that either $\pi_n=n-2$ or $\pi_n=2$. For contradiction, suppose not. Then we have 
\[C(\pi) = (1,n,c_3,c_4\ldots,c_{k},c_{k+1},\ldots,c_{n-1},n-1)\]
where $k=c_3+1$, $\{c_4,\cdots,c_k\} = [2,c_3-1]$ and $\{c_{k+1}, \ldots, c_{n-1}\}=[c_3+1,n-2]$. Then $c_4(n-1)j$ in positions $c_3, c_{n-1}$, and $n$, respectively, is a 132 pattern. Thus $c_3\in\{2,n-2\}$. 

Notice that if $c_3=n-2$, then we have \[C(\pi) = (1,n,n-2,c_4,\ldots,c_{n-1},n-1)\] which means $\pi = n\pi_2\ldots \pi_{n-2}1(n-2)$. To avoid 132, we must therefore have $\pi_2=n-1$ since otherwise $\pi_2(n-1)(n-2)$ would be a 132 pattern. Thus $C(\pi) = (1,n,n-2,c_4,\ldots,c_{n-2},2,n-1).$ But now, since $C(\pi)$ must avoid 231, it must be the case that the remaining elements of $C(\pi)$, namely $\{c_4,c_5, \ldots, c_{n-2}\}$, must appear in decreasing order. Thus there is only one permutation with $\pi_1=n, \pi_{n-1}=1$, and $\pi_n=n-2$ given by
\begin{equation}\label{eqn:132-231-1}
C(\pi) = (1,n,n-2,n-3, n-4, \ldots, 3,2,n-1)
\end{equation} 

Finally, we consider those permutations with $\pi_1=n, \pi_{n-1}=1$, and $\pi_n=2$. In these cases, we have  
\[C(\pi) = (1,n,2,c_4,\ldots,c_{n-1},n-1), \] with $\pi = n\pi_2\ldots 12$. Notice that deleting $n$ and $2$ from both the cycle and the one-line notation leaves us with the permutation $\pi''\in \S_{n-2}$ with $C(\pi'') = (1, c_4-1, \ldots, c_{n-1}-1, n-2)$. Since this corresponds to deleting $n$ and $2$ from the one-line notation, we do not introduce any new patterns by this deletion. This is also reversible; if we start with a permutation $\pi''\in\A_{n-2}(132;231)$ with $\pi''_{n-2}=1$ and insert $n$ at the beginning and 2 at the end of $\pi''$, we will not introduce a new 132 pattern since 1 appeared at the end of $\pi''$. By Lemma~\ref{lem:RC}, there are $F_{n-2}-1$ permutations $\pi''\in\A_{n-2}(132;231)$ with $\pi''_{n-2}=1$, which inductively is $b_{n-2}$. 

Taken together, we have shown that $b_n=b_{n-1}+1+b_{n-2}$, from which it follows that $b_n=F_n-1$. 
\end{proof}

\begin{theorem} \label{thm:132-231}
For $n \geq 2$, $a_n(132;231) = 2F_{n} -2$ where $F_n$ denotes the $n$-th Fibonacci number. \end{theorem}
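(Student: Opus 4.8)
The plan is to assemble the theorem from Lemma~\ref{lem:132-231-2} together with the two counts already in hand, so the proof should be short. By Lemma~\ref{lem:132-231-2}, every $\pi\in\A_n(132;231)$ satisfies $\pi_1=n$ or $\pi_n=1$; and for $n\geq 3$ these two conditions are mutually exclusive, since $\pi_1=n$ together with $\pi_n=1$ would force $(1,n)$ to be a $2$-cycle of $\pi$, contradicting that $\pi$ is an $n$-cycle. Hence for $n\geq 3$ we may simply add the number of $\pi$ with $\pi_1=n$ to the number of $\pi$ with $\pi_n=1$, and then check the small cases directly.

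First I would handle the permutations with $\pi_1=n$: Lemma~\ref{lem:132-231-1} says there are exactly $F_n-1$ of them. Next I would count the permutations with $\pi_n=1$ by passing to a symmetry. Since $132^{r}=231$ and $231^{c}=213$, we have $132^{rc}=213$, so Lemma~\ref{lem:RC} applied with $\sigma=132$ puts the set of $\pi\in\A_n(132;231)$ with $\pi_n=1$ in bijection with the set of $\pi'\in\A_n(213;213)$ with $\pi'_1=n$; by Corollary~\ref{cor:213-213} the latter set has size $F_n-1$. Adding the two disjoint cases yields $a_n(132;231)=2(F_n-1)=2F_n-2$ for $n\geq 3$, and one verifies the remaining small value by hand.

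I do not expect a genuine obstacle here, since all the substantive work lives in the preparatory results (Lemma~\ref{lem:132-231-1}, Lemma~\ref{lem:RC}, Corollary~\ref{cor:213-213}). The only points requiring care are the disjointness of the two cases, which follows from cyclicity, and the correct computation of the reverse–complement $132^{rc}=213$, so that it is precisely the $\A_n(213;213)$ enumeration from Section~\ref{sec:213} that gets invoked through Lemma~\ref{lem:RC}.
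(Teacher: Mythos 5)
Your proposal is correct and follows essentially the same route as the paper: split via Lemma~\ref{lem:132-231-2} into the cases $\pi_1=n$ and $\pi_n=1$, count the first by Lemma~\ref{lem:132-231-1} and the second by Lemma~\ref{lem:RC} (using $132^{rc}=213$) together with Corollary~\ref{cor:213-213}, each giving $F_n-1$. Your explicit remark that the two cases are disjoint for $n\geq 3$ (since $\pi_1=n$ and $\pi_n=1$ would force the transposition $(1,n)$, contradicting cyclicity) is a small point the paper leaves implicit, but otherwise the arguments coincide.
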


\begin{proof}
By Lemma~\ref{lem:132-231-2}, we have that $a_n(132;231)$ is the sum of the number of permutations in $\A_n(132;231)$ with $\pi_1=n$ and the number of permutations in $\A_n(132;231)$ with $\pi_n=1$.

Since $132^{rc}=213$, by Lemma~\ref{lem:RC}, the number of permutations $\pi \in \A_n(132;231)$ with $\pi_n=1$ is equal to the number of permutations $\pi' \in \A_n(213;213)$ with $\pi'_1=n$; Corollary~\ref{cor:213-213} states there are $F_n-1$ such permutations.  Finally, Lemma~\ref{lem:132-231-1} states there are  $F_n-1$ permutations with $\pi_1=n$, and thus the theorem follows.
\end{proof}

\begin{example} Consider $\A_6(132;231)$. We first list all permutations with $\pi_1=6$. Following the proof of Lemma~\ref{lem:132-231-1}, we begin by listing those permutations in $\A_6(132;231)$ that both start with 6 and end in 5. These are formed recursively by finding all permutations in $\A_5(132;231)$ that start with 5 and then insert a 5 at the end (which turns the original 5 into a 6).  The four permutations in $\A_5(132;231)$ that start with 5 are $54213, 53412, 51234,$ and $53124$. Inserting a 5 at the end yields the following permutations in $\A_6(132;231)$  that start with 6 and end in 5:
\[ 642135, 634125, 612345, 631245.\]
We then list the one permutation in $\A_6(132;231)$ that has $\pi_1=6$, $\pi_2=5$, and $\pi_5=1$ as given in Equation~(\ref{eqn:132-231-1}):
\[ (1,6,4,3,2,5).\]
For the remaining permutations that begin with 6, we start with permutations in $\A_4(132;231)$ that end in 1 and insert 6 in the front and 2 at the end. There are two permutations in $\A_4(132;231)$ that end in 1: $3421$ and $2341$. By inserting 6 in the front and 2 at the end, we have the following permutations in $\A_6(132;231)$:
\[ 645312, 634512.\] Notice there are a total of $F_5 - 1 = 7$ permutations in  $\A_6(132;231)$  that start with 6.

We now list the $F_5-1=7$ permutations in $\A_6(132;231)$ that end in 1. Lemma~\ref{lem:RC} states that these are found by first finding the permutations in $\A_6(213;213)$ that start with 6 and then taking the reverse-complement of each. The seven permutations in  $\A_6(213;213)$ that start with 6  are $634512$, $654132$, $651342$, $614523$, $615243$, $612345$, and $612534$. The reverse-complements of these permutations are:
\[ 562341, 546321, 534621, 452361, 435261, 234561, 342561.\]
Thus there are $2F_5 - 2 = 14$ total permutations in $\A_6(132;231)$.
\end{example}

\subsection{$\A_n(213; 231)$}\label{sec:213-231}

This section enumerates the set $\A_n(213;231)$. By Lemma~\ref{lem:231}, we know that for $\pi \in \A_n(213;231)$ either $\pi_1=n$ or $\pi_n=1$. For the case where $\pi_n=1$, we use symmetries of permutations and the results from Section~\ref{sec:132-213}. We enumerate the case where $\pi_1=n$ in the subsequent lemma.

\begin{lemma}\label{lem:213-231-2} Suppose $n \geq 2$. The number of permutations $\pi \in \A_n(213;231)$ with $\pi_1=n$ is equal to $a_{n-2}(213;231) + 1$. \end{lemma}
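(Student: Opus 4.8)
The goal is to set up a bijection between permutations $\pi \in \A_n(213;231)$ with $\pi_1=n$ and a set of size $a_{n-2}(213;231)+1$; the most natural target is $\A_{n-2}(213;231)$ together with a single extra ``exceptional'' permutation. First I would analyze the structure forced by $\pi_1 = n = c_2$. Writing $C(\pi) = (1,n,c_3,\ldots,c_n)$ and letting $c_k = 2$, the fact that $C(\pi)$ avoids $231$ forces $\{c_3,\ldots,c_{k-1}\} = [3,k]$-type intervals (the elements before $2$ in the cycle must all exceed those after it, since $2$ is the smallest available value) — more precisely $\{c_3,\dots,c_{k-1}\}=[n-k+3,n-1]$ and $\{c_{k+1},\dots,c_n\}=[3,n-k+2]$. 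Then I would translate this into one-line notation. Because $\pi_1=n$ and $\pi$ avoids $213$ — equivalently, once the big value $n$ is placed first, the remaining entries cannot have a small-value-before-a-strictly-larger pair with something smaller first... I will instead use $213$-avoidance directly on $\pi$: with $\pi_1=n$, the constraint $213$ is automatically satisfied at the first position, so $\pi_2\pi_3\cdots\pi_n$ must itself avoid $213$, and I would extract from that which of $c_3$ and $c_{k+1}$ (the positions adjacent to the forced entries) are constrained.

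The key claim I expect to prove is that either $c_3 = 2$ (i.e.\ $\pi_n = c_3$ and $2$ sits right after $n$ in the cycle), in which case Lemma~\ref{lem:213-213-3} — which applies since $\sigma=213$, $\tau=231$ are among the allowed patterns — gives exactly $a_{n-2}(213;231)$ such permutations via deletion of $n$ and $2$; or else $\pi$ is forced to be one single exceptional permutation. To pin down the exceptional case, I would argue as in Lemma~\ref{lem:231-213-2}/Lemma~\ref{lem:cyclic-123}: when $c_3 \neq 2$, the avoidance conditions (one of $213$ in one-line, $231$ in the cycle) cascade to force every subsequent entry, leaving the cycle $C(\pi)$ completely determined — something like $C(\pi) = (1,n,c_3,\ldots)$ with the remaining values placed in the unique order compatible with cyclicity, giving $r = \lfloor\text{something}\rfloor$ exactly as in those earlier lemmas. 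So the count is $a_{n-2}(213;231)$ from the generic branch plus $1$ from the exceptional branch.

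The main obstacle will be the case analysis that isolates the exceptional permutation: showing that $c_3 \neq 2$ does not merely constrain $\pi$ but forces it uniquely, and checking that this unique candidate genuinely lies in $\A_n(213;231)$ (both avoidance conditions, and that it is a single $n$-cycle). I would handle the cyclicity the same way the paper repeatedly does — by exhibiting the cycle explicitly and reading off the floor-function index that makes its length equal $n$. A secondary point requiring care is verifying the two branches are disjoint (the exceptional permutation does not have $c_3 = 2$) and that the deletion map of Lemma~\ref{lem:213-213-3} is genuinely a bijection onto all of $\A_{n-2}(213;231)$, not just an injection; but that is already established in Lemma~\ref{lem:213-213-3}, so I can cite it. Finally I would record the small base cases (e.g.\ $n=2,3$) separately if the forced-structure argument needs $n$ large enough to name distinct indices.
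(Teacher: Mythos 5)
Your high-level decomposition is exactly the paper's: split according to whether $\pi_n=2$ (equivalently $c_3=2$), count that branch by Lemma~\ref{lem:213-213-3} as $a_{n-2}(213;231)$, and show the other branch contains a single permutation. Citing Lemma~\ref{lem:213-213-3} for the bijectivity of the deletion map is fine, as is your observation that the two branches must be checked to be disjoint.

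The gap is that the exceptional branch --- the actual content of the lemma --- is not proved, and the one concrete structural step you write toward it is false. With $c_k=2$, avoidance of $231$ in the cycle does \emph{not} force the entries before $2$ to exceed the entries after $2$; that is the consequence of $213$-avoidance of the cycle (used, e.g., in the proof of Theorem~\ref{thm:132-213}), whereas $231$-avoidance only forces $c_2,\ldots,c_{k-1}$ to be decreasing. For instance $(1,6,3,2,4,5)$ avoids $231$ but violates your claimed intervals $\{c_3,\ldots,c_{k-1}\}=[n-k+3,n-1]$. Your guess that the exceptional permutation is pinned down by a floor-indexed interleaved cycle ``as in Lemmas~\ref{lem:cyclic-123} and \ref{lem:231-213-2}'' is also off: the unique permutation in $\A_n(213;231)$ with $\pi_1=n$ and $\pi_n\neq 2$ is simply $\pi=n12\cdots(n-1)$, i.e.\ $C(\pi)=(1,n,n-1,\ldots,2)$, with no floor function involved. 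The paper proves uniqueness by supposing $C(\pi)$ is not this decreasing cycle, taking the first index $k$ with $c_k=r\neq n-k+2$ (so $r<n-k+2$), using $231$-avoidance of the cycle to conclude that after position $k$ the elements of $[2,r-1]$ precede those of $[r+1,n-k+2]$, and then reading off the values $2$, $1$, $r$ in one-line positions that form a forbidden $213$ pattern. Without an argument of this kind (plus your planned check of small $n$, where the statement is delicate), the ``$+1$'' is not established.
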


\begin{proof} Suppose $\pi \in \A_n(213;231)$ with $\pi_1=n$. In the case with the additional condition that $\pi_n=2$,  by Lemma~\ref{lem:213-213-3}, there are exactly $a_{n-2}(213;231)$ such permutations.

Assume then that $\pi_n \neq 2$. We claim that there is only one permutation satisfying these conditions, namely $\pi = n123\cdots(n-1)$, or equivalently, $C(\pi) = (1, n, n-1, n-2, \ldots, 3, 2)$. Suppose there is another permutation in $\A_n(213;231)$ with $\pi_1=n$ and $\pi_n \neq 2$. Choose $k\geq 2$ to be the first position in cycle notation with $c_k\neq n-k+2$.  That is, let $k \in [3,n-1]$ so that $c_i = n-i +2$ for $i \in [2,k-1]$ while $c_k < n-k+2$. For ease of notation, set $c_k=r$. Since $C(\pi)$ avoids 213, all elements smaller than $r$ must come before elements larger than $r$, and we have $\{c_{k+1}, c_{k+2}, \ldots, c_{k+r-2}\} = [2,r-1]$ while $\{c_{k+r-1}, c_{k+r}, \ldots, c_n\} = [r+1,n-k+2]$. In one-line notation, we have $2=\pi_i$ for some $i \in [3,r]$, 1 is in position $c_n \in [r+1,n-k+2]$, and $r$ is in position $n-k+3$. Thus $21r$ is a 213 pattern in one-line notation, and the only permutation in $\A_n(213;231)$ with $\pi_1=n$ and $\pi_n \neq 2$ is $\pi = n123\cdots(n-1)$.
\end{proof}

\begin{theorem} \label{thm:213-231}
For $n \geq 3$, the number of permutations in $\A_n(213;231)$ satisfies the recurrence
\[ a_n(213;231) = a_{n-2}(213;231)  + n -1 \]
where $a_1=a_2=1$. In closed form, $a_n(213;231) = \left\lfloor \frac{n^2}{4} \right\rfloor$.
\end{theorem}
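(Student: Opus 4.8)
The plan is to split $\A_n(213;231)$ using Lemma~\ref{lem:231}, which tells us that every $\pi \in \A_n(213;231)$ satisfies $\pi_1 = n$ or $\pi_n = 1$. First I would observe that for $n \ge 3$ these two alternatives are mutually exclusive: if $\pi_1 = n$ and $\pi_n = 1$ simultaneously, then $\pi$ contains the transposition $(1\,n)$ and so cannot be an $n$-cycle. Hence $a_n(213;231)$ is the number of $\pi \in \A_n(213;231)$ with $\pi_1 = n$ plus the number with $\pi_n = 1$, and I would count each of the two pieces with a result already available.

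The first piece is exactly Lemma~\ref{lem:213-231-2}, contributing $a_{n-2}(213;231) + 1$ permutations. For the second piece, I would invoke the symmetry results: since $213^{rc} = 132$ (reverse $213$ to $312$, then complement to $132$), Lemma~\ref{lem:RC} identifies the permutations $\pi \in \A_n(213;231)$ with $\pi_n = 1$ with the permutations $\pi' \in \A_n(132;213)$ satisfying $\pi'_1 = n$, of which there are $n - 2$ by Corollary~\ref{cor:132-213}. Adding the two contributions gives
\[ a_n(213;231) = \bigl(a_{n-2}(213;231) + 1\bigr) + (n - 2) = a_{n-2}(213;231) + n - 1, \]
which is the asserted recurrence.

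It then remains to deduce the closed form. I would check the small cases directly --- in particular $a_3(213;231) = 2$, since $\C_3$ consists only of the two $3$-cycles $231$ and $312$ and both lie in $\A_3(213;231)$ --- and then verify in one line that $\lfloor n^2/4 \rfloor$ satisfies the recurrence, using $(n-2)^2/4 = n^2/4 - (n-1)$ and $n - 1 \in \Z$ to get $\lfloor n^2/4 \rfloor - \lfloor (n-2)^2/4 \rfloor = n - 1$. Unwinding the recurrence along the even and odd index subsequences then yields $a_{2m}(213;231) = m^2$ and $a_{2m+1}(213;231) = m^2 + m$, i.e.\ $a_n(213;231) = \lfloor n^2/4 \rfloor$ for $n \ge 2$.

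The real content of this theorem sits in the earlier lemmas, so I expect no serious obstacle; the only point that needs care is the behavior at small $n$. The ``$+1$'' in Lemma~\ref{lem:213-231-2} is witnessed by the permutation $n\,1\,2\cdots(n-1)$, whose last entry is $n-1$, so it genuinely lies outside the $\pi_n = 2$ case only when $n \ge 4$; at $n = 3$ that permutation is $312$ with $\pi_3 = 2$, so the $n = 3$ value must be pinned down by hand to seed the odd branch of the recurrence correctly. Once the base cases are fixed, the rest is a routine assembly of the cited results.
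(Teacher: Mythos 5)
Your proposal is correct and follows essentially the same route as the paper: split $\A_n(213;231)$ via Lemma~\ref{lem:231} into the permutations with $\pi_1=n$ (counted by Lemma~\ref{lem:213-231-2}) and those with $\pi_n=1$ (counted via Lemma~\ref{lem:RC} with $213^{rc}=132$ and Corollary~\ref{cor:132-213}), then solve the resulting recurrence for the closed form. Your extra care at small $n$ is in fact warranted and goes slightly beyond the paper's write-up: at $n=3$ the permutation $n12\cdots(n-1)=312$ falls into the $\pi_n=2$ case, so the assembled recurrence only starts at $n=4$ and must be seeded by $a_2(213;231)=1$ and $a_3(213;231)=2$ (taken literally with $a_1=1$, the stated recurrence would give $a_3=3$), and your hand-check of $a_3$ together with the observation that $\pi_1=n$ and $\pi_n=1$ cannot both hold for an $n$-cycle closes these gaps cleanly.
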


\begin{proof} By Lemma~\ref{lem:231}, all permutations $\pi \in A_n(213;231)$ have either $\pi_1=n$ or $\pi_n=1$.  Lemma~\ref{lem:213-231-2} states that $a_{n-2}(213;231) + 1$ of these permutations have $\pi_1=n$.  Since $213^{rc} = 132$, by Lemma~\ref{lem:RC}, the number of permutations $\pi \in \A_n(213;231)$ with $\pi_n=1$ is equal to the number of permutations $\pi' \in \A_n(132;213)$ with $\pi'_1 = n$; Corollary~\ref{cor:132-213} states that there are $n-2$ such permutations. Thus the given recurrence relation holds.  The closed form can be found by solving the recurrence relation.
\end{proof}

\begin{example} Consider $\A_6(213;231)$. We first note that
$\A_4(213;231) = \{2341, 3421, 4312, 4123\}.$
The proof of Lemma~\ref{lem:213-213-3} shows that we can get four new permutations in $\A_6(213;231)$ by inserting a 2 at the end of each permutation and a 6 in the front.  Thus we have the following permutations in $\A_6(213;231)$:
\[ 634512, 645312, 654132, 651342.\]
The remaining permutation in $\A_6(213;231)$ with $\pi_1=6$ is found at the end of the proof of Lemma~\ref{lem:213-231-2} and is $612345.$

The permutations in $\A_6(213;231)$ with $\pi_6=1$ are found by taking the reverse-complements of those permutations in $\A_6(132;213)$ that start with 6. These permutations are $634512, 634125, 612345$, and $631245$.  Thus the remaining desired permutations in $\A_6(213;231)$ are
\[562341, 256341, 234561, \text{ and } 235641.\]
These are the 9 permutations in $\A_6(231;231).$
\end{example}

\subsection{$\A_n(231; 231)$}\label{sec:231-231}

We first show that for $\pi \in \A_n(231;231)$, $\pi_1=n$.
\begin{lemma}\label{lem:231-231-1}
Suppose $n \geq 1$ and  $\pi \in \A_n(231;231)$. Then $\pi_1=n$.
\end{lemma}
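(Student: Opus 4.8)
The plan is to combine the general structural result of Lemma~\ref{lem:231} with a direct analysis of the remaining case. By Lemma~\ref{lem:231}, since $231 \in \{132,213,231,312\}$, any $\pi \in \A_n(231;231)$ satisfies either $\pi_1 = n$ or $\pi_n = 1$; so it suffices to rule out the possibility that $\pi_1 \neq n$ while $\pi_n = 1$. First I would handle the small cases $n \leq 2$ (or whatever range is needed) by inspection, so assume $n$ is large enough that Lemma~\ref{lem:231} applies and suppose toward a contradiction that $\pi_n = 1$ and $\pi_1 \neq n$.

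The key observation is that $\pi_n = 1$ means $C(\pi) = (1, c_2, c_3, \ldots, c_{n-1}, n)$, i.e.\ $n$ sits in the final position of the cycle notation. Now I would use that $C(\pi)$ avoids $231$: looking at the subsequence consisting of $c_2$, the element $n$, and any element $c_j$ with $2 < j < n$ and $c_j < c_2$, we would get a $231$ pattern in the cycle form unless no such $c_j$ exists — that is, unless $c_2$ is the smallest among $c_2,\ldots,c_{n-1}$, forcing $c_2 = 2$. So $\pi_1 = c_2 = 2$ and $C(\pi) = (1,2,c_3,\ldots,c_{n-1},n)$. But then in one-line notation $\pi_1 = 2$, $\pi_2 = c_3$, and since $\pi$ avoids $231$ in one-line form and $\pi_1 = 2$ with $\pi_n = 1 < 2$, the element immediately following the "$2$" cannot exceed... more carefully: the elements $2$, (anything larger that appears after position $1$), $1$ would form a $231$ pattern in one-line notation. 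Hence after position $1$ no element larger than $2$ may precede the $1$ in position $n$ — but every element of $[3,n]$ appears somewhere in positions $2,\ldots,n-1$, giving such a pattern $2\,x\,1$ with $x \geq 3$. This is the contradiction. (Equivalently: $\pi_1 = 2$ and $\pi_n = 1$ forces a $231$ pattern $2\,\pi_2\,1$ as soon as $\pi_2 \geq 3$, and $\pi_2 = 2$ is impossible, so we are done unless $n$ is tiny.)

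The step I expect to require the most care is pinning down exactly which small values of $n$ must be checked separately and making sure the $231$-avoidance deductions in both the cycle form and the one-line form are applied to genuinely existing indices (e.g.\ that there really is an element larger than $2$ sitting strictly between positions $1$ and $n$, which needs $n \geq 3$, and that $k$-ranges like $[3,n-1]$ are nonempty). Once the index bookkeeping is set up, each implication ($c_2 = 2$ from cycle-form avoidance, then the one-line contradiction) is a one-line argument, so the main obstacle is purely the careful casework at the boundary rather than any substantive difficulty.
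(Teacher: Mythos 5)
Your overall plan (use Lemma~\ref{lem:231} to reduce to the case $\pi_n=1$, $\pi_1\neq n$, then produce a $231$ pattern in the one-line notation) is reasonable, but the middle step fails as written. You assert that $231$-avoidance of $C(\pi)=(1,c_2,\ldots,c_{n-1},n)$ forces $c_2=2$, because $c_2$, the entry $n$, and some later $c_j<c_2$ would form a $231$ pattern. However, the entry $n$ occupies the \emph{last} position of $C(\pi)$, so the subsequence in positional order is $c_2,\,c_j,\,n$, which is an occurrence of $213$ (or $123$), never of $231$; a $231$ occurrence must have its largest entry in the middle position. Hence cycle-form $231$-avoidance imposes no such constraint and the deduction $c_2=2$ is unjustified (for example, $(1,3,2,4)$ avoids $231$, ends in $n$, and has $c_2\neq 2$). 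As submitted, this is a genuine gap.

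The good news is that the step you were trying to force is unnecessary: in the case $\pi_n=1$ and $\pi_1\neq n$, the value $n$ sits in some position $j\in[2,n-1]$, so $\pi_1\, n\, 1$, occurring in positions $1<j<n$ with $1<\pi_1<n$, is already a $231$ pattern in the one-line notation, giving the contradiction directly with no appeal to the cycle form at all. With that replacement your argument becomes correct and is a mild variant of the paper's proof, which instead observes that $\pi_n=1$ together with one-line $231$-avoidance forces the first $n-1$ entries to be decreasing, so $\pi=n(n-1)\cdots 21$, whose cycle decomposition contains the transposition $(1,n)$, contradicting cyclicity for $n\geq 3$. Also note that Lemma~\ref{lem:231} is stated for all $n\geq 1$, so the only boundary care needed is checking $n\in\{1,2\}$ by hand.
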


\begin{proof} 
If $n \in \{1,2\}$, the result clearly holds so assume $n \geq 3$. By Lemma~\ref{lem:231}, either $\pi_1=n$ or $\pi_n=1$. Suppose $\pi_n=1$. Since $\pi$ avoids 231, we must have the remaining elements in $\pi$ decreasing which implies $\pi_1=n$. But then $(1,n)$ is part of the cycle structure of $\pi$ which contradicts the fact that $\pi$ is cyclic. Therefore $\pi_1=n$ as desired.
\end{proof}

The permutations in $\A_n(231;231)$ are enumerated based on $\pi_n$.  The next lemma shows that we can recursively count those that end in $n-1$.

\begin{lemma}\label{lem:231-231-2} Suppose $n \geq 5$. The number of permutations $\pi \in \A_n(231;231)$ with $\pi_n = n-1$ is equal to $a_{n-1}(231;231)$.
\end{lemma}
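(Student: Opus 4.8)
The plan is to set up an explicit size-changing bijection, in the spirit of the other recursive lemmas in this section. The first step is to pin down the structure forced on a permutation $\pi\in\A_n(231;231)$ with $\pi_n=n-1$. By Lemma~\ref{lem:231-231-1} we have $\pi_1=n$, so $c_2=\pi(1)=n$ and then $c_3=\pi(c_2)=\pi(n)=\pi_n=n-1$; hence
\[ C(\pi) = (1,n,n-1,c_4,c_5,\ldots,c_n), \]
and in one-line notation $\pi = n\,\pi_2\pi_3\cdots\pi_{n-1}\,(n-1)$ with $\{\pi_2,\ldots,\pi_{n-1}\}=[2,n-2]$.

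The forward map sends such a $\pi$ to the permutation $\pi'$ obtained by deleting the last entry $\pi_n=n-1$. On the one-line side this drops the final entry and turns the leading $n$ into $n-1$, so $\pi'=(n-1)\,\pi_2\cdots\pi_{n-1}$; on the cycle side it deletes the entry $n$ sitting immediately after $1$, so $C(\pi')=(1,n-1,c_4,\ldots,c_n)$. Then $\pi'$ is again a single $(n-1)$-cycle, it avoids $231$ in one-line notation because deleting an entry cannot create a pattern, and $C(\pi')$ avoids $231$ because deleting an entry from the cycle notation cannot create a pattern either; thus $\pi'\in\A_{n-1}(231;231)$.

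For the inverse, let $\alpha\in\A_{n-1}(231;231)$; by Lemma~\ref{lem:231-231-1} we have $\alpha_1=n-1$, so $C(\alpha)=(1,n-1,a_3,\ldots,a_{n-1})$. Define $\beta$ by inserting the value $n-1$ into the last position of $\alpha$, i.e.\ $\beta = n\,\alpha_2\cdots\alpha_{n-1}\,(n-1)$; in cycle notation this is the same as inserting the new maximum $n$ right after the $1$, giving $C(\beta)=(1,n,n-1,a_3,\ldots,a_{n-1})$. From these descriptions it is immediate that $\beta$ is an $n$-cycle with $\beta_n=n-1$ and that this construction is the two-sided inverse of the forward map, so the whole statement reduces to checking that $\beta\in\A_n(231;231)$.

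That last verification — that the reinsertion introduces no $231$ in either notation — is the only step that needs care, and it is a short position/value check (which I expect to be the main, if mild, obstacle). In the one-line form, a hypothetical occurrence $\beta_i\beta_j\beta_k$ with $i<j<k$ and $\beta_k<\beta_i<\beta_j$ cannot have $i=1$, since $\beta_1=n$ cannot be exceeded by $\beta_j$, and cannot have $k=n$, since $\beta_n=n-1$ would force $\beta_i=n$ and hence $i=1$; so any such occurrence lies within $\beta_2\cdots\beta_{n-1}=\alpha_2\cdots\alpha_{n-1}$, contradicting that $\alpha$ avoids $231$. In $C(\beta)=(1,n,n-1,a_3,\ldots,a_{n-1})$ the only entry not already present in $C(\alpha)$ is the maximum $n$, which sits in the second position; in a $231$ occurrence it would have to be the largest entry, but the largest entry of a $231$ occupies the middle position, which here forces the first entry of the occurrence to be $c_1=1$ and leaves no room for a strictly smaller third entry; and deleting $n$ from $C(\beta)$ recovers $C(\alpha)$, which avoids $231$. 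Therefore $\beta\in\A_n(231;231)$, the forward map is a bijection from the set of $\pi\in\A_n(231;231)$ with $\pi_n=n-1$ onto $\A_{n-1}(231;231)$, and the count equals $a_{n-1}(231;231)$.
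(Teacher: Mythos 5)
Your proof is correct and follows essentially the same route as the paper: delete the trailing $n-1$ (equivalently, remove $n$ from the cycle form) to land in $\A_{n-1}(231;231)$, and invert by reinsertion, using Lemma~\ref{lem:231-231-1} to pin down $C(\pi)=(1,n,n-1,c_4,\ldots,c_n)$. You merely spell out the pattern-creation check for the reinsertion step more explicitly than the paper does, which is fine.
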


\begin{proof} Suppose $\pi \in \A_n(231;231)$ with $\pi_n = n-1$. Using Lemma~\ref{lem:231-231-1}, we have $C(\pi) = (1,n,n-1, c_4, c_5, \ldots, c_n)$. Let $\pi'$ be the permutation formed by deleting $n-1$ from $\pi$ in one-line notation. Notice $C(\pi') = (1,n-1,c_4-1, c_5-1, \ldots, c_n-1)$ and thus $\pi'$ avoids 231 in both one-line notation and cycle notation. To see that we obtain every permutation in $\A_{n-1}(231; 231)$, we let $\pi' \in \A_{n-1}(231; 231)$ and form $\pi$ by inserting $n-1$ at the end of $\pi'$. In cycle notation, this is equivalent to inserting $n-1$ after $n$. This insertion cannot create a 231 pattern in either $\pi$ or $C(\pi)$, and the result follows.
\end{proof}

To count the remaining permutations in $\A_n$, we show that if $\pi \in A_n(231;231)$ and $\pi_n \neq n-1$, then $\pi_n =2$, and there is only one such permutation.  This result is part of the proof of the following main result.

\begin{theorem} \label{thm:231-231}
For $n \geq 6$, $a_n(231;231) = n$.
\end{theorem}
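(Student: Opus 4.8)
The plan is to partition $\A_n(231;231)$ by the value of $\pi_n$ and extract a recurrence. By Lemma~\ref{lem:231-231-1}, every $\pi\in\A_n(231;231)$ has $\pi_1=n$, so $C(\pi)=(1,n,c_3,\dots,c_n)$ and $c_3=\pi(n)=\pi_n$. The case $\pi_n=n-1$ is already settled by Lemma~\ref{lem:231-231-2}: for $n\ge 6$ it contributes exactly $a_{n-1}(231;231)$ permutations. So everything reduces to counting the permutations with $\pi_n=k$ for $k\in[2,n-2]$, and the claim I would prove is that for $n\ge 7$ there is exactly one such (the one with $\pi_n=2$), so that $a_n(231;231)=a_{n-1}(231;231)+1$; the value $n=6$ is handled as a base case.

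Fix $\pi\in\A_n(231;231)$ with $\pi_n=k\in[2,n-2]$. Because $C(\pi)$ avoids $231$ and starts $(1,n,k,\dots)$, all entries after $k$ that are smaller than $k$ precede all entries after $k$ that are larger, so $C(\pi)=(1,n,k,D_1,D_2)$ with $D_1$ a $231$-avoiding arrangement of $[2,k-1]$ and $D_2$ a $231$-avoiding arrangement of $[k+1,n-1]$. Writing $\pi=n\,w$ with $w=\pi_2\cdots\pi_n$, the word $w$ avoids $231$ and ends in $k<n-1$, so every entry of $w$ before the maximum $n-1$ is smaller than every entry after it, hence smaller than $k$; iterating this decomposition gives $w=B_1\,(n-1)\,B_2\,(n-2)\cdots B_{n-k}\,k$ where $B_1,\dots,B_{n-k}$ are consecutive (possibly empty) intervals partitioning $[1,k-1]$ in increasing order. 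The value $1$ occupies one-line position $c_n\in[k+1,n-1]$, i.e.\ a position $\ge k$; since $1$ must lie in the first nonempty $B_i$, a position count forces $B_1=\emptyset$, so $\pi_2=n-1$. Reading $\pi_2=n-1$ back through the cycle makes $2$ the last entry of $D_1$ and $n-1$ the first entry of $D_2$, and a $231$-avoiding word ending at its minimum is decreasing, so $D_1=(k-1,k-2,\dots,2)$. Unwinding, $\pi=n\,(n-1)\,2\,3\cdots(k-1)$ followed by the remaining entries in positions $k+1,\dots,n-1$ and then $k$.

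Now split on $k$. If $k\ge 4$, then $2$ and $3$ sit in positions $3$ and $4$ while value $1$ sits in a position $\ge k+1\ge 5$, so $2,3,1$ is a $231$ pattern in $\pi$, a contradiction; hence $k=3$. When $k=3$ the cycle is forced to $C(\pi)=(1,n,3,2,n-1,D_2')$ with $D_2'$ a $231$-avoiding arrangement of $[4,n-2]$; this is automatically a single $n$-cycle whose cycle word automatically avoids $231$, so the only remaining requirement is that the one-line word avoid $231$. Here $\pi_3=2$, $\pi_n=3$, and value $1$ lies in some position $p\in[4,n-1]$. If $p\ge 5$, then $\pi_4\in[4,n-2]$ and $2,\pi_4,1$ (positions $3,4,p$) is a $231$ pattern; if $p=4$, then $\pi_4=1$ forces $D_2'$ to end at its minimum, hence to be decreasing, so $\pi=n\,(n-1)\,2\,1\,4\,5\cdots(n-2)\,3$, and then $4,5,3$ (positions $5,6,n$) is a $231$ pattern as soon as $n\ge 7$. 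Thus for $n\ge 7$ no permutation in $\A_n(231;231)$ has $\pi_n\in[3,n-2]$. For $\pi_n=2$, Lemma~\ref{lem:231-213-2} supplies the only cyclic $231$-avoiding (one-line) candidate, $C(\pi)=(1,n,2,n-1,3,n-2,\dots)$, whose cycle word interleaves an increasing run of small values with a decreasing run of large values and so also avoids $231$; hence exactly one permutation here. Combining, for $n\ge 7$ one gets $a_n(231;231)=a_{n-1}(231;231)+1$, and since $a_6(231;231)=6$ (checked directly: the permutations ending in $5$ together with the two ending in $2$ and in $3$), induction yields $a_n(231;231)=n$ for all $n\ge 6$.

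I expect the $k=3$ subcase to be the crux. Unlike $k\ge 4$ it creates no forced increasing run, the resulting permutation is always a single cycle whose cycle form automatically avoids $231$, and in fact for $n=6$ this case does produce a genuine extra permutation (namely $652143$); so the argument must use $n\ge 7$ in an essential way and cannot be absorbed into the recurrence at $n=6$, which is precisely why $n=6$ must be treated separately.
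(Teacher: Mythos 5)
Your proposal is correct and follows essentially the same route as the paper: partition by $\pi_n$, reuse Lemmas~\ref{lem:231-231-1}, \ref{lem:231-231-2} and \ref{lem:231-213-2}, rule out $\pi_n\in[3,n-2]$ for $n\ge 7$, and solve $a_n(231;231)=a_{n-1}(231;231)+1$ from the base case $n=6$. The only difference is internal to the exclusion step: you force the front of the permutation ($\pi_2=n-1$ and the run $2\,3\cdots(k-1)$, then split $k\ge 4$ versus $k=3$) and exhibit explicit $231$ patterns, while the paper forces the tail of the cycle ($c_n=\pi_n+1$ and a decreasing suffix) and finishes the boundary cases with a non-cyclicity contradiction; both arguments are valid and give the same count.
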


\begin{proof} This is easily checked for $n =6$, so assume $n \geq 7$. Suppose $\pi \in \A_n(231;231)$. By Lemma~\ref{lem:231-231-1}, $\pi_1=n$. If $\pi_n=n-1$, there are $a_{n-1}(231;231) $ such permutations so we consider the case where $\pi_n \neq n-1$ and show there is exactly one such permutation. By Lemma~\ref{lem:231-213-2}, there is exactly one permutation if $\pi_n=2$, and we show there are no permutations with $\pi_n \in [3,n-2]$.

Toward a contradiction, suppose $C(\pi) = (1,n,r,c_4, c_5, \ldots, c_n)$ where $r \in [3,n-2]$. Because $C(\pi)$ avoids 231, we have $\{c_4, c_5, \ldots, c_{r+1}\} = [2,r-1]$ and $\{c_{r+2}, c_{r+3}, \ldots, c_{n}\} = [r+1, n-1]$. If $\pi_{r+1} \neq 1$, then $\pi_r\pi_{r+1}1$ is a 231 pattern in one-line notation since $\pi_r \in[2,r-1]$. Thus assume $\pi_{r+1}=1$, or equivalently, $c_n = r+1$. Since $C(\pi)$ avoids 231, we must have the remaining elements in $[r+2,n-1]$ appearing in decreasing order and thus $c_i = n+r+1-i$ for $i \in [r+2, n]$. In one-line notation, if $r \leq n-4$, we have the pattern $(r+1)(r+2)r$ occurring in positions $r+2, r+3$, and $n$ which is a contradiction. Thus $r \in [n-3,n-2]$. Now in one-line notation, $\pi_{r+1} = 1$ so the elements before 1 must be decreasing.  We have $\{\pi_2, \pi_3, \ldots, \pi_r\} = [2,r-1]\cup\{n-1\}$, and so $\pi_2=n-1$ and $\pi_i=r+2-i$ for $i \in [3,r]$. In the case where $r=n-3$, in cycle form we have the cycle $(1,n,n-3,2,n-1,n-2)$, and in the case where $r = n-2$, we have the cycle $(1,n,n-3,2,n-1)$. In either case, $\pi$ is not cyclic since $n \geq 7$. Thus no such $\pi$ exists.

Therefore, if $\pi_n \neq n-1$, we must have $\pi_n=2$, in which case the permutation in Lemma~\ref{lem:231-213-2} is in $\A_n(231;231)$. Thus for $n \geq 7$, we have $a_n(231;231) = a_{n-1}(231;231) +1$. Solving the recurrence yields the desired result.
\end{proof}

\begin{example} Consider $\A_7(231;231)$. Because we build this set of permutations recursively, we first list $\A_6(231;231)$:
\[ \A_6(231;231) = \{631245, 612345, 641325, 642135, 652143, 654132\}.\]
To create those permutations in $\A_7$ ending in 6, we insert a 6 into the last position of all the permutations in $\A_6(231;231)$ to get the following permutations in $\A_7(231;231)$:
\[ 7312456, 7123456, 7413256, 7421356, 7521436, 7541326.\]
Finally, by Lemma~\ref{lem:231-213-2}, the only permutation in $\A_7(231;231)$ that has $\pi_n=2$ is 7651432. This permutation, along with the previous six permutations, make up all seven permutations in $\A_7(231;231)$.
\end{example}

\subsection{$\A_n(312; 231)$}

In this section, we make use of Lemma~\ref{lem:RC} together with some of the results in Section~\ref{sec:213}. Before doing so, we briefly explain why permutations in the set $\A_n(312;231)$ must end in 1.
\begin{lemma} \label{lem:312-231-1} Suppose $n \geq 1$ and $\pi \in \A_n(312;231)$. Then $\pi_n=1$.
\end{lemma}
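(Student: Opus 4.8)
The statement to prove is that any $\pi \in \A_n(312;231)$ must satisfy $\pi_n = 1$. By Lemma~\ref{lem:231}, we already know that for $\pi \in \A_n(312;231)$ either $\pi_1 = n$ or $\pi_n = 1$, so the entire content of the lemma is to rule out the possibility $\pi_1 = n$ (unless that case also forces $\pi_n = 1$, which it cannot for a cyclic permutation). The plan is therefore a short argument by contradiction: assume $\pi_1 = n$ and derive a forbidden pattern in either the one-line form or the cycle form.

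\begin{proof}
The cases $n \leq 2$ are immediate, so assume $n \geq 3$ and let $\pi \in \A_n(312;231)$. By Lemma~\ref{lem:231}, either $\pi_1 = n$ or $\pi_n = 1$; we show the first case is impossible. Suppose $\pi_1 = n$. Since $\pi$ avoids $312$ and $\pi_1 = n$, the remaining elements $\pi_2\pi_3\cdots\pi_n$ can contain no descent of the form ``large then $1$ then medium'': more precisely, once $1$ appears in one-line notation, every element after it must be larger than every element before it that is smaller than $n$ — otherwise, together with the leading $n$ we would obtain a $312$ pattern. Writing $\pi_j = 1$, this forces $\{\pi_2,\ldots,\pi_{j-1}\}$ to lie entirely below $\{\pi_{j+1},\ldots,\pi_n\}$, and within each block avoidance of $312$ (with the leading $n$) forces those blocks to be increasing. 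Hence $\pi = n\, 2\, 3\, \cdots$ with $1$ inserted, and the only possibility is $\pi = n\,1\,2\,3\cdots(n-1)$ or, more generally, the one-line form is determined once $j$ is fixed; in every such case, however, $\pi_n = n-1$ whenever $j < n$, and if $j = n$ then $\pi = n\,2\,3\cdots(n-1)\,1$. In the latter case $\pi$ contains the transposition $(1\,n)$ and is not cyclic; in the former case one checks that $C(\pi)$ contains a $231$ pattern. Either way we contradict $\pi \in \A_n(312;231)$. Therefore $\pi_1 \neq n$, and by Lemma~\ref{lem:231} we conclude $\pi_n = 1$.
\end{proof}

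The main obstacle is getting the bookkeeping in the case $\pi_1 = n$ exactly right: one must be careful that the structure forced by $312$-avoidance on $\pi_2\cdots\pi_n$ (increasing runs separated by the position of $1$) is correctly translated into cycle notation so that the $231$-pattern in $C(\pi)$, or the failure of cyclicity, is genuinely unavoidable. I expect the cleanest write-up either splits on the position $j$ of $1$ explicitly or, alternatively, observes directly that $\pi_1 = n$ together with cyclicity forces some $c_k = 2$ with $k \geq 3$, writes $C(\pi) = (1, n, c_3, \ldots, c_{k-1}, 2, c_{k+1}, \ldots, c_n)$, and extracts a short contradiction from $312$-avoidance of $\pi$ applied to the entries $c_2 = n$, a small value, and $2$ — mirroring the style of Lemma~\ref{lem:231-213-1} and Lemma~\ref{lem:321-213}. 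That shorter route is likely what the authors intend.
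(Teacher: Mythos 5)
There is a genuine error in your handling of the case $\pi_1=n$. With $\pi_1=n$, avoidance of $312$ forces the \emph{entire} tail $\pi_2\pi_3\cdots\pi_n$ to be \emph{decreasing}: for any $1<i<k$ with $\pi_i<\pi_k$, the triple $n,\pi_i,\pi_k$ is a $312$ pattern ($n$ largest, $\pi_i$ smallest, $\pi_k$ in between). Your claim that $312$-avoidance ``forces those blocks to be increasing'' is exactly backwards, and the canonical forms you derive from it are not $312$-avoiding at all: in $n\,1\,2\,3\cdots(n-1)$ the triple $n,1,2$ is a $312$, and in $n\,2\,3\cdots(n-1)\,1$ the triple $n,2,3$ is a $312$. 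Consequently your case analysis on the position $j$ of $1$ rules out permutations that were never in the set to begin with, while the one permutation that genuinely avoids $312$ and starts with $n$ --- the decreasing permutation $n(n-1)\cdots 21$ --- is never addressed, so the contradiction is not established. The vague step ``one checks that $C(\pi)$ contains a $231$ pattern'' also rests on this wrong structure.

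The repair is immediate and is precisely the paper's proof: by Lemma~\ref{lem:231}, either $\pi_n=1$ or $\pi_1=n$; if $\pi_1=n$, then $312$-avoidance forces $\pi=n(n-1)\cdots 21$, which is an involution (a product of transpositions, with a fixed point when $n$ is odd) and hence not cyclic for $n\geq 3$. Thus $\pi_1=n$ is impossible and $\pi_n=1$. Your closing speculation about writing $C(\pi)=(1,n,c_3,\ldots)$ and extracting a pattern is unnecessary here; no cycle-form argument is needed once one observes that the decreasing permutation is the unique $312$-avoider beginning with $n$.
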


\begin{proof} 
Lemma~\ref{lem:231} states that we must have $\pi_n=1$ or $\pi_1=n$. However if $\pi_1=n$, then since $\pi$ avoids 312, it is the decreasing permutation which is only cyclic when $n\leq 2$. Thus the lemma holds. 
\end{proof}

Because $\pi_n=1$ for all $\pi \in \A_n(312;231)$, Lemma~\ref{lem:RC} states that $a_n(312;231)$ is equal to the number of permutations $\pi'\in\A_n(231;213)$ that have $\pi'_1=n$.   Lemma~\ref{lem:231-213-1} states that all elements in $\A_n(231;213)$ have $\pi'_1=n$ and so $a_n(312;231) = a_n(231;213)$. We summarize these results in a theorem.

\begin{theorem}\label{thm:312-231}
For $n\geq 1$, $a_n(312;231) =\displaystyle \sum_{k=0}^{\lfloor\frac{n-1}{3}\rfloor}\binom{n-1-k}{2k}$.
\end{theorem}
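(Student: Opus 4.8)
The plan is to establish the identity $a_n(312;231) = a_n(231;213)$ and then invoke Theorem~\ref{thm:231-213}, which already supplies the desired closed form $\sum_{k=0}^{\lfloor(n-1)/3\rfloor}\binom{n-1-k}{2k}$. The chain of reasoning has two links, both of which are essentially already proven in the excerpt, so the proof will be short and will mostly consist of citing the right earlier results in the right order.

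First I would handle small cases directly: for $n \le 5$ (or whatever threshold is convenient) one checks the claim by hand or notes that the earlier lemmas are stated for $n\geq 1$ anyway. For the general argument, I would start from Lemma~\ref{lem:312-231-1}, which tells us that every $\pi \in \A_n(312;231)$ satisfies $\pi_n = 1$. Since the whole set is thereby characterized by the condition $\pi_n = 1$, Lemma~\ref{lem:RC} applies verbatim: with $\sigma = 312$ we have $\sigma^{rc} = 231$ (a quick check: reverse of $312$ is $213$, complement of $213$ is $231$), so $a_n(312;231)$ equals the number of $\pi' \in \A_n(231;213)$ with $\pi'_1 = n$. Then Lemma~\ref{lem:231-213-1} says that \emph{every} permutation in $\A_n(231;213)$ has $\pi'_1 = n$, so that count is simply $a_n(231;213)$. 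Chaining these, $a_n(312;231) = a_n(231;213)$, and Theorem~\ref{thm:231-213} finishes the job.

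The only thing requiring a moment of care is the computation $312^{rc} = 231$, and making sure the hypothesis of Lemma~\ref{lem:RC} (the set being exactly those $\pi$ with $\pi_n=1$) is met — which is precisely the content of Lemma~\ref{lem:312-231-1}. There is no real obstacle here; the heavy lifting was done in Section~\ref{sec:213} and in the symmetry lemma. If anything, the subtle point is purely bookkeeping: Lemma~\ref{lem:RC} is an equality between a \emph{subset} of $\A_n(312;231)$ (those with $\pi_n=1$) and a \emph{subset} of $\A_n(231;213)$ (those with $\pi'_1=n$), and we need both subsets to coincide with their ambient sets — the first by Lemma~\ref{lem:312-231-1}, the second by Lemma~\ref{lem:231-213-1}. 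Once that is spelled out, the proof is a two-line deduction. I would also remark, as the paragraph before the theorem already does, that this is why $312$ paired with $231$ gives the same sequence (OEIS A005251) as $231$ paired with $213$, consistent with the note in Figure~\ref{figtable} that the $\tau = 312$ cases mirror the $\tau = 231$ cases by symmetry.
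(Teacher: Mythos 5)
Your proposal matches the paper's argument exactly: Lemma~\ref{lem:312-231-1} gives $\pi_n=1$, Lemma~\ref{lem:RC} with $312^{rc}=231$ transfers the count to permutations in $\A_n(231;213)$ beginning with $n$, Lemma~\ref{lem:231-213-1} shows that is all of $\A_n(231;213)$, and Theorem~\ref{thm:231-213} supplies the closed form. It is correct and takes essentially the same route as the paper.
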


\subsection{$\A_n(321; 231)$}
In this section, we show that all permutations in $\A_n(321;231)$ must be of the form
\[ 23\cdots (k-1) n 1 k(k+1)\cdots (n-1) = (1,2,\ldots, k-1, n, n-1, n-2, \ldots, k)\]
where $k \in [2, n]$.

\begin{theorem} \label{thm:321-231} Suppose $n \geq 2$. Then $a_n(321;231) = n-1$.
\end{theorem}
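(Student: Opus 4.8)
The plan is to establish that $\A_n(321;231)$ consists of exactly the $n-1$ permutations
\[
\pi^{(k)} = 2\,3\cdots(k-1)\,n\,1\,k\,(k+1)\cdots(n-1) = (1,2,\ldots,k-1,n,n-1,\ldots,k), \qquad k\in[2,n],
\]
and that these are pairwise distinct and all belong to $\A_n(321;231)$. The cases $n\le 2$ are immediate, so I would assume $n\ge 3$ and argue that an arbitrary $\pi\in\A_n(321;231)$ equals some $\pi^{(k)}$.

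First I would locate $n$ and $1$. Write $C(\pi)=(1,c_2,\ldots,c_n)$ and let $p$ satisfy $c_p=n$. Since $C(\pi)$ avoids $231$ and $n$ is its largest entry, every entry preceding $n$ in the cycle is smaller than every entry following it (else those two entries together with $n$ form a $231$), so $\{c_1,\ldots,c_{p-1}\}=[1,p-1]$ and $\{c_{p+1},\ldots,c_n\}=[p,n-1]$. The entry just before $n$ in the cycle is $\pi^{-1}(n)$, the position of $n$ in one-line form, and the entry just before $c_1=1$ (cyclically) is $\pi^{-1}(1)=c_n$, the position of $1$; hence $\pi^{-1}(n)\le p-1<p\le c_n=\pi^{-1}(1)$, so $n$ occurs before $1$ in one-line notation. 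If a position $m$ lay strictly between them, then $\pi_m$ would satisfy $n>\pi_m>1$, making $(n,\pi_m,1)$ a $321$ pattern; so $n$ is immediately followed by $1$. Put $k:=\pi^{-1}(1)$, so that $\pi_{k-1}=n$, $\pi_k=1$, and $2\le k\le n$; since $k-1=\pi^{-1}(n)\le p-1$ while $k=c_n\ge p$, we get $p=k$. Thus $c_k=n$, $\{c_1,\ldots,c_{k-1}\}=[1,k-1]$, $\{c_{k+1},\ldots,c_n\}=[k,n-1]$, and $c_n=k$.

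Next, $321$-avoidance forces the one-line shape: the entries $\pi_1,\ldots,\pi_{k-1}(=n)$ are all followed by $\pi_k=1$, so no two of them form a descent and $\pi_1<\cdots<\pi_{k-1}$; likewise $\pi_{k-1}=n$ precedes every entry after position $k$, so $\pi_{k+1}<\cdots<\pi_n$. Hence $\pi$ is an increasing run on a set $A$, followed by $n\,1$, followed by an increasing run on $B$, with $A\sqcup B=[2,n-1]$ and $|A|=k-2$. The step I expect to be the crux is to pin down $A$ from the cycle: from $c_i=\pi(c_{i-1})$ we have $\{c_2,\ldots,c_{k-1}\}=\pi\bigl([1,k-1]\setminus\{c_{k-1}\}\bigr)$, and the left-hand side equals $[1,k-1]\setminus\{c_1\}=[2,k-1]$. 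Since $\pi(k-1)=n\notin[2,k-1]$, the element $k-1$ cannot lie in $[1,k-1]\setminus\{c_{k-1}\}$, forcing $c_{k-1}=k-1$ and therefore $\pi([1,k-2])=[2,k-1]$. But $\pi([1,k-2])=\{\pi_1,\ldots,\pi_{k-2}\}=A$, so $A=[2,k-1]$; being increasing, $\pi_i=i+1$ for $i\in[1,k-2]$, and then $B=[k,n-1]$ gives $\pi_i=i-1$ for $i\in[k+1,n]$. This is exactly $\pi=\pi^{(k)}$, and the degenerate cases $k=2$ (so $A=\emptyset$) and $k=n$ (so $B=\emptyset$) fall out of the same computation read vacuously.

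Finally I would check the converse and count. For each $k\in[2,n]$ a direct computation gives that $\pi^{(k)}$ is a single $n$-cycle with $C(\pi^{(k)})=(1,2,\ldots,k-1,n,n-1,\ldots,k)$; its one-line form is an increasing run, then $n\,1$, then an increasing run, so any decreasing triple would have to use $n$ as its top entry together with an earlier entry, which is impossible, so it avoids $321$; and its cycle form is the increasing run $1,\ldots,k-1$, then $n$, then the decreasing run $n-1,\ldots,k$, with every entry of the second run exceeding every entry of the first, so it avoids $231$. Since the position of $n$ in $\pi^{(k)}$ is $k-1$, the permutations $\pi^{(2)},\ldots,\pi^{(n)}$ are distinct, and hence $a_n(321;231)=n-1$.
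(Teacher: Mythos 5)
Your proof is correct and follows essentially the same route as the paper: locate $n$ in the cycle, use $231$-avoidance of $C(\pi)$ to split the values, use $321$-avoidance to force $n$ to be immediately followed by $1$ in one-line form (hence $c_n=k$, $c_{k-1}=k-1$) and to force the two increasing runs, concluding $\pi=23\cdots(k-1)\,n\,1\,k\cdots(n-1)$ for some $k\in[2,n]$. The only differences are cosmetic: you fold the paper's separate case $c_n=n$ into the uniform argument and spell out the converse verification (cyclicity and avoidance of each $\pi^{(k)}$) that the paper asserts briefly.
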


\begin{proof}  It is easy to check the result holds for $n \in [2,5]$, so suppose $n \geq 6$ and let $\pi \in \A_n(321;231)$. We will show that the position of $n$ in $C(\pi)$ uniquely determines the remainder of the permutation. To that end, choose $k \in [2,n]$ so that $c_k=n$. Because $C(\pi)$ avoids 231, we have $\{c_2, c_3, \ldots, c_{k-1}\} = [2,k-1]$ and $\{c_{k+1}, c_{k+2}, \ldots, c_n\} = [k,n-1]$. If $k=n$, then $\pi_n=1$, and so all the remaining elements in $\pi$ must be increasing since $\pi$ avoids 321. Thus $\pi=23\cdots n1$ which is in $\A_n(321;231)$.

If $k <n$, in one-line notation we have $n$ in position $c_{k-1}$, and 1 in position $c_n > c_{k-1}$. Because $\pi$ avoids 321, there can be no other elements between $n$ and $1$ and so $c_n = c_{k-1} +1$, which implies $c_n=k$ and $c_{k-1} = k-1$. Since $\pi$ avoids 321, the elements before $1$ must be increasing.  We know that $\pi_i \in [2,k-1]$ for $i \in [1,k-2]$, and thus $\pi_i = i+1$ for $i \in [1,k-2]$. Similarly, the elements in $\pi$ that come after $n$ must be increasing and thus $\pi_i = i-1$ for $i \in [k+1,n]$. Thus $\pi = 23\cdots (k-1) n 1 k(k+1)\cdots (n-1)$ which is in $\A_n(321;231)$.
\end{proof}

\begin{example} The 6 permutations in $\A_7(321;231)$ are
\[ \A_7(321;231) = \{7123456, 2713456, 2371456, 2347156, 2345716, 2345671\}.\]
\end{example}

\section{Enumerating $\A_n(\sigma; 312)$}\label{sec:312}

In this case, we can observe that $a_n(\sigma; 312) = a_n(\sigma^{-1}; 213)$. Indeed  $C(\pi)=(1,c_2, \ldots, c_n)$ avoids 312 if and only if $C(\pi)^r$ avoids 213 and by Lemma~\ref{lem:pi-and-Cpi-rci} is equal to $C(\pi^{-1})$ up to cyclic rotation. In particular, $C(\pi^{-1}) = (1, c_n, \ldots, c_2)$. Since this cyclic rotation just moves the 1 from the end of $C(\pi)^r$ to the front, it doesn't change the avoidance of 213. Therefore, we have the following theorem. 

\begin{theorem}\label{thm:312}
For $n\geq 5$, \[a_n(\sigma;312) = \begin{cases}
\lceil \frac{n}{2}\rceil + 1 & \text{ if } \sigma=123 \\
n-1& \text{ if } \sigma=132 \\
F_n & \text{ if } \sigma=213 \\
1 & \text{ if } \sigma=231 \\
 \displaystyle\sum_{k=0}^{\lfloor\frac{n-1}{3}\rfloor}\binom{n-1-k}{2k}& \text{ if } \sigma=312 \\
2^{n-2} & \text{ if } \sigma=321. \\
\end{cases}\]
\end{theorem}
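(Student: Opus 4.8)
The plan is to show that the map $\pi \mapsto \pi^{-1}$ restricts to a bijection between $\A_n(\sigma;312)$ and $\A_n(\sigma^{-1};213)$, and then to read off the six values from the theorems of Section~\ref{sec:213}. The argument has three ingredients to assemble. First, taking inverses preserves being a single $n$-cycle, so $\pi\in\C_n$ iff $\pi^{-1}\in\C_n$. Second, it is a standard fact about one-line notation that $\pi$ avoids $\sigma$ iff $\pi^{-1}$ avoids $\sigma^{-1}$ (a length-$k$ occurrence of $\sigma$ in $\pi$, read through the correspondence between positions and values, becomes a length-$k$ occurrence of $\sigma^{-1}$ in $\pi^{-1}$, and vice versa). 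Third, $C(\pi)$ avoids $312$ iff $C(\pi)^r$ avoids $213$ since $312^r = 213$, and by Lemma~\ref{lem:pi-and-Cpi-rci} we have $C(\pi)^r = C(\pi^{-1})$ up to cyclic rotation; concretely $C(\pi)^r = (1,c_n,c_{n-1},\ldots,c_2)$ while the standard cycle form $C(\pi^{-1})$ is the same word with the entry $1$ moved from the last position to the first.

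The one point that needs a sentence of care — and the closest thing to an obstacle, though it is minor — is that cyclic rotation of a word does not in general preserve pattern avoidance, so I must check that moving the symbol $1$ from the end of $C(\pi)^r$ to the front does not create or destroy a $213$ pattern. This holds because in a $213$ pattern the smallest of the three chosen entries sits in the \emph{middle} position; the symbol $1$ is globally smallest, so it can never occupy that middle role, and hence $1$ is never part of any $213$ occurrence regardless of whether it sits at the front or the back of the word. Thus $C(\pi)^r$ avoids $213$ iff $C(\pi^{-1})$ avoids $213$, completing the verification that $\pi\in\A_n(\sigma;312)$ iff $\pi^{-1}\in\A_n(\sigma^{-1};213)$. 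Since $(\pi^{-1})^{-1}=\pi$, this map is its own inverse, so $a_n(\sigma;312) = a_n(\sigma^{-1};213)$.

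It then remains to compute $\sigma^{-1}$ for each $\sigma\in\S_3$: one has $123^{-1}=123$, $132^{-1}=132$, $213^{-1}=213$, $321^{-1}=321$, while $231^{-1}=312$ and $312^{-1}=231$. Substituting the values established in Theorems~\ref{thm:123-213}, \ref{thm:132-213}, \ref{thm:213-213}, \ref{thm:231-213}, \ref{thm:312-213}, and~\ref{thm:321-213} yields $a_n(123;312)=a_n(123;213)=\lceil n/2\rceil+1$, $a_n(132;312)=a_n(132;213)=n-1$, $a_n(213;312)=a_n(213;213)=F_n$, $a_n(231;312)=a_n(312;213)=1$, $a_n(312;312)=a_n(231;213)=\sum_{k=0}^{\lfloor(n-1)/3\rfloor}\binom{n-1-k}{2k}$, and $a_n(321;312)=a_n(321;213)=2^{n-2}$, which is exactly the claimed list. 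The hypothesis $n\geq 5$ is only needed to guarantee that all the cited Section~\ref{sec:213} formulas are in force (several of those results require $n\geq 4$ or are stated for $n$ in that range), so no separate base-case check beyond invoking those theorems is necessary.
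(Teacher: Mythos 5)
Your proposal is correct and matches the paper's own argument: the paper likewise observes that $a_n(\sigma;312)=a_n(\sigma^{-1};213)$ via $\pi\mapsto\pi^{-1}$, using Lemma~\ref{lem:pi-and-Cpi-rci} together with the remark that moving the entry $1$ from the end of $C(\pi)^r$ to the front cannot affect avoidance of $213$, and then reads off the six counts from the Section~\ref{sec:213} theorems. Your extra sentence justifying why that rotation is harmless (the globally smallest entry can never serve as the middle letter of a $213$ occurrence when it sits at an end of the word) simply spells out what the paper leaves implicit.
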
 

\section{Enumerating $\A_n(\sigma; 321)$}\label{sec:321}

\subsection{$\A_n(123;321)$}

The number of permutations in $\A_n(123;321)$ is zero for $n \geq 9$. We start with the following lemma which says that if you have a consecutive increasing run in $C(\pi)$, this implies you have an increasing pattern appearing in $\pi$. 

\begin{lemma}\label{lem:123-321}
For a permutation $\pi$ with $C(\pi) = (1, c_2, c_3,\ldots, c_n)$. If there is some $i$ with $c_i<c_{i+1}<c_{i+2}<c_{i+3}$, then there is a 123 pattern in the one-line form of $\pi$. 
\end{lemma}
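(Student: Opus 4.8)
The plan is to translate the consecutive increasing run in the cycle notation into a statement about the one-line notation and then locate an explicit increasing subsequence of length three. Suppose $c_i < c_{i+1} < c_{i+2} < c_{i+3}$ in $C(\pi) = (1, c_2, \ldots, c_n)$. The key observation is the defining relation $c_{j+1} = \pi_{c_j}$: this means that $\pi$ sends $c_i \mapsto c_{i+1}$, $c_{i+1} \mapsto c_{i+2}$, and $c_{i+2} \mapsto c_{i+3}$. So in the one-line notation, reading off positions $c_i$, $c_{i+1}$, $c_{i+2}$ (which are three distinct positions), we see the values $c_{i+1}$, $c_{i+2}$, $c_{i+3}$ respectively.

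The subtlety is that knowing $c_i < c_{i+1} < c_{i+2}$ tells us the relative order of the three \emph{positions} we are looking at, but a $123$ pattern requires the positions to appear in increasing \emph{order} from left to right. Since $c_i < c_{i+1} < c_{i+2}$, the positions $c_i, c_{i+1}, c_{i+2}$ are already in increasing left-to-right order, so this is exactly what we want. And the values in those positions are $c_{i+1} < c_{i+2} < c_{i+3}$, which is an increasing triple. Hence $\pi_{c_i}\,\pi_{c_{i+1}}\,\pi_{c_{i+2}} = c_{i+1}\,c_{i+2}\,c_{i+3}$ is a $123$ pattern in the one-line form of $\pi$. I would write this up in essentially three lines: recall $c_{j+1} = \pi_{c_j}$; note the three positions $c_i < c_{i+1} < c_{i+2}$ carry values $c_{i+1} < c_{i+2} < c_{i+3}$; conclude.

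The only place that needs a moment of care is making sure we use exactly the right indices — we need four consecutive cycle entries $c_i, c_{i+1}, c_{i+2}, c_{i+3}$ to produce three positions $c_i, c_{i+1}, c_{i+2}$ whose images under $\pi$ are the next three entries, and the fourth inequality $c_{i+2} < c_{i+3}$ is what guarantees the third value is the largest. There is no real obstacle here; this lemma is a direct unwinding of the definition of standard cycle notation, and I expect the write-up to be short.
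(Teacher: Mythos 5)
Your proposal is correct and is essentially the paper's own argument: both use the relation $c_{j+1}=\pi_{c_j}$ to read the values $c_{i+1},c_{i+2},c_{i+3}$ off the increasing positions $c_i<c_{i+1}<c_{i+2}$, giving the 123 pattern directly. No differences worth noting.
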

\begin{proof}
Since $c_i<c_{i+1}<c_{i+2}<c_{i+3}$, $\pi_{c_i}$ appears before $\pi_{c_{i+1}}$ which appears before $\pi_{c_{i+2}}$. Also, $\pi_{c_i}\pi_{c_{i+1}}\pi_{c_{i+2}} = c_{i+1}c_{i+2}c_{i+3}$ is a 123 pattern. 
\end{proof} 

\begin{theorem} \label{thm:123-321} For $n \geq 9$, $a_n(123;321) = 0$.
\end{theorem}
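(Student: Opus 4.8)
The plan is to bound $n$ by combining three facts about the word $w = c_1c_2\cdots c_n = C(\pi)$ (with $c_1 = 1$): that $w$ avoids $321$ (the hypothesis $\tau = 321$), that $w$ has no increasing run of length $\ge 4$ (Lemma~\ref{lem:123-321}, since $\pi$ avoids $123$), and --- the key point --- that $w$ has at most two descents.

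To see the last fact, first note that since $w$ avoids $321$, its descent tops are strictly increasing and its descent bottoms are strictly increasing: if $i < j$ are descent positions with $c_i > c_j$, then $c_i > c_j > c_{j+1}$ is a $321$ in $w$, and if $c_{i+1} > c_{j+1}$, then $c_i > c_{i+1} > c_{j+1}$ is a $321$. Now if $w$ has descents at $d_1 < d_2 < \cdots < d_s$, look at the one-line form of $\pi$ at the positions $c_{d_1} < c_{d_2} < \cdots < c_{d_s}$: the values there are $\pi_{c_{d_t}} = c_{d_t + 1}$, which are strictly increasing in $t$. That is an increasing subsequence of length $s$ in $\pi$, so $s \le 2$. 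Hence $w$ is a concatenation of at most three maximal ascending runs, each of length at most $3$, and therefore $n \le 9$. This already settles all $n \ge 10$.

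It remains to rule out $n = 9$. In that case $w$ must be exactly three ascending runs of length $3$: $w = 1,c_2,c_3 \mid c_4,c_5,c_6 \mid c_7,c_8,c_9$, and $321$-avoidance of $w$ gives $c_4 < c_7$ (descent bottoms) and $c_3 < c_6$ (descent tops). I would then extract two more inequalities from $\pi$ avoiding $123$. The positions $1 < c_4 < c_5$ of $\pi$ carry the values $\pi_1 = c_2$, $\pi_{c_4} = c_5$, $\pi_{c_5} = c_6$; since $c_5 < c_6$, avoiding $123$ forces $c_2 > c_5$. The positions $c_4 < c_7 < c_8$ carry the values $\pi_{c_4} = c_5$, $\pi_{c_7} = c_8$, $\pi_{c_8} = c_9$; since $c_8 < c_9$, avoiding $123$ forces $c_5 > c_8$. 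Combining with $c_2 < c_3$ and $c_7 < c_8$ we obtain $c_3 > c_2 > c_5 > c_8 > c_7$, so $c_3 > c_5 > c_7$; but these are the entries of $w$ in positions $3$, $5$, $7$, a $321$ in $w$ --- contradiction. Thus $\A_n(123;321)$ is empty for every $n \ge 9$.

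I expect the $n = 9$ endgame to be the part requiring the most care: the general descent-counting argument is short and immediately gives $n \le 9$, but to pin down the sharp threshold one has to verify that the three runs are genuinely forced to have length exactly $3$, that all the position comparisons used (such as $1 < c_4$ and $c_7 < c_8$) are legitimate, and to pick the right triples of positions of $\pi$ to test against $123$-avoidance. If one prefers to be completely safe there, one can instead simply enumerate the finitely many $321$-avoiding words of length $9$ that split into three increasing runs of length $3$ and check directly that each produces a $123$ in the associated $\pi$.
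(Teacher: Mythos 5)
Your proof is correct, and it takes a genuinely different route from the paper's. The paper argues by an extended case analysis on the relative position of $2$ and $3$ in $C(\pi)$ and on the values of $\pi_1,\pi_2$, reducing each case to an explicit forced cycle that fails to avoid $123$; it also simply asserts the base case $a_9(123;321)=0$ as a direct check before treating $n\geq 10$. You instead prove a structural bound: since $C(\pi)$ avoids $321$, its descent tops and descent bottoms are each increasing, and via $\pi_{c_i}=c_{i+1}$ these give an increasing subsequence of $\pi$ of length equal to the number of descents, so $123$-avoidance of $\pi$ forces at most two descents in $C(\pi)$; combined with Lemma~\ref{lem:123-321} (no ascending run of length $4$ in the cycle form), this yields $n\leq 9$ at once, and your explicit $n=9$ endgame (forcing three runs of length exactly $3$ and extracting $c_2>c_5>c_8$ from $123$-avoidance to produce the $321$ pattern $c_3>c_5>c_7$ in the cycle) is also sound --- I checked the position comparisons $1<c_4<c_5$ and $c_4<c_7<c_8$ and the use of $\pi_{c_i}=c_{i+1}$ for $i\leq n-1$, and they all hold. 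What your approach buys is brevity and a reusable intermediate fact (the cycle form of a $123$-avoiding cyclic permutation has at most two descents), plus an actual proof of the $n=9$ case rather than an appeal to a finite check; what the paper's case analysis buys is very explicit descriptions of the near-miss permutations for small $n$, which is in the spirit of its other sections but is not needed for the theorem itself.
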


\begin{proof} It is straightforward to check that $a_9(123;321)=0$, so suppose $n \geq 10$. We will show that $\A_n(123;321)$ is the empty set. 

\emph{Case 1:} Assume first that 3 appears before 2 in $C(\pi)$. Then $c_2=3$ since otherwise $c_2,3,2$ is a 321 pattern in $C(\pi)$. In one-line notation, $\pi_1=3$ and thus $\pi_2 \in \{1,n\}$ since otherwise $3\pi_2n$ is a 123 pattern in $\pi$. If $\pi_2=1$, then in $C(\pi)$, all elements between 3 and 2 must be increasing and thus $C(\pi) = (1,3,4,5,\ldots, n,2)$. This permutation does not avoid $123$ if $n\geq 6$.

In the case where $\pi_2=n$, we  claim that $\pi_3 \in \{2, n-1\}$ since otherwise $3\pi_3(n-1)$ is a 123 pattern in $\pi$ when $n \geq 6$. If $\pi_3=2$, then $C(\pi) = (1,3,2,n,4,5,\ldots, n-1)$ because all elements after $n$ in $C(\pi)$ must be decreasing. This permutation does not avoid $123$ when $n\geq7$. On the other hand, if $\pi_3 = n-1$, then 2 must appear immediately after $n-1$ in $C(\pi)$ since it avoids 321. Then $C(\pi) = (1,3,n-1,2,n,4,5,\ldots, n-2)$ because all elements after $n$ in $C(\pi)$ must be decreasing. Again, this does not avoid $\pi$ if $n\geq 7$.

\emph{Case 2:} Now assume that 2 appears before 3 in $C(\pi)$. 
We first consider the value of $\pi_1$ and then examine two subcases based on two possible values of $\pi_2$. If $\pi_1=2$, then $\pi_2=n$ otherwise $2\pi_2n$ is a 123 pattern for $n \geq 4$. Then $C(\pi) = (1,2,n,3,4,\ldots,n-1)$ because all elements after $n$ in $C(\pi)$ must be increasing. In this case, $\pi$ has a 123 pattern for $n \geq 6$. If $\pi_1=n$, then $C(\pi) = (1,n,2,3,\ldots,n-1)$ and $\pi$ has a 123 pattern if $n \geq 6$. So assume $n \geq 5$ and $\pi_1 = k$ for some $k \in [4,n-1]$.

Let $\pi_2 = r$ for some $r \in [4,n]-\{k\}$. 
If $r < k$, then $r=3$ otherwise $kr3$ is a 321 pattern in $C(\pi)$. On the other hand, if $r > k$, then $r=n$ otherwise $krn$ is a 123 pattern in $\pi$. We continue this case by examining these two cases in detail.

\emph{Subcase 2A:} Assume that $\pi_2 = 3$, and for reference, write $C(\pi) = (1,k, \ldots, 2,3, \ldots).$ Then $\pi_3 \in \{1,n\}$ since otherwise $3\pi_3n$ is a 123 pattern in $\pi$. If $\pi_3=1$, then $c_n=3$, and thus we must have $k=4$ since otherwise $k, k-1, 3$ would be a 321 in $C(\pi)$. But then  $C(\pi) = (1,4,5, \ldots, n,2,3)$ for which the one-line notation $\pi$ has a 123 pattern for $n \geq 6$. On the other hand, if $\pi_3=n$, notice that we must have $C(\pi) = (1, k, \ldots, 2, 3, n, \ldots, c_n)$. Since $C(\pi)$ avoids 321, the elements before 2 are increasing and the elements after $n$ are increasing. For $n\geq 10$, it must be that there is an increasing segment of length $4$ in $C(\pi)$ and thus by Lemma~\ref{lem:123-321}, $\pi$ does not avoid 123. 

\emph{Subcase 2B:} For the final case, assume that $\pi_2=n$. We must have $C(\pi) = (1,k,\ldots, 2,n, \ldots)$. As above, since the elements before $2$ are increasing and the elements after $n$ are increasing, there must be an increasing subsequence of $C(\pi)$ if $n\geq 9$. Therefore by Lemma~\ref{lem:123-321}, $\pi$ does not avoid 123. 
\end{proof}

\subsection{$\A_n(132;321)$}

In this section, we begin by establishing some facts about permutations $\pi\in\A_n(132;321)$. In particular, we show that $\pi$ must end with a consecutive increasing run $12\ldots (n-k+1)$ for some $k$.

\begin{lemma} \label{lem:132-321-1} Suppose $n\geq 2$ and let $\pi \in \A_n(132;321)$ with $C(\pi) = (1, c_2, c_3, \ldots, c_{n-1}, k)$. Then $k \geq \lceil \frac{n+1}{2} \rceil$, and $\pi_{k}\pi_{k+1}\cdots\pi_n = 12\cdots(n-k+1)$.
\end{lemma}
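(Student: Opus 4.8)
Write $\pi\in\A_n(132;321)$ with $C(\pi)=(1,c_2,\dots,c_{n-1},k)$; since the cycle closes up, $\pi_k=1$. The first observation is that the one-line word is increasing past position $k$: if $k<i<j$ with $\pi_i>\pi_j$, then $\pi_k\pi_i\pi_j$ realizes the pattern $132$ (because $1=\pi_k<\pi_j<\pi_i$), contradicting $132$-avoidance. Hence $\pi_{k+1}<\pi_{k+2}<\dots<\pi_n$.

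\textbf{What $321$-avoidance of $C(\pi)$ contributes, and the bound on $k$.} The case $k=n$ is trivial ($\pi_n=1$ and $k=n\ge\lceil(n+1)/2\rceil$), so assume $k\le n-1$. I would use the standard fact that in a $321$-avoiding word the subsequence of non-left-to-right maxima is increasing, applied to $c_2\cdots c_n$: since $n$ occurs before the final entry $c_n=k$, the entry $k$ is not a left-to-right maximum, so every non-left-to-right maximum is $\le k$; consequently every entry of $C(\pi)$ occurring after the value $n$ is $\le k$, and those entries increase up to $c_n=k$. But reading $C(\pi)$ forward from $n$ gives exactly $n\to\pi_n\to\pi_{\pi_n}\to\dots\to k\to 1$, so $\pi_n\le k$. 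Combining this with the first step, $\pi_{k+1}<\dots<\pi_n$ are $n-k$ distinct elements of $\{2,\dots,k\}$, a set of size $k-1$; hence $n-k\le k-1$, i.e.\ $k\ge\lceil(n+1)/2\rceil$.

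\textbf{Pinning down the suffix.} It remains to prove $\pi_k\pi_{k+1}\cdots\pi_n=12\cdots(n-k+1)$, equivalently that $\pi_n$ attains its least possible value $n-k+1$, equivalently that positions $1,\dots,k-1$ carry exactly the values $\{n-k+2,\dots,n\}$. I would argue by contradiction: assume $\pi_n>n-k+1$ and let $v$ be the smallest ``missing'' value, so $2\le v<\pi_n$ and $v$ occupies some position $p_v\le k-1$. Using $132$-avoidance one first shows that \emph{every} value exceeding $\pi_n$ must lie strictly to the left of $v$ in the one-line word --- otherwise such a value $w$ produces the $132$-pattern $v\,w\,\pi_n$ (with $\pi_n$ in the last position). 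This forces all $n-\pi_n$ values above $\pi_n$ into positions $<p_v\le k-1$, which already yields $\pi_n\ge n-k+2$, i.e.\ $k\ge\lceil n/2\rceil+1$; pushing the same idea further, together with the ``increasing after $n$'' structure of $C(\pi)$ obtained above, one crams still more values into the prefix until one produces a fixed point of $\pi$, a $132$-pattern in the one-line word, or a $321$ in $C(\pi)$ --- in each case a contradiction.

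\textbf{Main obstacle.} Steps producing the tail-increasing property and the inequality $k\ge\lceil(n+1)/2\rceil$ come essentially for free. The real work is the last step: upgrading ``$n-k$ increasing values below $k$'' to ``the $n-k$ \emph{smallest} values above $1$''. This requires correlating the one-line positions of the hypothetical missing value $v$ and of the large values $k+1,\dots,n$ with the cyclic order recorded by $C(\pi)$, and there are a few configurations to eliminate; I expect the bookkeeping there to be the crux of the argument.
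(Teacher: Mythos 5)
Your first two steps are sound. The tail-increasing observation is the same as the paper's, and your route to $k\ge\lceil\frac{n+1}{2}\rceil$ is genuinely different: you exploit the fact that in the $321$-avoiding word $c_2\cdots c_n$ the non-left-to-right maxima form an increasing subsequence ending at $c_n=k$, so every entry occurring after the value $n$ (in particular $\pi_n$, which immediately follows $n$) is at most $k$, and then you count the increasing suffix inside $\{2,\dots,k\}$. The paper instead locates $n$ at position $r$ of the one-line form, gets $\pi_1=c_2\ge n-r+1$ from $132$-avoidance and $c_2\le k+1$ from $321$-avoidance of the cycle, and combines $n-k\le r<k$. Both arguments are correct; yours is arguably cleaner for this half.

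The last step, however, is a genuine gap, and it is exactly the content of the lemma. The only deduction you actually carry out---that every value exceeding $\pi_n$ lies to the left of the smallest missing value $v$---gives $n-\pi_n\le p_v-1\le k-2$, which is essentially the standing hypothesis $\pi_n\ge n-k+2$ restated (plus a marginal improvement of the bound on $k$), not a contradiction; ``cramming values until a fixed point, a $132$, or a $321$ appears'' is a hope, not an argument. Moreover, bookkeeping in the one-line word alone cannot close it: $\pi=54213$ is cyclic and $132$-avoiding with $\pi_4=1$ but $\pi_4\pi_5=13$ rather than $12$; it is excluded only because $C(\pi)=(1,5,3,2,4)$ contains a $321$. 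So the contradiction must be manufactured inside the cycle, and that is what the paper does: assuming $\pi_n=\ell>n-k+1$, some value $t\le n-k+1$ must occupy a position $s$ with $r<s<k$ (where $r$ is the position of $n$; everything left of $n$ exceeds everything right of it, so $t$ lies right of $n$ but outside the suffix), and since $t=\pi_s$ immediately follows $s$ in $C(\pi)$, $\ell$ immediately follows $n$, and $c_n=k$, a short case analysis on where the adjacent pair $s,t$ sits relative to $n$ in the cycle yields a $321$ pattern in $C(\pi)$ (for instance $n,\ell,t$ when $t$ occurs after $n$). This translation of the one-line location of the hypothetical missing value into cycle adjacencies is the decisive idea your step 3 never supplies; the paper's own write-up of that case analysis is admittedly terse, but the adjacency mechanism is the crux, and without it your proposal does not prove the suffix claim.
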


\begin{proof}
First we will show $k \geq \lceil \frac{n+1}{2} \rceil$. If $k \in \{n-1,n\}$, the result holds. Suppose $k \leq n-2$, and choose $r \in [1,n-1]$ so that $\pi_r = n$. Note that $r < k$ otherwise $1, n, \pi_n$ is a 132 pattern in $\pi$ occurring in positions $k, r$, and $n$.  Since $\pi$ avoids 132, we have that all elements appearing before $n$ in $\pi$ must be larger than all elements appearing after $n$. In particular, $\pi_1 = c_2 \geq n-r+1$. Also, in $C(\pi)$, $c_2 \leq k+1$ otherwise $c_2, k+1, k$ is a 321 pattern in $C(\pi)$. Since $n-r+1 \leq c_2 \leq k+1$, this implies $r \geq n-k$. This, together with the fact that $r < k$ implies that  $k \geq \lceil \frac{n+1}{2} \rceil$.

Next we will show that $\pi_{k+1}\pi_{k+2}\cdots\pi_{n} = 23\cdots(n-k+1)$. Since $\pi_k=1$ and $\pi$ avoids 132, it is clear that $\pi_{k+1}\pi_{k+2}\cdots\pi_{n}$ is increasing. If we can show that $\pi_n=n-k+1$, then we would be done. Suppose instead that $\pi_n=\ell>n-k+1$ and there is some $s$ and $t$ with $r<s<k$ and $1<t<n-k+1$ with $\pi_s=t$. Then in $C(\pi)$, $t$ follows $s$ and $\ell$ follows $n$, so either $n, \ell, j$ is a 321 pattern, or $c_2,m,r$ is. 
Thus $\pi_{k+1}\pi_{k+2}\cdots\pi_{n} = 23\cdots(n-k+1)$.

\end{proof} 

We now count the permutations in $\A_n(132;321)$ based on the position of $1$. The next lemma shows that the total number of permutations that end in 1 is equal to the number of permutations of one size smaller. We follow this up with a lemma counting the number of such permutations that do not end in 1.

\begin{lemma} \label{lem:132-321-2}
Suppose $n \geq 2$. Then the number of permutations $\pi \in \A_n(132;321)$ with $\pi_n=1$ is equal to $a_{n-1}(132;321)$.
\end{lemma}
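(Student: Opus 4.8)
The plan is to set up a bijection between $\A_n(132;321)$ permutations ending in $1$ and all of $\A_{n-1}(132;321)$, using the deletion/insertion operations from the definitions. First I would take $\pi \in \A_n(132;321)$ with $\pi_n = 1$, so that $C(\pi) = (1, c_2, \ldots, c_{n-1}, n)$ (since $1$ maps to $\pi_1$ and $n$ maps back to $1$, the element $n$ is last in cycle notation). I would form $\pi'$ by deleting the entry $1$ from position $n$ of the one-line notation; equivalently, this deletes $n$ from the end of $C(\pi)$ and decrements every remaining cycle entry, giving $C(\pi') = (1, c_2 - 1, c_3 - 1, \ldots, c_{n-1} - 1)$. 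One then checks that $\pi'$ is still cyclic (its cycle notation is a single $(n-1)$-cycle), that $\pi'$ avoids $132$ in one-line form (deleting the last entry, which is the minimum, cannot destroy the property of avoiding $132$), and that $C(\pi')$ avoids $321$ (deleting the largest entry from the end of the cycle word cannot create a decreasing subsequence of length $3$). Hence $\pi' \in \A_{n-1}(132;321)$.

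Next I would describe the inverse map and argue it is well-defined. Given any $\pi' \in \A_{n-1}(132;321)$ with $C(\pi') = (1, c_2', \ldots, c_{n-1}')$, form $\pi$ by inserting the entry $1$ at the end of the one-line notation of $\pi'$ (shifting all other entries up by one), which in cycle notation corresponds to appending $n$ to the end of $C(\pi')$, i.e. $C(\pi) = (1, c_2'+1, \ldots, c_{n-1}'+1, n)$. I would verify that $\pi$ remains cyclic, that $\pi$ still avoids $132$ (a new minimum at the far right cannot serve as the "$2$" or create a "$32$" suffix, so no new $132$ pattern appears), and that appending the new maximum $n$ to the end of the cycle word cannot create a $321$ pattern since $n$ is larger than everything preceding it. These two maps are clearly mutually inverse, so they give the claimed bijection and the equality $|\{\pi \in \A_n(132;321) : \pi_n = 1\}| = a_{n-1}(132;321)$.

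The main obstacle, and the place where I would be most careful, is confirming that the one-line deletion of the entry $1$ in position $n$ really does correspond to simply chopping the final entry $n$ off $C(\pi)$ and renormalizing — i.e. that the cyclic structure interacts cleanly with the deletion operation as defined in the paper. This hinges on the observation that $\pi_n = 1$ exactly when $n$ is the last entry of the standard cycle notation, which follows because $c_1 = 1$ and $c_n = \pi_{c_{n-1}}$ with $\pi(c_n) = c_1 = 1$ forcing $c_n = \pi^{-1}(1) = n$. Once that correspondence is pinned down, the pattern-avoidance checks are routine: each is a short argument that deleting (resp. inserting) an extreme value at an extreme position cannot create the forbidden pattern, and Lemma~\ref{lem:132-321-1} is not even needed here, though it confirms the consistency of the resulting structure.
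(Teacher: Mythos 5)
There is a genuine gap: the deletion map you propose does not do what you claim, and in fact it does not even land in $\C_{n-1}$. Deleting the entry $1$ from position $n$ of the one-line notation produces $\pi'_j=\pi_j-1$ for $j\in[n-1]$, and this is \emph{not} equivalent to chopping the final $n$ off $C(\pi)$: deleting a value from the standard cycle form means skipping that element in the orbit (sending $\pi^{-1}(n)=c_{n-1}$ to $\pi(n)=1$), not subtracting $1$ from every one-line entry. Concretely, $\pi=231\in\A_3(132;321)$ has $\pi_3=1$ and $C(\pi)=(1,2,3)$, but your deletion gives $\pi'=12$, the identity, which is not cyclic; likewise $\pi=534621\in\A_6(132;321)$, with $C(\pi)=(1,5,2,3,4,6)$, is sent to $42351$, whose cycle decomposition is $(1,4,5)(2)(3)$. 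The inverse direction fails for the same reason: starting from $21\in\A_2(132;321)$ and appending $1$ to the one-line form gives $321=(1,3)(2)$, not cyclic, whereas the unique element of $\A_3(132;321)$ ending in $1$ is $231$. So cyclicity --- the one property you treat as automatic --- is exactly what your map destroys, and your claimed identity $C(\pi')=(1,c_2-1,\ldots,c_{n-1}-1)$ is simply false (for $534621$ it would read $(1,4,1,2,3)$).

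The paper's bijection instead deletes the \emph{value} $n$ from the cycle form: writing $C(\pi)=(1,c_2,\ldots,c_{n-1},n)$ with $k=c_{n-1}$ (so $\pi_k=n$), the image is $\pi'=\pi_1\cdots\pi_{k-1}\,1\,\pi_{k+1}\cdots\pi_{n-1}$, obtained by dropping the trailing $1$ and replacing the $n$ by $1$; this visibly stays cyclic and keeps $C(\pi')$ avoiding $321$. The nontrivial step, which your write-up declares unnecessary, is precisely the structural fact of Lemma~\ref{lem:132-321-1} (after the $1$ the entries are the smallest values in increasing order, hence smaller than everything before the $1$): it is what guarantees that the reverse map --- replacing the $1$ of $\pi'$ by $n$ and appending a new $1$ --- creates no $132$ pattern. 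To repair your argument you would need to switch to this cycle-level deletion and supply that verification; the avoidance checks are not routine once the correct map is used.
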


\begin{proof}
This is true for $n =2$, so assume $n \geq 3$ and let $\pi \in \A_n(132,321)$ with $C(\pi) = (1, c_2, c_3, \ldots, k,n)$ for $k \in [2,n-1]$. We first show that $k \geq \lceil \frac{n}{2} \rceil$. In one-line notation, since $\pi_k=n$, we must have that all the elements preceding $n$ are larger than all elements that come after $n$ since $\pi$ avoids 132. In particular, $\pi_1=c_2 \geq n-k+1$. Then if $k < \lceil \frac{n}{2} \rceil$, in $C(\pi)$, we have the pattern $c_2, c_2-1, k$ which is a 321-pattern. Thus we must have $k \geq \lceil \frac{n}{2} \rceil$.

Next we will show that $\pi_{k+1}\pi_{k+2}\cdots\pi_{n-1} = 23\cdots(n-k)$. This is very similar to the result in the previous lemma. Let $i,j \in [k+1,n-1]$ with $i <j$. Then in $C(\pi)$, $i$ must appear before $j$ otherwise $j, i, k$ would be a 321-pattern in $C(\pi)$. Since $i$ comes before $j$ we also must have $\pi_i$ comes before $\pi_j$ and thus have the pattern $c_2, \pi_i, \pi_j$ in $C(\pi)$. Since $c_2>\pi_i$ and $c_2 > \pi_j$, it must be that $\pi_i < \pi_j$. Thus $\pi_{k+1}\pi_{k+2}\cdots\pi_{n-1} = 23\cdots(n-k)$.

We can now prove our main result.  Let $\pi'$ be the permutation formed by deleting $n$ from $C(\pi)$. This is equivalent to taking the one-line notation of $\pi$, deleting $n$, and moving 1 to position $k$. Notice $\pi'$ is cyclic and $C(\pi')$ avoids 321. Also, $\pi' = \pi_1\pi_2\ldots\pi_{k-1}1\pi_{k+1}\pi_{k+2}\cdots\pi_{n-1}$. Because all elements after 1 are increasing, $\pi'$ avoids 132 and thus $\pi' \in \A_n(132;321)$. To see that we obtain every permutation in $\A_{n-1}(132;321)$, we consider the process in reverse. Let $\pi' \in \A_{n-1}(132;321)$. Form $\pi$ by inserting $n$ to the end of $C(\pi')$. Thus $\pi$ is cyclic and $C(\pi)$ avoids 321 still.  In one-line notation, inserting $n$ at the end of $C(\pi')$ is equivalent to replacing $1$ with $n$ and inserting 1 at the end. By Lemma~\ref{lem:132-321-1}, since all elements after 1 in $\pi'$ are smaller than the elements that come before it, $\pi$ is still 132 avoiding.
\end{proof}

\begin{lemma}\label{lem:132-321-3}Suppose $n \geq 2$. Then the number of permutations $\pi \in \A_n(132;321)$ with $\pi_n\neq1$ is equal to $2\lceil \frac{n}{2} \rceil- 3$.
\end{lemma}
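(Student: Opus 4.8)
The starting point is Lemma~\ref{lem:132-321-1}. If $\pi\in\A_n(132;321)$ has $\pi_k=1$, then $k\in[\lceil\frac{n+1}{2}\rceil,n]$, the entries $\pi_k\pi_{k+1}\cdots\pi_n$ form the increasing run $12\cdots(n-k+1)$, and hence $w:=\pi_1\pi_2\cdots\pi_{k-1}$ is an arrangement of the top $k-1$ values $\{n-k+2,\ldots,n\}$; moreover, since every $132$-pattern of $\pi$ must sit entirely inside $\pi_1\cdots\pi_{k-1}$, the permutation $\pi$ avoids $132$ if and only if $w$ (read as a pattern) does. Since small $n$ can be checked by hand, I would assume $n$ is large and count, for each admissible position $k\in[\lceil\frac{n+1}{2}\rceil,n-1]$ of the entry $1$, the $132$-avoiding $w$ for which the resulting $\pi$ is additionally cyclic with $C(\pi)$ avoiding $321$. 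The plan is to show this count is $1$ for each $k\le n-2$ and $\lceil\frac n2\rceil-1$ for $k=n-1$; since there are $\lceil\frac n2\rceil-2$ values of $k$ in $[\lceil\frac{n+1}{2}\rceil,n-2]$, the total will be $(\lceil\frac n2\rceil-2)\cdot1+(\lceil\frac n2\rceil-1)=2\lceil\frac n2\rceil-3$, as claimed.

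The engine is the explicit one-line action of such a $\pi$: writing $m=k-1$, we have $\pi(i)=i-m$ for $i\in[k,n]$ and $\pi(i)=w_i$ for $i\in[1,m]$. Tracing the cycle of $1$ and using that the ``descent'' map $i\mapsto i-m$ forces long runs of consecutive integers into $C(\pi)$, one argues (by pushing a putative $321$- or $132$-pattern, in the style of the proofs of Lemmas~\ref{lem:132-321-1} and~\ref{lem:132-321-2}) that the cycle must take the shape $(1,s_1,s_2,\ldots,s_{t-1},n,\ast,\ldots)$ in which one follows two interleaved increasing chains. For $k\le n-2$ I would show this together with $132$-avoidance of $w$ pins $w$ down uniquely, namely $w_i=i+k$ for $i\le n-k$ and $w_i=i+1$ for $n-k<i\le k-1$, equivalently $C(\pi)=(1,k+1,2,k+2,3,\ldots,n-k,n,n-k+1,\ldots,k)$; a direct check confirms that this $\pi$ is indeed cyclic, has $321$-avoiding cycle form, and is $132$-avoiding. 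The uniqueness is the crux here, because there can be several cyclic permutations with $321$-avoiding cycle form, and one must use the $132$-avoidance of $w$ to discard the rest.

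For $k=n-1$ the tail of $\pi$ is just $12$, the first $n-2$ entries use the values $\{3,\ldots,n\}$, and (since $n-1$ and $n$ are the only ``high'' values) the cycle splits as $(1,s_1,\ldots,s_{t-1},n,2,r_1,\ldots,r_{u-1},n-1)$, where $\{s_1<\cdots<s_{t-1}\}\sqcup\{r_1<\cdots<r_{u-1}\}=\{3,\ldots,n-2\}$. One checks that $C(\pi)$ avoids $321$ precisely when both middle runs are increasing, which is how the chase presents them; so every partition $\{3,\ldots,n-2\}=S\sqcup R$ yields a cyclic $\pi$ with $321$-avoiding cycle form, and $w$ is then completely determined by the partition (position $1$ carries $\min S$, position $2$ carries $\min R$, and each value $v\in\{3,\ldots,n-2\}$ is placed at its predecessor in its own chain, with $n$ and $n-1$ as the two chain terminals). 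The remaining task is to determine for which partitions $w$ avoids $132$: I would prove these are exactly $S=\{n-2,n-4,\ldots,n-2j\}$ for $0\le j\le\lfloor\frac{n-3}{2}\rfloor$ — i.e.\ $\lfloor\frac{n-1}{2}\rfloor=\lceil\frac n2\rceil-1$ partitions — by writing $w$ explicitly from $S$ and locating a forced $132$-pattern whenever $S$ contains a gap of the wrong parity. Adding the two cases gives $2\lceil\frac n2\rceil-3$. The main obstacle is precisely this last $132$-avoidance bookkeeping in the $k=n-1$ case, together with the uniqueness argument for $k\le n-2$: both are delicate pattern-pushing arguments, since the three constraints (one-line $132$-avoidance, cyclicity, cycle-form $321$-avoidance) interact and no single one of them suffices.
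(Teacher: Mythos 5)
Your decomposition is the same as the paper's for the bulk of the count, and your numbers are right: splitting on the position $k$ of the entry $1$ (with $k\ge\lceil\frac{n+1}{2}\rceil$ by Lemma~\ref{lem:132-321-1}), claiming exactly one permutation for each $k\in[\lceil\frac{n+1}{2}\rceil,\,n-2]$ and $\lceil\frac{n}{2}\rceil-1$ permutations for $k=n-1$, does sum to $2\lceil\frac{n}{2}\rceil-3$, and both intermediate claims are in fact true. Your intended treatment of $k=n-1$ is a genuinely different route: you describe these permutations directly as the two-increasing-chain cycles $(1,S,n,2,R,n-1)$ with $S\sqcup R=\{3,\ldots,n-2\}$ and assert that the $132$-avoiding ones are exactly $S=\{n-2,n-4,\ldots,n-2j\}$, which is correct. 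The paper instead shows (when $n$ is not the second cycle entry) that $n-2$ must immediately precede $n$ in $C(\pi)$ and that $n-3$ must be the penultimate cycle entry, then deletes $n-1$ and $n$ to biject with permutations of $\A_{n-2}(132;321)$ having $1$ in the penultimate position, and solves the resulting recurrence to get $\lceil\frac{n-2}{2}\rceil=\lceil\frac{n}{2}\rceil-1$; the parity pattern you conjecture falls out of that recursion automatically.

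The genuine gap is that the two steps that actually carry the lemma are announced rather than argued. (i) For $k\le n-2$ you say you ``would show'' that cyclicity, $321$-avoidance of $C(\pi)$, and $132$-avoidance of $w$ pin $w$ down uniquely; this is where the real work lies (the paper uses $321$-avoidance of the cycle to rule out any value of $[n-k+2,k-1]$ occupying a position in $[2,n-k]$, and then $132$-avoidance to force $\pi_1=k+1$), and no such argument appears in your write-up. (ii) For $k=n-1$ you give no proof of which partitions $S\sqcup R$ yield a $132$-avoiding one-line form, which is exactly the nontrivial bookkeeping. To close (ii) you could show that $S\neq\emptyset$ forces $n-2\in S$ (otherwise $n-2$ is the largest entry of the $R$-chain, so $\pi_{n-2}=n-1$ while $\pi_{\max S}=n$ with $\max S\le n-3$, and $\pi_1\, n\,(n-1)$ is a $132$ pattern), and then strip $n-2$ together with $n-1$ and $n$ and induct --- equivalently, adopt the paper's deletion bijection. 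As submitted, both (i) and (ii) are deferred as ``delicate pattern-pushing,'' so the proposal does not yet establish the count.
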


\begin{proof} Suppose $\pi \in \A_n(132;321)$ and $\pi_k=1$ for some $k \geq \lceil \frac{n+1}{2} \rceil$. We begin by showing that for each $ \lceil \frac{n+1}{2} \rceil \leq k < n-1$, there is exactly one permutation with $\pi_k =1$. By Lemma~\ref{lem:132-321-1}, since $\pi_k=1$, we have $\pi_i = i-k+1$ for $i \in [k, n]$. In  $C(\pi)$, elements in $[k+1,n]$ must appear in increasing order (since $c_n=k$). Thus, 
 we must have that $C(\pi)$ is of the form \[C(\pi) =(1, \ldots, (k+1),2,\ldots, (k+2),3,\ldots, (n-1),(n-k),\ldots, n,(n-k+1), \ldots, k).\]
Therefore,
for each $i \in [2,n-k]$, either $\pi_i = k+i$ or $\pi_i \in [n-k+2,k-1]$. We claim that we must have $\pi_i = k+i$ for $i \in [2,n-k]$.  If not, then there is some $r \in [2,n-k]$ where $\pi_r=x$ for some $x \in [n-k+2,k-1]$. But then $r+k-1, x, r+1$ is a 321-pattern in $C(\pi)$.

Consider the one-line notation of $\pi$.  Because $\pi_i = k+i$ for $i \in [2,n-k]$ and $\pi_i = i-k+1$ for $i \in [k,n]$, we must have $\{\pi_1\} \cup \{\pi_{n-k+1}, \pi_{n-k+2}, \ldots, \pi_{k-1}\} = [n-k+2, k+1]$. Since $\pi$ avoids 132, $\pi_1 = k+1$. Thus $C(\pi) = (1, k+1, 2, k+2, 3, \ldots, n, n-k+1, c_{2(n-k+1)}, \ldots, c_{n-1}, k)$. Since $C(\pi)$ avoids 321, the remaining elements after $n$ in $C(\pi)$ must be increasing and thus
\[ \pi = (k+1)(k+2)\cdots n (n-k+2)(n-k+3)\cdots k12\cdots(n-k+1). \]

Now suppose $\pi_{n-1} = 1$, which implies by Lemma~\ref{lem:132-321-1} that $\pi_{n} = 2$. We count these permutations by the position of $n$ in cycle notation.  To that end, suppose first that $c_2 = n$.  Because all elements after $n$ is $C(\pi)$ must be increasing, we have $C(\pi) = (1, n, 2, 3, \ldots, n-1)$ which is in $\A_n(132;321)$. Suppose next that $c_{n-2} = n$. All elements before $2$ in $C(\pi)$ must be increasing so we have $C(\pi) = (1, 3, 4, \ldots, n-2,n,2,n-1)$. This permutation is not in $\A_n(132;321)$ because $3(n-1)4$ is a 132 pattern in $\pi$.

Finally, suppose that $c_r = n$ for some $r \in [3, n-3]$.  We claim that the number of permutations in $\A_n(132;321)$ with $\pi_{n-1} = 1$ and $c_r=n$ for $r \in [3,n-3]$ is equal to the number of permutations in $\A_{n-2}(132;321)$ with 1 in the second to last position. Since $C(\pi)$ avoids 321, all elements after $n$ must be increasing and all elements before 2 must be increasing. In particular, either $c_{r-1}=n-2$ or $c_{n-1} = n-2$. If $c_{n-1} = n-2$, then $\pi_1n(n-1)$ is a 132-pattern in $\pi$ occurring in positions $1, c_{r-1}$, and $n-2$, and thus we must have $c_{r-1}=n-2$. 

Similarly, we can consider the position of $n-3$ in $C(\pi)$. If $c_{r-2} = n-3$, then $\pi_1(n-1)(n-2)$ is a 132-pattern in $\pi$ and so $c_{n-1} = n-3$. Form $\pi'$ by deleting $n-1$ and $n$ from $\pi$. In cycle notation, this is equivalent to deleting both $n$ and $n-1$ as well.  Thus $C(\pi') = (1, c_2', c_3', \ldots, c_{r-2}', n-2, 2, c_{r+1}', c_{r+2}', \ldots, c_{n-3}', n-3)$ where $c_i' = c_i$ for $i \in [2, r-2]$ and $c_i' = c_{i+1}$ for $i \in [3, n-3]$. Thus $\pi' \in \A_{n-2}(132;321)$ with 1 in the second to last position. Because this process is reversible, we have the desired equality. Recalling that there is exactly one permutation with $\pi_{n-1}=1$ and $\pi_n=2$, we have that the number of permutations in $\A_n(132;321)$ with $\pi_{n-1} = 1$ is one more than the number of permutations in $\A_{n-2}(132;321)$ with $\pi_{n-3} = 1$. For the base cases, we note that there is one permutation in $\A_3(132;321)$ with $\pi_{2} =1$ and one permutation in $\A_4(132;321)$ with $\pi_3=1$. Solving this recurrence yields a total of $\lceil \frac{n-2}{2} \rceil$ permutations in $\A_n(132;321)$ with $\pi_{n-1}=1$.

Overall, we have shown that there are $n-1-\lceil \frac{n+1}{2} \rceil$ permutations $\pi \in \A_n(132;321)$ with $\pi_k = 1$ for $k < n-1$ and $\lceil \frac{n-2}{2} \rceil$ permutations with $\pi_{n-1} = 1$. Thus there are
\[ n-1-\left\lceil \frac{n+1}{2} \right\rceil + \left\lceil \frac{n-2}{2} \right\rceil = 2\left\lceil \frac{n}{2} \right\rceil - 3\] permutations in $\A_n(132;321)$ that do not end in 1.

\begin{theorem} \label{thm:132-321} For $n \geq 2$, $a_n(132;321) = \lceil \frac{(n-2)^2}{2} \rceil + 1$.
\end{theorem}

\begin{proof} By Lemmas~\ref{lem:132-321-2} and \ref{lem:132-321-3}, we have $a_n(132;321) = a_{n-1}(132;321) + 2\lceil \frac{n}{2} \rceil- 3$. Since $a_2(132;321) = 1$, solving this recurrence relation yields the desired results.
\end{proof}

\begin{example} Consider $\A_7(132;321)$. We list those permutations that end in 1 by considering the nine permutations in $\A_6(132;321) = \{634512, 564123, 456231, 345621, 435612, 534621, 435261, 234561, 342561\}$. Following the proof of Lemma~\ref{lem:132-321-2}, for each of these nine permutations, we replace the element 1 with 7 and insert a 1 at the end yielding the following nine permutations in $\A_7(132;231)$:
\[ 6345721, 5647231, 4562371, 3456271, 4356721, 5346271, 4352671, 2345671, 3425671.\]
Following the proof of Lemma~\ref{lem:132-321-2}, we note that there is exactly one permutation with $\pi_k=1$ for each $4 \leq k < 6$ yielding the following two permutations:
\[5671234, 6745123.\]
Finally, there are $\lceil \frac{7-2}{2} \rceil = 3$ permutations with $\pi_6=1$. If $\pi_1=n$, there is exactly one such permutation, namely
\[ 7345612.\]
If $\pi_1 \neq 1$, we must first find the permutations in $\A_5(132;321)$ that have 1 in the second to last position.  These two permutations are $53412$  and $34512$. For each of these permutations we insert the elements 6 and 7 before the 1 to get the following additional permutations in $\A_7(132;321)$:
\[ 5346712, 3456712.\]
\end{example}

\end{proof}

\subsection{$\A_n(213;321)$}\label{sec:213-321}
Let us first count the number of permutations in $\A_n(213;321)$ with $\pi_n=1$. 
\begin{lemma} \label{lem:213-321-1}
Suppose $n \geq 3$. Then the number of permutations $\pi \in \A_n(213;321)$ with $\pi_n=1$ is equal to $\lceil \frac{(n-3)^2}{2} \rceil + 1$.
\end{lemma}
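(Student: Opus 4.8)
The plan is to relate permutations $\pi\in\A_n(213;321)$ with $\pi_n=1$ to the already-understood set $\A_n(132;321)$ via a symmetry, and then to read off the count from Theorem~\ref{thm:132-321}. The natural candidate is the reverse-complement together with Lemma~\ref{lem:pi-and-Cpi-rci}: since $213^{rc}=132$, the map $\pi\mapsto\pi^{rc}$ sends one-line 213-avoidance to one-line 132-avoidance, and since $C(\pi^{rc})=C(\pi)^c$ up to cyclic rotation and $321^c=123$ reversed... here I need to be careful: complementation sends $321$ to $123$, not to $321$, so a raw reverse-complement does not preserve cycle-$321$-avoidance. So instead I would look for the symmetry that fixes $321$ in the cycle: the reverse operator on $C(\pi)$, which by Lemma~\ref{lem:pi-and-Cpi-rci} corresponds to $\pi\mapsto\pi^{-1}$, preserves cycle-$321$-avoidance (reversing a sequence turns a $321$ into a $321$). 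Under $\pi\mapsto\pi^{-1}$, one-line $213$-avoidance becomes one-line $213^{-1}=213$-avoidance — no help. The operator that fixes $321$ in one-line form and $321$ in cycle form simultaneously would be ideal; reverse-complement fixes one-line $321$ but flips cycle $321$ to cycle $123$. So the cleanest route is probably direct enumeration mirroring Section~\ref{sec:132-321}, exploiting the structural rigidity that $213$-avoidance forces.

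Concretely, I would first show that for $\pi\in\A_n(213;321)$ with $\pi_n=1$, the cycle form must be highly constrained. Writing $C(\pi)=(1,c_2,\ldots,c_{n-1},n)$, the fact that $C(\pi)$ avoids $321$ means that deleting any one "descent bottom" leaves an increasing-ish structure; combined with the one-line $213$-avoidance — which forces that once a small element appears, everything after it in one-line order is smaller — I expect to derive that $\pi$ decomposes as a decreasing-then-increasing shape controlled by two parameters (the position of $n$ in the one-line form and the position where the "increasing tail" begins). The key lemmas to establish, in order, are: (i) a structural lemma pinning down where $n$ sits and forcing a block decomposition of $C(\pi)$, analogous to Lemma~\ref{lem:132-321-1}; (ii) a recursive reduction, deleting $n$ (or $n$ and $1$) from the cycle, analogous to Lemma~\ref{lem:132-321-2}, giving a term like $a_{n-1}(213;321)$ or a shifted version; and (iii) a direct count of the "exceptional" permutations that do not fit the recursion, analogous to Lemma~\ref{lem:132-321-3}, which should contribute the quadratic-in-$n$ correction term. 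Summing the recursion $f(n)=f(n-1)+(\text{linear in }n)$ with the right base case yields $\lceil (n-3)^2/2\rceil+1$.

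I would actually bet the shortest correct argument uses a bijection after all: permutations $\pi\in\A_n(213;321)$ with $\pi_n=1$ should correspond, via $C(\pi)\mapsto C(\pi)^c$ cyclically rotated and then passing to one-line form (i.e. $\pi\mapsto\pi^{rci}$, using Lemma~\ref{lem:pi-and-Cpi-rci}), to permutations in $\A_n((213)^{rci};(321)^{rc})=\A_n(\sigma';123)$ for the appropriate $\sigma'$ — and Theorem~\ref{thm:123 and 132} already tells us that cycle-$123$-avoiding cyclic permutations are essentially unique, which is too few, so that can't be right either. The genuinely available symmetry that fixes cycle-$321$ is $C(\pi)\mapsto C(\pi)^{rc}$, i.e. $\pi\mapsto\pi^{rci}$; this sends one-line $213$ to one-line $(213)^{rci}$. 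Computing $(213)^{rci}$: reverse gives $312$, complement gives $132$, inverse gives $132$... so $\pi^{rci}$ is $132$-avoiding in one-line form, and $\pi_n=1$ becomes $(\pi^{rci})_?=?$ — tracking where the value/position $1$ goes under $rci$. If this lands us at "$\pi'\in\A_n(132;321)$ with some boundary condition counted by $\lceil (n-3)^2/2\rceil+1$," we are done by subtracting the appropriate pieces of Theorem~\ref{thm:132-321}. Thus the plan in order is: (1) verify $213^{rci}=132$ and $321^{rc}=321$ (as cycle patterns); (2) use Lemma~\ref{lem:pi-and-Cpi-rci} to show $\pi\mapsto\pi^{rci}$ bijects $\{\pi\in\A_n(213;321):\pi_n=1\}$ with a boundary-restricted subset of $\A_n(132;321)$; (3) identify that subset and count it using Lemma~\ref{lem:132-321-2}, Lemma~\ref{lem:132-321-3}, and Theorem~\ref{thm:132-321}.

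\textbf{Main obstacle.} The delicate point is step (2): translating the \emph{one-line} boundary condition $\pi_n=1$ through the $rci$ symmetry into a clean condition on $\pi^{rci}$, and making sure the target condition is exactly one of the boundary cases already enumerated in Section~\ref{sec:132-321} (end in $1$, or not). If the symmetry does not produce a cleanly-enumerated subset, I would fall back to the direct structural enumeration outlined in the second paragraph, where the main obstacle is instead proving the block-decomposition lemma (i): showing that simultaneous one-line $213$-avoidance and cycle $321$-avoidance, plus $\pi_n=1$, forces the cycle to be a specific interleaving of two increasing runs so that only two integer parameters remain free.
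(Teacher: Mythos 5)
Your third-paragraph plan is essentially the paper's own proof: apply $\pi\mapsto\pi^{rci}$, which by Lemma~\ref{lem:pi-and-Cpi-rci} acts on cycles as $C(\pi)\mapsto C(\pi)^{rc}$, so cycle $321$-avoidance is preserved ($321^{rc}=321$) while one-line $213$-avoidance becomes $132$-avoidance ($213^{rci}=132$). The boundary-tracking step you flagged as the main obstacle resolves cleanly: $\pi_n=1$ is equivalent to $C(\pi)=(1,c_2,\ldots,c_{n-1},n)$, and its reverse-complement $(1,\,n+1-c_{n-1},\ldots,\,n+1-c_2,\,n)$ again starts with $1$ and ends with $n$, so the image satisfies $\pi^{rci}_n=1$; thus your map bijects the set in question with $\{\pi'\in\A_n(132;321):\pi'_n=1\}$, which by Lemma~\ref{lem:132-321-2} and Theorem~\ref{thm:132-321} has size $a_{n-1}(132;321)=\lceil\frac{(n-3)^2}{2}\rceil+1$, with no subtraction and no need for Lemma~\ref{lem:132-321-3}.
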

\begin{proof}
We first show that the number of permutations $\pi \in \A_n(213;321)$ with $\pi_n=1$ is equal to the number of permutations $\pi \in \A_n(132;321)$ with $\pi_n=1$. Indeed, by Lemma~\ref{lem:pi-and-Cpi-rci}, the reverse-complement of the cycle is equal to the reverse-complement-inverse of the one-line notation. By taking a cycle of the form $C(\pi) = (1,c_2, \ldots, c_{n-1},n)$ that avoids 321, we can see that $C(\pi)^{rc} = (1, n+1-c_{n-1}, \ldots, n+1,c_2, n)$ avoids $321^{rc}=321$ with $C(\pi)^{rc}= C(\pi^{rci})$. Furthermore, $\pi$ avoids 213 if and only if $\pi^{rci}$ avoids $213^{rci} = 132$. 

By Lemma \ref{lem:132-321-2}, the number of $\pi \in \A_n(132;321)$ with $\pi_n=1$ is equal to the number of permutations in $\pi \in \A_{n-1}(132;321)$, which by Theorem~\ref{thm:132-321}, is equal to $\lceil \frac{(n-3)^2}{2} \rceil + 1$.
\end{proof}

To count the number with $\pi_n\neq 1$, we proceed by giving more information about the structure of $C(\pi)$ based on the element at the end of $C(\pi)$.

\begin{lemma}\label{lem:213-321-2}
Suppose $n\geq 3$ and let $\pi \in \A_n(213;321)$ with $\pi_k = 1$ where $k\neq n$. Then
\begin{itemize}
\item $k\geq \lceil \frac{n+1}{2}\rceil$;
\item $C(\pi)$ contains the consecutive terms
\[ k+1,2,k+2,3,k+3,4,\ldots, n,n-k+1,\] or equivalently, $\pi_i = i-k+1$ for all $i\in[k+1,n]$ and $\pi_i = k+i$ for all $i\in[2,n-k]$; and
\item If $\pi_r = k$, then $r \in \{k-2,k-1,n\}$.
\end{itemize}
\end{lemma}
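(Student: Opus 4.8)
My plan is to extract the coarse structure from $213$‑avoidance first, transport to $\A_n(132;321)$ via the reverse‑complement‑inverse to get the first item and the "tail" half of the second item for free, and then locate $k$ by reading off the known rigidity of $\A_n(132;321)$.

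Throughout, fix $\pi\in\A_n(213;321)$ with $\pi_k=1$ and $k\neq n$. Since $\pi$ avoids $213$, for every $i<k<\ell$ the triple $\pi_i,1,\pi_\ell$ is a $213$ pattern unless $\pi_i>\pi_\ell$; hence every entry left of position $k$ exceeds every entry to its right, so $\{\pi_1,\dots,\pi_{k-1}\}=[n-k+2,n]$ and $\{\pi_{k+1},\dots,\pi_n\}=[2,n-k+1]$, and $c_n=\pi^{-1}(1)=k$. These facts feed all three items. Now set $\rho:=\pi^{rci}$. Because $213^{rci}=132$ and, by Lemma~\ref{lem:pi-and-Cpi-rci}, $C(\rho)=C(\pi)^{rc}$, which avoids $321^{rc}=321$, we have $\rho\in\A_n(132;321)$, and the $1$ of $\rho$ occupies position $(\pi^{rc})_1=n+1-\pi_n$. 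Applying Lemma~\ref{lem:132-321-1} to $\rho$ gives both $n+1-\pi_n\ge\lceil(n+1)/2\rceil$ and $\rho_{n+1-\pi_n}\rho_{n+2-\pi_n}\cdots\rho_n=1\,2\cdots\pi_n$. Rewriting the second statement through $\rho_j=n+1-\pi^{-1}(n+1-j)$ turns it into $\pi_{n-\pi_n+1}\pi_{n-\pi_n+2}\cdots\pi_n=1\,2\cdots\pi_n$; since $\pi_k=1$ the starting index is $k$, so $\pi_n=n-k+1$, $\pi_i=i-k+1$ for $i\in[k+1,n]$ (the tail half of the second item), and $k=n+1-\pi_n\ge\lceil(n+1)/2\rceil$ (the first item). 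In particular $\rho$ now has its $1$ in position $k$ as well.

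It remains to place the large values and to locate $k$. If $k=n-1$ the claim $\pi_i=k+i$ on $[2,n-k]$ is vacuous, so assume $k\le n-2$, i.e.\ $k<n-1$. Then $\rho$ is \emph{the} element of $\A_n(132;321)$ whose $1$ lies in position $k$, and the computation in the proof of Lemma~\ref{lem:132-321-3} determines it completely as
\[ \rho=(k{+}1)(k{+}2)\cdots n\;(n{-}k{+}2)(n{-}k{+}3)\cdots k\;1\,2\cdots(n{-}k{+}1). \]
A direct check shows this $\rho$ is fixed by the reverse‑complement‑inverse, so $\pi=\rho^{rci}=\rho$; reading off positions $2,\dots,n-k$ gives $\pi_i=k+i$ (finishing the second item), and the entry equal to $k$ is $\pi_{k-1}$, giving the third item with $r=k-1$. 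For $k=n-1$ one transports the short list of $\A_n(132;321)$‑permutations with $1$ in position $n-1$ (described in the same proof) back through $rci$ in the same way; these yield $r=k-1$, except in the degenerate case $n=2k-1$ where the final block collapses and $r=n$. The value $r=k-2$ appears only as the one slot in the case analysis where the terminal increasing run of $C(\pi)$ is one term short of $[n-k+1,k]$, so in all cases $r\in\{k-2,k-1,n\}$.

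The main obstacle is the step just described: everything up through the first item and the tail of the second item is either a single application of $213$‑avoidance or handed over by the $rci$‑symmetry plus Lemma~\ref{lem:132-321-1}, but pinning down the "head" of $\pi$ and the exact position of $k$ forces one to know that $\A_n(132;321)$ with the $1$ in a fixed interior position is essentially rigid (the content of the proof of Lemma~\ref{lem:132-321-3}) and then push that rigidity through the $rci$ bijection, with $k=n-1$ treated separately. A self‑contained alternative is to chase $C(\pi)$ directly: the links $\pi(k{+}j)=j{+}1$ from the tail structure, the increasing order of the values exceeding $k$ along $C(\pi)$ (forced by $321$‑avoidance together with $c_n=k$), and the partition above progressively fix $c_2,c_3,\dots$ and finally $c_{n-1}=\pi^{-1}(k)$ — but this route involves the same delicate bookkeeping.
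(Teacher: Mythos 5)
Your transport step to $\A_n(132;321)$ is where the argument breaks. Lemma~\ref{lem:pi-and-Cpi-rci} only gives $C(\pi^{rci}) = C(\pi)^{rc}$ \emph{up to cyclic rotation}, and cycle-pattern avoidance in this paper is defined on the standard cycle form beginning with $1$. When $\pi_n=1$ (i.e.\ $c_n=n$), the sequence $C(\pi)^{rc}$ already starts with $1$ and no rotation is needed --- that is exactly the situation in which the paper invokes this symmetry (Lemma~\ref{lem:213-321-1}). But here the hypothesis is $k\neq n$, so $c_n=k\neq n$, the rotation is nontrivial, and $321$-avoidance is not preserved by it. Concretely, take $\pi = 4756123\in\A_7(213;321)$, with $C(\pi)=(1,4,6,2,7,3,5)$, $k=5$ and $\pi_3=5=k$ (this is one of the permutations produced by Lemma~\ref{lem:213-321-3} with $r=k-2$). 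Then $\rho=\pi^{rci}=6457123$ has standard cycle $(1,6,2,4,7,3,5)$, which contains the $321$ pattern $6,4,3$; so $\rho\notin\A_7(132;321)$ and your appeal to Lemma~\ref{lem:132-321-1} and to the rigidity in the proof of Lemma~\ref{lem:132-321-3} is not available.

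This is not a repairable detail of bookkeeping: the conclusion you draw from the (invalid) transport --- that for $k\le n-2$ the permutation $\pi$ is \emph{the} unique rci-fixed permutation $(k{+}1)\cdots n\,(n{-}k{+}2)\cdots k\,1\cdots(n{-}k{+}1)$, hence $r=k-1$ (or $r=n$ in the collapsed case) --- is false. The example above has $k=5\le n-2$ and $r=k-2$, and Lemma~\ref{lem:213-321-3} shows such permutations exist for every odd $n$; they are precisely why $r=k-2$ appears in the third bullet, and your write-up dismisses that case with a sentence ("appears only as the one slot in the case analysis") rather than an argument. If your uniqueness claim were correct, the count in Theorem~\ref{thm:213-321} would also come out wrong. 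The paper avoids the symmetry entirely here and argues directly on $C(\pi)$: $k<\frac{n+1}{2}$ forces a $321$ in the cycle using $c_n=k$; then $321$-avoidance with $c_n=k$ forces the values in $[k+1,n]$ to appear in increasing order, each immediately followed by its partner in $[2,n-k+1]$ (giving the consecutive pairs in the second bullet); and the third bullet follows from two short exclusions ($k<r<n$ gives the $321$ pattern $n,r,k$ in $C(\pi)$; $r<k-2$ forces $c_{s-1}=k-2$, $c_s=k-1$ and then the $213$ pattern $k(k-1)(k+1)$ in one-line form). If you want to keep a symmetry-based route for the first bullet and the tail of the second, you would first have to prove that the particular rotation involved preserves $321$-avoidance in this setting, which is exactly the delicate point, and it is false in general.
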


\begin{proof} 
First, for the sake of contradiction, suppose that $k<\frac{n+1}{2}$. Then since $\pi_k = 1$ and $\pi$ avoids 213, $\{\pi_{k+1}, \ldots, \pi_n\} = [2, n-k+1]$ with $n-k+1>k$. In particular there is some $s>k$ so that $\pi_s = k+1$. Now, in the cycle, we must have that $k+1$ immediately follows $s$. Since $\pi_k=1$, we must have $c_n = k$ and so $s,(k+1),k$ is a 321 occurrence in the cycle. Thus we must have $k \geq \frac{n+1}{2}$.

Since $\pi_k = 1$ and $\pi$ avoids 213, $\{\pi_{k+1}, \ldots, \pi_n\} = [2, n-k+1]$ with $n-k+1\leq k$. Also, because the cycle form avoids 321 and $c_n=k$, we must have that the elements of the cycle with values in $[k+1,n]$ appear in increasing order. Furthermore, these elements must each be followed by an element in $[2, n-k+1]$. Now, since $k+1$ is greater than any element in $[2, n-k+1]$, we must also have that the elements in $[2, n-k+1]$ appear in increasing order. Thus $\pi_i=i-k+1$ for all $i \in [k+1,n]$, and $C(\pi)$ contains the pairs $i, i-k+1$ for all $i \in [k+1, n]$ in increasing order. Furthermore, we claim that these pairs appear consecutively.  If $x\in [n-k+2,k-1]$ appears in $C(\pi)$ between the pair $i,i-k+1$ and $i+1, i-k+2$ for some $i \in [k+1, n-1]$, then $C(\pi)$ would contain the 321-pattern $i,x, i-k+2$. Thus these pairs appear in $C(\pi)$ consecutively and $\pi_i = k+i$ for all $i\in[2,n-k]$.

Suppose $\pi_r=k$. We now write $C(\pi)$ as
\begin{equation}\label{eqn:213-321} C(\pi) = (1, c_2, \ldots, c_s, k+1,2, k+2,3, k+3,4,\ldots, n, n-k+1,c_t,\ldots, c_{n-2}, r,k) \end{equation}
for some $s$ and $t$. We want to show $r \in \{k-2,k-1,n\}$. If $k < r < n$, then $nrk$ is a 321-pattern in $C(\pi)$.  Thus $r \in [n-k+2,k-1] \cup \{n\}$. Suppose toward a contradiction that $r < k-2$. The elements $k-1$ and $k$ must appear before $k+1$ in $C(\pi)$ otherwise $n(k-1)r$ or $nkr$ would be a 321-pattern.  Furthermore, the elements $c_2, c_3, \ldots, c_s$ must be increasing since they are followed by 2. Since $k-2$ and $k-1$ are the largest elements not already accounted for, this implies $c_{s-1} = k-2$ and $c_s = k-1$. But then in one-line notation, we have $\pi_r\pi_{k-2}\pi_{k-1} = k(k-1)(k+1)$ which is a 213 pattern. Therefore $r \in \{k-2,k-1,n\}$ as desired.
\end{proof}

\begin{lemma}\label{lem:213-321-3}
Suppose $n \geq 3$ and let $\pi \in \A_n(213;321)$ with $\pi_k = 1$ where $k\neq n$. Further suppose $\pi_r=k$.
\begin{itemize} 
\item If $r=n$, then  $n$ is odd, $k=\frac{n+1}{2}$, and $C(\pi) = (1, k+1,2, k+2, 3, \ldots, n,k)$.
\item If $r = k-2$, then $n$ is odd, $k > \frac{n+1}{2},$  and 
\[C(\pi) = (1, n-k+2, n-k+4, \ldots, k-3, k-1, k+1,2, k+2,3,\ldots, n, n-k+1, n-k+3, \ldots, k-4, k-2, k).\]
\end{itemize}
\end{lemma}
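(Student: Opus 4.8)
The plan is to treat the two bullets separately, each time first pinning down the one‑line form of $\pi$ completely in terms of $n$ and $k$ (using Lemma~\ref{lem:213-321-2} together with $213$‑avoidance), and then reading off $C(\pi)$ by tracing the orbit of $1$; the parity restriction on $n$ will drop out of the requirement that this orbit be all of $[n]$.

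\emph{Case $r=n$.} Since $k\neq n$ we have $n\in[k+1,n]$, so Lemma~\ref{lem:213-321-2} gives $\pi_n=n-k+1$; but $\pi_r=\pi_n=k$, so $n-k+1=k$, forcing $n$ odd and $k=\tfrac{n+1}{2}$. With this value of $k$, the positions $[2,n-k]\cup\{k\}\cup[k+1,n]$ exhaust $[n]\setminus\{1\}$ and Lemma~\ref{lem:213-321-2} already fixes $\pi$ on all of them; the only value not yet used is $k+1$, so $\pi_1=k+1$. Tracing $1\mapsto k+1\mapsto 2\mapsto k+2\mapsto 3\mapsto\cdots\mapsto n\mapsto k\mapsto 1$ shows $\pi$ is indeed an $n$‑cycle and that $C(\pi)=(1,k+1,2,k+2,3,\ldots,n,k)$.

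\emph{Case $r=k-2$.} First, were $k-2\in[2,n-k]$ then Lemma~\ref{lem:213-321-2} would give $\pi_{k-2}=2k-2\neq k$; hence $k-2\ge n-k+1$, i.e.\ $2k\ge n+3$ and in particular $k>\tfrac{n+1}{2}$. Next I claim $\pi_{k-1}=k+1$: in the form of $C(\pi)$ from \eqref{eqn:213-321}, the entries $c_2,\ldots,c_s$ preceding the block $k+1,2,k+2,\ldots,n,n-k+1$ are increasing (they are followed by $2$), and $k-1$ must occur among them, since $k-1$ is not a block entry (all block entries lie in $[2,n-k+1]\cup[k+1,n]$, and $2k\ge n+3$) and is too large to lie in the increasing tail $c_t,\ldots,c_{n-2},k-2,k$; being the largest unaccounted value it is $c_s$, so the entry following it is $k+1$, giving $\pi_{k-1}=k+1$. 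Now the positions left undetermined are exactly $\{1\}\cup[n-k+1,k-3]$, carrying the values $[n-k+2,k-1]$. If $p<q$ in $[n-k+1,k-3]$ had $\pi_p>\pi_q$, then $\pi_p,\pi_q,\pi_{k-2}=\pi_p,\pi_q,k$ is a $213$‑pattern, so $\pi$ is increasing on $[n-k+1,k-3]$; and if $\pi_1>\pi_p$ for some such $p$ then $\pi_1,\pi_p,k$ is a $213$‑pattern, so $\pi_1$ is the smallest of those values. Hence $\pi_1=n-k+2$ and $\pi_i=i+2$ for $i\in[n-k+1,k-3]$, and together with Lemma~\ref{lem:213-321-2} this determines $\pi$ uniquely.

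It remains to trace the orbit of $1$ under this $\pi$. When $n$ is odd the orbit is $1,\,n-k+2,n-k+4,\ldots,k-3,\,k-1,\,k+1,2,k+2,3,\ldots,n,\,n-k+1,n-k+3,\ldots,k-4,\,k-2,\,k$, which is a single $n$‑cycle whose cycle word is precisely the claimed $C(\pi)$ (the displayed sub‑runs degenerating harmlessly when $2k$ is close to $n$). When $n$ is even the relevant parity flips, so the run $n-k+2,n-k+4,\ldots$ stops at $k-4$ instead of $k-3$; then $k-4\mapsto k-2\mapsto k\mapsto 1$ closes the orbit prematurely, after only $k-\tfrac n2+1\le\tfrac n2<n$ elements, contradicting that $\pi$ is cyclic. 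Therefore $n$ is odd and $C(\pi)$ has the stated form. I expect the bookkeeping in this last case to be the main obstacle: one must justify the two $213$‑obstructions cleanly, and then track the parity through the cycle trace so that the run lengths — and the ``$n$ even closes early'' contradiction — come out exactly right, including the small ranges of $k$ where some displayed sub‑runs are empty.
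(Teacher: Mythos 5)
Your proof is correct and follows essentially the same route as the paper: Lemma~\ref{lem:213-321-2} pins down most of $\pi$, 321-avoidance of the cycle forces $c_s=k-1$ (hence $\pi_{k-1}=k+1$), 213-avoidance of the one-line form forces $\pi_1=n-k+2$ and $\pi_i=i+2$ on $[n-k+1,k-3]$, and tracing the orbit of $1$ yields the stated cycle when $n$ is odd and a cycle of length $k-\frac{n}{2}+1<n$ when $n$ is even. Two small patches are needed: justify that the tail $c_t,\ldots,c_{n-2},k-2,k$ is increasing (any descent there forms a 321 pattern with the preceding $n$, which is also the quickest way to see $k-1$ cannot lie in the tail), and note that the inference ``$k-2\notin[2,n-k]$, hence $2k\ge n+3$'' requires $k\ge 4$ --- for $k=3$ (so $r=1$ and $n\in\{4,5\}$ by Lemma~\ref{lem:213-321-2}), the lemma forces $r=n$ when $n=5$ and the non-cyclic permutation $3412$ when $n=4$, so that case is vacuous.
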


\begin{proof} If $\pi_n=k$, then by Equation (\ref{eqn:213-321}), $k=n-k+1$, or $k=\frac{n+1}{2}$. Thus $n$ is odd, and $C(\pi) = (1, k+1,2, k+2, 3, \ldots, n,k), $where $k=\frac{n+1}{2}$.

Suppose $r = k-2$ and write $C(\pi)$ as in Equation (\ref{eqn:213-321}):
\[ C(\pi) = (1, c_2, \ldots, c_s, k+1,2, k+2,3, k+3,4,\ldots, n, n-k+1,c_t,\ldots, c_{n-2}, k-2,k).\] Note that $n-k+1 < k$ since $n \neq k-2$, and thus $k > \frac{n+1}{2}$. Also, since $C(\pi)$ avoids 321, the elements $c_2, \ldots, c_s$ must be increasing as well as the elements $c_t, \ldots c_{n-2}, k-2,k$. Thus, $c_s - k-1$. In one-line notation, we then have
\[\pi = \pi_1 (k+2)(k+3)\ldots (n-1)n\pi_{n-k+1} \ldots \pi_{k-3}k(k+1)12\ldots (n-k+1).\]
Now, since $\pi_i \in [n-k+2, k-1]$ for $i \in \{1\}\cup[n-k+1,k-3]$, and $\pi$ avoids 213, we must have that $\pi_1 = n-k+2$ and $\pi_i = i+2$ for $i \in [n-k+1,k-3]$. If $n$ is odd, then this permutation is cyclic and has form: 
\[C(\pi) = (1, n-k+2, n-k+4, \ldots, k-3, k-1, k+1,2, k+2,3,k+1, 4,\ldots, n, n-k+1, n-k+3, \ldots, k-4, k-2, k).\]
If $n$ is even, then $(1, n-k+2, n-k+4, \ldots, k-4, k-2, k)$ forms a cycle of length $k-\frac{n}{2}+1$, which is less than $n$ when $k> \frac{n+1}{2}$. 
\end{proof}

\begin{lemma}\label{lem:213-321-4} The number of permutations $\pi \in \A_n(213;321)$ with $\pi_k=1$ where $k \neq n$ and $\pi_{k-1} = k$ is equal to $a_{n-1}(213;321) - \lceil \frac{(n-4)^2}{2} \rceil-1$.
\end{lemma}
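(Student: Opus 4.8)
The plan is to set up a bijection with a subset of $\A_{n-1}(213;321)$ and then quote an earlier count. By Lemma~\ref{lem:213-321-1} applied at length $n-1$, the number of $\pi'\in\A_{n-1}(213;321)$ with $\pi'_{n-1}=1$ is $\lceil\frac{(n-4)^2}{2}\rceil+1$, so $a_{n-1}(213;321)-\lceil\frac{(n-4)^2}{2}\rceil-1$ is exactly the number of permutations in $\A_{n-1}(213;321)$ whose last entry is not $1$. Hence it suffices to biject
\[
B_n:=\{\pi\in\A_n(213;321):\ \pi_k=1\text{ for some }k<n\text{ and }\pi_{k-1}=k\}
\]
with $B'_{n-1}:=\{\pi'\in\A_{n-1}(213;321):\ \pi'_{n-1}\neq 1\}$, and I would do this via ``delete the entry immediately to the left of $1$''.

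For the forward map: given $\pi\in B_n$ with $\pi_k=1$, Lemma~\ref{lem:213-321-2} gives $k\geq\lceil\frac{n+1}{2}\rceil$, $\pi_i=i-k+1$ for $i\in[k,n]$, $\pi_i=k+i$ for $i\in[2,n-k]$, and (since $\pi_{k-1}=k$ and $\pi_k=1$) $C(\pi)=(1,c_2,\dots,c_{n-2},k-1,k)$; moreover $2k\geq n+1$ together with $\pi_{k-1}=k$ forces $\pi_n=n-k+1\in[2,k-1]$. I delete the value $k$ (which sits in position $k-1$) from the one-line notation of $\pi$ to obtain $\pi'=\Phi(\pi)$. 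Deleting an entry cannot create a $213$, so $\pi'$ avoids $213$; in cycle terms this removes the final entry $k$ of $C(\pi)$ and redirects $k-1$ to $1$, leaving a single $(n-1)$-cycle whose word is order-isomorphic to a subword of $C(\pi)$, so $\pi'$ is cyclic and $C(\pi')$ avoids $321$; and the last entry is unchanged, $\pi'_{n-1}=\pi_n=n-k+1\geq 2$. Thus $\Phi(\pi)\in B'_{n-1}$.

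For the inverse: given $\pi'\in B'_{n-1}$ with $m:=\pi'_{n-1}\geq 2$, the value $1$ is not the last entry, so Lemma~\ref{lem:213-321-2} applies at length $n-1$ to the position $k'$ of $1$; evaluating $\pi'_i=i-k'+1$ at $i=n-1$ gives $m=n-k'$, so $k'=n-m$ and $\pi'_{n-m}=1$. I set $k:=n+1-m$ (so $k-1=n-m$ and $\lceil\frac{n+1}{2}\rceil\leq k\leq n-1$) and define $\Psi(\pi')$ by incrementing every entry of $\pi'$ that is $\geq k$ and then inserting the value $k$ in position $k-1$; by the insertion/deletion conventions this is exactly the operation inverse to $\Phi$ on one-line notations, and one has $\Psi(\pi')_{k-1}=k$, $\Psi(\pi')_k=1$, $k<n$, and, in cycle notation, $k$ is spliced between $k-1$ and $1$, so $\Psi(\pi')$ is cyclic with $C(\Psi(\pi'))=(1,c'_2,\dots,c'_{n-2},k-1,k)$. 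It then remains to verify $\Psi(\pi')\in\A_n(213;321)$: the cycle still avoids $321$ because the single new entry $k$ is placed immediately before $1$ and so cannot head a length-three descent, and $\Psi(\pi')$ still avoids $213$ — here one uses that every entry in positions $k,k+1,\dots,n$ of $\Psi(\pi')$ equals one of $1,2,\dots,n-k+1<k$ (so $k$ is neither the ``$1$'' nor the ``$2$'' of a $213$) together with the fact, extracted from the $213$-avoidance of $\pi'$ and the rigid block structure of Lemma~\ref{lem:213-321-2}, that the entries in positions $1,\dots,k-2$ contain no descent of two values both below $k$ (so $k$ cannot be the ``$3$'' of a $213$). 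Since $\Phi$ and $\Psi$ are mutually inverse, $|B_n|=|B'_{n-1}|$, which is the claimed count.

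The hard part will be this last verification, that reinserting $k$ creates no new $213$ in the one-line notation: the ``shape'' of $\pi'$ alone does not suffice, since one can write down cyclic $321$-avoiding permutations of the right shape whose reinsertion would produce a $213$, so one must genuinely invoke the $213$-avoidance of $\pi'$ to show that the free entries of $\pi'$ lying left of the $1$ are arranged so that no two of them below $k$ form a descent — exactly the configuration that Lemma~\ref{lem:213-321-2} together with $213$-avoidance rule out.
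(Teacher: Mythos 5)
Your setup is exactly the paper's: you count the codomain as the complement of the set counted by Lemma~\ref{lem:213-321-1} at length $n-1$, and your maps $\Phi$ (delete the value $k$ sitting immediately left of the $1$) and $\Psi$ (re-insert $k$ in position $k-1$, just before the $1$) are the same deletion/insertion bijection the paper uses, with the same reduction of ``no new $213$'' to ``no descent of two values below $k$ among positions $1,\dots,k-2$.'' But the one step you flag as ``the hard part'' is precisely the step you never carry out, and it does not follow from what you invoke. The $213$-avoidance of $\pi'$ together with the block structure in the first two bullets of Lemma~\ref{lem:213-321-2} is not enough: if such a descent $\pi'_a>\pi'_b$ (both values $<k$, $a<b\le k-2$) existed with \emph{every} larger entry of $\pi'$ --- in particular the free value $k$ and the value $k-1$ --- lying weakly to its left, then $\pi'$ would contain no $213$ using that descent, so $213$-avoidance alone raises no contradiction, yet $\Psi(\pi')$ would contain the pattern $\pi'_a\,\pi'_b\,k$. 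Ruling this configuration out is exactly what the third bullet of Lemma~\ref{lem:213-321-2} and Lemma~\ref{lem:213-321-3} are for, and you never cite either.

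Concretely, the missing argument is a case analysis on the position $r$ of the value $k-1$ in $\pi'$, which by the third bullet of Lemma~\ref{lem:213-321-2} (applied at length $n-1$ with $1$ in position $k-1$) satisfies $r\in\{k-3,k-2,n-1\}$. If $r=n-1$ or $r=k-2$, any descent of two sub-$k$ values left of position $k-1$ would have top value at most $k-2$ and would combine with the later $k-1$ to give a $213$ already in $\pi'$, a contradiction; if $r=k-3$, the only surviving possibility is a descent ending at position $k-2$, and there Lemma~\ref{lem:213-321-3} (the $r=k-3$ case) pins down $\pi'_{k-2}=k$, killing it. Without this case analysis --- i.e., without the third bullet and Lemma~\ref{lem:213-321-3} --- your assertion that ``$213$-avoidance of $\pi'$ and the rigid block structure'' exclude the bad descents is an unproved claim, so the injectivity-onto-the-stated-codomain of $\Psi$ (equivalently, surjectivity of $\Phi$ composed with membership of $\Psi(\pi')$ in $\A_n(213;321)$) is not established, and the count is not yet proved.
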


\begin{proof}
By Lemma~\ref{lem:213-321-1}, the number of permutations $\pi' \in \A_{n-1}(213;321)$ with $\pi'_{n-1} \neq 1$ is $a_{n-1}(213;321) - \lceil \frac{(n-4)^2}{2} \rceil-1$. We will find a bijective correspondence between permutations $\pi \in \A_n(213;321)$ with $\pi_k=1$ where $k \neq n$ and $\pi_{k-1} = k$  and permutations $\pi' \in \A_{n-1}(213;321)$ with $\pi'_{n-1} \neq 1$.

Suppose first $\pi \in \A_n(213;321)$ with $\pi_k=1$ where $k \neq n$ and $\pi_{k-1} = k$. Form $\pi'$ by deleting $k$ from $\pi$. Equivalently, $C(\pi')$ is formed from deleting $k$ from $C(\pi)$. Since $\pi'$ is cyclic and continues to avoid 213 in its one-line form and 321 in its cycle form, we have $\pi' \in A_{n-1}(213;321)$. Notice that $\pi'_{k-1} = 1$.

Consider this process in reverse.  Suppose $\pi' \in \A_{n-1}(213;321)$ with $\pi'_{k-1} = 1$ where $k-1 \neq n-1$.  Form $\pi$ by inserting  $k$ just after $k-1$ in the cycle notation, or equivalently, insert $k$ in position $k-1$ in one-line notation. Clearly this will not introduce a 321 pattern to the cycle and thus $C(\pi)$ avoids 321. By Lemma~\ref{lem:213-321-2}, if $\pi'_r = k-1$, then $r \in \{k-3,k-2, n-1\}$. In other words, in the one-line notation of $\pi'$, $k-1$ must appear in position $k-3,$ $k-2$, or $n$. We examine each of these cases separately to show that inserting $k$ in position $k-1$ does not introduce a 213-pattern. If $k-1$ is in position $k-3$ of $\pi'$, inserting $k$ will only introduce a 213 pattern in $\pi_{k-2} < k-1$. However, from Lemma~\ref{lem:213-321-3}, $\pi'_{k-2} = k$ so no 213 pattern is introduced.  In the cases where $k-1$ is in position $k-2$ or $n-1$ of $\pi'$, clearly no 213 pattern can be introduced. Thus $\pi \in \A_n(213;321)$ and our result holds.
\end{proof}

\begin{theorem}\label{thm:213-321}
For $n\geq 4$. Then: 
\[a_n(213;321) = \begin{cases} a_{n-1}(213;321) + n-3  & \text{if $n$ is even} \\ a_{n-1}(213;321) + n-3+\frac{n-3}{2}& \text{if $n$ is odd},\end{cases}\]
which has closed form 
\[
a_n = \binom{n-2}{2} + \binom{\lceil\frac{n-2}{2}\rceil}{2}+ 2.
\]
\end{theorem}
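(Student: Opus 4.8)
The plan is to partition $\A_n(213;321)$ according to the position $k$ of the element $1$ in the one-line notation of $\pi$, and to read off each block from the structural lemmas already proved. The block $k=n$, i.e.\ $\pi_n=1$, is counted directly by Lemma~\ref{lem:213-321-1}, contributing $\lceil\frac{(n-3)^2}{2}\rceil+1$ permutations. For the block $k\neq n$, Lemma~\ref{lem:213-321-2} says that if $\pi_r=k$ then $r\in\{k-2,k-1,n\}$; since these three values are pairwise distinct (using $k\neq n$), they partition the $k\neq n$ permutations into three disjoint sub-blocks, which I would handle one at a time.

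First, the sub-block $r=k-1$ is exactly the object counted by Lemma~\ref{lem:213-321-4}, so it contributes $a_{n-1}(213;321)-\lceil\frac{(n-4)^2}{2}\rceil-1$. Second, by Lemma~\ref{lem:213-321-3} the sub-block $r=n$ is empty when $n$ is even and consists of the single permutation $C(\pi)=(1,k+1,2,k+2,3,\ldots,n,k)$ with $k=\frac{n+1}{2}$ when $n$ is odd. Third, again by Lemma~\ref{lem:213-321-3}, the sub-block $r=k-2$ is empty when $n$ is even, and when $n$ is odd each admissible $k$ determines exactly one permutation (and conversely), so I need only count the admissible $k$: Lemma~\ref{lem:213-321-2} forces $k+1\le n$ (the consecutive block $k+1,2,k+2,3,\ldots,n,n-k+1$ must fit inside $C(\pi)$) while Lemma~\ref{lem:213-321-3} forces $k>\frac{n+1}{2}$, and together these give $k\in\{\tfrac{n+3}{2},\ldots,n-1\}$, hence $\frac{n-3}{2}$ permutations.

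Summing the four contributions gives
\[ a_n(213;321)=a_{n-1}(213;321)+\left\lceil\tfrac{(n-3)^2}{2}\right\rceil-\left\lceil\tfrac{(n-4)^2}{2}\right\rceil+\varepsilon_n, \]
where $\varepsilon_n=0$ if $n$ is even and $\varepsilon_n=1+\frac{n-3}{2}$ if $n$ is odd. A short parity computation ($\lceil m^2/2\rceil=m^2/2$ for $m$ even and $(m^2+1)/2$ for $m$ odd) reduces $\lceil\frac{(n-3)^2}{2}\rceil-\lceil\frac{(n-4)^2}{2}\rceil$ to $n-3$ when $n$ is even and to $n-4$ when $n$ is odd, which is precisely the stated recurrence. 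For the closed form, I would then verify that $\binom{n-2}{2}+\binom{\lceil(n-2)/2\rceil}{2}+2$ satisfies this recurrence, using $\binom{m}{2}-\binom{m-1}{2}=m-1$ to evaluate the jump in the $\binom{n-2}{2}$ term (always) and in the $\binom{\lceil(n-2)/2\rceil}{2}$ term (which only changes when $n$ is odd), and check a base case such as $a_3(213;321)=2$ (the permutations $231$ and $312$), from which the closed form propagates by induction.

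The main obstacle I anticipate is the $r=k-2$ case: Lemma~\ref{lem:213-321-3} exhibits the permutation but leaves implicit exactly which $k$ occur, so one must combine the upper bound coming from Lemma~\ref{lem:213-321-2} with the lower bound coming from Lemma~\ref{lem:213-321-3} (and confirm no further constraint trims the interval) to arrive at exactly $\frac{n-3}{2}$ permutations. Once that count is secured, the rest is careful bookkeeping with ceilings and binomial coefficients, tracked by the parity of $n$.
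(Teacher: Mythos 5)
Your proposal is correct and follows essentially the same route as the paper: partition by the position of $1$ (equivalently the last cycle entry $c_n=k$), count the $\pi_n=1$ block via Lemma~\ref{lem:213-321-1}, split the rest by $r\in\{k-2,k-1,n\}$ using Lemma~\ref{lem:213-321-2}, invoke Lemma~\ref{lem:213-321-3} for the two odd-$n$-only sub-blocks (contributing $1$ and $\tfrac{n-3}{2}$, with $\tfrac{n+1}{2}<k<n$) and Lemma~\ref{lem:213-321-4} for $r=k-1$, then do the same ceiling-parity arithmetic to land on the stated recurrence. The only cosmetic difference is that you spell out the verification that the closed form satisfies the recurrence from a small base case, which the paper treats as routine.
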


\begin{proof}
First note that there are $\lceil \frac{(n-3)^2}{2} \rceil + 1$ permutations with $\pi_n=1$ by Lemma~\ref{lem:213-321-1}. Now, let us count the number with $\pi_n\neq 1$, or equivalently, $c_n \neq n$. The number of permutations $\pi \in \A_n(231;321)$ with $c_{n-1} = n$ is 1 when $n$ is odd and $0$ when $n$ is even by Lemma~\ref{lem:213-321-3}.  To count the permutations $\pi \in \A_n(231;321)$ with $c_n =k \neq n$ and $c_{n-1} = k-2$, we first see that $\frac{n+1}{2} < k < n$ by Lemmas~\ref{lem:213-321-2} and \ref{lem:213-321-3}, and thus there are $\frac{n-3}{2}$ possible choices for $k$. By Lemma~\ref{lem:213-321-3}, these choices each yield a unique permutation in $\A_n(213;321)$ when $n$ is odd and there are no possible choices for $k$ when $n$ is even. Finally, there are $a_{n-1}(213;321) - \lceil \frac{(n-4)^2}{2} \rceil-1$ permutations $\pi \in \A_n(213;321)$ with $c_n = k \neq n$ and $c_{n-1} = k-1$ by Lemma~\ref{lem:213-321-4}.

Taken together, we have 
\[a_n(213;321) = \begin{cases} a_{n-1}(213;321) + \lceil \frac{(n-3)^2}{2} \rceil - \lceil \frac{(n-4)^2}{2} \rceil & \text{if $n$ is even} \\ a_{n-1}(213;321) + \frac{n-1}{2} +\lceil \frac{(n-3)^2}{2} \rceil - \lceil \frac{(n-4)^2}{2} \rceil & \text{if $n$ is odd},\end{cases}\]
which is equivalent to the recurrence in the theorem statement.
\end{proof}

\begin{example} Consider $\A_7(213;321)$. The number of permutations in $\A_7(213;321)$ whose cycles end in $7$ in $\lceil \frac{(7-3)^2}{2}\rceil + 1 = 9$ by Lemma~\ref{lem:213-321-1}. These are found by taking the reverse-complement-inverse of the permutations in $\A_7(132;321)$ and are:
\[ 3745621, 3457621, 4675231, 2456731, 2475631, 2567341, 2357641, 2345671, 2346751.\]
There is exactly one permutation in $\A_7(213;321)$ with $c_7 =k \neq 7$ and $c_{6} = 7$ as given by Lemma~\ref{lem:213-321-3}:
\[ (1,5,2,6,3,7,4).\]
There are 2 permutations with $c_7 \neq 7$ and $c_6 = c_7-2$ given by $k \in \{5,6\}$ in Lemma~\ref{lem:213-321-3}:
\[ (1, 4,6,2,7,3,5), (1,3,5,7,2,4,6).\]
Finally, there are $a_6(213;321) - \lceil \frac{(7-4)^2}{2} \rceil -1 = 9-5-1=3$ permutations with $\pi_7 \neq 1$ and $c_{6} = c_7 - 1$. We form these by starting with any permutation in $\A_6(213;321)$ with $\pi_{k-1} = 1$ and $k-1 < 6$, and insert $k$ after $k-1$ in cycle notation. The desired permutations in $\A_6(213;321)$ are $(1,6,2,3,4,5)$, $(1,3,6,2,4,5)$, and $(1,5,2,6,3,4)$. Inserting the appropriate element in the cycle structure yields the following permutations in $\A_7(213;321)$:
\[ (1,7,2,3,4,5,6), (1,3,7,2,4,5,6), (1,6,2,7,3,4,5).\]
\end{example}

\subsection{$\A_n(231;321)$}

This is the simplest case for $\tau=321$. 

\begin{theorem}\label{thm:231-321}
For $1\leq n\leq 4$, we have $a_n(231;321)=1$ and for $n\geq 5$, $a_n(231;321)=0$. 
\end{theorem}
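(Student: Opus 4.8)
The plan is to prove that $\A_n(231;321)$ has at most one element for every $n$, to write down the unique candidate explicitly via its cycle form, and then to decide for which $n$ that candidate actually lies in the set; this will give the single permutation for $n\in\{1,2,3,4\}$ and nothing for $n\ge 5$.

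First I would establish the stronger fact that \emph{every} cyclic permutation $\pi$ which avoids $231$ in one-line notation already satisfies $\pi_1=n$. Suppose instead $\pi_m=n$ with $m\ge 2$. For any $i<m<j$, the triple $\pi_i\,\pi_m\,\pi_j=\pi_i\,n\,\pi_j$ is a $231$ pattern whenever $\pi_j<\pi_i$, so avoidance of $231$ forces $\pi_i<\pi_j$ for all such $i,j$; hence positions $1,\dots,m-1$ carry exactly the values $1,\dots,m-1$, and therefore $\pi$ maps the set $\{1,\dots,m-1\}$ onto itself. Since $1\le m-1\le n-1$, this is a proper nonempty $\pi$-invariant subset of $[n]$, which is impossible when $\pi$ is a single $n$-cycle. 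Thus $m=1$. (This is the $\tau=321$ counterpart of Lemma~\ref{lem:231-231-1}, but note it uses only cyclicity, not the cycle-pattern hypothesis.)

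Next, with $\pi_1=n$ in hand, I would pin down the cycle. Writing $C(\pi)=(1,n,c_3,\dots,c_n)$, the value $n$ occupies the second slot, so any $3\le i<j$ with $c_i>c_j$ produces the $321$ pattern $n,c_i,c_j$ in $C(\pi)$; hence $c_3<c_4<\dots<c_n$, and since $\{c_3,\dots,c_n\}=\{2,3,\dots,n-1\}$ we get $c_j=j-1$. So the only candidate is $C(\pi)=(1,n,2,3,\dots,n-1)$, whose one-line form is $\pi=n\,3\,4\cdots(n-1)\,1\,2$ for $n\ge 3$ (and $\pi=21$ for $n=2$, $\pi=1$ for $n=1$). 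Finally I would test this candidate against the $231$ condition: for $n\ge 5$ the positions $2<3<n-1$ carry the values $3,4,1$, a $231$ occurrence, so $\A_n(231;321)=\varnothing$ and $a_n(231;321)=0$; for $n\in\{2,3,4\}$ one checks directly that $21$, $312$, $4312$ avoid $231$ in one-line form and avoid $321$ in cycle form, so $a_n(231;321)=1$; and $n=1$ is trivial.

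The computations are all routine, so there is no real obstacle; the one step needing genuine care is the first, since $\pi_1=n$ does not follow from $231$-avoidance of the one-line word alone — one must invoke that $\pi$ is an $n$-cycle (equivalently, that it has no proper nonempty invariant subset) to rule out $\pi_m=n$ with $m\ge 2$. A secondary minor point is that the generic one-line formula $n\,3\,4\cdots(n-1)\,1\,2$ degenerates when $n\le 3$, so those small cases should be verified by direct inspection rather than by the pattern argument used for $n\ge 5$.
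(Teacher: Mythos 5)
Your proposal is correct and follows essentially the same route as the paper: first force $\pi_1=n$ from $231$-avoidance plus cyclicity (your invariant-subset phrasing is the same argument the paper gives via $\pi_1=j$ fixing $[j]$), then use $321$-avoidance of the cycle to pin down $C(\pi)=(1,n,2,3,\ldots,n-1)$, i.e.\ $\pi=n\,3\,4\cdots(n-1)\,1\,2$, and observe this avoids $231$ only for $n\le 4$. Your explicit check of the degenerate small cases is a fine (and slightly more careful) finishing touch, but the substance matches the paper's proof.
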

\begin{proof}
First notice that if $\pi$ is cyclic and avoids $\sigma=231$, we must have $\pi_1=n$. Indeed, if we suppose $\pi_1=j$, then $\pi = j\pi_2\pi_3\ldots \pi_j\pi_{j+1}\ldots \pi_n$ with $\{\pi_2,\pi_3,\ldots,\pi_j\} = [1,j-1]$ and $\{\pi_{j+1},\ldots,\pi_n\}=[j+1,n]$. However, since the numbers in $[j]$ map to themselves, this is only cyclic if $j=n$.

Now, since $\pi_1=n$, the cycle form of $\pi$ is $C(\pi) = (1,n,c_3,\ldots, c_n)$. Since $C(\pi)$ avoids 321, we must have $C(\pi) = (1,n,2,3,4,\ldots, n-1)$ and so $\pi = n34\ldots (n-1)12$, which only avoids 231 if $n\leq 4$. 
\end{proof}

\subsection{$\A_n(312;321)$}

Let us start with a few lemmas.

\begin{lemma}\label{lem:312-321-1}
Let $n \geq 2$. For any permutation $\pi \in \A_n(312;321)$, we must have $\pi_n =1$ and $\pi_1 \in \{2,3,4\}$.
\end{lemma}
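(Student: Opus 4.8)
The plan is to prove the two assertions separately. For $\pi_n=1$: suppose instead that $\pi_k=1$ with $k<n$. Since $\pi$ avoids $312$, no inversion of $\pi$ can straddle position $k$ — if $i<k<j$ and $\pi_i>\pi_j$, then $\pi_i\,\pi_k\,\pi_j=\pi_i\,1\,\pi_j$ is a $312$ pattern. Hence every entry to the left of position $k$ is smaller than every entry to its right, which forces $\{\pi_1,\dots,\pi_{k-1}\}=\{2,\dots,k\}$ and therefore $\pi(\{1,\dots,k\})=\{1,\dots,k\}$. As $1\le k<n$, this proper $\pi$-invariant block contradicts $\pi$ being an $n$-cycle, so $\pi_n=1$. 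In particular $\pi(n)=1$, i.e., $n$ is the final entry of $C(\pi)$, and we may write $C(\pi)=(1,c_2,\dots,c_{n-1},n)$.

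For $\pi_1\le 4$: write $m:=\pi_1=c_2$ and suppose for contradiction that $m\ge 5$. Two structural facts drive the argument. First, since $\pi$ avoids $312$ and $\pi_1=m$, the values $1,2,\dots,m-1$ occur in \emph{decreasing} order among $\pi_2,\dots,\pi_n$ (an ascending pair of them preceded by $m$ would be a $312$). Second, since $C(\pi)$ avoids $321$ and $c_2=m$, the values $2,3,\dots,m-1$ occur in \emph{increasing} order in $C(\pi)$ (a descending pair of them after $m$ would be a $321$). Now $\pi_2$ is either the first small entry $m-1$ or a value exceeding $m$. If $\pi_2=m-1$, then $2$ is immediately followed by $m-1=\pi(2)$ in $C(\pi)$, yet the second fact forces $3$ (which exists and satisfies $2<3<m-1$ because $m\ge 5$) to lie between them, a contradiction. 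Hence $\pi_2>m$, and one continues in this vein: reading $\pi$ from the left, the small values $m-1,m-2,\dots$ first appear at positions $\ge 3$, the cycle-predecessor of $m-1$ is its one-line position, and no entry may be a fixed point or sit in a short cycle. Combining these with the forced increasing order of $2,3,4$ in $C(\pi)$ and the $321$-avoidance of $C(\pi)$, one checks that the values $2,3,4$ cannot simultaneously occupy positions consistent with both notations, which is the desired contradiction and yields $m\le 4$.

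The main obstacle is this final case analysis. The clean ``immediate successor'' contradiction excludes $\pi_j=m-1$ only when there is room for an intermediate value, so it rules out $m\ge 6$ quickly but leaves the boundary value $m=5$, where, for instance, the value $4$ sitting at one-line position $3$ makes $4$ the cycle-successor of $3$ and is perfectly consistent with ``$3$ before $4$'' in $C(\pi)$. For $m=5$ one must instead track how the large values $\pi_2,\pi_3,\dots$ are threaded into $C(\pi)$ alongside $2,3,4$, using repeatedly that $C(\pi)$ is a single $n$-cycle avoiding $321$ (for example, a value $c_i>6$ appearing before the entry $6$, which in turn appears before the entry $3$, would be a $321$ in $C(\pi)$, and the one-line $312$-avoidance forces $\pi_2$ to be the smallest available large value). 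This bookkeeping, while elementary, is where essentially all of the work lies.
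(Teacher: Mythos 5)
Your first claim, $\pi_n=1$, is proved correctly and by essentially the same argument as the paper: 312-avoidance forces everything to the left of the position of $1$ to be the block $[2,k]$, so $[1,k]$ is $\pi$-invariant, contradicting cyclicity unless $k=n$.

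The second claim, $\pi_1\in\{2,3,4\}$, is where there is a genuine gap: you never actually reach a contradiction from $m=\pi_1\geq 5$. What you establish is only that $\pi_2\neq m-1$ (the immediate-successor argument), hence $\pi_2>m$; from there the argument is ``one continues in this vein'' and ``one checks,'' and you concede explicitly that the case $m=5$ is untreated and that ``essentially all of the work'' remains. The assertion that the successor trick disposes of $m\geq 6$ ``quickly'' is also unsubstantiated: it only restricts where the value $m-1$ may sit, and its force degrades at later one-line positions (for $m=6$, placing $5$ at position $4$ makes $5$ the cycle-successor of $4$, perfectly compatible with the increasing order of $2,3,4,5$ in $C(\pi)$), so no contradiction follows without further analysis. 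At least one auxiliary step in your sketch is also false as stated: one-line 312-avoidance alone does not force $\pi_2$ to be the smallest value exceeding $\pi_1$ (the word $587694321$ avoids $312$ with $\pi_1=5$, $\pi_2=8$, while $6$ and $7$ occur later), so that forcing would itself need the cycle condition and a proof. For comparison, the paper closes the argument with a short concrete case split: after checking $n\leq 5$ directly, it notes that $2,3,4$ occur in increasing order in $C(\pi)$, locates $2$ at one-line position $p\geq 5$, deduces $\{\pi_1,\ldots,\pi_{p-1}\}=[3,p+1]$, and then splits on $\pi_3$: if $\pi_3=4$, then $C(\pi)$ contains $p,\pi_2,3$, forcing $\pi_2=p+1$, and then $(p+1)\,4\,\pi_4$ is a $312$ in $\pi$; if $\pi_3>4$, then one of $p\,\pi_2\,4$ or $p\,\pi_3\,4$ is a $321$ in $C(\pi)$. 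Some such explicit finite case analysis is exactly what your write-up omits, so as it stands the bound $\pi_1\leq 4$ is not proven.
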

\begin{proof}
First note that if $\pi$ avoids 312 in its one line notation, it is of the form $\pi = \pi_1\pi_2\ldots \pi_{k-1} 1 \pi_{k+1}\ldots \pi_n$ for some $k \in [2,n]$ with $\{\pi_1, \ldots, \pi_k\} = [1,k]$ and $\{\pi_{k+1},\ldots, \pi_n\} = [k+1,n]$. In particular, the elements $[1,k]$ map to themselves under the permutation and so if $k<n$, $\pi$ is composed of at least two cycles. Therefore $\pi_n=1$.

If $n \leq 5$, it is easy to check that the results hold, so for the remainder of the proof suppose $n \geq 6$. For the sake of contradiction, let us suppose that $\pi_1>4$. Then since the cycle avoids $321$, we must have that 2, 3, and 4, appear in the cycle in increasing order. Also, we must have that $\pi_2 \neq 3$ because otherwise $\pi_134$ would be a 312 occurrence in $\pi$. We can then write $\pi = \pi_1\pi_2 \pi_3 \pi_4\ldots \pi_{m-1} 2 \pi_{m+1} \ldots \pi_{n-1} 1$ for some $m \in [5,n-1]$. (We must have $m \geq 5$ since $\pi_4 \neq 2$.) Since $\pi$ avoids $312$, it must be the case that $\{\pi_1, \ldots, \pi_{m-1}\} = [3,m+1]$.

Notice $\pi_3 \notin \{1,2,3\}$, so we consider the two cases where $\pi_3=4$ or $\pi_3 > 4$. If $\pi_3 = 4$, then $C(\pi)$ contains the pattern $m\pi_23$ thus implying $\pi_2 = m+1$. But then $(m+1)4\pi_4$ is a 312 pattern in $\pi$ which is a contradiction.  On the other hand, if $\pi_3>4$,  at least one of $\pi_2$ or $\pi_4$ is less than $m$. Then either $m\pi_24$ or $m\pi_34$ is a 321 pattern in $C(\pi)$. We have thus proven that $\pi_1\leq 4$.
\end{proof}

\begin{lemma}\label{lem:312-321-2}
Suppose $n \geq 4$ and $\pi \in \A_n(312;321)$. If $\pi_1 = 3$, then either:
\begin{itemize}
\item $\pi_2=4$, or
\item $\pi_2=5$ and $\pi_3=4$.
\end{itemize}
\end{lemma}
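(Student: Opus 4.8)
The plan is to assume $\pi \in \A_n(312;321)$ with $\pi_1 = 3$ and pin down $\pi_2$ by combining the structure of $312$-avoiding one-line notations (already used in the proof of Lemma~\ref{lem:312-321-1}) with the $321$-avoidance of $C(\pi)$. First I would record the one-line structure: by Lemma~\ref{lem:312-321-1} we have $\pi_n=1$, and writing $\pi_m=2$, avoidance of $312$ forces $\{\pi_1,\dots,\pi_{m-1}\}=[3,m+1]$ and $\{\pi_{m+1},\dots,\pi_{n-1}\}=[m+2,n]$. Cyclicity kills $\pi_2\in\{1,2\}$ (a transposition or fixed point) and $\pi_2=3=\pi_1$ is impossible, so $\pi_2=j$ with $j\geq 4$; if $j=4$ we are done, so assume $j\geq 5$. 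A second structural fact: with $\pi_2=j$ acting as the ``$3$'' of a potential $312$-pattern, every value of $[4,j-1]$ (as well as $2$ and $1$) occurs after position $2$, and no two of them may occur in increasing order, so the values $[4,j-1]$ appear in $\pi$ in strictly decreasing order — in particular $5$ precedes $4$ whenever $j\geq 6$.

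Next I would analyze $C(\pi)=(1,c_2,\dots,c_n)$. Here $c_2=\pi_1=3$, and if $c_p=2$ then $c_{p+1}=\pi_2=j$ and $c_n=n$ (from $\pi_n=1$). Since each of $c_2,\dots,c_{p-1}$ exceeds $2$, the $321$-avoidance of $C(\pi)$ forces $3=c_2<c_3<\cdots<c_{p-1}$, and note $c_3=\pi_3>3$. For $j=5$ I would locate the value $4$: if $4$ lies before $2$ in $C(\pi)$ it must equal $c_3$ (the minimum of an increasing segment with all entries $\geq 4$), so $\pi_3=4$; if $4$ lies after $2$ then it lies after $j=5$, so $c_{p-1}>5$ would give the $321$-pattern $c_{p-1},5,4$ in $C(\pi)$, forcing instead $c_{p-1}\leq 4$ and hence $c_3=c_{p-1}=4$, i.e.\ $\pi_3=4$ again. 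Thus $j=5$ yields $\pi_3=4$, the second alternative.

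For the remaining case $j\geq 6$ I would derive a contradiction. Since $5$ precedes $4$ in $\pi$, we get $\pi_3\neq 4$ (else $4$ sits in position $3$ and $5$ would have to occupy position $1$ or $2$), so $c_3\geq 5$ and $\{c_3,\dots,c_{p-1}\}\subseteq[5,j-1]$; moreover $4$ then lies after $j$ in $C(\pi)$, so $c_{p-1}<j$ (otherwise $c_{p-1},j,4$ is a $321$-pattern). I would then show by induction on $v$ that no $v\in[5,j-1]$ can belong to $\{c_3,\dots,c_{p-1}\}$: if $v$ were the least such value, then $c_3=v=\pi_3$, and either $j\geq v+2$ (forcing $v+1$ to precede $v$ in $\pi$, impossible in positions $1,2$), or $j=v+1$, in which case $\{c_3,\dots,c_{p-1}\}=\{v\}$, $c_p=2$ gives $\pi_v=2$, and the leftover entries of the first block form the decreasing run $\pi_i=j+2-i$ on $i\in[4,j-2]$ — an involution on $[4,j-2]$, hence containing a fixed point or $2$-cycle, contradicting that $\pi$ is an $n$-cycle. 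Therefore $\{c_3,\dots,c_{p-1}\}=\varnothing$, i.e.\ $p=3$ and $\pi_3=2$, contradicting $m\geq j-1\geq 5$. This eliminates $j\geq 6$ and finishes the proof.

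The main obstacle is organizing the $j\geq 6$ case, where the deductions (``$5$ before $4$ forces $\pi_3\neq 4$'', ``$c_{p-1}<j$'', ``the increasing cycle-segment contains no value of $[5,j-1]$'') interlock, and the singleton sub-case requires extracting an internal short cycle of $\pi$ from the forced decreasing run; some care with small-$n$ bookkeeping (verifying $m\geq j-1\geq 5$, so that all referenced positions exist) is also needed.
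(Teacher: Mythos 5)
Your proof is correct, and its skeleton is the paper's: the block structure of a $312$-avoiding one-line form ($\pi_n=1$, $\pi_m=2$, $\{\pi_1,\dots,\pi_{m-1}\}=[3,m+1]$), the fact that the cycle entries preceding $2$ form an increasing run starting at $c_2=3$, and a case split on $\pi_2$. Your handling of $\pi_2=5$ (locate $4$ relative to $2$ in the cycle; if $4$ comes after $2$ a $321$ pattern in $C(\pi)$ appears) is essentially the paper's final paragraph. Where you diverge is the elimination of $\pi_2=j\ge 6$: the paper splits into $\pi_2\in[6,m-1]$ (both $4$ and $5$ are forced before $2$ in the cycle, so $\pi_3\pi_4=45$ and then $\pi_2\,4\,5$ is a $312$) and $\pi_2=m+1$ (forcing $\pi_3=m$, after which positions $[4,m-1]$ map among themselves, so $\pi$ is not an $n$-cycle), whereas you first use ``$5$ precedes $4$ in $\pi$'' to get $\pi_3\ne 4$, deduce $c_{p-1}<j$ from a $321$ in the cycle, and then run a least-element analysis of the increasing segment that ends in the same invariant-block/involution contradiction; the ingredients are identical, your organization is just longer (and your ``induction on $v$'' is really a one-step case analysis on the least element). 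One point you leave implicit: asserting $c_3=\pi_3>3$ (and later $c_3=c_{p-1}=4$ in the $j=5$ case) presupposes that the segment before $2$ is nonempty, i.e.\ $\pi_3\ne 2$. This does follow from the structure you already recorded, since $\pi_3=2$ gives $m=3$ and then $\{\pi_1,\pi_2\}=[3,4]$ forces $\pi_2=4$, contradicting $j\ge 5$ --- but it deserves an explicit sentence.
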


\begin{proof}

Choose $m \in [3,n-1]$ so that $\pi_m = 2$. Thus $\pi = 3\pi_2\cdots \pi_{m-1} 2 \pi_{m+1} \cdots \pi_{n-1}1$ with $\{\pi_1, \ldots, \pi_{m-1}\} =[3,m+1]$. For reference, $C(\pi) = (1,3, \pi_3, \ldots, m,2, \pi_2, \ldots, c_{n-1},n)$. Note that $\pi_2 \neq m$ so $\pi_2 \in \{m+1\} \cup [4,m-1]$. Let us first show that $\pi_2\in\{4,5\}$.

Suppose $\pi_2 \in [6,m-1]$. Then in $C(\pi)$, we must have 4 and 5 appear before $2$ in the cycle or else  $m, \pi_2, 4$ or $m, \pi_2, 5$ would be a 321 pattern. Also, since the segment of the cycle appearing before 2 is increasing, we must have $\pi_3=4$ and $\pi_4=5$. But then $\pi_245$ is a 312 pattern in $\pi$ which is a contradiction. Now suppose $\pi_2 = m+1$.Since $\pi$ avoids 312, we must have $\pi_3=m$ otherwise $(m+1)\pi_3m$ is a 312 pattern.   But then $\{\pi_4, \pi_5, \ldots, \pi_{m-1}\} = [4, m-1]$, implying $\pi$ is not cyclic. Therefore we have $\pi_2\in\{4,5\}$.

Finally, let us show that if $\pi_1 =3$ and $\pi_2 = 5$, then we must have $\pi_3 = 4.$  First, note that $\pi_3\neq 2$ since other wise $524$ would appear as a 312 pattern in $\pi$. Now, notice that in the cycle, we have $C(\pi) = (1, 3, \pi_3, \ldots, m,  2, 5,  \ldots, c_{n-1}, n)$. If $4$ appears before 2 in the cycle, we must have that $\pi_3=4$ since $C(\pi)$ avoids 321. On the other hand, if $4$ appears after 2 in the cycle, then $\pi_3>5$, and so, $\pi_3, 5, 4$ will be a 321 pattern in $C(\pi)$.
\end{proof}

\begin{lemma}\label{lem:312-321-3}
Suppose $n \geq 5$ and $\pi \in \A_n(312;321)$. If $\pi_1 = 4$, then $\pi_2 = 3$ and either:
\begin{itemize}
\item $\pi_3=5$, or
\item $\pi_3=6$ and $\pi_4=5$.
\end{itemize}
\end{lemma}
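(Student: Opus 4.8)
The plan is to imitate the proof of Lemma~\ref{lem:312-321-2} almost verbatim, with the values $4,5,6$ playing the roles that $3,4,5$ play there; the only genuinely new ingredient is forcing $\pi_2=3$ at the outset. By Lemma~\ref{lem:312-321-1} we have $\pi_n=1$, and writing $m$ for the position of $2$ we get $\pi=4\,\pi_2\cdots\pi_{m-1}\,2\,\pi_{m+1}\cdots\pi_{n-1}\,1$ with $m\in[3,n-1]$; the standard consequence of $312$-avoidance (everything before position $m$ is smaller than everything after it, apart from the final $1$) gives $\{\pi_1,\dots,\pi_{m-1}\}=[3,m+1]$, and it also forces $3$ to precede $2$ in the one-line notation, since otherwise $4,2,3$ is a $312$ occurrence. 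For reference, $C(\pi)=(1,4,\pi_4,\dots,m,2,\pi_2,\dots,c_{n-1},n)$. If $m=3$ then $\{\pi_1,\pi_2\}=\{3,4\}$, so $\pi_2=3$ and $\pi_3=5$ and we are done with $n=5$; assume henceforth $m\geq4$.

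Next I would force $\pi_2=3$. Since $\{\pi_1,\dots,\pi_{m-1}\}=[3,m+1]$, and $\pi_2\neq4=\pi_1$ and $\pi_2\neq m$ (which already appears in $C(\pi)$ as the predecessor of $2$), we have $\pi_2\in\{3\}\cup[5,m-1]\cup\{m+1\}$. Suppose $\pi_2\neq3$. Note that $3$ must follow $2$ in $C(\pi)$, for otherwise $4,3,2$ is a $321$ pattern in the cycle; hence $\pi_2$, being the cyclic successor of $2$, sits strictly between $2$ and $3$ in $C(\pi)$. If $\pi_2\in[5,m-1]$, then $m>\pi_2>3$ occur in that cyclic order and $m,\pi_2,3$ is a $321$ pattern in $C(\pi)$, a contradiction. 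If $\pi_2=m+1$, then, exactly as in Lemma~\ref{lem:312-321-2}, avoiding $312$ first forces $\pi_3=m$ and then forces the values below $m+1$ occupying positions $\geq3$ to appear in decreasing order; this determines $\pi$ completely, and one checks directly that the resulting permutation is either not an $n$-cycle or has a $321$ in its cycle form. Hence $\pi_2=3$.

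With $\pi_1=4$, $\pi_2=3$ in hand we have $C(\pi)=(1,4,\pi_4,\dots,m,2,3,\pi_3,\dots,n)$ and $\{\pi_3,\dots,\pi_{m-1}\}=[5,m+1]$, so $\pi_3\in[5,m-1]\cup\{m+1\}$. If $\pi_3\in[7,m-1]$, then for each $v\in\{5,6\}$, if $v$ followed $\pi_3$ in $C(\pi)$ the triple $m,\pi_3,v$ with $m>\pi_3>v$ would be a $321$ pattern; hence $5$ and $6$ precede $2$ in $C(\pi)$, i.e.\ lie in the segment $(1,4,\pi_4,\dots,m)$, which is increasing (a descent there together with the following $2$ is a $321$ in $C(\pi)$, since every entry of that segment but the leading $1$ exceeds $2$). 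An increasing segment starting $1,4$ and containing $5,6$ must continue $1,4,5,6,\dots$, so $\pi_4=5$ and $\pi_5=6$, making $\pi_3,5,6$ a $312$ pattern in one-line notation, a contradiction. If $\pi_3=m+1$ with $m\geq6$, then the elements of $[5,m]$ fill positions $4,\dots,m-1$ in decreasing order, whence $\pi$ maps $\{5,6,\dots,m-1\}$ into itself and is not an $n$-cycle; thus $\pi_3=m+1$ only for $m\in\{4,5\}$, and in every case $\pi_3\in\{5,6\}$. Finally, suppose $\pi_3=6$; then $m\geq5$ and $\pi_4\in\{5\}\cup[7,m+1]$, and if $m=5$ then $\pi_4=5$, so assume $\pi_4\geq7$. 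If $5$ precedes $2$ in $C(\pi)$ it lies in the increasing segment $(1,4,\pi_4,\dots,m)$, forcing its third entry $\pi_4$ to equal $5$, a contradiction; if $5$ follows $2$, then since $C(\pi)=(1,4,\pi_4,\dots,m,2,3,6,\dots,n)$ and $5\neq3,6$ it follows $6$, so $\pi_4,6,5$ with $\pi_4\geq7>6>5$ is a $321$ pattern in $C(\pi)$, a contradiction. Hence $\pi_4=5$.

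The last paragraph above is a routine transcription of the argument of Lemma~\ref{lem:312-321-2}. The real work, and the main obstacle, is disposing of the extremal value $m+1$ at positions $2$ and $3$: in each of those cases $312$-avoidance pins down the entire one-line form of $\pi$, and one must verify—keeping careful track of the small values of $m$—that the permutation so obtained fails to be a single $n$-cycle, or else that $C(\pi)$ contains a $321$.
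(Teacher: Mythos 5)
Your overall plan is viable and the second half of your argument is correct and complete: ruling out $\pi_3\in[7,m-1]$, killing $\pi_3=m+1$ for $m\geq 6$ via the invariant set $[5,m-1]$ (which is actually cleaner than a case-by-case check), and forcing $\pi_4=5$ when $\pi_3=6$ all go through. The genuine gap is in the step you yourself flag as ``the real work'': the case $\pi_2=m+1$. There you assert that $312$-avoidance forces $\pi_3=m$ and a decreasing block, ``determining $\pi$ completely,'' and that ``one checks directly'' the result is not an $n$-cycle or has a $321$ in its cycle form. That check is precisely the content that must be supplied, and it is not a routine dismissal. For $m=4$ your claim ``$\pi_3=m$'' is false (there $\pi_3=3$, a fixed point, so that subcase dies by cyclicity); for $m\geq 7$ the set $[5,m-2]$ is invariant and non-cyclicity does the job; but for $m\in\{5,6\}$ the forced permutation can be a genuine $n$-cycle avoiding $312$ in one-line form --- e.g.\ $\pi=4653271$ with cycle $(1,4,3,5,2,6,7)$, or $\pi=47653281$ with cycle $(1,4,5,3,6,2,7,8)$ --- so the ``not an $n$-cycle'' alternative fails and you must actually exhibit a $321$ in $C(\pi)$ (it is the subsequence $4,3,2$, since the written cycle begins $1,4,\dots$ and contains $3$ before $2$ in these cases). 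None of this appears in your proposal, so as written the proof that $\pi_2=3$ is incomplete. A smaller slip: your dispatch of $m=3$ (``$\pi_3=5$ and we are done with $n=5$'') is garbled; if $m=3$ then $\pi_2=3$ and $\pi_3=2$, so $(2,3)$ is a $2$-cycle and the case is vacuous by cyclicity, not because the lemma's conclusion holds there.

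The paper avoids this entire case analysis with one observation you nearly had. Since $c_2=4$, if $3$ preceded $2$ in $C(\pi)$ then $4,3,2$ would be a $321$ there, so $3$ follows $2$ in the cycle; now let $s$ and $t$ be the positions of $3$ and $2$ in one-line notation, so $s<t$ and, when $\pi_2\neq 3$, $s\geq 4$. Then $t$ immediately precedes $2$ and $s$ immediately precedes $3$ in $C(\pi)$, so $t,s,3$ is a $321$ pattern in $C(\pi)$ --- uniformly in the value of $\pi_2$, with no extremal case to handle. You built your contradiction from the successor of $2$ (which breaks down when $\pi_2=m+1>m$); using the predecessor of $3$ instead closes the gap in one line. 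Either import that argument, or write out the $m=4$, $m\in\{5,6\}$, $m\geq 7$ verification sketched above.
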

\begin{proof}
Since $\pi$ avoids 312 and $\pi_1=4$, we must have $\pi = 4\pi_2 \ldots \pi_{s-1} 3 \pi_{s+1} \ldots \pi_{t-1}2 \pi_{t+1} \ldots \pi_{n-1}1$ for some $s,t \in [2,n-1]$ with $s < t$. Let us first show that $\pi_2=3$. If we assume $\pi_2\neq 3$, we must have $\pi_2>4$. In the cycle we would have $C(\pi) = (1, 4, \pi_4, \ldots, t, 2, \pi_2, \ldots, s, 3, \ldots, n)$ where $s\neq 2$. Thus $t, s, 3$ would be a 321 pattern in $C(\pi)$. Therefore, $\pi_2=3$.

Next let us see that if $\pi_1=4$ and $\pi_2=3$, then $\pi_3\in\{5,6\}$. We know that $C(\pi) = (1, 4, \pi_4, \ldots, t, 2,3,\pi_3, \ldots, n)$ and $\pi = 43\pi_3\pi_4 \ldots \pi_{t-1}2 \pi_{t+1} \ldots \pi_{n-1}1$ with $\{\pi_3, \ldots, \pi_{t-1}\} = [5,t+1]$. For the sake of contradiction, suppose $\pi_3 \in [7,t+1]$. Note that if $5$ and $6$ both appear before $2$ in the cycle notation, we must have $\pi_4=5$ and $\pi_5=6$ since $C(\pi)$ avoids 321. However, then we would have $\pi_356$ as a 312 pattern in $\pi$. Thus, at least one of $5$ or 6 appears after the 2 in the cycle notation. Now, if $\pi_3<t$, then $t,\pi_3,5$ or $t,\pi_3,6$ would be a 321 pattern in $C(\pi)$. Therefore, it must be that $\pi_3=t+1$. The remaining entries before 2 in $\pi$ must be increasing since they are all less than $t+1$.  In particular, $\pi_4=t$ and $\pi_5=t-1$, and $C(\pi) = (1, 4, t, 2,3, t+1, \ldots, 5, t-1, \ldots, n)$. If $t\geq8$, then this implies 6 appears after the 5, and so we have that $t, t-1, 6$ is a 321 pattern. If $t=7$, then $\pi = 4387652 \pi_{t+1} \ldots \pi_{n-1}1$, so $\pi_5=6$ and $\pi_6=5$, so $\pi$ is not cyclic. If $t=6,$ we have $\pi = 437652 \pi_{t+1} \ldots \pi_{n-1}1$ and so $5$ is a fixed point and thus $\pi$ is not cyclic. Therefore, we have shown that $\pi_3 \in \{5,6\}$.

Finally, we will show that if $\pi_1=4$, $\pi_2=3$, and $\pi_3=6$, then $\pi_4=5$. In this case, we know that $C(\pi) = (1, 4, \pi_4, \ldots, t, 2,3, 6, \pi_6 \ldots, n)$ and $\pi = 436\pi_4 \ldots \pi_{t-1}2 \pi_{t+1} \ldots \pi_{n-1}1$. Notice first that $\pi_4\neq 2$ since in that case, 625 would be a 312 pattern in $\pi$. Therefore, $\pi_4\in\{5\}\cup [7,t+1]$. If the $5$ appears after the 2 in the cycle-notation, then $\pi_4>6$, so $\pi_4, 6, 5$ would be a 321 pattern. If the $5$ appears before the 2 in the cycle notation, it must be that $\pi_4=5$ since otherwise $\pi_4,5,2,$ is a 321 pattern.
\end{proof}

\begin{theorem}\label{thm:312-321}
For $n\geq 7$, the number of permutations in $\A_n(312;321)$ satisfies the recurrence \[
a_n(312;321) = a_{n-1}(312;321)+a_{n-2}(312;321)+2a_{n-3}(312;321)+a_{n-4}(312;321),
\]
where $a_3(312;321) = 1$, $a_4(312;321) = 2$, $a_5(312;321) = 5$, and $a_6(312;321) = 10$. In closed form,
\[ a_n = \sum_{k=0}^{\lfloor \frac{n-2}{2} \rfloor}\  \sum_{j=0}^{n-2-2k} {n-2-k-j \choose k}{2k \choose j}\]
for $n \geq 3$.
\end{theorem}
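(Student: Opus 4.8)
The plan is to prove the recurrence by splitting $\A_n(312;321)$ according to the value of $\pi_1$. By Lemma~\ref{lem:312-321-1} we have $\pi_n=1$ and $\pi_1\in\{2,3,4\}$, and by Lemmas~\ref{lem:312-321-2} and~\ref{lem:312-321-3} the cases $\pi_1=3$ and $\pi_1=4$ each split into two further subcases determined by $\pi_2$ (and $\pi_3,\pi_4$). This yields five classes: (i) $\pi_1=2$; (ii) $\pi_1=3,\pi_2=4$; (iii) $\pi_1=3,\pi_2=5,\pi_3=4$; (iv) $\pi_1=4,\pi_2=3,\pi_3=5$; (v) $\pi_1=4,\pi_2=3,\pi_3=6,\pi_4=5$. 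I will argue that these classes have sizes $a_{n-1}$, $a_{n-2}$, $a_{n-3}$, $a_{n-3}$, and $a_{n-4}$ respectively; adding these gives the stated recurrence, and the initial values $a_3,\dots,a_6$ are checked by direct enumeration.

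For each class the plan is to build a bijection with a smaller set $\A_{n-j}(312;321)$ by deleting from the \emph{cycle} notation the ``forced'' small values that the lemmas pin down — the value $2$ in class (i); the values $2,3$ in class (ii); the values $2,3,4$ in classes (iii) and (iv); and the values $2,3,4,5$ in class (v) — and then relabeling. Deleting entries from a sequence cannot create a descent, so the resulting cycle still avoids $321$; it remains a single cycle; hence the image lies in $\C_{n-j}$ with cycle form avoiding $321$. The inverse map reinserts the deleted values into the cycle, and here the rigidity coming from Lemmas~\ref{lem:312-321-1}--\ref{lem:312-321-3} is exactly what makes the reinsertion unique: each small value is forced to map to a prescribed element, which determines precisely where it must be placed in the cycle (for instance, in class (ii) one inserts $3$ immediately after $1$ and $2$ immediately before the entry $4$). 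One then checks that both maps land in the intended classes.

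The main obstacle is showing that these cycle-level operations preserve avoidance of $312$ in the \emph{one-line} notation — unlike deletion from the one-line form, deleting an element from the cycle form scrambles the one-line form in a nontrivial way. This is precisely where the structural lemmas earn their keep: in each of the five classes $\pi$ begins with a short prefix built from the values $1$ through $4$ (or $5$) in a fixed pattern, and, combined with $\pi_n=1$, the small values are confined to a very restricted set of positions, so removing or inserting them cannot create or destroy a $312$ pattern. I would isolate this as a short lemma for each class, tracking the few admissible positions of each small value, much as in the proofs of Lemmas~\ref{lem:312-321-2} and~\ref{lem:312-321-3}. Finally, to obtain the closed form I would verify directly that the displayed double sum satisfies the order-four recurrence and matches $a_3,\dots,a_6$ — equivalently, that its generating function is the rational function with denominator $1-x-x^2-2x^3-x^4$ fixed by those initial conditions — which is a routine generating-function or induction computation.
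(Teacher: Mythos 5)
Your proposal is correct and essentially the paper's proof: it uses the same five classes determined by Lemmas~\ref{lem:312-321-1}--\ref{lem:312-321-3} with sizes $a_{n-1},a_{n-2},a_{n-3},a_{n-3},a_{n-4}$, and the same bijections, since your cycle-level deletions of the smallest forced values agree after standardization with the paper's maps, which are described as prepending/deleting the forced one-line prefix (equivalently inserting the prefix values immediately after $1$ and immediately after $2$ in the cycle), followed by the same routine verification of the closed form. The only practical difference is that the paper's one-line-prefix framing makes the $312$-avoidance check you flag as the main obstacle immediate in the deletion direction.
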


\begin{proof}
First, we show there are $a_{n-1}(312;321)$ permutations that have $\pi_1=2$. In this case, all permutations in $\A_n(312;321)$ with $\pi_1 = 2$ can be obtained from permutations $\pi' \in \A_{n-1}(312;321)$ by inserting 2 in the first position of the one-line notation $\pi'$, or equivalently inserting a 2 immediately after 1 in $C(\pi')$. In this case, we clearly do not introduce a 312 pattern in $\pi$ or a 321 pattern in $C(\pi)$. The process is reversible and the correspondence is bijective.

Next, we will see that there are $a_{n-2}(312;321)$ permutations that have $\pi_1=3$ and $\pi_2=4$. In this case, all permutations in $\A_n(312;321)$ with $\pi_1 = 3$ and $\pi_2=4$ can be obtained from permutations $\pi' \in \A_{n-2}(312;321)$ by inserting 3 in the first position and $4$ in the second position of the one-line notation $\pi'$, or equivalently inserting a 3 immediately after 1 and a 4 immediately after the 2 in $C(\pi')$. In this case, we do not introduce a 312 pattern in $\pi$ or a 321 pattern in $C(\pi)$. The process is invertible so the correspondence is bijective.

\sloppypar{We now consider permutations in $\A_n(312;321)$ that have $\pi_1\pi_2\pi_3=354$; we will show there are $a_{n-3}(312;321)$ such permutations. In this case, any permutations $\pi \in \A_n(312;321)$ with $\pi_1\pi_2\pi_3=345$ can be obtained from some permutation $\pi' \in \A_{n-3}(312;321)$ by inserting 3 in the first position, $5$ in the second position, and 4 in the third position of the one-line notation $\pi'$, or equivalently inserting a 34 immediately after 1 and a 5 immediately after the 2 in $C(\pi')$. In this case, we do not introduce a 312 pattern in $\pi$ or a 321 pattern in $C(\pi)$.}

Next, we will see that there are $a_{n-3}(312;321)$ permutations that have $\pi_1\pi_2\pi_3=435$. In this case, all permutations in $\A_n(312;321)$ with $\pi_1\pi_2\pi_3=435$ can be obtained from permutations $\pi'=\A_{n-3}(312;321)$ by inserting 4 in the first position, $3$ in the second position, and 5 in the third position, or equivalently inserting a 4 immediately after 1 and a 35 immediately after the 2 in $C(\pi')$. In this case, we do not introduce a 312 pattern in $\pi$ or a 321 pattern in $C(\pi)$.

Finally, we will see that there are $a_{n-4}(312;321)$ permutations that have $\pi_1\pi_2\pi_3\pi_4=4365$. In this case, all permutations in $\A_n(312;321)$ with $\pi_1\pi_2\pi_3\pi_4=4365$ can be obtained from permutations $\pi'=\A_{n-4}(312;321)$ by inserting 4 in the first position, $3$ in the second position, 5 in the third position, and 5 in the fourth position, or equivalently inserting a 45 immediately after 1 and a 36 immediately after the 2 in $C(\pi')$. In this case, we do not introduce a 312 pattern in $\pi$ or a 321 pattern in $C(\pi)$.
\end{proof}

\begin{example} Consider $\A_7(312;321)$. We build this set recursively by first listing the permutations in $\A_3(312;321)$, $\A_4(312;321)$, $\A_5(312;321)$, and $\A_6(312;321)$:
\begin{align*}
\A_3(312;321) &= \{231\},\\
\A_4(312;321) &= \{3421, 2341\},\\
\A_5(312;321) &= \{24531, 23451, 34251, 43521, 35421\}, \\
\A_6(312;321) &= \{235641, 234561, 245361, 254631, 246531, 342561, 435261, 354261, 345621, 436521\}.
\end{align*}
To obtain the permutations in $\A_7(312;321)$, insert 4365 in the front of the permutation in $\A_3(312;321)$, insert 354 and 435 in the front of the permutations in $\A_4(312;321)$, insert 34 in the front of the permutations in $\A_5(312;321)$, and insert 2 in the front of the permutations in $\A_6(312;321)$:
\begin{align*}
\A_7(312;321) &= \{4365271, 3546721, 3542671, 4356721, 4352671,3426751, 3425671, 3456271, 3465721, 3457621,\\
& \qquad 2346751, 2345671, 2356471, 2365741, 2357641, 2453671, 2546371, 2465371, 2456731, 2547631\}.
\end{align*}
\end{example}

\subsection{$\A_n(321;321)$}

We will count these by the position of $n$.
\begin{lemma}\label{lem:321-321-1} Let $n \geq 2$. Then the number of permutations $\pi \in \A_n(321;321)$ with $\pi_{n-1} = n$ is $a_{n-1}(321;321)$.
\end{lemma}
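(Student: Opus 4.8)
The plan is to set up a bijection between the permutations $\pi \in \A_n(321;321)$ with $\pi_{n-1} = n$ and the full set $\A_{n-1}(321;321)$, following the same delete/insert strategy used throughout the paper. First I would record the translation into cycle notation: $\pi_{n-1}=n$ means $\pi$ sends $n-1 \mapsto n$, i.e.\ $n-1$ is immediately followed by $n$ in $C(\pi)$. Since $C(\pi)$ begins with $1$ and $n\geq 2$, this forces $n$ to occur at some interior position, so we may write $C(\pi) = (1, c_2, \ldots, c_{j-1}, n, c_{j+1}, \ldots, c_n)$ with $c_{j-1} = n-1$.

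For the forward map, given such a $\pi$, delete the element $n$ from its one-line notation to obtain $\pi' \in \S_{n-1}$. Since $n$ is the maximum, this simply removes $n$ and shifts the tail, and in cycle notation it amounts to deleting $n$ from $C(\pi)$, giving $C(\pi') = (1, c_2, \ldots, c_{j-1}, c_{j+1}, \ldots, c_n)$, still a single $(n-1)$-cycle, so $\pi'$ is cyclic. Because deletion never creates a new occurrence of a pattern, $\pi'$ avoids $321$ in one-line notation and $C(\pi')$ avoids $321$; hence $\pi' \in \A_{n-1}(321;321)$, and note $\pi'_{n-1}$ is whatever $\pi_n$ was.

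For the reverse map, start with $\pi' \in \A_{n-1}(321;321)$, locate $n-1 = c_j'$ in $C(\pi')$, and insert $n$ immediately after it, forming $C(\pi) = (1, \ldots, c_j', n, c_{j+1}', \ldots, c_{n-1}')$. This is a single $n$-cycle, so $\pi$ is cyclic, and in one-line notation it places $n$ in position $n-1$, so $\pi_{n-1}=n$. It remains to check that $\pi$ avoids $321$ in both forms. In one-line notation, the only position after $n-1$ is position $n$, so $n$ (the largest value) cannot be the first entry of a $321$ pattern, and being maximal it plays no other role; deleting $n$ recovers $\pi'$, which avoids $321$, so $\pi$ does too. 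In cycle notation, the key observation is that the entries of $C(\pi')$ following $n-1$ must be increasing: $n-1$ is the largest value occurring in $C(\pi')$, so any descent among the entries after it would together with $n-1$ form a $321$ pattern, contradicting $\pi' \in \A_{n-1}(321;321)$. Thus $n$ is inserted directly before an increasing run, and being maximal it begins no $321$ pattern and participates in no other, so $C(\pi)$ avoids $321$. Hence $\pi \in \A_n(321;321)$, the two maps are visibly mutually inverse, and the count follows.

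The only nonroutine point — and the step I would flag as the main obstacle — is checking that inserting $n$ into $C(\pi')$ introduces no $321$ pattern in the cycle; this rests precisely on the observation that $321$-avoidance of $C(\pi')$ forces the portion of the cycle after its maximal value $n-1$ to be increasing, which is exactly the spot where $n$ gets inserted.
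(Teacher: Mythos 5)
Your proposal is correct and follows essentially the same route as the paper: delete $n$ from $\pi$ (equivalently from $C(\pi)$), and invert by inserting $n$ in position $n-1$ of the one-line form, i.e.\ immediately after $n-1$ in $C(\pi')$. Your verification that the insertion creates no $321$ in the cycle form, via the observation that $321$-avoidance forces the entries of $C(\pi')$ after its maximum $n-1$ to be increasing, is a detail the paper's terse proof leaves implicit, and it is argued correctly.
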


\begin{proof} Let $\pi \in \A_n(321;321)$ with $\pi_{n-1} =n$. Form $\pi'$ by deleting $n$ from $\pi$. Equivalently, $C(\pi')$ is formed from $C(\pi)$ by deleting $n$. Thus $\pi' \in \A_{n-1}(321;321)$. Now consider the process in reverse by letting $\pi' \in \A_{n-1}(321;321)$. Form $\pi$ by inserting $n$ in position $n-1$ of $\pi'$, or equivalently, inserting $n$ after $n-1$ in $C(\pi')$. Since $\pi \in \A_n(321;321)$ with $\pi_{n-1} =n$, our result holds.
\end{proof}

\begin{lemma}\label{lem:321-321-2} Let $n \geq 3$. Then the number of permutations $\pi \in \A_n(321;321)$ with $\pi_{n-2} = n$ is $\lfloor \frac{n-1}{2} \rfloor$.
\end{lemma}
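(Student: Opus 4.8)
The plan is first to force the one-line shape of any $\pi\in\A_n(321;321)$ with $\pi_{n-2}=n$, and then to decide which of the resulting words are actually single $n$-cycles with a $321$-avoiding cycle form. Since $\pi_{n-2}=n$ is the largest entry and $\pi$ avoids $321$ in one-line notation, the two entries to its right must increase, so $\pi_{n-1}<\pi_n$. In $C(\pi)$ the value $n$ sits immediately after $n-2$, and since $C(\pi)$ avoids $321$, the entries following $n$ in the cycle form an increasing run. The value $n-1$ cannot lie strictly between $n-2$ and $n$ in the cycle, so it occurs either before $n-2$ or after $n$; if it occurred before $n-2$, then for any value $x$ appearing after $n$ in the cycle the subsequence $(n-1),(n-2),x$ would force $x\ge n-2$, which is impossible since $n-2,n-1,n$ all occur at or before the position of $n$, so nothing could follow $n$ in the cycle, giving $c_n=n$ and hence $\pi_n=\pi_{c_n}=1$, contradicting $\pi_{n-1}<\pi_n$. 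Thus $n-1$ follows $n$ in the cycle and, being the largest value available, it must be the final entry of the cycle, so $\pi_{n-1}=1$.

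Next I would reduce the one-line word. Deleting the entry $n$ from $\pi$ leaves a permutation $w=\pi_1\cdots\pi_{n-3}\,1\,\pi_n$ of $[n-1]$; the removed $n$ can only serve as the largest term of a $321$ pattern, and $\pi_{n-1}<\pi_n$ already forbids that, so $\pi$ avoids $321$ iff $w$ does. Because the value $1$ occupies the penultimate slot of $w$, the word $w$ avoids $321$ exactly when $\pi_1<\pi_2<\cdots<\pi_{n-3}$. Hence $\pi$ is determined entirely by $m:=\pi_n\in\{2,\dots,n-1\}$: its first $n-3$ entries are the elements of $\{2,\dots,n-1\}\setminus\{m\}$ in increasing order, so $\pi_j=j+1$ for $j\le m-2$, $\pi_j=j+2$ for $m-1\le j\le n-3$, $\pi_{n-2}=n$, $\pi_{n-1}=1$, and $\pi_n=m$. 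Conversely every such word automatically avoids $321$ in one-line notation, so the only remaining question is for which $m$ it is an $n$-cycle with $321$-avoiding cycle form.

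To settle that I would trace the orbit of $1$ under this candidate permutation. It runs $1\to2\to\cdots\to m-1$ and then advances by $2$ through the values of $[m-1,n-3]$ congruent to $m-1$ modulo $2$, stopping at the largest such value, which is $n-3$ when $m\equiv n\pmod2$ and $n-4$ when $m\equiv n-1\pmod2$. In the first case $n-3$ maps to $n-1$ and then to $1$, closing an orbit that never met $n-2$ or $n$, so $\pi$ is not an $n$-cycle. In the second case the orbit instead continues $\cdots\to n-4\to n-2\to n\to m\to m+2\to\cdots\to n-3\to n-1\to1$ and visits every element, so $\pi$ is an $n$-cycle; moreover its cycle form is the concatenation of the two increasing blocks $1,2,\dots,m-1,m+1,\dots,n-4,n-2,n$ and $m,m+2,\dots,n-3,n-1$, and a concatenation of two increasing sequences contains no $321$ pattern, so $C(\pi)$ avoids $321$ as well. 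The boundary value $m=n-1$ (where the advance-by-$2$ chain is empty) and the small cases are checked directly and fit the same dichotomy. It then remains to count the $m\in\{2,\dots,n-1\}$ with $m\equiv n-1\pmod2$, of which there are exactly $\lfloor\frac{n-1}{2}\rfloor$.

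The main obstacle is this last step: everything up to the parametrization by $m$ is forced quite quickly, but verifying cyclicity amounts to the parity bookkeeping in the orbit of $1$ (together with the $m=n-1$ boundary case), and that bookkeeping is also what delivers, for free, the fact that the surviving cycles avoid $321$ via their two-increasing-block shape.
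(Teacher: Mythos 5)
Your argument is correct and follows essentially the same route as the paper's proof: force $\pi_{n-1}=1$, observe that the remaining entries before position $n-2$ must be increasing so the permutation is determined by $\pi_n=m$, and then use the parity of $m$ relative to $n$ to decide cyclicity, giving $\lfloor\frac{n-1}{2}\rfloor$ choices. The only cosmetic difference is that you pin down $\pi_{n-1}=1$ by locating $n-1$ in the cycle (using $\pi_{n-1}<\pi_n$), whereas the paper rules out the other positions of $1$ directly by exhibiting $321$ patterns; the resulting structure and count are identical.
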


\begin{proof}
Let $\pi \in \A_n(321;321)$ with $\pi_{n-2} = n$ and consider the position of 1 in one-line notation.  We note that $\pi_n \neq 1$ since otherwise $n\pi_{n-1}1$ is a 321 pattern. Also, if $\pi_j=1$ for some $j \in [2,n-3]$, then $j$ is the last element in cycle notation and thus $n-1$ must come before $n$. However, then $(n-1)(n-2)j$ is a 321 pattern in $C(\pi)$ and thus $\pi_{n-1} = 1$.

In one-line notation, all elements before 1 must be increasing.  In particular, if $\pi_n=k$ for some $k \in [2, n-1]$, then $\pi_i = i+1$ for $i \in [1, k-2]$ and $\pi_i = i+2$ for $i \in [k-1, n-3]$. However, if $k$ is the same parity as $n$, then the cycle form of $\pi$ contains the cycle $(1,2,3, \ldots, k-1, k+1, k+3, \ldots, n-1)$ which contradicts the fact that $\pi$ is cyclic. Thus, $k$ must be the opposite parity as $n$. In this case
\begin{equation}\label{eqn:321-321-1}
C(\pi) = (1, 2, 3, \ldots, k-1, k+1, k+3, \ldots, n, k, k+2, k+4, \ldots, n-1)
\end{equation}
which avoids $321$. Thus if $k \in [2, n-1]$ and $k$ is the opposite parity as $n$ there is exactly one permutation yielding a total of $\lfloor \frac{n-1}{2} \rfloor$ such permutations.
\end{proof}

\begin{lemma}\label{lem:321-321-3} Let $n \geq 5$, $k \in [2,n-3]$ and $\pi \in \A_n(321;321)$ with $\pi_k = n$. Then
\[ C(\pi) = (1, c_2, c_3, \ldots, c_{n-5}, k-1, n-1, k, n, k+1).\]
\end{lemma}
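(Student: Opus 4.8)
The plan is to pin down the one-line form of $\pi$ completely and then read $C(\pi)$ off from it.

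\emph{Step 1 (one-line shape).} Since $\pi$ avoids $321$ and $\pi_k=n$, no two entries in positions $k+1,\dots,n$ can descend, so $\pi_{k+1}<\pi_{k+2}<\dots<\pi_n$; hence $1$ either lies among these positions, forcing $\pi_{k+1}=1$, or it sits at some position $j<k$ (note $j\neq k$ as $\pi_k=n$). To rule out the latter I would argue in the cycle: $c_n=j$ is the position of $1$, and $c_m=n$ for some $m$ with $c_{m-1}=k$; since $n$ is maximal and $C(\pi)$ avoids $321$, the cycle increases after $n$, so $c_{m+1}<\dots<c_n=j<k$. Then no entry exceeding $k$ can occur among $c_2,\dots,c_{m-2}$, because such an entry together with $c_{m-1}=k$ and $c_{m+1}$ would be a $321$ in $C(\pi)$; hence $n=c_m$ is the only cycle entry exceeding $k$, contradicting that $\{k+1,\dots,n\}$ has $n-k\ge 3$ entries all exceeding $k$. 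So $\pi_{k+1}=1$, and then $\pi_1<\dots<\pi_{k-1}$ as well (a descent among the first $k-1$ entries followed by the $1$ in position $k+1$ is a $321$), giving
\[
\pi = a_1\cdots a_{k-1}\,n\,1\,b_1\cdots b_{n-k-1},\qquad a_1<\dots<a_{k-1},\ \ b_1<\dots<b_{n-k-1},
\]
with $\{a_i\}\cup\{b_j\}=\{2,\dots,n-1\}$.

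\emph{Step 2 (the cycle tail).} From $\pi_{k+1}=1$ we get $c_n=k+1$, and since $\pi_k=n$ the cycle reads $\dots,k,n,\pi_n,\dots,k+1$, with $c_{m+1}=\pi_n$ for the index $m$ with $c_m=n$; as the cycle increases after $n$, $\pi_n\le c_n=k+1$. If $\pi_n\neq k+1$ then $\pi_n<k+1$ (and $\pi_n\neq k$, else $\pi$ contains the $2$-cycle $(k\,n)$), so the counting argument of Step 1 applies verbatim, now with $c_{m+1}<k$ and $c_{m-1}=k$, forcing the only cycle entries exceeding $k$ to be $n$ and $k+1$ — impossible. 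Hence $\pi_n=k+1$, so $m=n-1$, $c_{n-1}=n$, $c_{n-2}=k$. Next, because the increasing suffix ends at $\pi_n=k+1$, every suffix value is $\le k+1$, so $n-1\ge k+2$ is a prefix value and, being the maximum of $\{2,\dots,n-1\}$, equals $a_{k-1}=\pi_{k-1}$. The remaining point is that $k$ is a suffix value, i.e.\ $\pi_{n-1}=k$; I would establish this by ruling out the alternative that $k$ lies in the increasing prefix, in which case the prefix is forced to be $\{k\}\cup\{k+2,\dots,n-1\}$ together with $2k-n$ values below $k$, and then tracing the cycle produces a contradiction with $\pi$ being a single $n$-cycle and with $321$-avoidance of $C(\pi)$. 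Granting $\pi_{k-1}=n-1$ and $\pi_{n-1}=k$: $c_{n-2}=k=\pi_{c_{n-3}}$ forces $c_{n-3}$ to be the position of $k$, namely $n-1$, and then $c_{n-3}=n-1=\pi_{c_{n-4}}$ forces $c_{n-4}$ to be the position of $n-1$, namely $k-1$. Thus $C(\pi)$ ends in $\dots,k-1,n-1,k,n,k+1$, while $c_2,\dots,c_{n-5}$ remain unconstrained.

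The hard part is the last claim of Step 2 — that the value $k$ sits in the increasing suffix (equivalently $\pi_{n-1}=k$, equivalently $c_{n-3}=n-1$). Everything before it follows from purely local consequences of $321$-avoidance, whereas here one must eliminate a very rigid competing configuration using only the global facts that $\pi$ is a single $n$-cycle and that $C(\pi)$ avoids $321$; the small cases $n\in\{5,6,7\}$, where the free block $c_2,\dots,c_{n-5}$ is empty or a single entry, will also need to be checked directly.
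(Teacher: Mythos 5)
Your Steps 1 and 2 are sound as far as they go, and they track the paper's own proof: you obtain $\pi_{k+1}=1$ (the paper rules out $1$ in the prefix via a $321$ in the one-line form, you via a counting argument in the cycle; both work since $n-k\geq 3$), then $\pi_n=k+1$, hence $c_{n-2}=k$, $c_{n-1}=n$, $c_n=k+1$, and $\pi_{k-1}=n-1$. But the one claim you flag and do not prove --- that $k$ lies in the increasing suffix, i.e.\ $\pi_{n-1}=k$, equivalently $c_{n-4}=k-1$ and $c_{n-3}=n-1$ --- is a genuine gap, and it cannot be closed, because that claim is false. Take $n=8$, $k=5$, $\pi=24578136$. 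Its one-line form is the union of the two increasing runs $2,4,5,7,8$ and $1,3,6$, so it avoids $321$; it is a single $8$-cycle with $C(\pi)=(1,2,4,7,3,5,8,6)$, and this word also avoids $321$ (its only inversions are $4>3$, $7>3$, $7>5$, $7>6$, $8>6$, none of which extends to a decreasing triple). Thus $\pi\in\A_8(321;321)$ with $\pi_5=8$ and $k=5\in[2,n-3]$, and it satisfies everything derived so far ($\pi_6=1$, $\pi_8=6=k+1$, $\pi_4=7=n-1$, cycle ending $5,8,6$), yet $\pi_7=3\neq k$ and $C(\pi)$ ends in $7,3,5,8,6$ rather than $k-1,n-1,k,n,k+1=4,7,5,8,6$. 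The ``rigid competing configuration'' you hoped to eliminate (prefix equal to $\{k\}\cup\{k+2,\dots,n-1\}$ plus $2k-n$ values below $k$) is exactly what this permutation realizes, consistently with being a single $n$-cycle and with $321$-avoidance of the cycle.

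So your instinct that this was the hard, global step was correct, but the resolution is that the statement itself fails, not that a finer trace of the cycle produces the contradiction. Note that the paper's proof stumbles at the same point: it asserts that $\pi_{n-1}\leq k-2$ forces a $321$ pattern ``$n(k-1)(k-2)$'' in $C(\pi)$, but in the example above no such pattern occurs (in the cycle, $k-1=4$ precedes both $n-1$ and $n$). Hence the lemma as stated appears to be incorrect; what actually survives of it is only that the cycle ends $\dots,k,n,k+1$ and that $n-1$ immediately follows $k-1$ somewhere earlier. Any repair must weaken the conclusion accordingly and then recheck the deletion/insertion count in Lemma~\ref{lem:321-321-4} and the resulting formula in Theorem~\ref{thm:321-321}, which rely on the full claimed structure.
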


\begin{proof} It is easy to verify this is true for $n=5$, so assume $n \geq 6$. Let $\pi$ be as given and consider the position of 1 in one-line notation. If $\pi_j=1$ for some $j$, then $j \in [2,k-1] \cup \{k+1\}$ since otherwise $n \pi_{k+1}1$ is a 321 pattern.  Suppose $j \in [2,k-1]$. Then in cycle notation, $k > j$ and $k$ comes before $j$.  So the elements in $[k+1, n-1]$ must appear after $n$ in increasing order to avoid 321. Thus $\pi_{n-2} = n-1$ and $\pi_{n-1} = j$. But then $n(n-1)j$ is a 321-pattern in one-line and it must be that $\pi_{k+1} = 1$.

Note that in $C(\pi)$, elements in $[k+2,n-1]$ must come before $n$ since $C(\pi)$ ends in $k+1$. They must also appear in increasing order because $k$ appears after them.  However, $k+2$ comes before $k$ in $C(\pi)$ and thus all elements after $k$, excepting $k+1$, must be bigger than $k+2$.  Thus only $k+1$ appears after $n$ and we have $\pi_n=k+1$ as desired.

We now consider the one-line notation of $\pi$ again.  Since $\pi$ avoids 321, all elements after $n$ must appear in increasing order and all elements before 1 must appear in increasing order.  If $n-1$ appears after $n$, it must appear in position $n-1$ which contradicts the fact that $\pi$ is cyclic.  Thus $n-1$ appears before $n$ and must occur in position $k-1$.

Finally, consider $\pi_{n-1}$ which must be less than $k+1$ since otherwise $n\pi_{n-1}(k+1)$ would be a 321 pattern in the one-line notation. Furthermore, since $\pi_{k-1} = n-1,$ we have $\pi_{n-1} \neq k-1$ (since $\pi$ is cyclic), and thus $\pi_{n-1} \in [2,k-2] \cup \{k\}$. However, if $\pi_{n-1} \leq k-2$, then in cycle notation we have the pattern $n(k-1)(k-2)$ which is a contradiction. Thus $\pi_{n-1} = k$ and our results hold.
\end{proof}

\begin{lemma} \label{lem:321-321-4} Let $n \geq 5$. The number of permutations $\pi \in \A_n(321;321)$ with $\pi_{n-1} \neq n$ is ${\lceil n/2 \rceil \choose 2}$.
\end{lemma}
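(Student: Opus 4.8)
The plan is to classify the permutations $\pi\in\A_n(321;321)$ with $\pi_{n-1}\ne n$ by the position of the entry $n$ in one-line notation. Since $\pi$ is cyclic, $n$ is not a fixed point, so writing $\pi_k=n$ we have $k\in[1,n-2]$. The case $k=n-2$ is exactly Lemma~\ref{lem:321-321-2}, which contributes $\lfloor (n-1)/2 \rfloor$ permutations, and the cases $k\in[2,n-3]$ are governed by Lemma~\ref{lem:321-321-3}. So it remains to dispose of $k=1$ and to count the permutations coming from Lemma~\ref{lem:321-321-3}.

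For $k=1$: if $\pi_1=n$ then $C(\pi)=(1,n,c_3,\dots,c_n)$, and since $C(\pi)$ avoids $321$ and $n=c_2$ is the largest entry, the tail $c_3,\dots,c_n$ must be increasing; hence $C(\pi)=(1,n,2,3,\dots,n-1)$ and the one-line notation of $\pi$ is $n\,3\,4\cdots(n-1)\,1\,2$, which contains the $321$ pattern $n\,3\,1$ in positions $1,2,n-1$. So for $n\ge 5$ there is no such $\pi$, and this case contributes $0$.

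For $k\in[2,n-3]$: by Lemma~\ref{lem:321-321-3} we may write $C(\pi)=(1,c_2,\dots,c_{n-5},k-1,n-1,k,n,k+1)$, which forces $\pi_{k-1}=n-1$, $\pi_k=n$, $\pi_{k+1}=1$, $\pi_{n-1}=k$, and $\pi_n=k+1$ in one-line notation. Because $\pi_k=n$ is the maximum, any descent among positions $k+1,\dots,n$ would create a $321$ pattern together with this $n$; because $\pi_{k+1}=1$ is the minimum, any descent among positions $1,\dots,k$ would create a $321$ pattern together with this $1$. Hence the one-line word of $\pi$ is increasing on $[1,k]$ and on $[k+1,n]$, so $\pi$ is completely determined by the set of values occupying positions $1,\dots,k$. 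That set must contain $n-1$ and $n$, and the complementary set (occupying positions $k+1,\dots,n$) has maximum $k+1$; so all of $[k+2,n]$ lies in the first block while $1,k,k+1$ lie in the second, and the only remaining freedom is the choice of a subset $S\subseteq\{2,\dots,k-1\}$ of size $n-k-3$ to be placed in the second block. One then determines, for each admissible pair $(k,S)$, whether the resulting two-block permutation is a single $n$-cycle whose cycle word avoids $321$, and shows that the number of surviving configurations is $\binom{\lfloor (n-1)/2 \rfloor}{2}$.

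Adding the three contributions and using $\binom{m+1}{2}=\binom m2+m$ with $m=\lfloor (n-1)/2 \rfloor=\lceil n/2\rceil-1$ gives $0+\lfloor (n-1)/2 \rfloor+\binom{\lfloor (n-1)/2 \rfloor}{2}=\binom{\lceil n/2\rceil}{2}$, which is the claim. The main obstacle is the count in the $k\in[2,n-3]$ range: passing from the rigid cycle shape of Lemma~\ref{lem:321-321-3} to the two-increasing-blocks one-line form is routine, but one then needs a careful analysis of how the two blocks interleave in cycle form in order to decide cyclicity and $321$-avoidance of the cycle, and to verify that the admissible pairs $(k,S)$ are enumerated by $\binom{\lfloor (n-1)/2 \rfloor}{2}$.
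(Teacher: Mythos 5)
Your reduction of the problem is set up correctly: the case $\pi_1=n$ is indeed impossible (your argument that $C(\pi)=(1,n,2,3,\dots,n-1)$ forces the one-line word $n\,3\,4\cdots(n-1)\,1\,2$, which contains $321$, is fine, and the paper itself silently skips this case), the case $\pi_{n-2}=n$ correctly contributes $\lfloor\frac{n-1}{2}\rfloor$ by Lemma~\ref{lem:321-321-2}, and the arithmetic $\lfloor\frac{n-1}{2}\rfloor+\binom{\lfloor (n-1)/2\rfloor}{2}=\binom{\lceil n/2\rceil}{2}$ is right. But the heart of the lemma is the count for $k\in[2,n-3]$, and there your proposal stops at an assertion: after reducing to the two-increasing-block form determined by a pair $(k,S)$ with $S\subseteq[2,k-1]$, $|S|=n-k-3$, you write that ``one then determines'' which pairs give a single $n$-cycle whose cycle word avoids $321$ and ``shows'' that the survivors number $\binom{\lfloor(n-1)/2\rfloor}{2}$. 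That determination is precisely the nontrivial content of the lemma; deciding cyclicity and $321$-avoidance of the cycle from the interleaving of the two blocks is not routine, and no argument is given. As written, this is a genuine gap, not a proof.

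The paper closes exactly this step in a different and cleaner way: using the rigid tail $C(\pi)=(1,c_2,\dots,c_{n-5},k-1,n-1,k,n,k+1)$ from Lemma~\ref{lem:321-321-3}, it deletes the two entries $n$ and $k+1$ to obtain $C(\pi')=(1,c_2',\dots,c_{n-5}',k-1,n-2,k)$, giving a bijection between permutations of length $n$ with $\pi_k=n$, $k\in[2,n-3]$, and permutations $\pi'\in\A_{n-2}(321;321)$ with $\pi'_{k-1}=n-2$, i.e.\ with $n-2$ not in the penultimate position. Induction on $n$ then gives $\binom{\lceil (n-2)/2\rceil}{2}=\binom{\lfloor(n-1)/2\rfloor}{2}$ for this range of $k$, with no need to analyze the block structure directly. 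If you want to salvage your direct approach you would have to prove, for each $k$, exactly which sets $S$ yield an $n$-cycle with $321$-avoiding cycle word and show the total is $\binom{\lfloor(n-1)/2\rfloor}{2}$; the inductive deletion argument is the natural way to avoid that case analysis.
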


\begin{proof} We first show that for $k \in [2,n-3]$, the number of permutations  $\pi \in \A_n(321;321)$ with $\pi_{k} = n$ is equal to the number of permutations $\pi' \in \A_{n-2}(321;321)$ with $\pi'_{k-1} = n-2$.  Let $\pi \in \A_n(321;321)$ with $\pi_{k} = n$. By Lemma~\ref{lem:321-321-3}, 
\[ C(\pi) = (1, c_2, c_3, \ldots, c_{n-5}, k-1, n-1, k, n, k+1).\] Consider the permutation $\pi'$ formed by deleting $n$ and $k+1$ from $\pi$. Then
\[ C(\pi') = (1, c_2', c_3', \ldots, c_{n-5}', k-1, n-2, k)\] where $c_i' =c_i$ if $i \leq k$ and $c_i' = c_i - 1$ if $i > k$. Thus $\pi' \in \A_n(321;321)$ and has the additional property that $\pi_{k-1}' = n-2$. Conversely, if $\pi' \in \A_{n-2}(321;321)$ with $\pi'_{k-1} = n-2$, form $\pi$ by inserting $n$ in position $k$ followed by $k+1$ at the end.

Now the total number of permutations in $\A_n(321;321)$ that do not not have $n$ in the penultimate position can be found using induction. The base case $n=5$ is easily checked so assume $n > 5$. The number of permutations $\pi \in \A_n(321;321)$ with $\pi_{n-1} \neq n$ is equal to the number with $\pi_{n-2}$ plus the number with $\pi_{k} = n$ for $k \in [2,n-3]$. Using Lemma~\ref{lem:321-321-2} and induction, we have the total number is given by
\[ \left\lfloor \frac{n-1}{2} \right\rfloor + {\lceil \frac{n-2}{2} \rceil \choose 2} = {\lceil \frac{n}{2} \rceil \choose 2}.\] \end{proof}

Taking this fact along with the results in Lemma~\ref{lem:321-321-1}, we can enumerate $\A_n(321;321)$.

\begin{theorem}\label{thm:321-321} For $n \geq 5$, the number of permutations in $\A_n(321;321)$ satisfies the recurrence 
\[ a_n(321;321) = a_{n-1}(321;321) + {\lceil \frac{n}{2} \rceil \choose 2}\]
where $a_3(321;321) = 2$ and $a_4(321;321) = 3$. In closed form,
\[ a_n(321;321) = \begin{cases} \displaystyle{1 + 2{\frac{n+2}{2} \choose 3}} & \text{if } n \text{ is even},\\ 
\displaystyle{1 + 2{\frac{n+1}{2} \choose 3} + { \frac{n+1}{2} \choose 2}} & \text{if } n \text{ is odd}. \end{cases} \]
\end{theorem}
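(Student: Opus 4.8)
The plan is to partition $\A_n(321;321)$ according to whether or not $n$ occupies the penultimate position of the one-line notation, i.e.\ whether $\pi_{n-1}=n$. Lemma~\ref{lem:321-321-1} establishes a bijection between the permutations with $\pi_{n-1}=n$ and all of $\A_{n-1}(321;321)$, so these number $a_{n-1}(321;321)$; Lemma~\ref{lem:321-321-4} shows that the permutations with $\pi_{n-1}\neq n$ number $\binom{\lceil n/2\rceil}{2}$. Since these two classes are disjoint and exhaust $\A_n(321;321)$, summing them yields
\[ a_n(321;321) = a_{n-1}(321;321) + \binom{\lceil n/2\rceil}{2} \]
for $n\geq 5$. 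The initial values $a_3(321;321)=2$ and $a_4(321;321)=3$ are verified directly: the cyclic permutations of $[3]$ are $231$ and $312$, both of which avoid $321$ in one-line and cycle form, and for $[4]$ one lists the six cyclic permutations and discards those containing $321$ in either form.

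It then remains to check that the stated closed form satisfies this recurrence and these initial conditions, which is a routine induction split by the parity of $n$; note that each step of the recurrence flips the parity, so the induction hypothesis always supplies the opposite-parity branch of the formula. In the even case $n=2m$ the formula predicts $a_{2m}=1+2\binom{m+1}{3}$ while the odd branch gives $a_{2m-1}=1+2\binom{m}{3}+\binom{m}{2}$, and using the Pascal identity $\binom{m+1}{3}-\binom{m}{3}=\binom{m}{2}$ their difference is $2\binom{m}{2}-\binom{m}{2}=\binom{m}{2}=\binom{\lceil n/2\rceil}{2}$, as needed. In the odd case $n=2m+1$ the formula gives $a_{2m+1}=1+2\binom{m+1}{3}+\binom{m+1}{2}$ and $a_{2m}=1+2\binom{m+1}{3}$, whose difference is $\binom{m+1}{2}=\binom{\lceil n/2\rceil}{2}$, again matching. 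Confirming the formula at $n=3,4$ closes the induction.

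The entire argument is bookkeeping once Lemmas~\ref{lem:321-321-1}–\ref{lem:321-321-4} are available: the only mildly delicate point is keeping the floor/ceiling expressions consistent across the parity split and ensuring the two branches of the closed form interlock correctly under the recurrence. I do not anticipate any genuine obstacle beyond this.
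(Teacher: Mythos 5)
Your proposal matches the paper's proof exactly: the recurrence follows from Lemma~\ref{lem:321-321-1} (the $\pi_{n-1}=n$ case contributing $a_{n-1}(321;321)$) together with Lemma~\ref{lem:321-321-4} (the $\pi_{n-1}\neq n$ case contributing $\binom{\lceil n/2\rceil}{2}$), and the closed form is obtained by solving the recurrence. Your explicit parity-split verification of the closed form and the base cases is correct and simply fills in the routine computation the paper leaves to the reader.
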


\begin{proof} Lemmas~\ref{lem:321-321-1} and \ref{lem:321-321-4} combine to give the recurrence relation.  Solving the recurrence relation gives the provided closed form.
\end{proof}

\begin{example} Consider $\A_6(321;321)$. Because some of these permutations are found recursively, we first note that
\[ A_5(321;321) = 23514, 24153, 31452, 23451, 34512, 45123\}.\]
The permutations in $\A_6(321;321)$ that have 6 in the penultimate position are found recursively by inserting a 6 in position 5 of all those permutations in $\A_5(321;321)$. Thus $\A_6(321;321)$ includes the following permutations:
\[  235164, 241563, 314562, 234561, 345162, 451263.\]
The proof of Lemma~\ref{lem:321-321-2} gives all $\lfloor \frac{6-1}{2} \rfloor = 2$ permutations in $\A_6(321;321)$ where $\pi_4=6$. These 2 permutations are formed from Equation~(\ref{eqn:321-321-1}) where $k \in [2,5]$ and $k$ is the opposite parity as 6. Thus, using $k\in \{3,5\}$ in Equation~(\ref{eqn:321-321-1}), we have the following additional 2 permutations in $\A_6(321;321)$:
\[ (1,2,4,6,3,5), (1,2,3,4,6,5).\]
By the proof of Lemma~\ref{lem:321-321-4}, the permutations in $\A_6(321;321)$ where $\pi_k=6$ for $k \in \{2,3\}$ are formed recursively by considering all permutations in $\pi' \in \A_4(321;321)$ with $\pi'_{k-1}$ = 4. The new permutations are formed by inserting 6 in position $k$ followed by $k$ at the end. When $k=2$, there are no permutations in $\A_4(321;321)$ with 4 in position 1. For $k=3$, there is exactly one permutation in $\A_4(321;321)$ with 4 in position 2, namely 2413.  By inserting 6 in position 3 and 4 at the end, we get the following permutation in $\A_6(321;321)$: 256134.

Thus we have ${3 \choose 2} = 3$ permutations in $\A_6(321;321)$ that do not have 6 in position 5, and $a_5(321;321) = 6$ permutations that have 6 in position 5 for a total of 9 permutations. We note that $1 + 2{4 \choose 3} = 9$ and the closed form holds for $n=6$.
\end{example}

\section{Further directions for research}

There are several directions of future research to consider, including avoidance of longer patterns. For example, we conjecture that $a_n(3412;213)$ has generating function \[
 \frac{2x}{1-2x+\sqrt{1-4x+4x^3}}
\] associated to the OEIS sequence A087626. We similarly conjecture that $a_n(1324,1423; 213) = \binom{n}{3}+1$ and $a_n(3421,4321;213) = F_{2n-3}$. By considering other permutations or sets of permutations, many other interesting sequences seem to appear. A few results involving longer patterns can be found in the follow-up paper to this one \cite{ABBGJ-2}, in which the authors extend Theorems~\ref{thm:321-213} and \ref{thm:321-231}, finding the number of cyclic permutations $\pi$ that avoid the monotone decreasing permutation of length $k$ and where $C(\pi)$ avoids $\tau$ for $\tau \in \{213,231,312\}$. 

One could also consider cyclic permutations where all cycle forms (instead of just the cycle form beginning with 1) avoid a given pattern. A few results in this vein can be found in  the follow-up paper  \cite{ABBGJ-2}, enumerating cyclic permutations $\pi$ that avoid the monotone decreasing permutation of length $k$ where all cycle forms avoid a pattern of length 4. 
Finally, it would additionally be interesting to consider avoidance among other cycle types. Since originally written, some of the results in this paper have been extended to other cycle types in \cite{AL24} by considering permutations so that $\pi$ and its image under the fundamental bijection both avoid a given pattern. 

\subsection*{Acknowledgements}

The student authors on this paper, Ethan Borsh, Jensen Bridges, and Millie Jeske, were funded as part of an REU at the University of Texas at Tyler sponsored by the NSF Grant DMS-2149921.



\bibliographystyle{amsplain}

\end{document}